\title{Higher stacks as diagrams}
\date{April 6, 2022}
\author{Fritz H\"ormann\\ Mathematisches Institut, Albert-Ludwigs-Universit\"at Freiburg}
\newcommand{\comment}[1]{}
\newtheorem{SATZ}{Theorem}[section]
\newtheorem{LEMMA}[SATZ]{Lemma}
\newtheorem{DEF}[SATZ]{Definition}
\newtheorem{PROP}[SATZ]{Proposition}
\newtheorem{KOR}[SATZ]{Corollary}
\newtheorem{BEM}[SATZ]{Remark}
\newtheoremstyle{bare}        
  {}            
  {}            
  {\normalfont}                 
  {}                            
  {\bfseries}                   
  {}                            
  {.0em}                           
  {\thmnumber{#2}#1. \thmnote{\normalfont\textsc{(#3)}} } 
\theoremstyle{bare}
\newtheorem{PAR}[SATZ]{}
\newcommand{\N}{ \mathbb{N} }
\newcommand{\HH}{ \mathbb{H} }
\newcommand{\DD}{ \mathbb{D} }
\newcommand{\EE}{ \mathbb{E} }
\newcommand{\SSS}{ \mathbb{S} }
\DeclareMathOperator{\proj}{proj}
\DeclareMathOperator{\spl}{split}
\DeclareMathOperator{\Fun}{Fun}
\DeclareMathOperator{\Fib}{Fib}
\DeclareMathOperator{\Cof}{Cof}
\DeclareMathOperator{\Mor}{Mor}
\DeclareMathOperator{\id}{id}
\DeclareMathOperator{\Hom}{Hom}
\DeclareMathOperator{\Dia}{Cat}
\DeclareMathOperator{\cor}{cor}
\DeclareMathOperator{\op}{op}
\DeclareMathOperator{\cart}{cart}
\DeclareMathOperator{\cocart}{cocart}
\DeclareMathOperator{\pr}{pr}
\DeclareMathOperator{\colim}{colim}
\DeclareMathOperator{\hocolim}{hocolim}
\DeclareMathOperator{\holim}{holim}
\newcommand{\tw}[1]{ {{}^{\downarrow \uparrow} #1 }}
\newcommand{\twc}[1]{ {{}^{\downarrow \uparrow \downarrow} #1 }}
\begin{document}

\maketitle

{\footnotesize  {\em 2010 Mathematics Subject Classification:} 18F20, 18N40, 18N50, 55U35  }

{\footnotesize  {\em Keywords:} simplicial presheaves, infinity topoi, higher algebraic stacks }

\section*{Abstract}

Several possible presentations for the homotopy theory of (non-hypercomplete) $\infty$-stacks on a classical site $\mathcal{S}$ are discussed. 
In particular, it is shown that an elegant combinatorial description in terms of diagrams in $\mathcal{S}$ exists, similar to Cisinski's presentation, based on work of Quillen, Thomason and Grothendieck, of usual homotopy theory by small categories and their smallest (basic) localizer. As an application it is shown that any (local) fibered (a.k.a.\@ algebraic) derivator over $\mathcal{S}$ with stable fibers extends to $\infty$-stacks in a well-defined way under mild assumptions.  

\tableofcontents

\section{Introduction}

\begin{PAR}
Let $\mathcal{S}$ be a (locally small) category with finite limits and Grothendieck topology. We discuss several models for the homotopy
theory of higher stacks on $\mathcal{S}$. If $\mathcal{S}$ is small, the presentation by the \v{C}ech localization of simplicial presheaves is well-known. 
In case of the trivial topology or the extensive topology it can also be presented by simplicial objects in $\mathcal{S}^\amalg$ (free coproduct completion), or in $\mathcal{S}$, respectively, regardless of the size of $\mathcal{S}$, using  model category structures on simplicial objects in $\mathcal{S}$ constructed in a previous article \cite{Hor21}.
The novelty is a presentation by diagrams in $\mathcal{S}$ together with their smallest {\em localizer}. The notion of basic localizer\footnote{``localisateur fondamental'' in French} was introduced by Grothendieck for small categories
and extended by the author \cite{Hor15} to diagrams in a site $\mathcal{S}$. 
This theory gives an elegant way of extending any fibered derivator with stable fibers on $\mathcal{S}$ (or $\mathcal{S}^{\op}$) to higher stacks using the theory of (co)homological descent for fibered derivators \cite{Hor15}. This is analogous to the universal property, proven by Cisinski \cite{Cis08}, enjoyed by the associated derivator. That property will also be reproven here for the case that $\mathcal{S}$ is arbitrary (not necessarily small or even locally small). 
We emphasize that the theory of localizers on diagrams in $\mathcal{S}$ is not based on the simplex category (or any other ``test category'') at all. 

In a subsequent article \cite{Hor21d} it will be shown that also derivator six-functor-formalisms (which are certain fibered multiderivators over the 2-multicategory $\mathcal{S}^{\cor}$ of correspondences in $\mathcal{S}$) extend to higher stacks under
similar hypotheses. There, in contrast to the extension discussed here, some ``algebraicity condition'' on the stack is needed in addition. 
\end{PAR}

\begin{PAR}Recall \cite{Cis03, Hor15} that a (basic) localizer on the category $\Dia$ of small categories is a subclass $\mathcal{W} \subset \Mor(\Dia)$ subject to the following axioms:
\begin{enumerate}
\item[(L1)] $\mathcal{W}$ is weakly saturated (cf.\@ Definition~\ref{DEFWS}); 
\item[(L2 left)] For every small category $I$ with final object the projection functor $I \rightarrow \cdot$ is in $\mathcal{W}$; 
\item[(L3 left)] If a commutative diagram in $\Dia$
\[ \xymatrix{ I \ar[rr]^w \ar[rd] & & J \ar[ld] \\ & K } \]
is such that for all $k \in K$ the induced functor \[ w_k: I \times_{/K} k \rightarrow J \times_{/K} k \] is in $\mathcal{W}$, then the functor $w$ is in $\mathcal{W}$.
\end{enumerate}
There is an obvious dual notion of colocalizer with axioms (L2 right) and (L3 right). However, it can be shown \cite{Cis04} that any localizer is a colocalizer and vice versa. 
The class of localizers is clearly stable under intersection and thus there is a smallest localizer $\mathcal{W}_{\infty}$.
\end{PAR}
\begin{SATZ}[Cisinski \cite{Cis04}]
\[ \mathcal{W}_{\infty} = \{ w \in \Mor(\Dia) \ | \ N(w) \text{ is a weak equivalence of simplicial sets }\} \]
\end{SATZ}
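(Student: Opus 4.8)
The plan is to show that the class $\mathcal{W}_N := \{ w \in \Mor(\Dia) \mid N(w) \text{ is a weak equivalence} \}$ satisfies (L1), (L2 left), (L3 left), so that $\mathcal{W}_\infty \subset \mathcal{W}_N$, and then conversely to show that $\mathcal{W}_N$ lies in every localizer, in particular in $\mathcal{W}_\infty$. For the inclusion $\mathcal{W}_\infty \subset \mathcal{W}_N$, I would verify the three axioms for $\mathcal{W}_N$. Weak saturation (L1) is standard: weak equivalences of simplicial sets are weakly saturated, and $N$ commutes with the relevant constructions (retracts, two-out-of-three, transfinite composition of pushouts of the generating maps pulled back along $N$). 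For (L2 left), if $I$ has a final object then $I \to \cdot$ is a homotopy equivalence of categories, hence $N(I) \to \ast$ is a simplicial homotopy equivalence. The real work is (L3 left): given the triangle over $K$ with each $w_k : I \times_{/K} k \to J \times_{/K} k$ in $\mathcal{W}_N$, one must conclude $N(w)$ is a weak equivalence. Here I would invoke Thomason's theorem that the nerve of a category $I$ over $K$ computes the homotopy colimit over $K$ of the nerves of the comma categories $I \times_{/K} k$ — i.e. $N(I) \simeq \mathrm{hocolim}_{k \in K} N(I \times_{/K} k)$ — and then use that a levelwise weak equivalence of diagrams induces a weak equivalence on homotopy colimits. (Equivalently, one can cite Quillen's Theorem A in its relative form.)

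For the reverse inclusion $\mathcal{W}_N \subset \mathcal{W}_\infty$, I would use the characterization of $\mathcal{W}_\infty$ as the localizer generated by the empty set, i.e. that it suffices to show any localizer $\mathcal{W}$ contains all $w$ with $N(w)$ a weak equivalence. The key input is that $N(w)$ is a weak equivalence if and only if $w$ becomes invertible after inverting the functors $\Delta[n] \times I \to I$ (equivalently, after inverting the "last vertex"/decomposition maps); more precisely, one shows that for any small category $I$, the natural functor from the category of simplices $\Delta_{/N(I)}$ (or, via Thomason, a suitable subdivision) to $I$ is in every localizer, using that localizers are closed under the operations in (L2)–(L3) and contain the maps $\Delta[n] \to \Delta[m]$ coming from final objects. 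Then $N(w)$ being a weak equivalence of simplicial sets can be detected after applying the "realization as a category" functor, and one transfers this back. Concretely I would follow Cisinski: every localizer contains the class generated by $\{ \varnothing \}$, this class is $\mathcal{W}_\infty$, and one shows $\mathcal{W}_\infty$ contains $i_I : \Delta_{/N(I)} \to I$ for all $I$ (this is where (L3 left) applied over $I$, combined with (L2 left) for the comma categories $\Delta_{/N(I)} \times_{/I} i$ each having a final object, does the job), whence for $w : I \to J$ with $N(w)$ an equivalence one factors through the simplex categories and uses weak saturation plus the fact that an equivalence of simplicial sets induces a functor of simplex categories lying in $\mathcal{W}_\infty$.

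The main obstacle I anticipate is the reverse inclusion, and specifically showing $i_I : \Delta_{/N(I)} \to I$ lies in $\mathcal{W}_\infty$ (equivalently in every localizer) without circularity — one must argue purely from the axioms (L1)–(L3 left) that this comparison map, or the analogous subdivision comparison, is a localizer-equivalence. This requires a careful analysis of the comma categories of $i_I$ over objects of $I$, showing each has contractible nerve "formally" (via a final object or a zig-zag reducible through (L2)–(L3)), and then assembling via (L3 left). A secondary subtlety is making sure the size conditions are respected ($\Delta_{/N(I)}$ is small when $I$ is small) and that the two-out-of-three and retract closure suffice to bootstrap from the generating comparison maps to all $N$-equivalences. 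Once $i_I \in \mathcal{W}_\infty$ is established, the rest is bookkeeping: $\mathcal{W}_N$ is detected on simplex categories, which $\mathcal{W}_\infty$ already handles, so both inclusions close up.
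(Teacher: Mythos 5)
Your overall architecture is correct and is essentially Cisinski's own: show that the class $\mathcal{W}_N$ of $N$-equivalences is a basic localizer (hence $\mathcal{W}_\infty\subseteq\mathcal{W}_N$), and show conversely that every localizer contains $\mathcal{W}_N$ by comparing $I$ with its category of simplices $\Delta_{/N(I)}=\int N(I)$. The paper does not reprove this particular statement (it cites \cite{Cis04}), but its generalization to $\Dia(\mathcal{S})$ follows exactly this skeleton: Proposition~\ref{PROPFWD} is your key lemma that $i_I\colon\int N(I)\to I$ lies in every localizer, proved much as you suggest (the comma categories are simplex categories of nerves of categories with an initial object); Theorem~\ref{THEOREMBACK} is the Quillen-type statement that $N\int X\to X$ is a weak equivalence; and the forward inclusion is obtained there by exhibiting $\mathcal{W}_N$ as the class of $\DD$-equivalences of a fibered derivator (Theorem~\ref{SATZWEAKHOMDESCENT}, Corollary~\ref{KOREQUIVALENCE}), whereas you verify (L1)--(L3) directly via Quillen's Theorem A / Thomason's homotopy colimit theorem --- both routes work, and yours is the more elementary for $\mathcal{S}=\{\cdot\}$.

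The gap is that you have mislocated the main difficulty of the reverse inclusion. Proving $i_I\in\mathcal{W}_\infty$ is the easy half; the hard half is the ``fact'' you invoke without argument, namely that a weak equivalence of simplicial sets $f\colon X\to Y$ induces $\Delta_{/f}\in\mathcal{W}_\infty$. This is not bookkeeping and does not follow from weak saturation, 2-out-of-3 and retract closure: since $f$ is in general not the nerve of a functor, you cannot reduce it to the maps $i_I$ and 2-out-of-3. The standard argument (and the one the paper runs for its generalization, Theorem~\ref{THEOREMINTWECOF}) factors $f$ --- every simplicial set being cofibrant --- as a transfinite composite of pushouts of the generating anodyne maps $\Lambda_e(\Delta_n\times\Delta_1)\hookrightarrow\Delta_n\times\Delta_1$ followed by a trivial fibration, and then proves three separate non-formal statements: (i) $\Delta_{/-}$ sends these generators into every localizer (this needs the cylinder $\Delta_1$ and the comparison of $\int$ of a bisimplicial set with $\int$ of its diagonal, cf.\@ Proposition~\ref{PROPPROPERTIESLOCALIZERII}); (ii) the class of cofibrations sent into $\mathcal{W}$ is stable under pushout along cofibrations and under transfinite composition (Propositions~\ref{PROPPUSHOUT} and~\ref{PROPTRANSFINITE}, resting on Lemma~\ref{LEMMAPUSHOUTTRANSFINITE}, which compares $\int$ of a pushout span with the actual pushout); and (iii) trivial fibrations are simplicial homotopy equivalences and these are sent into $\mathcal{W}$ (Corollary~\ref{KORLEFTHOMOTOPYEQUIVALENCE}). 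Until you supply (i)--(iii), the implication ``$N(w)$ a weak equivalence $\Rightarrow$ $w\in\mathcal{W}_\infty$'' is not established.
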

The proof shows in addition that there is an equivalence of categories with weak equivalences induced by $N$:
\begin{equation}\label{equiv1} (\Dia, \mathcal{W}_{\infty}) \cong (\mathcal{SET}^{\Delta^{\op}}, \mathcal{W}) \end{equation}
where $\mathcal{W}$ is the class of usual weak equivalences of simplicial sets. Hence the two derivators, or $\infty$-categories, formed by these two models
are equivalent.

\begin{PAR} The notions of localizer and colocalizer have been extended in \cite{Hor15} to the category $\Dia(\mathcal{S})$ (resp.\@ $\Dia^{\op}(\mathcal{S})$) of diagrams in $\mathcal{S}$ (cf.\@ Definition~\ref{DEFDIA})  as follows: Fix a Grothendieck pretopology on $\mathcal{S}$. 
A {\bf localizer} on $\Dia(\mathcal{S})$ is a subclass $\mathcal{W} \subset \Mor(\Dia(\mathcal{S}))$ subject to the following axioms: 
\begin{enumerate}
\item[(L1)] $\mathcal{W}$ is weakly saturated (cf.\@ Definition~\ref{DEFWS}); 
\item[(L2 left)] For every diagram $(I, S)$ such that $I$ has a final object $i$ the morphism $(I, S) \rightarrow (\cdot, S(i))$ is in $\mathcal{W}$; 
\item[(L3 left)] If a commutative diagram in $\Dia(\mathcal{S})$
\[ \xymatrix{ (I, S) \ar[rr]^w \ar[rd] & & (J, T) \ar[ld] \\ & (K, U) } \]
is such that for any $k \in K$ there is a covering $\{U_{k, i} \rightarrow U(k) \}$ such that
the induced morphism \[ w_{k, i}: I \times_{/(K, U)} (k, U_{k,i}) \rightarrow J \times_{/(K, U)} (k, U_{k,i}) \] is in $\mathcal{W}$ for all $i$, then the morphism $w$ is in $\mathcal{W}$.
\item[(L4 left)] If $w: (I, \alpha^*T) \rightarrow (J, T)$ is a morphism (of pure diagram type) in $\Dia(\mathcal{S})$ and
for all $j$ the morphism
\[ (j \times_{/J} I, \cdot) \rightarrow (\cdot, \cdot)    \]
is in $\mathcal{W}$ then the morphism $w$ is in $\mathcal{W}$. 
\end{enumerate}
There are again obvious dual axioms (L1--L4 right) defining a {\bf colocalizer} on $\Dia^{\op}(\mathcal{S}) \cong \Dia(\mathcal{S}^{\op})^{2-\op}$, giving a directly incomparable notion in this case\footnote{however, a localizer on $\Dia(\mathcal{S})$ gives rise to a colocalizer on  $\Dia^{\op}(\mathcal{S}^{\op})$ under the identification $(I, S) \mapsto (I^{\op}, S^{\op})$ and vice versa.}. 
If $\mathcal{S}=\cdot$ is the terminal category then (L4 left) follows from (L1--L3 left) and similarly (L4 right) follows from (L1--L3 right) and furthermore, as mentioned above, 
the sets of axioms (L1--L3 left) and (L1--L3 right) are equivalent using the natural identification ({\em not} involving taking the opposite) $\Dia(\cdot) = \Dia = \Dia^{\op}(\cdot)$. Thus both recover the notion of basic localizer of Grothendieck. 

Again, there exists a smallest localizer $\mathcal{W}_\infty$. 
In this article we prove
\end{PAR} 

\begin{SATZ}[cf.\@ Theorem~\ref{THEOREMINTWECOFCECH}]
If $\mathcal{S}$ is small, we have
\[ \mathcal{W}_{\infty} = \{ w \in \Mor(\Dia(\mathcal{S})) \ | \ N(w) \text{ is a \v{C}ech weak equivalence of simplicial presheaves }\} \]
\end{SATZ}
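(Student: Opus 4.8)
The plan is to establish the two inclusions separately; throughout, write $\mathcal{W}_{\check{C}}$ for the class on the right-hand side. The inclusion $\mathcal{W}_\infty\subseteq\mathcal{W}_{\check{C}}$ reduces, by minimality of $\mathcal{W}_\infty$, to showing that $\mathcal{W}_{\check{C}}$ is a localizer, i.e.\@ satisfies (L1)--(L4 left). The tool behind all four verifications is that the nerve is homotopy cocontinuous: $N(I,S)$ computes the homotopy colimit $\hocolim_{i\in I}h_{S(i)}$, where $h$ is the Yoneda embedding into $\mathrm{sPre}(\mathcal{S})$, and for any morphism of diagrams over $(K,U)$ there is a Thomason--Fubini equivalence $N(I,S)\simeq\hocolim_{k\in K}N\bigl(I\times_{/(K,U)}(k,U(k))\bigr)$; more generally $N$ carries the Grothendieck-construction homotopy colimits of $\Dia(\mathcal{S})$ to homotopy colimits of simplicial presheaves. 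Granting this, (L2 left) is immediate, since a homotopy colimit over a category with a final object is the value there, so $N(I,S)\to h_{S(i)}$ is even an objectwise weak equivalence; (L1) follows because \v{C}ech weak equivalences form a weakly saturated class and $N$, being a right adjoint with left adjoint $L$ determined by $L(h_U\times\Delta^n)=([n],\mathrm{const}\,U)$, interacts well enough with the relevant colimits and cofibrations to transport this along $N^{-1}$; and (L4 left) is a relative Quillen-Theorem-$A$ argument --- restricted to each comma category $j\times_{/J}I$ the pulled-back diagram has value $T(j)$ up to the transformation to the constant diagram, whence $N(I,\alpha^*T)\simeq\hocolim_j\bigl(\underline{N(j\times_{/J}I)}\times h_{T(j)}\bigr)$, which by the hypothesis, the fact that the \v{C}ech-local structure is cartesian, and the case $\mathcal{S}=\cdot$ (Grothendieck's minimal localizer together with Cisinski's theorem above) is $\simeq\hocolim_j h_{T(j)}=N(J,T)$, compatibly with $w$.

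The one axiom with real content is (L3 left), and this is where the Grothendieck topology enters. Using the Fubini decomposition one reduces to $K=(k,U(k))$ a one-object diagram equipped with a covering $\{U_{k,i}\to U(k)\}$ over which the $w_{k,i}$ are \v{C}ech weak equivalences. I would then identify, up to a zig-zag of \v{C}ech weak equivalences, $N$ of the comma diagram $I\times_{/(K,U)}(k,U_{k,i})$ with the homotopy base change of $N(I,S)$ along $h_{U_{k,i}}\to h_{U(k)}$ --- so that $w_{k,i}$ becomes the restriction of $N(w)$ to the covering member --- and invoke the defining property of the \v{C}ech-local (non-hypercomplete) model structure: a morphism of simplicial presheaves whose restrictions along a covering family are \v{C}ech weak equivalences is itself one. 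As \v{C}ech weak equivalences are stable under homotopy colimits, recombining over $K$ completes the verification, giving $\mathcal{W}_\infty\subseteq\mathcal{W}_{\check{C}}$.

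For the reverse inclusion the plan is to show that $N$ induces an equivalence between the homotopy theory of $(\Dia(\mathcal{S}),\mathcal{W}_\infty)$ and that of $\mathrm{sPre}(\mathcal{S})$ with its \v{C}ech weak equivalences, with quasi-inverse the left adjoint $L$ (precomposed with $\Ex^2$ if, as in the classical Thomason model structure on $\Dia$, a subdivision is needed for the comparison to go through). Two lemmas are needed. First, the counit $LN(I,S)\to(I,S)$ lies in $\mathcal{W}_\infty$: writing $LN(I,S)\simeq\hocolim_{\sigma\in\Delta/N(I)}([n_\sigma],\mathrm{const}\,S(\sigma(0)))$ and replacing each $[n_\sigma]$ by a point via (L2 left) reduces this to $\mathcal{W}_\infty$-cofinality of the last-vertex functor $\Delta/N(I)\to I$, whose comma categories are categories of simplices of nerves of categories with a final object and which is therefore covered by the case $\mathcal{S}=\cdot$. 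Second, $L$ carries \v{C}ech weak equivalences to $\mathcal{W}_\infty$-equivalences. On the one hand $L$ sends objectwise weak equivalences to $\mathcal{W}_\infty$-equivalences, by the same argument as in the case $\mathcal{S}=\cdot$ --- the point being that $L$ respects $\Delta^1$-homotopies, as $L(h_U\times(\Delta^n\times\Delta^1))\to L(h_U\times\Delta^n)$ is the map $([n]\times[1],\mathrm{const}\,U)\to([n],\mathrm{const}\,U)$, which lies in $\mathcal{W}_\infty$ by (L2 left) and $2$-out-of-$3$. On the other hand, since $L$ is cocontinuous and the \v{C}ech weak equivalences are obtained from the objectwise ones by adjoining the \v{C}ech nerve maps $\check C(\mathcal{U})\to h_U$ of covering families and closing under the weakly saturated operations, it remains only to check $L(\check C(\mathcal{U})\to h_U)\in\mathcal{W}_\infty$; this holds by (L3 left), whose very shape is designed to make covering families into weak equivalences, once one observes that the base change of $\check C(\mathcal{U})$ to a covering member acquires an extra degeneracy, so its image under $L$ is a deformation retract there. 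Granting the two lemmas: if $N(w)$ is a \v{C}ech weak equivalence then $LN(w)\in\mathcal{W}_\infty$, and the naturality square for the counit together with $2$-out-of-$3$ gives $w\in\mathcal{W}_\infty$. Equivalently, $N$ is an equivalence on homotopy categories, and since localizers on $\Dia(\mathcal{S})$ are strongly saturated this forces $\mathcal{W}_{\check{C}}=\mathcal{W}_\infty$.

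I expect the main obstacle to be the base-change compatibility underpinning (L3 left): one must show that forming $N$ of a comma diagram commutes, up to \v{C}ech weak equivalence but not on the nose, with base change of diagrams along a covering morphism $U_{k,i}\to U(k)$, so that the abstract \v{C}ech-descent property of the non-hypercomplete model structure can be brought to bear; controlling this zig-zag, and checking that no hypercompleteness is secretly needed, is the delicate point. A secondary difficulty, in the reverse inclusion, is the bookkeeping of homotopy colimits and cocontinuity of $L$: since $\Dia(\mathcal{S})$ carries no model structure a priori, these must be handled through the localizer axioms and the associated derivator of \cite{Hor15} rather than via a left Quillen functor, and it is exactly this that may force the subdivision $\Ex^2$ into the comparison, just as classically.
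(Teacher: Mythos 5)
Your two-pronged strategy is the paper's strategy, and both prongs are carried out in essentially the same way, so this is largely a matter of packaging. For the inclusion $\mathcal{W}_\infty\subseteq\mathcal{W}_{\check C}$ the paper does not verify (L1)--(L4 left) by hand for $\mathcal{W}_{\check C}$; instead it identifies $\mathcal{W}_{\check C}$ with the class of $\DD_{\mathcal{M}}$-equivalences for the fibered derivator $\DD_{\mathcal{SET}^{\mathcal{S}^{\op}\times\Delta^{\op}}_{loc}}$ (via Proposition~\ref{HOCOLIMNERVE}, which is exactly your ``$N$ computes $p_!p^*$'' statement) and then quotes the general Main Theorem of weak homological descent (Theorem~\ref{SATZWEAKHOMDESCENT}). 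The content is the same: the base-change compatibility you single out as the main obstacle for (L3 left) is axiom (Dloc1 left) for this derivator, and the ``local character'' you invoke is the joint-conservativity lemma, which is \emph{not} a defining property of the \v{C}ech localization but requires the bisimplicial argument with $\widetilde A_{i,j}=A_i\times_S U_j$ given in the paper. For the reverse inclusion the paper uses the Grothendieck construction $\int^{\amalg}Q$ (category of simplices, on cofibrant objects) rather than a genuine left adjoint $L$; your derived description $LN(I,S)\simeq\hocolim_{\sigma\in\Delta/N(I)}(\cdot)$ is that same functor, the counit argument is Proposition~\ref{PROPFWD}, and no $\Ex^2$-type subdivision is needed because only an equivalence of categories with weak equivalences is claimed.

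The one step you compress to the point of hiding most of the work is ``$L$ is cocontinuous, so it suffices to check the \v{C}ech nerve maps.'' Strict cocontinuity does not transport the property of landing in $\mathcal{W}_\infty$ along pushouts and transfinite compositions, since $\Dia(\mathcal{S})$ has no model structure; the paper needs Lemma~\ref{LEMMAPUSHOUTTRANSFINITE} and Propositions~\ref{PROPPUSHOUT}--\ref{PROPTRANSFINITE} (which use the split-projective cofibrancy in an essential way), retract-closure of $\mathcal{W}_\infty$ via the saturation Theorem~\ref{SATZSAT}, and, because the generating trivial cofibrations of the localization are the pushout-products $(\partial\Delta_n\hookrightarrow\Delta_n)\boxplus(U_\bullet\to Y')$ rather than the bare \v{C}ech maps, also Proposition~\ref{PROPCECH} together with the tensoring statements of Proposition~\ref{PROPPROPERTIESLOCALIZERII} and Lemma~\ref{LEMMAFACTORCECH} (to keep the factorization inside $\mathcal{S}^{\amalg,\Delta^{\op}}$). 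Your extra-degeneracy observation for the base change of $\check C(\mathcal{U})$ along $U_0\to X$ is exactly the paper's argument inside Proposition~\ref{PROPCECH}, so the idea is right; you have simply deferred, rather than supplied, the homotopy-colimit bookkeeping that you yourself flag as the secondary difficulty.
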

For the extension of the functor {\em nerve} $N$ to $\Dia(\mathcal{S})$ see Definition~\ref{DEFN}. 

Caution: Axiom (L4 left\nobreak/right) has been stated in \cite{Hor15} in a slightly different form --- the difference does not matter for the {\em smallest} localizer because all that is needed
for the proof is the following: for morphisms $I \rightarrow J$ such that, for all $j \in J$, the diagram $j \times_{/J} I$ is contractible in the sense of the {\em smallest} localizer on $\Dia$, morphisms of the form
$(I, \alpha^* T) \rightarrow (J, T)$ are in $\mathcal{W}$, cf.\@ the discussion in the proof of Theorem~\ref{SATZUNIV}.

Still assuming that $\mathcal{S}$ is small, there is also, generalizing (\ref{equiv1}), an equivalence of categories with weak equivalences: 
\[ (\Dia(\mathcal{S}), \mathcal{W}_{\infty}) \cong (\mathcal{SET}^{\mathcal{S}^{\op} \times \Delta^{\op}}, \mathcal{W}_{loc}) \]
where  $\mathcal{W}_{loc}$ is the class of \v{C}ech weak equivalences, i.e.\@ the class of weak equivalences in the left Bousfield localization of simplicial presheaves at the \v{C}ech covers. 
There exists a further left Bousfield localization of $\mathcal{SET}^{\mathcal{S}^{\op} \times \Delta^{\op}}$ at all hypercovers (modeling hypercomplete $\infty$-stacks) and it would be interesting to investigate how the corresponding localizer on $\Dia(\mathcal{S})$ can be axiomatically described.  

\begin{PAR} For a trivial Grothendieck topology we have (potentially) four categories with weak equivalences
\[ (\mathcal{S}^{\Delta^{\op}}, \mathcal{W})  \quad  (\mathcal{S}^{\amalg, \Delta^{\op}}, \mathcal{W})  \quad (\mathcal{SET}^{\mathcal{S}^{\op} \times \Delta^{\op}}, \mathcal{W})  \quad (\Dia(\mathcal{S}), \mathcal{W}_\infty)  \]
where $\mathcal{S}^{\amalg}$ is the free coproduct completion.  The first three are part of model category structures (not having necessarily all limits and colimits) and will be recalled in section~\ref{SECTMC}. 
For the existence of the structure on $\mathcal{SET}^{\mathcal{S}^{\op} \times \Delta^{\op}}$ (simplicial presheaves) one has to assume that $\mathcal{S}$ is small, for the structure on $\mathcal{S}^{\amalg, \Delta^{\op}}$ that $\mathcal{S}$ has finite limits, 
and for the structure on $\mathcal{S}^{\Delta^{\op}}$ that $\mathcal{S}$ has finite limits, is extensive (and thus is big in general) and every object is a coproduct of $\N$-small objects. 
There is a Quillen adjunction between $\mathcal{S}^{\Delta^{\op}}$ and $\mathcal{S}^{\amalg, \Delta^{\op}}$ (when both exist)  which induces an equivalence between the former  
and the left Bousfield localization of the latter at the (\v{C}ech) covers for the extensive topology. If $\mathcal{S}$ is small we have equivalences
\[ (\mathcal{S}^{\amalg, \Delta^{\op}}, \mathcal{W}) \cong  (\mathcal{SET}^{\mathcal{S}^{\op} \times \Delta^{\op}}, \mathcal{W}) \cong (\Dia(\mathcal{S}), \mathcal{W}_{\infty}) \]
where $\mathcal{W}_{\infty}$ is the smallest localizer for the trivial topology, and if $\mathcal{S}$ is not necessarily small, with finite limits, we have at least an equivalence
\[ (\mathcal{S}^{\amalg, \Delta^{\op}}, \mathcal{W}) \cong (\Dia(\mathcal{S}), \mathcal{W}_{\infty}) \]
where $\mathcal{W}_{\infty}$ is again the smallest localizer for the trivial topology, and if $\mathcal{S}$ is extensive and has the above properties, we have equivalences
\[ (\mathcal{S}^{\Delta^{\op}}, \mathcal{W}) \cong  (\mathcal{S}^{\amalg, \Delta^{\op}}, \mathcal{W}_{loc}) \cong (\Dia(\mathcal{S}), \mathcal{W}_{\infty}) \]
where $\mathcal{W}_{loc}$ is the class of weak equivalences in the left Bousfield localization w.r.t.\@ the extensive topology (which exists regardless of the size of $\mathcal{S}$) and $\mathcal{W}_{\infty}$ is
the smallest localizer for the extensive topology. 
\end{PAR} 

\begin{PAR}Let $\mathcal{S}$ be a small category with finite limits and Grothendieck pretopology. 
As an application we show in Section~\ref{SECTEX} that any nice enough fibered derivator over $\mathcal{S}$ which is
local w.r.t.\@ the Grothendieck pretopology on $\mathcal{S}$ (cf.\@ Definition~\ref{DEFLOCAL}) has a natural extension to
$\mathbb{H}^2(\mathcal{SET}^{\Delta^{\op} \times \mathcal{S}^{\op}}_{loc})$, the homotopy pre-2-derivator of the model category 
$\mathcal{SET}^{\Delta^{\op} \times \mathcal{S}^{\op}}_{loc}$. The main ingredient is the theory of strong (co)homological descent
developed in \cite{Hor15} mainly for this purpose.
This is merely an example --- in a subsequent article \cite{Hor21d} we will come back to this question and will also construct extensions of derivator six-functor-formalisms to higher stacks.
We concentrate on the ``homological case''  and leave the dual ``cohomological case'' (starting with a fibered derivator over $\mathcal{S}^{\op}$) to the reader. 
The precise statement is as follows:
\end{PAR}

\vspace{0.2cm}
{\bf Theorem~\ref{SATZEXDER}.\@}
Let $\DD \rightarrow \SSS$ be an infinite fibered derivator which is local w.r.t.\@ the Grothendieck
pretopology on $\mathcal{S}$ with stable, well-generated fibers. Then there is a natural extension 
\[ \DD'' \rightarrow \mathbb{H}^2(\mathcal{SET}^{\Delta^{\op} \times \mathcal{S}^{\op}}_{loc}) \]
such that the pullback of $\DD''$ along the natural morphism 
\[ \SSS \rightarrow \mathbb{H}^2(\mathcal{SET}^{\Delta^{\op} \times \mathcal{S}^{\op}}_{loc})  \]
is equivalent to $\DD$ and such that for a pair
$(I, S) \in \Dia(\mathcal{S})$ with $I \in \Dia$ and $S \in \mathcal{S}^{I}$
we have an equivalence of derivators: 
\[ \DD''_{N(I, S)} \cong  \DD_{(I, S)}^{\cart}. \]
\vspace{0.2cm}

\section{Presentations of  higher stacks}\label{SECTMC}

We adopt the following conventions: 
\begin{DEF}
A {\bf category with weak equivalences} $(\mathcal{M}, \mathcal{W})$ is a category $\mathcal{M}$ with a subclass $\mathcal{W} \subset \mathrm{Mor}(\mathcal{M})$ of {\em weak equivalences} satisfying 2-out-of-3 and containing all isomorphisms. 

A functor $F: \mathcal{M}_1 \rightarrow \mathcal{M}_2$ is called a {\bf functor of categories with weak equivalences} $(\mathcal{M}_1, \mathcal{W}_1) \rightarrow (\mathcal{M}_2, \mathcal{W}_2)$, if
$F(\mathcal{W}_1) \subseteq \mathcal{W}_2$. It is called an {\bf equivalence of categories with weak equivalences}, if there is a functor of categories with weak equivalences  $G: (\mathcal{M}_2, \mathcal{W}_2) \rightarrow (\mathcal{M}_1, \mathcal{W}_1)$ such that \[F \circ G \cong \id \quad \text{ and } \quad G \circ F \cong \id\] in $\Fun(\mathcal{M}_1, \mathcal{M}_1)[\mathcal{W}_{\mathcal{M}_1}^{-1}]$ (resp.\@ in $\Fun(\mathcal{M}_2, \mathcal{M}_2)[\mathcal{W}_{\mathcal{M}_2}^{-1}]$), where $\mathcal{W}_{\mathcal{M}_1}$ (resp.\@ $\mathcal{W}_{\mathcal{M}_2}$)  is the class of natural transformations which are object-wise weak equivalences .
\end{DEF}

Note that, a priori, equivalent categories with weak equivalences define equivalent $\infty$-categories and equivalent prederivators.  

\begin{DEF}
A {\bf model category} $(\mathcal{M}, \Cof, \Fib, \mathcal{W})$ is a category $\mathcal{M}$ with three subclasses of morphisms such that
\begin{enumerate}
\item $(\mathcal{M}, \mathcal{W})$ is a category with weak equivalences.
\item $(\Cof, \Fib \cap \mathcal{W})$ and $(\Cof \cap \mathcal{W}, \Fib)$ are weak factorization systems (cf.\@ \cite[Definition 3.5]{Hor21b}). 
\end{enumerate}
We do not necessarily assume that $\mathcal{M}$ has all limits and colimits but will always assume that finite limits and all coproducts exist. Furthermore we assume that
push-outs along cofibrations and transfinite compositions of cofibrations exit. 

A {\bf simplicial model category} is in addition simplicially enriched such that a tensoring
\[ \otimes: \mathcal{SET}^{\Delta^{\op}} \times \mathcal{M} \rightarrow \mathcal{M} \]
exists and is left Quillen on all simplicial sets and its right adjoint
\[ \mathcal{HOM}: (\mathcal{SET}^{\Delta^{\op}}_{fin})^{\op} \times \mathcal{M} \rightarrow \mathcal{M} \]
exists on the restriction to finite simplicial sets. 
\end{DEF}

Let $\mathcal{S}$ be a category. Denote by $\mathcal{S}^\amalg$ the free coproduct completion and by $\mathcal{SET}^{\mathcal{S}^{\op}}$ the category of presheaves on  $\mathcal{S}$. There are obvious fully-faithful embeddings 
\[ \mathcal{S} \hookrightarrow \mathcal{S}^\amalg \hookrightarrow \mathcal{SET}^{\mathcal{S}^{\op}}. \]
The embedding $\mathcal{S} \hookrightarrow \mathcal{S}^\amalg$ has a left adjoint if $\mathcal{S}$ has all coproducts while the embedding 
$\mathcal{S} \hookrightarrow \mathcal{SET}^{\mathcal{S}^{\op}}$ has a left adjoint if $\mathcal{S}$ has all colimits. The latter case will not be assumed though at any place in this article. 

\begin{SATZ}\label{SATZSOMODEL}If $\mathcal{S}$ has finite limits,
$\mathcal{S}^{\amalg, \Delta^{\op}}$ is a simplicial model category (with finite limits and all coproducts) in which the fibrations and weak equivalences are 
the morphisms such that $\Hom(S, -)$ is a fibration, respectively a weak equivalence of simplicial sets for all $S \in \mathcal{S}$. The cofibrant objects are those
simplicial objects in which the degeneracies induce a decomposition:
\[ X_n \cong X_{n,nd} \amalg \coprod_{\substack{\Delta_n \twoheadrightarrow \Delta_m \\ n \not= m}} X_{m, nd}. \]
We call the model category structure the {\bf split-projective} structure.
\end{SATZ}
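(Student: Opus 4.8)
The plan is to transport the model structure along the Yoneda-type embedding $\mathcal{S}^{\amalg} \hookrightarrow \mathcal{SET}^{\mathcal{S}^{\op}}$ and the corresponding embedding on simplicial objects $\mathcal{S}^{\amalg, \Delta^{\op}} \hookrightarrow \mathcal{SET}^{\mathcal{S}^{\op} \times \Delta^{\op}}$, using the fact that the target carries the projective (a.k.a.\ Bousfield--Kan) model structure on simplicial presheaves, where fibrations and weak equivalences are detected objectwise after evaluating at each $S \in \mathcal{S}$. The claim that fibrations and weak equivalences in $\mathcal{S}^{\amalg, \Delta^{\op}}$ are exactly the maps $f$ for which $\Hom(S, f)$ is a fibration resp.\ weak equivalence of simplicial sets for all $S \in \mathcal{S}$ is the definition of the transported classes; since this embedding preserves and reflects finite limits and coproducts (which is where the hypothesis that $\mathcal{S}$ has finite limits enters, guaranteeing that $\mathcal{S}^{\amalg}$ has finite limits and that evaluation at $S$ commutes with them), the two weak factorization systems and the $2$-out-of-$3$ axiom for $\mathcal{W}$ can be checked after embedding. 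So the real content is: (i) the weak factorization systems $(\Cof, \Fib \cap \mathcal{W})$ and $(\Cof \cap \mathcal{W}, \Fib)$ exist on $\mathcal{S}^{\amalg, \Delta^{\op}}$ with the stated (co)fibrations, and (ii) the cofibrant objects are the split ones.

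For (i), I would proceed by the standard small-object-style argument adapted to the fact that $\mathcal{S}^{\amalg, \Delta^{\op}}$ is not cocomplete: the generating cofibrations are the maps $r_S \otimes (\partial \Delta_n \hookrightarrow \Delta_n)$ for $S \in \mathcal{S}$, where $r_S$ denotes the representable and $\otimes$ the copower (a coproduct of copies of $r_S$, which exists since all coproducts exist), and the generating trivial cofibrations are $r_S \otimes (\Lambda^k_n \hookrightarrow \Delta_n)$. One checks that a map has the RLP against the first set iff $\Hom(S,-)$ of it is a Kan fibration for all $S$ (by adjunction, $\Hom(S, \mathcal{HOM}(\Delta_n/\partial\Delta_n, X) )$-type manipulations), and against the second set iff it is moreover a trivial fibration. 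The factorizations are produced by transfinite composition of pushouts of coproducts of generating (trivial) cofibrations; the point is that each such pushout stays inside $\mathcal{S}^{\amalg, \Delta^{\op}}$ — this uses that the generators have representable source and target a copower of a representable, so the pushouts are computed levelwise as coproducts in $\mathcal{S}^{\amalg}$, i.e.\ genuine coproducts of objects of $\mathcal{S}$, which exist by assumption. The hypotheses at the end of the model-category definition in the excerpt (pushouts along cofibrations and transfinite compositions of cofibrations exist) are exactly what is needed. The simplicial enrichment, tensoring by all simplicial sets (again a copower, hence a coproduct, which exists), and $\mathcal{HOM}$ on finite simplicial sets (built from finite limits) then follow, and the pushout-product / SM7 axiom is inherited from simplicial sets via the objectwise description.

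For (ii), an object $X$ is cofibrant iff $\emptyset \to X$ is a retract of a transfinite composition of pushouts of generating cofibrations $r_S \otimes (\partial\Delta_n \hookrightarrow \Delta_n)$. Such a cell structure attaches, in each step, a copower of a representable along its boundary; unwinding this, the non-degenerate cells in degree $n$ assemble into an object $X_{n,nd} \in \mathcal{S}^{\amalg}$ and the degeneracy maps induce the asserted splitting $X_n \cong X_{n,nd} \amalg \coprod_{\Delta_n \twoheadrightarrow \Delta_m, n \neq m} X_{m,nd}$ — this is the classical characterization of ``split'' (or Reedy-cofibrant-for-$\Delta^{\op}$) simplicial objects, and I would cite the latching-object description: $X$ is cofibrant iff the latching maps $L_n X \to X_n$ admit the structure of a split monomorphism with complement $X_{n,nd}$, which is automatic in a coproduct-completion since all monos of this shape split. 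Conversely, any split simplicial object in $\mathcal{S}^{\amalg}$ is built by such a cell attachment.

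The main obstacle I anticipate is bookkeeping around non-cocompleteness: one must verify at every stage that the colimits invoked in the small object argument (transfinite compositions, pushouts along generating cofibrations, copowers) genuinely land in $\mathcal{S}^{\amalg, \Delta^{\op}}$ and are preserved by the embedding into simplicial presheaves, rather than casually quoting the cofibrantly-generated machinery which presupposes cocompleteness. This is exactly the kind of care taken in \cite{Hor21} and \cite{Hor21b}, so I would lean on those references for the weak-factorization-system formalism without all limits and colimits, and keep the new argument focused on identifying the generators and the cofibrant objects.
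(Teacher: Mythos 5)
The paper offers no argument here: its ``proof'' is the citation \cite[Theorem~6.1]{Hor21} (with \cite[Theorem~4.9]{Hor21} for the cofibrant objects), and the subsequent Remark records that the structure is obtained by \emph{transporting}, in the sense of \cite[4.4]{Hor21}, the weak factorization system $(\mathcal{L}_{\proj,\spl},\mathcal{R}_{\proj,\spl})$ on $\mathcal{S}^{\amalg}$ to simplicial objects. Your sketch --- representable-sourced generating (trivial) cofibrations, factorizations by a small object argument whose cell attachments are levelwise coproduct inclusions and hence stay inside $\mathcal{S}^{\amalg,\Delta^{\op}}$, and identification of the cofibrant objects with the split simplicial objects via relative latching maps --- is essentially a reconstruction of that cited argument, and the care you announce about non-cocompleteness is exactly the point of the weak-factorization-system formalism in \cite{Hor21}.

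One part of your framing, however, does not survive scrutiny and should be removed rather than repaired. You open by proposing to transport the structure along $\mathcal{S}^{\amalg,\Delta^{\op}}\hookrightarrow \mathcal{SET}^{\mathcal{S}^{\op}\times\Delta^{\op}}$ ``using the fact that the target carries the projective model structure.'' That model structure is only available for \emph{small} $\mathcal{S}$ (Theorem~\ref{SATZSPMODEL}), whereas Theorem~\ref{SATZSOMODEL} assumes only finite limits and is meant to apply to $\mathcal{S}$ of arbitrary size --- this is precisely the reason the $\mathcal{S}^{\amalg,\Delta^{\op}}$ model is introduced. Fortunately nothing in your actual argument needs the ambient model structure: the classes $\Fib$ and $\mathcal{W}$ are \emph{defined} by the condition on $\Hom(S,-)$, 2-out-of-3 is inherited from simplicial sets objectwise, and your factorizations are built internally. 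So the proof should be phrased entirely inside $\mathcal{S}^{\amalg,\Delta^{\op}}$, using only that $\Hom(S,-)$ takes the relevant colimits (coproducts, pushouts along coproduct inclusions, transfinite compositions) to colimits of simplicial sets. A second, smaller point: your assertion that the split latching condition ``is automatic in a coproduct-completion since all monos of this shape split'' is too quick; the correct statement is that the cofibrations are exactly the maps whose relative latching maps lie in $\mathcal{L}_{\proj,\spl}$, i.e.\ are coproduct inclusions $A\to A\amalg B$, and one must check separately that this class is closed under retracts in $\mathcal{S}^{\amalg}$ (which holds by extensivity of the free coproduct completion). With those two adjustments your outline matches the construction the paper relies on.
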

\begin{proof}\cite[Theorem~6.1]{Hor21}. Cf.\@ also \cite[Theorem~4.9]{Hor21} for the description of cofibrant objects.
\end{proof}

\begin{BEM}
In the language of \cite{Hor21} the model structure on $\mathcal{S}^{\amalg, \Delta^{\op}}$ is transported in the sense of \cite[4.4]{Hor21}
from the weak factorization system $(\mathcal{L}_{\proj,\spl}, \mathcal{R}_{\proj, \spl})$ on $\mathcal{S}^{\amalg}$ (cf.\@ \cite[3.11]{Hor21}).
\end{BEM}

\begin{SATZ}\label{SATZSPMODEL}
If $\mathcal{S}$ is small,
$\mathcal{SET}^{\mathcal{S}^{\op} \times \Delta^{\op}}$ is a simplicial model category (with all limits and colimits) in which the fibrations and weak equivalences are 
the morphisms of presheaves such that $\Hom(h_S, -)$ is a fibration, respectively a weak equivalence of simplicial sets for all $S \in \mathcal{S}$. 
The cofibrations are those morphisms $X \rightarrow Y$ such that the morphism 
\[ L_n Y \amalg_{L_n X} X_n \rightarrow Y_n  \]
in which $L_n$ denotes the latching object functor of the Reedy structure on $\Delta^{\op}$, is of the form $A \rightarrow \coprod B_i$ in which the $B_i$ are retracts of representables (i.e.\@ themselves representable if $\mathcal{S}$ has finite limits). 
The model category is cofibrantly generated and left and right proper. 
\end{SATZ}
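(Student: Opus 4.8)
The plan is to establish this as an instance of a general ``transported model structure'' on presheaf categories, analogous to Theorem~\ref{SATZSOMODEL} but now with the whole presheaf category $\mathcal{SET}^{\mathcal{S}^{\op}}$ (where all limits and colimits exist) playing the role that $\mathcal{S}^{\amalg}$ played before. Concretely, I would first recall the injective/projective-type model structure on $\mathcal{SET}^{\mathcal{S}^{\op}}$ viewed as diagrams of sets: since $\mathcal{S}$ is small, $\mathcal{SET}^{\mathcal{S}^{\op}}$ is a presheaf topos, hence locally presentable, and one has the standard cofibrantly generated weak factorization systems. Then $\mathcal{SET}^{\mathcal{S}^{\op} \times \Delta^{\op}} = (\mathcal{SET}^{\mathcal{S}^{\op}})^{\Delta^{\op}}$ is the category of simplicial objects in a locally presentable category, and the projective/Reedy machinery of \cite{Hor21} (the transport construction of \cite[4.4]{Hor21}) applies verbatim to produce a simplicial model structure whose fibrations and weak equivalences are detected objectwise in $\mathcal{S}$, i.e.\@ after applying $\Hom(h_S,-)$ for every $S$. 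This is exactly the local-projective / Reedy-cofibrant structure, and it is the one whose weak equivalences are the sectionwise weak equivalences of simplicial presheaves.

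The key steps, in order: (1) verify that $\mathcal{SET}^{\mathcal{S}^{\op}}$ carries the requisite weak factorization system so that the transport of \cite[4.4]{Hor21} may be invoked — here one uses that the representables $h_S$ together with their retracts generate, and that $\mathcal{SET}^{\mathcal{S}^{\op}}$ is cocomplete; (2) apply the transport/Reedy theorem of \cite{Hor21} (the same result cited for Theorem~\ref{SATZSOMODEL}, now with $\mathcal{S}$ replaced by $\mathcal{S}^{\op}$-presheaves) to obtain the three classes of morphisms and the factorization axioms; (3) identify the cofibrations: a map $X \to Y$ is a cofibration iff for all $n$ the relative latching map $L_n Y \amalg_{L_n X} X_n \to Y_n$ lies in the left class of the weak factorization system on $\mathcal{SET}^{\mathcal{S}^{\op}}$, and then show that left class consists precisely of the maps $A \to \coprod B_i$ with the $B_i$ retracts of representables — this is the presheaf analogue of the ``split'' description in \cite[3.11, 4.9]{Hor21}, using that every presheaf is a colimit of representables; (4) check the simplicial enrichment, tensoring and cotensoring (over finite simplicial sets) — this is formal since $\mathcal{SET}^{\mathcal{S}^{\op}}$ is cartesian closed and complete/cocomplete, and the tensor $K \otimes X$ is computed sectionwise; (5) cofibrant generation follows from cofibrant generation of the wfs on $\mathcal{SET}^{\mathcal{S}^{\op}}$ together with the small object argument in the locally presentable category of simplicial presheaves; (6) left and right properness: right properness because fibrations and weak equivalences are detected sectionwise and the structure on $\mathcal{SET}^{\Delta^{\op}}$ is right proper, and left properness because all objects whose latching maps are monomorphisms of representables-coproducts are cofibrant-enough, or more simply because the structure is Quillen equivalent to a left proper one; alternatively one cites that sectionwise structures inherit properness from $\mathcal{SET}^{\Delta^{\op}}$.

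I expect the main obstacle to be step~(3), the explicit identification of cofibrations. The transport theorem gives cofibrations abstractly as maps with relative-latching maps in the left class $\mathcal{L}$ of the chosen wfs on $\mathcal{SET}^{\mathcal{S}^{\op}}$, but one must show that $\mathcal{L}$ equals the class of ``inclusions of a coproduct of retracts of representables''. This requires care: in $\mathcal{S}^{\amalg}$ the analogous statement is nearly tautological (objects \emph{are} coproducts), whereas in $\mathcal{SET}^{\mathcal{S}^{\op}}$ one must argue that the cofibrations generated by $\{ \emptyset \to h_S \}$ (and the boundary inclusions) are retracts of such maps, which uses the small object argument together with the fact that a pushout of $\coprod h_{S_i} \to \coprod h_{S_i}$-type maps along arbitrary maps stays in the class up to retract. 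The subtlety ``retracts of representables, i.e.\@ themselves representable if $\mathcal{S}$ has finite limits'' should follow from idempotent-completeness of $\mathcal{S}$ under finite limits (split idempotents are absolute (co)limits). A secondary, lesser obstacle is verifying the finite-simplicial-set cotensoring interacts correctly with the Reedy latching description, but this is routine once the enrichment is set up. Everything else is a direct transcription of the arguments in \cite{Hor21} with $\mathcal{S}^{\amalg}$ replaced by the (larger, genuinely complete and cocomplete) presheaf category.
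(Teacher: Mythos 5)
Your proposal follows the same route as the paper: the paper simply cites the well-known existence of the projective model structure on simplicial presheaves together with \cite[Theorem~4.9]{Hor21} for the cofibration description, and notes (in the subsequent remark) that the structure is exactly the one transported in the sense of \cite[4.4]{Hor21} from the weak factorization system $(\mathcal{L}_{\proj}, \mathcal{R}_{\proj})$ on $\mathcal{SET}^{\mathcal{S}^{\op}}$ --- which is precisely your steps (1)--(3). The obstacles you flag (identifying the left class with coproduct-inclusions of retracts of representables, and the idempotent-splitting remark) are the content of the cited results in \cite{Hor21}, so your outline is a correct expansion of the paper's citation-level proof.
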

See e.g.\@ \cite[Theorem~4.9]{Hor21} for the description of cofibrations. This is called the {\bf projective} model structure on simplicial presheaves. 

\begin{BEM}
In the language of \cite{Hor21} the model structure on $\mathcal{SET}^{\mathcal{S}^{\op} \times \Delta^{\op}}$ is transported in the sense of \cite[4.4]{Hor21}
from the weak factorization system $(\mathcal{L}_{\proj}, \mathcal{R}_{\proj})$ on $\mathcal{SET}^{\mathcal{S}^{\op}}$ \cite[3.11]{Hor21}.
\end{BEM}

\begin{PAR}\label{PARR}
There is a functor induced by the Yoneda embedding
\[ \xymatrix{ \mathcal{S}^{\amalg, \Delta^{\op}}  \ar[rr]^-R & &   \mathcal{SET}^{\mathcal{S}^{\op} \times \Delta^{\op}}} \]
more precisely, defined by 
\[ R(\{U_i\}_i) (S) := \prod_i \Hom(S, U_i).  \]
\end{PAR}

\begin{PROP}\label{PROPSMALLS}Let $\mathcal{S}$ be a small category with finite limits, and let $R$ be the functor defined in \ref{PARR}. 
\begin{enumerate}
\item 
 $R$ preserves cofibrations, fibrations and weak equivalences. 
\item 
 $R$ induces an equivalence of categories with weak equivalences
\[ \xymatrix{ (\mathcal{S}^{\amalg, \Delta^{\op}}, \mathcal{W}) \ar[r]^-\sim &   (\mathcal{SET}^{\mathcal{S}^{\op} \times \Delta^{\op}}, \mathcal{W}) } \]
\end{enumerate}
\end{PROP}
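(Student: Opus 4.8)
The plan is to exploit that $R$ restricts on $\mathcal S$ to the Yoneda embedding and commutes with coproducts, so that ``mapping out of an object $S\in\mathcal S$'' on the left corresponds to ``evaluating at $S$'' on the right. Concretely, degreewise $R(\{U_i\}_i)(S)\cong\coprod_i\Hom(S,U_i)=\Hom_{\mathcal S^\amalg}(S,\{U_i\}_i)$, so for $X\in\mathcal S^{\amalg,\Delta^{\op}}$ and $S\in\mathcal S$ one gets a natural isomorphism of simplicial sets
\[ \Hom(h_S,R(X))=R(X)(S)\cong\Hom_{\mathcal S^{\amalg,\Delta^{\op}}}(S,X), \]
the last $\Hom$ denoting the degreewise hom-simplicial-set. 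By Theorems~\ref{SATZSOMODEL} and~\ref{SATZSPMODEL} a morphism of $\mathcal S^{\amalg,\Delta^{\op}}$ (resp.\ of $\mathcal{SET}^{\mathcal S^{\op}\times\Delta^{\op}}$) is a fibration, resp.\ a weak equivalence, exactly when all the maps $\Hom(S,-)$ (resp.\ $\Hom(h_S,-)={-}(S)$), $S\in\mathcal S$, are Kan fibrations, resp.\ weak equivalences; so the displayed isomorphism shows at once that $R$ \emph{preserves and reflects} fibrations and weak equivalences.

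For cofibrations I would use the transport descriptions from~\cite{Hor21}. Both structures are transported from a weak factorization system on the underlying category ($\mathcal S^\amalg$, resp.\ $\mathcal{SET}^{\mathcal S^{\op}}$), and the functor $\mathcal S^\amalg\to\mathcal{SET}^{\mathcal S^{\op}}$ underlying $R$ sends representables to representables, coproduct inclusions to coproduct inclusions, and — since $\mathcal S$, hence $\mathcal S^\amalg$, has finite limits and therefore split idempotents — retracts to retracts; hence it carries $\mathcal L_{\proj,\spl}$ into $\mathcal L_{\proj}$. Moreover, for a cofibration in $\mathcal S^{\amalg,\Delta^{\op}}$ the latching and relative latching objects appearing in the criterion of Theorem~\ref{SATZSOMODEL} are formed by coproducts only (the degeneracies split off summands), so $R$ commutes with their formation. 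Applying $R$ degreewise therefore turns the relative latching maps of a cofibration, which are split inclusions of coproducts of representables, into maps of the same shape, i.e.\ into a cofibration in the sense of Theorem~\ref{SATZSPMODEL}. This proves~(1).

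For~(2) the two structural inputs are: $R$ is \emph{fully faithful} on $\mathcal S^{\amalg,\Delta^{\op}}$ (degreewise this is the usual full faithfulness of $\mathcal S^\amalg\hookrightarrow\mathcal{SET}^{\mathcal S^{\op}}$, and full faithfulness is inherited by categories of simplicial objects), and its essential image is precisely the class of simplicial presheaves that are degreewise coproducts of representables — full faithfulness lets one lift the simplicial structure maps back uniquely to $\mathcal S^\amalg$. By~(1) together with the cofibration criterion of Theorem~\ref{SATZSPMODEL} (each $X_n$ and each $L_nX$ is a coproduct of retracts of representables, hence, $\mathcal S$ having finite limits, of representables) every cofibrant object of $\mathcal{SET}^{\mathcal S^{\op}\times\Delta^{\op}}$ lies in that essential image. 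Now pick a functorial cofibrant replacement $Q$ on $\mathcal{SET}^{\mathcal S^{\op}\times\Delta^{\op}}$ (it is cofibrantly generated) and put $G:=R^{-1}\circ Q$, which is well defined since $QZ$ always lies in the essential image of the fully faithful $R$; $G$ preserves weak equivalences because $Q$ does and $R$ reflects them. The replacement transformation $QZ\to Z$ is a natural object-wise weak equivalence, so $R\circ G\cong Q\cong\id$ in $\Fun(\mathcal{SET}^{\mathcal S^{\op}\times\Delta^{\op}},\mathcal{SET}^{\mathcal S^{\op}\times\Delta^{\op}})[\mathcal W^{-1}]$; and applying $R$ to the natural map $G(RY)=R^{-1}(Q(RY))\to Y$ recovers $Q(RY)\to RY$, again a natural object-wise weak equivalence, whence by reflection $G\circ R\cong\id$ in the corresponding localized functor category. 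By the definition of equivalence of categories with weak equivalences, $R$ is such an equivalence.

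The real work is the identification of the essential image of $R$ and the verification that it contains all cofibrant objects: this is where one must unwind the two (somewhat elaborate) transported model structures of~\cite{Hor21}, keep track of latching objects, and use that retracts of representables are representable when $\mathcal S$ has finite limits. The rest — preservation and reflection of fibrations and weak equivalences, full faithfulness, and the formal argument in the localized functor categories — is soft.
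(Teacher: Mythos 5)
Your proposal is correct in substance. Note, though, that the paper offers no argument of its own here: the proof is the single citation \cite[Proposition~6.6]{Hor21}, so there is nothing in-paper to compare your route against; what you have written is a plausible reconstruction of what that reference must do. Two remarks. First, you tacitly replace the $\prod_i$ in the displayed formula of \ref{PARR} by $\coprod_i$, so that $R(\{U_i\})(S)=\Hom_{\mathcal{S}^{\amalg}}(S,\{U_i\})$; this is surely the intended reading (otherwise $R$ is neither ``induced by the Yoneda embedding'' nor fully faithful, and the proposition fails), and everything downstream --- preservation and reflection of fibrations and weak equivalences via $\Hom(h_S,R(X))\cong\Hom_{\mathcal{S}^{\amalg}}(S,X)$, full faithfulness, the identification of the essential image with degreewise coproducts of representables, and the cofibrant-replacement inverse $G=R^{-1}\circ Q$ --- is sound and matches the paper's definition of equivalence of categories with weak equivalences. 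Second, the one thin spot is the cofibration clause of (1): Theorem~\ref{SATZSOMODEL} only describes cofibrant \emph{objects}, and your claim that $R$ commutes with the formation of relative latching objects needs care when the domain of the cofibration is not cofibrant, since $L_nX$ is a priori a connected colimit (not just a coproduct) and $\mathcal{S}^{\amalg}\hookrightarrow\mathcal{SET}^{\mathcal{S}^{\op}}$ is only guaranteed to preserve coproducts. A cleaner route, available because $\mathcal{S}$ is small, is to check preservation on the generating cofibrations $\partial\Delta_n\otimes S\to\Delta_n\otimes S$ and observe that $R$ preserves the pushouts, transfinite compositions and retracts by which general cofibrations are built (these pushouts being degreewise of the form $A\to A\amalg B$). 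With that repair the argument is complete.
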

\begin{proof}\cite[Proposition~6.6]{Hor21}.
\end{proof}

Sometimes it is convenient to work with simplicial diagrams in $\mathcal{S}$ rather than in $\mathcal{S}^\amalg$: 

\begin{SATZ}\label{SATZEXTENSIVEMODELCAT}
Let $\mathcal{S}$ be an extensive category with finite limits such that every object is a coproduct of $\N$-small objects. 
Then $\mathcal{S}^{\Delta^{\op}}$ is a simplicial model category (with finite limits and all coproducts) in which the fibrations and weak equivalences are 
the morphisms such that $\Hom(S, -)$ is a fibration of simplicial sets for all $S \in \mathcal{S}$, respectively a weak equivalence of simplicial sets for all $\N$-small $S \in \mathcal{S}$. 
The cofibrant objects are those simplicial objects in which the degeneracies induce a decomposition
\[ X_n \cong X_{n,nd} \amalg \coprod_{\substack{\Delta_n \twoheadrightarrow \Delta_m \\ n \not= m}} X_{m, nd}. \]
We call the model category structure the {\bf split-projective} structure.
Furthermore, the left Bousfield localization $\mathcal{S}^{\amalg, \Delta^{\op}}_{loc}$  of $\mathcal{S}^{\amalg, \Delta^{\op}}$ at those morphisms that become weak equivalences in $\mathcal{S}^{\Delta^{\op}}$ exists, and we have
Quillen adjunctions
\[ \xymatrix{ \mathcal{S}^{\Delta^{\op}} \ar@<3pt>[rr]^-R & & \ar@<3pt>[ll]^-{\amalg} \mathcal{S}^{\amalg, \Delta^{\op}}_{loc} 
\ar@<3pt>[rr]^-{\id} & & \ar@<3pt>[ll]^-{\id} \mathcal{S}^{\amalg, \Delta^{\op}} } \]
The left hand side adjunction is a Quillen equivalence. 
\end{SATZ}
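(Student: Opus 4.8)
The plan is to establish Theorem~\ref{SATZEXTENSIVEMODELCAT} in three stages: first the split-projective model structure on $\mathcal{S}^{\Delta^{\op}}$, then the existence of the Bousfield localization $\mathcal{S}^{\amalg, \Delta^{\op}}_{loc}$, and finally the Quillen adjunctions and the claim that the left one is a Quillen equivalence. For the first stage, I would follow exactly the template of Theorem~\ref{SATZSOMODEL}: the model structure should be \emph{transported} (in the sense of \cite[4.4]{Hor21}) from a weak factorization system $(\mathcal{L}_{\proj,\spl}, \mathcal{R}_{\proj,\spl})$ on $\mathcal{S}$ itself, rather than on $\mathcal{S}^{\amalg}$. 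The key input is that $\mathcal{S}$ is extensive with finite limits and every object is a coproduct of $\N$-small objects, which is precisely what is needed so that the relevant lifting/factorization arguments of \cite{Hor21} go through inside $\mathcal{S}$: extensivity lets one detect fibrations and weak equivalences via $\Hom(S,-)$ (weak equivalences only on $\N$-small $S$, since these generate under the coproduct decomposition), and $\N$-smallness guarantees that the small object argument terminates. The description of cofibrant objects via the splitting $X_n \cong X_{n,nd} \amalg \coprod X_{m,nd}$ is then identical to the $\mathcal{S}^{\amalg}$ case and follows from \cite[Theorem~4.9]{Hor21} verbatim, since that argument only uses the Reedy/latching combinatorics together with the weak factorization system.

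For the second stage, the existence of the left Bousfield localization $\mathcal{S}^{\amalg, \Delta^{\op}}_{loc}$ at the morphisms becoming weak equivalences in $\mathcal{S}^{\Delta^{\op}}$: here I would identify this localizing class concretely. The embedding $\mathcal{S} \hookrightarrow \mathcal{S}^{\amalg}$ sends a coproduct $\coprod_i U_i$ (computed in $\mathcal{S}$) to an object receiving a canonical map from $\{U_i\}_i$; the class to localize at is generated by these canonical maps $\{U_i\}_i \to \coprod_i U_i$ (more precisely their images under the standard cosimplicial resolution), i.e.\@ the \v{C}ech covers for the extensive topology. Since $\mathcal{S}^{\amalg, \Delta^{\op}}$ is cofibrantly generated — in fact one can arrange it to be combinatorial or at least that Smith's/Bousfield's localization theorem applies, using left properness which holds because the model structure is transported from a weak factorization system on a category with finite limits — the localization at a \emph{set} of morphisms exists by the standard machinery (Hirschhorn). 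One must check that the localizing set can be taken small: this is where one uses that every object is a coproduct of $\N$-small objects, so it suffices to localize at covers of $\N$-small objects, which form a set up to the relevant equivalence.

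For the third stage, the Quillen adjunctions are mostly formal. The adjunction $(\id, \id): \mathcal{S}^{\amalg,\Delta^{\op}}_{loc} \rightleftarrows \mathcal{S}^{\amalg,\Delta^{\op}}$ is Quillen by the definition of Bousfield localization. For $(R, \amalg): \mathcal{S}^{\Delta^{\op}} \rightleftarrows \mathcal{S}^{\amalg,\Delta^{\op}}_{loc}$: the functor $\amalg$ (the left adjoint of the Yoneda-type embedding $\mathcal{S} \hookrightarrow \mathcal{S}^{\amalg}$, applied levelwise) exists because $\mathcal{S}$ has all coproducts by hypothesis (coproducts of $\N$-small objects), and $R$ is its right adjoint. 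One checks $\amalg$ is left Quillen for the \emph{localized} structure: it sends the generating cofibrations of $\mathcal{S}^{\amalg,\Delta^{\op}}$ to cofibrations of $\mathcal{S}^{\Delta^{\op}}$ (a retract-of-representables computation, using that $\amalg$ takes the formal coproduct $\{U_i\}$ to the genuine coproduct), and it sends the localizing maps $\{U_i\} \to \coprod U_i$ to \emph{isomorphisms} in $\mathcal{S}^{\Delta^{\op}}$, hence to weak equivalences — so $\amalg$ factors through the localization and remains left Quillen. That $R$ preserves (localized) weak equivalences and fibrations follows as in Proposition~\ref{PROPSMALLS}(1) once one notes a map in $\mathcal{S}^{\Delta^{\op}}$ is a weak equivalence iff $\Hom(S,-)$ is for $\N$-small $S$, and $R$ is by construction the functor $\{U_i\} \mapsto (S \mapsto \prod_i \Hom(S,U_i))$ composed appropriately.

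The main obstacle is proving the left-hand adjunction is a \textbf{Quillen equivalence}: one must show that for cofibrant $X \in \mathcal{S}^{\Delta^{\op}}$ — but it is cleaner to argue at the level of homotopy categories — the derived unit $X \to R(\amalg X)^{cof}$ is a weak equivalence in $\mathcal{S}^{\Delta^{\op}}$, and the derived counit is a weak equivalence in the localized structure. The counit statement is essentially the assertion that localizing $\mathcal{S}^{\amalg,\Delta^{\op}}$ at the extensive \v{C}ech covers precisely kills the difference between formal and genuine coproducts, i.e.\@ that $\mathcal{S}^{\amalg}$ with these covers has the same $\infty$-topos of sheaves as $\mathcal{S}$ with the extensive topology — this is a descent statement for the extensive topology. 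I would reduce it to: (a) every object of $\mathcal{S}^{\amalg}$ is, after localization, equivalent to its image of the genuine coproduct in $\mathcal{S}$ (immediate from the localizing maps being inverted), and (b) $R$ is homotopically fully faithful on the genuinely-coproduct objects, which reduces via the splitting of cofibrant objects and levelwise analysis to the statement that for $S, T \in \mathcal{S}$ with $T$ a coproduct of $\N$-small pieces, $\Hom_{\mathcal{S}^\amalg}(S, T) \to \Hom_{\mathcal{S}}(S, T)$ is a bijection — which is exactly the extensivity axiom (maps out of a connected/$\N$-small object into a coproduct factor through a unique summand). Assembling (a) and (b) with the explicit cofibrant replacements from the split-projective structure gives the Quillen equivalence. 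I expect the bookkeeping around ``$\N$-small'' versus ``connected'' and making the localizing set genuinely small to be the fiddly part, but no new idea beyond extensivity is required.
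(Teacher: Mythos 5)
The paper itself offers no argument here: the proof is the single citation \cite[Proposition~6.5]{Hor21}, and the remark following the theorem confirms that the split-projective structure on $\mathcal{S}^{\Delta^{\op}}$ is transported from the weak factorization system $(\mathcal{L}_{\proj,\spl},\mathcal{R}_{\proj,\spl})$ on $\mathcal{S}$ — exactly your first stage. Your third stage is also sound: $\amalg\dashv R$, $\amalg$ sends the formal-to-genuine coproduct maps to isomorphisms, and the Quillen equivalence reduces, via the explicit cofibrant objects, to extensivity (maps from a connected object into a coproduct factor through a unique summand). So the overall architecture is right.

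The genuine gap is in your second stage. You propose to get the left Bousfield localization $\mathcal{S}^{\amalg,\Delta^{\op}}_{loc}$ from ``the standard machinery (Hirschhorn)'' after arranging the category to be ``combinatorial or at least that Smith's/Bousfield's localization theorem applies.'' This cannot work in the stated generality: $\mathcal{S}$ is explicitly allowed to be a \emph{big} category (extensive with every object a coproduct of $\N$-small objects forces this in the examples of interest), so $\mathcal{S}^{\amalg,\Delta^{\op}}$ is not locally presentable, and moreover the paper's model categories are only assumed to have finite limits and coproducts — not all limits and colimits — so neither the cellular nor the combinatorial existence theorem is available. The introduction even stresses that this localization ``exists regardless of the size of $\mathcal{S}$,'' which signals that the standard existence theorems are deliberately avoided. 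The repair is to construct the localized structure directly through the adjunction you already have: declare $f$ to be a local weak equivalence iff $\amalg Qf$ is a weak equivalence in $\mathcal{S}^{\Delta^{\op}}$ (equivalently, iff $\Hom(S,Qf)$ is a weak equivalence for all $\N$-small $S\in\mathcal{S}$), keep the cofibrations, define fibrations by lifting, and verify the factorization axioms by hand using the small object argument on the explicit set of generating (trivial) cofibrations with $\N$-small sources — this is where the $\N$-smallness hypothesis actually earns its keep, not merely in cutting the localizing class down to a set. With that substitution your outline matches what \cite{Hor21} must do.
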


\begin{proof}
\cite[Proposition~6.5]{Hor21}.
\end{proof}

\begin{BEM}
In the language of \cite{Hor21} the model structure on $\mathcal{S}^{\Delta^{\op}}$ is transported in the sense of \cite[4.4]{Hor21}
from the weak factorization system $(\mathcal{L}_{\proj,\spl}, \mathcal{R}_{\proj,\spl})$ on $\mathcal{S}$ \cite[3.11]{Hor21}.
\end{BEM}

\begin{PAR}
Let $\mathcal{S}$ be a category with Grothendieck pretopology. 
We briefly recall the \v{C}ech localization of a model category of simplicial presheaves on $\mathcal{S}$. For a covering $\{U^{(i)} \rightarrow X\}$ the {\bf \v{C}ech cover} $U_\bullet \rightarrow X$ is a morphism in $\mathcal{S}^{\amalg, \Delta^{\op}}$ defined by 
\[ U_n := \underbrace{U \times_X \cdots \times_X U}_{n-\text{times}} \]
where $U:=\coprod_i U^{(i)}$ in $\mathcal{S}^{\amalg}$. The existence of the fiber product is implied by the axioms of a pretopology. 
Its image in $\mathcal{SET}^{\mathcal{S}^{\op} \times \Delta^{\op}}$ will also be called a \v{C}ech cover. 
\end{PAR}

\begin{PAR}
Consider the category $\mathcal{M}:=\mathcal{SET}^{\mathcal{S}^{\op} \times \Delta^{\op}}$ of simplicial presheaves on $\mathcal{S}$ equipped with the projective model category structure from Theorem~\ref{SATZSPMODEL}. Consider the set $S$ of \v{C}ech covers in $\mathcal{SET}^{\mathcal{S}^{\op} \times \Delta^{\op}}$. An object $Z$ in $\mathcal{M}$  is called {\bf $S$-local}, if for every morphism $f: X \rightarrow Y$ in $\mathcal{M}$ the induced morphism of mapping spaces\footnote{$\mathrm{Map}(X, Y):= [\Hom(X', Y')]$ where $X'$ is a cofibrant replacement of $X$, and $Y'$ is a fibrant replacement of $Y$. }
\[ \mathrm{Map}(Y, Z) \rightarrow \mathrm{Map}(X, Z)  \] 
is a homotopy equivalence. A morphism $f: X \rightarrow Y$ in $\mathcal{M}$ is called an {\bf $S$-equivalence}, if for every $S$-local object $Z$ the morphism
\[ \mathrm{Map}(Y, Z) \rightarrow \mathrm{Map}(X, Z)  \]
is a homotopy equivalence.

A model category structure  $\mathcal{M}_{loc}$ on the same underlying category, in which the cofibrations are the same, and the weak equivalences are the $S$-equivalences is called a left Bousfield localization at $S$. 
\end{PAR}

\begin{SATZ}\label{SATZBLEXACT}
Let $\mathcal{S}$ be a small category with Grothendieck pretopology. 
Then the left Bousfield localization $(\mathcal{SET}^{\mathcal{S}^{\op} \times \Delta^{\op}}, \Cof, \Fib_{loc}, \mathcal{W}_{loc})$ of 
$(\mathcal{SET}^{\mathcal{S}^{\op} \times \Delta^{\op}}, \Cof, \Fib, \mathcal{W})$ at the \v{C}ech covers exists and 
is a left and right proper simplicial model category (with all limits and colimits).

We have a Quillen adjunction
\[ \xymatrix{ \mathcal{SET}_{loc}^{\mathcal{S}^{\op} \times \Delta^{\op}} \ar@<5pt>[rr]^{R} & & \ar@<5pt>[ll]^{L} \mathcal{SET}^{\mathcal{S}^{\op} \times \Delta^{\op}}   } \]
such that the underlying functors for $L$ and $R$ are the identity. In this case the localization is {\bf exact} in the sense that the left adjoint $L$ also commutes with homotopically finite homotopy limits.
\end{SATZ}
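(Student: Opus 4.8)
The plan is to split the statement into two parts: existence of the left Bousfield localization with the claimed properties, and the exactness assertion about $L$ commuting with homotopically finite homotopy limits.

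For the existence part, I would invoke the standard machinery of left Bousfield localization at a \emph{set} of morphisms in a left proper, cofibrantly generated (indeed combinatorial) simplicial model category; here $\mathcal{SET}^{\mathcal{S}^{\op} \times \Delta^{\op}}$ with the projective structure from Theorem~\ref{SATZSPMODEL} qualifies, since $\mathcal{S}$ is small, and the \v{C}ech covers form a set $S$ because they are indexed by coverings in the small site $\mathcal{S}$. This yields a left proper combinatorial simplicial model structure $\mathcal{M}_{loc}$ with the same cofibrations, weak equivalences the $S$-equivalences, and fibrant objects the $S$-local fibrant objects. The adjunction with identity underlying functors is then formal: the identity $\mathcal{M}_{loc} \to \mathcal{M}$ preserves cofibrations and, since every $\mathcal{W}$-equivalence is an $S$-equivalence (an honest weak equivalence induces a homotopy equivalence on all mapping spaces), it is a left Quillen functor; its right adjoint is the identity $\mathcal{M} \to \mathcal{M}_{loc}$. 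Right properness of $\mathcal{M}_{loc}$ does not come for free from the general localization theory and is the first genuine point to address: I would deduce it from the fact that the $S$-local objects (equivalently, the fibrant objects of $\mathcal{M}_{loc}$) are closed under the relevant constructions, using that the \v{C}ech covers are morphisms of \emph{representables} — more precisely, that an $S$-local fibrant object is exactly a simplicial presheaf satisfying \v{C}ech descent, and descent is a limit condition stable under base change; this is the classical argument (due to Dugger--Hollander--Isaksen) that the \v{C}ech-local projective model structure on simplicial presheaves over a small site is right proper.

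The exactness claim is the heart of the theorem and the main obstacle. Here I need to show that $L\colon \mathcal{SET}^{\mathcal{S}^{\op}\times\Delta^{\op}}_{loc} \to \mathcal{SET}^{\mathcal{S}^{\op}\times\Delta^{\op}}$ — which on underlying categories is the identity but as a derived functor is ``$S$-localization'' composed with the inclusion of the original model structure — commutes with homotopically finite homotopy limits (finite homotopy products and homotopy pullbacks along maps between homotopically finite objects, or equivalently finite homotopy limits over finite diagrams of bounded-below objects). The strategy is: first reduce to homotopy pullbacks and finite homotopy products by the usual decomposition of finite limits; then, since the cofibrations and the \emph{fibrant} objects after further fibrant replacement agree suitably, express the statement as: the $S$-localization functor $\widehat{(-)}\colon \mathcal{M}\to\mathcal{M}$ (derived fibrant replacement in $\mathcal{M}_{loc}$, i.e. \v{C}ech-sheafification up to homotopy) preserves homotopy pullbacks along maps of homotopically finite objects. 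The key input is that $S$-sheafification can be computed, because the covers generating $S$ are \v{C}ech covers of representables and $\mathcal{S}$ has a Grothendieck pretopology with finite limits on representables, by a filtered/transfinite process of ``\v{C}ech refinement'' whose stages are built from finite homotopy limits (the $n$-th matching object / coskeleton of a \v{C}ech nerve is a finite limit), and filtered colimits of simplicial sets commute with finite homotopy limits. I would make this precise by the Dugger--Hollander--Isaksen characterization of the \v{C}ech-local model structure as the Bousfield localization whose fibrant objects satisfy descent, together with the observation that the fibrant-replacement can be iterated through the small-object argument in a way that only involves pushouts along the generating \v{C}ech cofibrations and transfinite composition, each such stage being exact for homotopically finite homotopy limits because the \v{C}ech nerves are levelwise finite limits of representables; then a transfinite-induction/commutation-of-filtered-colimits argument finishes it. An alternative, cleaner route I would consider is to pass to $\infty$-categorical language: $\mathcal{M}_{loc}$ presents the $\infty$-topos $\mathrm{Sh}^{\wedge}_{\mathrm{\check{C}ech}}(\mathcal{S})$ of (non-hypercomplete) sheaves, the functor $L$ corresponds at the level of $\infty$-categories to the inclusion $\mathrm{Sh} \hookrightarrow \mathrm{PSh}$, and its left adjoint, sheafification, is left exact by definition of a (left exact) localization of an $\infty$-topos — hence in particular commutes with all finite limits, a fortiori homotopically finite ones. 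I expect the cleanest write-up to quote this $\infty$-topos fact (sheafification is left exact) and then translate it back into the statement that the model-categorical $L$ commutes with homotopically finite homotopy limits, with the only delicate point being to check that ``homotopically finite homotopy limit'' on the model-category side corresponds to ``finite limit'' on the $\infty$-category side, which is standard. The main obstacle, in either approach, is organizing the bookkeeping so that the boundedness hypothesis (``homotopically finite'') is used exactly where filtered colimits need to be exchanged with limits — without it the statement is false, since unbounded homotopy limits do not commute with sheafification.
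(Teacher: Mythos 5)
For calibration: the paper's own ``proof'' of this theorem is a two-line citation --- existence of the localization is deferred to \cite{Hir03} and the exactness to \cite[Corollary 3.18]{Hor21b} --- so you are supplying an argument where the paper defers entirely to references. Your existence part is correct and is the standard route (left Bousfield localization at a \emph{set} of maps in a left proper combinatorial simplicial model category); note, though, that right properness of $\mathcal{M}_{loc}$ can be obtained more cheaply as a \emph{consequence} of exactness together with right properness of the projective structure (a pullback along a local fibration is in particular a global homotopy pullback, hence by exactness a local homotopy pullback), rather than via a separate descent argument.

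The genuine gap is in the exactness claim, which is the entire content of the theorem. Your ``clean'' route is circular as written: you assert that sheafification ``is left exact by definition of a (left exact) localization of an $\infty$-topos'', but that the \v{C}ech localization \emph{is} a left exact localization is precisely what must be proven. The missing ingredient is the identification of the \v{C}ech localization as a topological localization in the sense of \cite{Lur09}: the geometric realization of a \v{C}ech nerve $U_\bullet \rightarrow X$ is the sieve subobject of $X$ generated by the cover, i.e.\@ a $(-1)$-truncated map (monomorphism) of presheaves, and accessible localizations of presheaf $\infty$-topoi at sets of monomorphisms are left exact; without this identification the appeal to $\infty$-topos theory proves nothing. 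Your hands-on route also does not work as stated: the fibrant replacement in $\mathcal{M}_{loc}$ is produced by the small object argument from pushouts along the generating trivial cofibrations $(\partial \Delta_n \rightarrow \Delta_n) \boxplus (U_\bullet \rightarrow Y')$ and transfinite composition --- these stages are \emph{colimits}, not finite limits, and the claim that ``each such stage is exact for homotopically finite homotopy limits'' is exactly the assertion to be proven; it does not follow from the \v{C}ech nerves being levelwise finite limits of representables. (The plus-construction picture you allude to --- a filtered colimit over covering sieves of finite limits --- is a different construction from the small-object-argument replacement, and for $\infty$-sheaves it requires its own transfinite analysis.) So either route needs its key lemma actually supplied; the paper obtains it from \cite[Corollary 3.18]{Hor21b}, which characterizes the localization via local lifting against \v{C}ech hypercovers.
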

\begin{proof} 
This is well-known. See e.g.\@ \cite{Hir03} for the existence of the localization and \cite[Corollary 3.18]{Hor21b} for the exactness of the localization. 
\end{proof}

\begin{PAR}
The factorization into trivial cofibration and fibration in $\mathcal{M}_{loc}$ is constructed by the small object argument. Every trivial cofibration is the retract of a transfinite composition of push-outs of trivial cofibrations in $\mathcal{M}$, and cofibrations of the form
\[ (\partial \Delta_{n} \hookrightarrow \Delta_n) \boxplus (U_\bullet \rightarrow Y')  \]
where $U_\bullet \rightarrow Y'$ is a cofibration obtained by factoring a \v{C}ech cover $U_\bullet \rightarrow Y$ as $U_\bullet \rightarrow Y' \rightarrow Y$ into cofibration followed by trivial fibration. See \cite[Proposition A.3.7.3]{Lur09}.
\end{PAR}

For later reference we state the following: 
\begin{LEMMA}\label{LEMMAFACTORCECH}Assume that $\mathcal{S}$ has finite limits. 
We may assume w.l.o.g.\@ that $Y'$ in the factorization $U_\bullet \rightarrow Y' \rightarrow Y$ of a \v{C}ech cover $U_\bullet \rightarrow Y$ into 
projective cofibration and projective trivial fibration is in the essential image of $\mathcal{S}^{\amalg, \Delta^{\op}}$. 
\end{LEMMA}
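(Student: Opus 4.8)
The plan is to make the factorization explicit and to observe that the latching-object criterion from Theorem~\ref{SATZSPMODEL} is satisfied termwise by an object coming from $\mathcal{S}^{\amalg}$, so the factorization can be carried out inside the essential image of $\mathcal{S}^{\amalg,\Delta^{\op}}$. First I would recall that the \v{C}ech cover $U_\bullet \to Y$ is, by construction, a morphism in $\mathcal{S}^{\amalg,\Delta^{\op}}$ (here $Y$ is representable, $Y = h_X$, and $U_\bullet$ is the nerve of the fiber products $U_n = U\times_X \cdots \times_X U$ with $U = \coprod_i U^{(i)}$, all lying in $\mathcal{S}^{\amalg}$ since $\mathcal{S}$ has finite limits). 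The key point is that in the Reedy/latching description of projective cofibrations, one only needs, at each simplicial degree $n$, to write $L_n Y \amalg_{L_n U_\bullet} (U_\bullet)_n \to Y_n$ as a map whose target is representable and whose cokernel-type data is a coproduct of retracts of representables. Since everything is happening over the representable $Y = h_X$, whose latching objects stabilize, and since $\mathcal{S}^{\amalg}$ is closed under finite limits and arbitrary coproducts, these latching objects and the relevant pushouts all live in $\mathcal{S}^{\amalg}$.

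Concretely, I would run the standard inductive Reedy factorization of $U_\bullet \to Y$ in $\mathcal{SET}^{\mathcal{S}^{\op}\times\Delta^{\op}}$, building $Y'$ one simplicial degree at a time. At stage $n$ one factors the latching map $L_n Y \amalg_{L_n U_\bullet}(U_\bullet)_n \to Y_n = h_X$ as a projective cofibration followed by a trivial fibration; the point is that this factorization can be chosen inside $\mathcal{S}^{\amalg}$. Indeed, the source of this latching map is (inductively) an object of $\mathcal{S}^{\amalg}$, and its target $h_X$ is representable; using the split-projective weak factorization system $(\mathcal{L}_{\proj,\spl}, \mathcal{R}_{\proj,\spl})$ on $\mathcal{S}^{\amalg}$ from \cite[3.11]{Hor21} (the very one from which the model structure of Theorem~\ref{SATZSOMODEL} is transported), one factors this map of objects of $\mathcal{S}^{\amalg}$ as an $\mathcal{L}_{\proj,\spl}$-map followed by an $\mathcal{R}_{\proj,\spl}$-map. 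The image of such a factorization under $R$ (which by Proposition~\ref{PROPSMALLS} preserves cofibrations, fibrations and weak equivalences) then provides exactly the projective cofibration / projective trivial fibration factorization required at degree $n$, with intermediate object in $\mathcal{S}^{\amalg}$. Assembling these degreewise choices yields a simplicial object $Y'_\bullet$ in $\mathcal{S}^{\amalg}$ together with $U_\bullet \to Y' \to Y$ of the required type, and by construction $Y'$ is in the essential image of $\mathcal{S}^{\amalg,\Delta^{\op}}$.

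The main obstacle I anticipate is bookkeeping the compatibility between the Reedy/latching induction on the simplicial direction and the split-projective factorization in the $\mathcal{S}$-direction: one must check that the latching objects $L_n Y'$ formed along the way genuinely remain in $\mathcal{S}^{\amalg}$ (not just in presheaves), i.e.\@ that the relevant colimits indexed by the boundary $\partial\Delta_n$ are finite colimits of split-projective type that $\mathcal{S}^{\amalg}$ can compute. This is where extensivity/finite-limit hypotheses and the explicit shape of the split-projective cofibrations — the degeneracy decomposition $X_n \cong X_{n,nd}\amalg \coprod X_{m,nd}$ — do the work: the latching maps for such objects are coproduct inclusions of non-degenerate parts, which stay inside $\mathcal{S}^{\amalg}$. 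I would handle this by proving the inductive statement ``$Y'_{\le n}$ lies in $\mathcal{S}^{\amalg,(\Delta_{\le n})^{\op}}$ and $L_{n+1}Y' \in \mathcal{S}^{\amalg}$'' alongside the factorization, invoking Theorem~\ref{SATZSOMODEL} and Proposition~\ref{PROPSMALLS} at each step; once this induction is set up, the verification at each degree is routine and the conclusion follows.
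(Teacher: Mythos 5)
Your proposal is correct and rests on the same two facts the paper uses: the \v{C}ech cover already lives in $\mathcal{S}^{\amalg,\Delta^{\op}}$, and $R$ preserves cofibrations and trivial fibrations (Proposition~\ref{PROPSMALLS}), so one may factor in $\mathcal{S}^{\amalg,\Delta^{\op}}$ and push forward. The explicit degreewise Reedy induction you set up is unnecessary, since Theorem~\ref{SATZSOMODEL} already provides the cofibration/trivial-fibration factorization inside $\mathcal{S}^{\amalg,\Delta^{\op}}$ as a black box — the paper's proof is exactly this two-line observation.
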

\begin{proof}
By Proposition~\ref{PROPSMALLS}
the Yoneda embedding $R: \mathcal{S}^{\amalg, \Delta^{\op}} \rightarrow \mathcal{SET}^{\mathcal{S}^{\op} \times \Delta^{\op}}$ preserves cofibrations and trivial fibrations. 
Therefore we may just factor the morphism in $\mathcal{S}^{\amalg, \Delta^{\op}}$. 
\end{proof}

\begin{PAR}
It would be desirable to have a more concrete description of $\mathcal{W}_{loc}$, like the one for the localization at all hypercovers. The fact that the localization is exact is equivalent to a different descriptions (see \cite{Hor21b}) in terms of local liftings w.r.t.\@ \v{C}ech hypercovers (i.e.\@ hypercovers which are also \v{C}ech weak equivalences). It is, however, self-referential and thus not really helpful in this respect. The description as smallest localizer in later sections gives a different characterization. 
\end{PAR}

\section{Localizers on diagram categories}

\begin{PAR}
Let $\mathcal{S}$ be a category.
After recalling the definition of the 2-category $\Dia(\mathcal{S})$ of diagrams in $\mathcal{S}$ and
the generalization of Grothendieck's notion of {\em ``localisateur fondamental''}, the main result of this section is that the  Grothendieck construction
\[ \int^{\amalg}: \mathcal{S}^{\amalg, \Delta^{\op}} \rightarrow \Dia(\mathcal{S}) \]
(cf.\@ \ref{PARINT}) maps weak equivalences between cofibrant objects\footnote{that restriction will later be repealed} to morphisms contained in any localizer provided $\mathcal{S}$ has finite limits. The reasoning is analogous to \cite{Cis04}. 
\end{PAR}

\begin{DEF} \label{DEFDIA}
Let $\mathcal{S}$ be a category. The 2-category $\Dia(\mathcal{S})$ of diagrams in $\mathcal{S}$ is the following 2-category:  
\begin{enumerate}
 \item The objects are the pairs $(I, F)$ with $I \in \Dia$ and $F \in \mathcal{S}^I$.
 \item The morphisms $(I, F) \rightarrow (J,G)$ are pairs $(\alpha, f)$ in which 
$\alpha: I \rightarrow J$ is a functor and $f: F  \rightarrow \alpha^* G$ a natural transformation.
\item The 2-morphisms $(\alpha, f) \Rightarrow (\beta, g)$ are 
 natural transformations $\mu: \alpha \Rightarrow \beta$ satisfying $\mu^*G \circ f= g$, where $\mu^*G$ is the induced natural transformation $\alpha^* G \rightarrow \beta^*G$.
\end{enumerate}
We call a morphism $(\alpha, f)$ of {\bf fixed shape} if $\alpha=\id$, and of {\bf diagram type} if $f$ consists of identities. 
Every morphism is obviously a composition of one of diagram type by one of fixed shape. 
\end{DEF}

\begin{PAR}Assume that $\mathcal{S}$ has fiber products. 
For two morphisms
\[
\xymatrix{ & \ar[d] (I, S) \\
(J, T) \ar[r] & (K, U)
  }
\]
we define a (non-commutative) fiber product
\[ (I, S) \times_{/(K, U)} (J, T) \]
in the obvious way, 
whose underlying diagram is the comma category $I \times_{/K} J$. 
\end{PAR}

\begin{PAR}\label{PARGCONSTR}
Given a pseudo-functor 
\begin{eqnarray*}
 F: A &\rightarrow& \Dia(\mathcal{S}) \\
  a & \mapsto & (I_a, S_a)
\end{eqnarray*}
 there is a Grothendieck construction
\[ \int F := (\int I, S) \]
where $I: A \rightarrow \Dia$ is the composition of $F$ with the forgetful functor and $S$ maps a pair $(a, i), a \in A, i \in I_a$ to $S_a(i) \in \mathcal{S}$. 
The functor $\int I \rightarrow A$ is an opfibration. 
\end{PAR}

\begin{DEF}\label{DEFWS}
A class of morphisms $\mathcal{W}$ in a category is called {\bf weakly saturated}, if it satisfies the following properties:
\begin{enumerate}
\item[(WS1)] Identities are in $\mathcal{W}$.
\item[(WS2)] $\mathcal{W}$ has the 2-out-of-3 property.
\item[(WS3)] If $p: Y \rightarrow X$ and $s: X \rightarrow Y$ are morphisms such that $p \circ s = \id_X$ and $s \circ p \in \mathcal{W}$ then $p \in \mathcal{W}$ (and hence 
$s \in \mathcal{W}$ by (WS2)). 
\end{enumerate}
\end{DEF}

The following definition was given in \cite[Definition 3.2.3]{Hor15} with a slightly different axiom (L4 left).

\begin{DEF}\label{DEFFUNLOC}
Let $\mathcal{S}$ be a category with finite limits. Assume we are given a Grothendieck pretopology on $\mathcal{S}$.
Consider the category $\Dia(\mathcal{S})$ of diagrams on $\mathcal{S}$.

A subclass $\mathcal{W}$ of 1-morphisms in $\Dia(\mathcal{S})$ is called a {\bf localizer} if
 the following properties are satisfied:
\begin{enumerate}
\item[(L1)]  $\mathcal{W}$ is weakly saturated.
\item[(L2 left)] If $D=(I,F) \in \Dia(\mathcal{S})$, and $I$ has a final object $e$, then the projection $D \rightarrow (e, F(e))$ is in $\mathcal{W}$.
\item[(L3 left)] If a commutative diagram in $\Dia(\mathcal{S})$
\[ \xymatrix{ (I, S) \ar[rd] \ar[rr]^{w} & & (J, T) \ar[dl]  \\
  & (K,U)   }\]
is such that for all $k \in K$ there is a covering $\{U_{i,k} \rightarrow U(k)\}$ such that the induced morphisms
\[  w \times_{/(K,U)} (k, U_{i,k}) \]
are in $\mathcal{W}$ for all $i$ then the morphism $w$ is in $\mathcal{W}$. 

\item [(L4 left)] The equivalent conditions (under assumption of (L1--L3)):
\item  If a functor $\alpha: I \rightarrow J$ and a morphism
$w: D_1=(I,\alpha^*T) \rightarrow D_2=(J,T)$ of pure diagram type is such that the morphisms 
\[  (j \times_{/J} I, \cdot) \rightarrow (\cdot, \cdot)  \]
are in $\mathcal{W}$ for all $j\in J$ then the morphism $w$ is in $\mathcal{W}$. 
\item If a fibration $\alpha: I \rightarrow J$ and a morphism
$w: D_1=(I,\alpha^*T) \rightarrow D_2=(J,T)$  of pure diagram type is such that the morphisms
\[ (I_j, \cdot)  \rightarrow (\cdot, \cdot)  \]
are in $\mathcal{W}$ for all $j\in J$ then the morphism $w$ is in $\mathcal{W}$. 
\end{enumerate}
\end{DEF}

See \cite[Proposition 3.2.12]{Hor15} for the proof of the equivalence of the two variants of (L4 left). Be aware that (L4 left) is stated in a slightly different form in \cite{Hor15} which does not
influence the proof of the equivalence.

There is an obvious dual notion of absolute {\bf colocalizer} in $\Dia^{\op}(\mathcal{S}) \cong \Dia(\mathcal{S}^{\op})^{2-\op}$ if we suppose that $\mathcal{S}^{\op}$ has a Grothendieck pretopology. 

One might want to include stronger saturatedness requirements into the definition. Actually it turns out (cf.\@ Theorem~\ref{SATZSAT} in the appendix) that
a localizer as defined above is automatically (strongly) saturated and, in particular, satisfies 2-out-of-6, is closed under retracts, etc.

In the following, we fix a localizer
$\mathcal{W}$ in $\Dia(\mathcal{S})$ and call the elements {\bf weak equivalences}.

We have the following immediate properties (cf.\@ \cite[Proposition 3.2.11]{Hor15}):

\begin{PROP}\label{PROPPROPERTIESLOCALIZER}
\begin{enumerate}
\item The localizer $\mathcal{W}$ is closed under coproducts. 

\item Let 
\[ \xymatrix{ I \ar@<3pt>[r]^s & \ar@<3pt>[l]^p J } \]
be an adjunction in $\Dia$ with $s$ right adjoint, let $S \in \mathcal{S}^J$  and consider the induced formal adjunction 
\[ \xymatrix{ (I,s^*S) \ar@<3pt>[rr]^-{\widetilde{s}=(s, \id)} & & \ar@<3pt>[ll]^-{\widetilde{p}=(p,u^*)} (J,S) } \]
where $u^*:  S \rightarrow p^*s^* S$ denotes the morphism induced by the unit. 
Then $\widetilde{s}$ and $\widetilde{p}$ are in $\mathcal{W}$. 
\item Consider a commutative diagram in $\Dia(\mathcal{S})$
\[ \xymatrix{ (I,S) \ar[rd] \ar[rr]^{w} & & (J,T) \ar[dl]  \\
  & (K,U)   }\]
  where the underlying vertical functors are opfibrations and the underlying functor of $w$ is a morphism of opfibrations. If for all $k \in K$ there are
  coverings $\{U_{k,i} \rightarrow U(k)\}$ such that the morphisms
  \[  w \times_{(K,U)} (k, U_{k,i}) \]
  are in $\mathcal{W}$ for all $i$ then $w$ is in $\mathcal{W}$. 

\item If $f: D_1 \rightarrow D_2$ is in $\mathcal{W}$, then $f \times E: D_1 \times E \rightarrow D_2 \times E$ is in $\mathcal{W}$  for any $E \in \Dia$.
\item Any morphism which is homotopic\footnote{Homotopy on $\Dia(\mathcal{S})$ is the equivalence relation generated by $\alpha \sim \beta$ if there is a 2-morphism $\mu: \alpha \Rightarrow \beta$.} to a morphism in $\mathcal{W}$ is in $\mathcal{W}$.
\end{enumerate}
\end{PROP}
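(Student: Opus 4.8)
The plan is to derive all five statements from the axioms (L1)--(L4), using (L2 left) as the only real source of ``contractibility'' and (L3 left)/(L4 left) as the machinery that propagates it. I would handle the statements roughly in the order $(5) \Rightarrow (2) \Rightarrow (4) \Rightarrow (1) \Rightarrow (3)$, since several of them feed into one another. For $(5)$, note that a $2$-morphism $\mu: \alpha \Rightarrow \beta$ between morphisms $D_1 \to D_2$ in $\Dia(\mathcal{S})$ gives a morphism $D_1 \times (\Delta_1, \cdot) \to D_2$ restricting to $\alpha$ and $\beta$ on the two endpoints; the two inclusions $D_1 \hookrightarrow D_1 \times (\Delta_1,\cdot)$ are split by the projection $D_1\times(\Delta_1,\cdot)\to D_1$, and that projection lies in $\mathcal W$ by (L2 left) (applied with $I$ replaced by $I_{D_1}\times\Delta_1$ over $I_{D_1}$ via (L3 left), or directly by statement $(4)$ which I would prove first if cleaner). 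Then (WS3) and (WS2) give that both inclusions are in $\mathcal W$, and $2$-out-of-$3$ shows $\alpha\in\mathcal W \Leftrightarrow \beta\in\mathcal W$; homotopy being the equivalence relation generated by such $\mu$, this proves $(5)$.

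For $(4)$, the morphism $f\times E$ over $E$ fits into a triangle with vertex $E$; for each $e\in E$ the fiber $(D_1\times E)\times_{/E} e$ receives a final object (namely $e$ itself together with the whole of $D_1$), and the induced map on these comma categories is, up to a morphism in $\mathcal W$ coming from (L2 left), just $f$ itself — so (L3 left) with the trivial covering $\{U(e)\to U(e)\}$ gives $f\times E\in\mathcal W$. For $(2)$, the adjunction $p\dashv s$ in $\Dia$ means the unit and counit are $2$-morphisms; one shows $\widetilde s \widetilde p$ and $\widetilde p \widetilde s$ are each homotopic to an identity (via the triangle identities) and concludes by $(5)$ combined with (WS3): $\widetilde p\widetilde s$ homotopic to $\id$ is in $\mathcal W$, and $\widetilde s$ is split by $\widetilde p$, etc. Statement $(1)$, closure under coproducts, follows by viewing a coproduct $\coprod_\lambda D_\lambda$ as a diagram over the discrete category on the index set: the map $\coprod f_\lambda$ is a map of opfibrations over that discrete base, each fiber map is $f_\lambda\in\mathcal W$, and (L3 left) with trivial coverings applies — this is really the special case of $(3)$ over a discrete base.

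Statement $(3)$ is the one I expect to be the main obstacle, since it is a genuine strengthening of (L3 left): the hypothesis is only about $w\times_{(K,U)}(k,U_{k,i})$ rather than $w\times_{/(K,U)}(k,U_{k,i})$ — i.e.\ strict fibers of opfibrations rather than comma categories. The standard move (this is exactly the pattern in \cite{Cis04} and \cite{Hor15}) is to factor the comparison: for an opfibration $\pi: I\to K$ the inclusion of the strict fiber $I_k\hookrightarrow I\times_{/K} k$ is a right adjoint (its left adjoint sends $(i,\varphi: \pi(i)\to k)$ to $\varphi_!i\in I_k$), hence lies in $\mathcal W$ by part $(2)$, and one has a commuting square relating $I_k \to J_k$ to $I\times_{/K}k \to J\times_{/K}k$ in which the vertical maps are these adjunction inclusions, all in $\mathcal W$. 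Then $2$-out-of-$3$ converts the hypothesis ``$w\times_{(K,U)}(k,U_{k,i})\in\mathcal W$'' into ``$w\times_{/(K,U)}(k,U_{k,i})\in\mathcal W$'', and (L3 left) finishes. The delicate points to check carefully are that the morphism-of-opfibrations hypothesis on $w$ makes these squares actually commute (the base-change comparison for $w$ respects the $\varphi_!$), that everything is compatible with the $\mathcal S$-valued structure sheaves and the coverings $\{U_{k,i}\to U(k)\}$, and that the adjunction inclusions one uses are of the formal type covered by part $(2)$ (with the appropriate pulled-back $S$). Once $(2)$ is in hand, none of this requires anything beyond (L1) and (L3 left).
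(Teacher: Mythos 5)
The paper itself gives no proof of this proposition --- it is stated as an ``immediate property'' with a citation to \cite[Proposition 3.2.11]{Hor15} --- so your proposal can only be judged on its own terms. Your overall architecture is sound, and parts (1), (3), (4), (5) are essentially right: in particular you correctly identify that (3) is the real content, and the mechanism you describe (the strict fiber $I_k$ of an opfibration includes into the comma category $I\times_{/K}k$ as a \emph{right} adjoint of $(i,\varphi)\mapsto \varphi_! i$, so part (2) plus 2-out-of-3 upgrades the hypothesis on strict fibers to the hypothesis of (L3~left)) is exactly the standard one.

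The genuine gap is in part (2). From $\widetilde{s}\widetilde{p}\in\mathcal{W}$ and $\widetilde{p}\widetilde{s}\in\mathcal{W}$ (which do follow from (5) via the unit and counit) you cannot conclude $\widetilde{s},\widetilde{p}\in\mathcal{W}$ by ``(WS3) plus 2-out-of-3'': (WS3) requires a \emph{strict} retraction $p\circ s=\id$, and an adjunction only gives composites homotopic to identities, not equal to them. The implication ``$gf\in\mathcal{W}$ and $fg\in\mathcal{W}$ imply $f,g\in\mathcal{W}$'' is 2-out-of-6, which is not among the axioms (only weak saturation is), and invoking it here would be circular, since the paper derives 2-out-of-6 (Theorem~\ref{SATZSAT}, via Lemma~\ref{LEMMAPUSHOUTW}) precisely from parts (2), (3) and (5) of this proposition. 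The fix is to prove that \emph{one} of the two morphisms lies in $\mathcal{W}$ directly: for the left adjoint $\widetilde{p}:(J,S)\to(I,s^*S)$, the adjunction $p\dashv s$ gives for each $i\in I$ an isomorphism of comma categories $J\times_{/I}i\cong J\times_{/J}s(i)$, which has the final object $(s(i),\id)$ with value $S(s(i))$; so both comma categories occurring in (L3~left) over $(I,s^*S)$ (with trivial coverings) have final objects compatibly over $(\cdot,S(s(i)))$, and (L2~left) plus 2-out-of-3 puts the induced map between them in $\mathcal{W}$, whence $\widetilde{p}\in\mathcal{W}$ by (L3~left). Then $\widetilde{s}\in\mathcal{W}$ follows by 2-out-of-3 from $\widetilde{s}\widetilde{p}\in\mathcal{W}$. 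With (2) repaired this way, the rest of your plan goes through; note also that your direct (L3~left)/(L2~left) proof of (5) must indeed be used (rather than the detour through (4)), since (4) depends on (2) and (2) on (5).
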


\begin{KOR}\label{KORGCONSTRUCTION}
The Grothendieck construction $\int: \Dia(\mathcal{S})^I \rightarrow \Dia(\mathcal{S})$ (cf.\@ \ref{PARGCONSTR}) maps 
point-wise weak equivalences to weak equivalences. 
\end{KOR}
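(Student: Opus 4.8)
The plan is to deduce the statement directly from Proposition~\ref{PROPPROPERTIESLOCALIZER}(3), which is exactly the ``descent along opfibrations'' statement needed here. Fix the (small) indexing category $I$ and a point-wise weak equivalence in $\Dia(\mathcal{S})^I$, i.e.\@ a (pseudo-)natural transformation $\phi: F \Rightarrow G$ between (pseudo-)functors $F,G: I \to \Dia(\mathcal{S})$, $a \mapsto (I_a^F, S_a^F)$ resp.\@ $a \mapsto (I_a^G, S_a^G)$, whose components $\phi_a: (I_a^F,S_a^F) \to (I_a^G,S_a^G)$ all lie in $\mathcal{W}$. Write $\int F = (\int I^F, S^F)$ and $\int G = (\int I^G, S^G)$ as in \ref{PARGCONSTR}. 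Since $\mathcal{S}$ has finite limits it has a terminal object, so $(I, \cdot)$ (the pair of $I$ with the constant functor at the terminal object of $\mathcal{S}$) is an object of $\Dia(\mathcal{S})$; the opfibrations $\int I^F \to I$ and $\int I^G \to I$ upgrade to morphisms $\int F \to (I,\cdot)$ and $\int G \to (I,\cdot)$ in $\Dia(\mathcal{S})$ (the $\mathcal{S}$-component being the unique natural transformation to the constant terminal diagram), and $\int\phi: \int F \to \int G$ forms a strictly commutative triangle with these two projections.

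First I would check the two bits of $2$-categorical bookkeeping needed to invoke Proposition~\ref{PROPPROPERTIESLOCALIZER}(3). (i) The underlying functor of $\int\phi$ is a morphism of opfibrations over $I$: a cocartesian arrow of $\int I^F$ over $f: a \to a'$ out of $(a,x)$ lands at $(a', I^F(f)(x))$, and its image is the arrow over $f$ from $(a, \phi_a(x))$ to $(a', \phi_{a'}(I^F(f)(x)))$; by (pseudo-)naturality $\phi_{a'}\circ I^F(f) \cong I^G(f)\circ \phi_a$, so the target is (isomorphic to) $(a', I^G(f)(\phi_a(x)))$, whence the arrow is again cocartesian. (ii) The strict fibre of $\int I^F \to I$ over $a$ is, by the very definition of the Grothendieck construction, the category $I_a^F$, and the restriction of $S^F$ to it is $S_a^F$; so the fibre of $\int F \to (I,\cdot)$ over $(a,\cdot)$ is $(I_a^F, S_a^F) = F(a)$, likewise for $G$, and the restriction of $\int\phi$ to these fibres is exactly $\phi_a$.

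Now I would apply Proposition~\ref{PROPPROPERTIESLOCALIZER}(3) to the triangle $\int F \to \int G$ over $(I,\cdot)$: the vertical functors are opfibrations, $\int\phi$ is a morphism of opfibrations, and, choosing for each $a \in I$ the trivial covering $\{\mathrm{id}: U(a) \to U(a)\}$ of the terminal object, the required morphism $\int\phi \times_{(I,\cdot)} (a,\cdot)$ is $\phi_a \in \mathcal{W}$ by hypothesis. Hence $\int\phi \in \mathcal{W}$, as claimed. I do not expect any genuine obstacle: the content is entirely contained in part~(3) of the preceding proposition (which in turn packages the opfibration case of (L3~left)), and the only thing to be careful about is identifying the fibres of the Grothendieck construction with the values $F(a)$ and the restriction of $\int\phi$ with $\phi_a$, which is routine.
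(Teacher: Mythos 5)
Your proposal is correct and is exactly the paper's argument: the corollary is deduced directly from Proposition~\ref{PROPPROPERTIESLOCALIZER}, 3.\@ applied to the triangle over $(I,\cdot)$ with the trivial coverings, the strict fibres of the opfibrations being the values $F(a)$, $G(a)$ and the restriction of $\int\phi$ being $\phi_a$. The extra bookkeeping you spell out is precisely what the paper leaves implicit.
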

\begin{proof}
This follows directly from Proposition~\ref{PROPPROPERTIESLOCALIZER}, 3.\@ 
\end{proof}

\begin{PAR}\label{PARINT}
Applying the Grothendieck construction (cf.\@ \ref{PARGCONSTR}) to simplicial objects, we get functors
\begin{eqnarray*}
 \int^\amalg: \mathcal{S}^{\amalg, \Delta^{\op}} &\rightarrow& \Dia(\mathcal{S})  \\
 \int: \mathcal{S}^{\Delta^{\op}} &\rightarrow& \Dia(\mathcal{S}) 
  \end{eqnarray*}
In the first case $\mathcal{S}^{\amalg}$ is considered as a subcategory of $\Dia(\mathcal{S})$ consisting of diagrams whose underlying category is discrete (i.e.\@ a set) and in the second case $\mathcal{S}$ is considered as a subcategory of $\Dia(\mathcal{S})$ consisting of diagrams whose underlying category is the terminal category.
We equip $\mathcal{S}^{\Delta^{\op}}$, and $\mathcal{S}^{\amalg, \Delta^{\op}}$, with the split-projective model category structure of Theorem~\ref{SATZEXTENSIVEMODELCAT}, and Theorem~\ref{SATZSOMODEL}, respectively, if the necessary assumptions on $\mathcal{S}$ are fulfilled. {\bf We assume in any case that $\mathcal{S}$ has finite limits for the rest of the section. }

Obviously $\int$ is the composition
\[ \xymatrix{  \mathcal{S}^{\Delta^{\op}}  \ar[r]^-R&   \mathcal{S}^{\amalg, \Delta^{\op}}  \ar[r]^-{\int^{\amalg}} &  \Dia(\mathcal{S}), }  \]
where $R: \mathcal{S}^{\Delta^{\op}} \hookrightarrow \mathcal{S}^{\amalg, \Delta^{\op}}$ is the natural inclusion. 
Recall that weak equivalences in $\mathcal{S}^{\Delta^{\op}}$ and $\mathcal{S}^{\amalg, \Delta^{\op}}$ are the morphisms $f$ such that
\[ \Hom(X, f) \]
is a weak equivalence for all $\N$-small objects $X$ in $\mathcal{S}$, and in $\mathcal{S}^{\amalg}$, respectively --- in the latter case one can equivalently restrict to connected objects, i.e.\@ to the image of $\mathcal{S}$. 
\end{PAR}
\begin{PAR}\label{PARINTINT}Variant: For any $I \in \Dia$ we have a similar functor, denoted the same way, if $I$ is understood
\begin{eqnarray*}
 \int^\amalg: \mathcal{S}^{\amalg, I} &\rightarrow& \Dia(\mathcal{S})  
\end{eqnarray*}
where the underlying diagrams of the images come equipped with an opfibration to $I$. 
\end{PAR}

\begin{PAR}\label{PARPOINTWISE}
For $\mathcal{S} = \{ \cdot \}$ (terminal category) the first functor in \ref{PARINT} boils down to the usual Grothendieck construction
\[ \int: \mathcal{SET}^{\Delta^{\op}} \rightarrow \Dia.  \]
We also have a functor
\begin{eqnarray*} \Hom: \mathcal{S}^{\amalg} \times \Dia(\mathcal{S}) &\rightarrow& \Dia \\
 X,  (I, F:I \rightarrow \mathcal{S}) &\mapsto& \int \Hom(X, F(-)) 
 \end{eqnarray*}
 in which $\Hom(X, F(-)): I \rightarrow \mathcal{SET} \subset \Dia$ is considered as a functor with values in discrete categories. 
\end{PAR}
The following is a direct consequence of the definitions:
\begin{LEMMA}\label{LEMMAPOINTWISEINT}
We have for $X \in \mathcal{S}^{\amalg}$ and $S_\bullet \in \mathcal{S}^{\amalg, \Delta^{\op}}$: 
\[ \int \Hom_r(X, S_\bullet) \cong \Hom(X, \int^\amalg S_\bullet). \]
\end{LEMMA}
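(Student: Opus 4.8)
The statement is essentially pure book-keeping; the plan is to describe both sides explicitly as Grothendieck constructions (each naturally opfibered over $\Delta^{\op}$ via the first projection), to produce the identification degreewise, and then to check compatibility with the simplicial structure maps and naturality in $X$ and $S_\bullet$.

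Write $X = \{X_b\}_{b \in B}$ and $S_n = \{U^{(n)}_i\}_{i \in A_n}$ for the given formal-coproduct decompositions. First I would unwind the right-hand side. By \ref{PARINT} together with \ref{PARGCONSTR}, the diagram $\int^\amalg S_\bullet$ is the pair $(\mathcal{I}, S)$, where $\mathcal{I}$ is the Grothendieck construction of the functor $\Delta^{\op} \to \mathcal{SET} \subset \Dia$, $[n] \mapsto A_n$ --- so $\mathcal{I}$ has objects the pairs $([n], i)$ with $i \in A_n$, and morphisms induced by the face and degeneracy maps of $A_\bullet$ --- and $S$ sends $([n], i)$ to $U^{(n)}_i$. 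Feeding this into the functor $\Hom$ of \ref{PARPOINTWISE}, $\Hom(X, \int^\amalg S_\bullet)$ is the Grothendieck construction of the functor $\mathcal{I} \to \mathcal{SET}$, $([n], i) \mapsto \Hom_{\mathcal{S}^\amalg}(X, U^{(n)}_i)$; hence its objects are the triples $([n], i, \psi)$ with $i \in A_n$ and $\psi \in \Hom_{\mathcal{S}^\amalg}(X, U^{(n)}_i)$, and its morphisms are those induced by the simplicial structure maps of $S_\bullet$ and by functoriality of $\Hom_{\mathcal{S}^\amalg}(X, -)$.

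For the left-hand side, $\Hom_r(X, S_\bullet)$ is by definition the simplicial set $[n] \mapsto \Hom_{\mathcal{S}^\amalg}(X, S_n)$ with simplicial structure inherited from $S_\bullet$, so $\int \Hom_r(X, S_\bullet)$ has objects the pairs $([n], \varphi)$ with $\varphi \in \Hom_{\mathcal{S}^\amalg}(X, S_n)$ and morphisms induced by the simplicial structure maps. Both constructions are thus naturally opfibered over $\Delta^{\op}$, with fibers over $[n]$ equal to $\coprod_{i \in A_n} \Hom_{\mathcal{S}^\amalg}(X, U^{(n)}_i)$ and $\Hom_{\mathcal{S}^\amalg}(X, S_n)$ respectively. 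The comparison is the canonical map
\[ \coprod_{i \in A_n} \Hom_{\mathcal{S}^\amalg}(X, U^{(n)}_i) \longrightarrow \Hom_{\mathcal{S}^\amalg}\Bigl(X, \coprod_{i \in A_n} U^{(n)}_i\Bigr) = \Hom_{\mathcal{S}^\amalg}(X, S_n), \]
sending $(i, \psi)$ to $\psi$ composed with the coproduct inclusion; one checks that it is a bijection directly from the description of morphisms in the free coproduct completion, and that it is compatible with the horizontal (simplicial) structure maps on both sides. It therefore assembles into an isomorphism of small categories $\Hom(X, \int^\amalg S_\bullet) \cong \int \Hom_r(X, S_\bullet)$, and naturality in $X$ and $S_\bullet$ is automatic since every map used was canonical. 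The only step that is not a literal rearrangement of \ref{PARGCONSTR} and \ref{PARPOINTWISE} is this bijectivity-and-compatibility of the displayed comparison map, which is where the combinatorics of $\mathcal{S}^\amalg$ enters; it is the point I would spend a sentence on, but it is immediate from how $\Hom$ is computed in the coproduct completion, and everything else is formal.
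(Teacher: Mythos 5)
Your overall strategy --- unwinding both sides as categories opfibered over $\Delta^{\op}$ and comparing the fibers --- is the natural one (the paper offers no proof beyond ``direct consequence of the definitions''), but the step you single out as the only non-formal one is exactly where the argument breaks in the generality you set up. You allow $X = \{X_b\}_{b\in B}$ to be an arbitrary object of $\mathcal{S}^{\amalg}$ and then claim that the canonical map $\coprod_{i\in A_n}\Hom_{\mathcal{S}^{\amalg}}(X, U^{(n)}_i)\rightarrow \Hom_{\mathcal{S}^{\amalg}}(X, S_n)$ is a bijection ``directly from the description of morphisms in the free coproduct completion''. That description gives $\Hom_{\mathcal{S}^{\amalg}}(\coprod_b X_b, \coprod_i U^{(n)}_i) = \prod_b\coprod_i\Hom_{\mathcal{S}}(X_b, U^{(n)}_i)$, whereas the source of your map is $\coprod_i\prod_b\Hom_{\mathcal{S}}(X_b, U^{(n)}_i)$; these agree only when $|B|=1$. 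For $|B|\geq 2$ the map is injective but not surjective: a morphism out of a disconnected $X$ need not land in a single component of $S_n$. Concretely, for $\mathcal{S}$ the terminal category, $X$ a two-element formal coproduct and $S_\bullet$ the constant two-element formal coproduct, your left-hand side is four copies of $\Delta^{\op}$ and your right-hand side is two copies, so the two categories are not even weakly equivalent --- under your reading of $\Hom_r$ the asserted identity is simply false for disconnected $X$.

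The functor $\Hom_{\mathcal{S}^{\amalg}}(X,-)$ commutes with coproducts precisely when $X$ is connected, i.e.\@ lies in the image of $\mathcal{S}\hookrightarrow\mathcal{S}^{\amalg}$; for such $X$ your argument is complete and correct, including the identification of morphisms on both sides (coproduct inclusions are disjoint and jointly monic on maps out of connected objects, so the simplicial compatibilities match). That restricted case is also all that is ever used: in the proof of Theorem~\ref{THEOREMBACK} the lemma is invoked only for $\Hom(S,-)$ with $S\in\mathcal{S}$, consistent with the remark in \ref{PARINT} that one may restrict to connected objects. So either prove the statement for $X\in\mathcal{S}$ and say so, or make explicit a reading of $\Hom_r$ under which the levelwise sets are $\coprod_i\Hom_{\mathcal{S}^{\amalg}}(X,U^{(n)}_i)$ rather than $\Hom_{\mathcal{S}^{\amalg}}(X,S_n)$; as written, the one genuine assertion in your proof is the bijection, and it does not hold.
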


The main theorem of this section is the following:

\begin{SATZ}\label{THEOREMINTWECOF}
The functor $\int^\amalg$  of (\ref{PARINT}) maps weak equivalences between cofibrant objects to weak equivalences. 
\end{SATZ}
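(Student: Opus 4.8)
The plan is to reduce the statement to a combinatorial assertion about nerves of comma categories, following the strategy of Cisinski \cite{Cis04}. Recall that a weak equivalence in $\mathcal{S}^{\amalg, \Delta^{\op}}$ is a morphism $f\colon S_\bullet \to T_\bullet$ such that $\Hom(X, f)$ is a weak equivalence of simplicial sets for every (connected) $X \in \mathcal{S}$, i.e.\@ for every $X$ in the image of $\mathcal{S} \hookrightarrow \mathcal{S}^{\amalg}$. By Lemma~\ref{LEMMAPOINTWISEINT} we have $\int \Hom_r(X, S_\bullet) \cong \Hom(X, \int^\amalg S_\bullet)$, so the hypothesis says precisely that for each such $X$ the induced functor $\Hom(X, \int^\amalg f)\colon \Hom(X, \int^\amalg S_\bullet) \to \Hom(X, \int^\amalg T_\bullet)$ becomes a weak equivalence after applying the nerve, hence lies in the smallest localizer $\mathcal{W}_\infty$ on $\Dia$ by Cisinski's theorem (the first cited \textsc{Satz}). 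The goal is to leverage axiom (L4 left), in its fibration form, applied to the opfibrations $\int^\amalg S_\bullet \to \Delta$ and $\int^\amalg T_\bullet \to \Delta$ and the morphism $\int^\amalg f$ between them.

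First I would set up the situation: write $w = \int^\amalg f\colon (I, \alpha^* T) \to (J, T)$ — one must check the morphism really is of (pure) diagram type here, since $\int^\amalg$ of a simplicial map only changes the indexing category, not the $\mathcal{S}$-valued data at each object, so the underlying natural transformation consists of identities; thus $w$ has the required form with $\alpha\colon I \to J$ the functor on underlying diagrams. However, $\alpha$ need not be a fibration, so one cannot directly invoke the fibration variant of (L4 left). The standard device (as in \cite{Cis04}) is to replace $\alpha$ by a fibration up to weak equivalence: factor $w$ through the ``relative nerve''/path-space construction, or equivalently observe that $\alpha$ factors as a final (or cofinal) functor followed by a fibration, where the first factor is in $\mathcal{W}$ by the comma-category form of (L4 left) together with (L2 left). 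Concretely, for each $j \in J$ one analyzes the comma category $j \times_{/J} I$: using cofibrancy of $S_\bullet$ and $T_\bullet$ (the split decomposition $X_n \cong X_{n,nd} \amalg \coprod X_{m,nd}$), these comma categories decompose as coproducts indexed by non-degenerate simplices, and each summand is of the form (simplex category slice) $\times$ (a fiber of $f$ over a non-degenerate piece), which one shows is contractible in $\mathcal{W}_\infty \subset \mathcal{W}$ exactly when the corresponding map of simplicial sets $\Hom(X,f)$ is a weak equivalence — here the cofibrancy hypothesis is what makes the ``relative latching'' analysis work and is precisely why the restriction to cofibrant objects is imposed.

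The main steps in order: (1) identify $\int^\amalg f$ as a morphism of pure diagram type over $\Delta$, with underlying functor $\alpha\colon I \to J$; (2) reduce, via the opfibration $I \to \Delta$, $J \to \Delta$ and Proposition~\ref{PROPPROPERTIESLOCALIZER}(3) together with the \v Cech-trivial (here trivial-topology) version of (L3 left), to checking that the induced maps on fibers over each $[n] \in \Delta$ — which are morphisms in $\mathcal{S}^{\amalg}$, i.e.\@ maps of sets of ``$X$-points'' — are in $\mathcal{W}$; but these fiberwise maps need not be equivalences, so instead (2') work over $\Delta$ directly and reduce to (L4 left): show that for each $j \in J$ (a pair $([n], \text{non-degenerate data})$) the comma category $j \times_{/J} I$ is in $\mathcal{W}_\infty$ when tested against the hypothesis; (3) use cofibrancy to split the comma categories into coproducts of products of simplex slices with fibers of $f$, reducing contractibility in $\mathcal{W}$ to contractibility of nerves of these pieces, i.e.\@ to a statement about simplicial sets, which holds by the hypothesis together with Cisinski's identification $\mathcal{W}_\infty = \{w : N(w) \in \mathcal{W}\}$ on $\Dia$ and $\mathcal{W}_\infty(\Dia) \subseteq \mathcal{W}$; (4) conclude $w \in \mathcal{W}$ by (L4 left).

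The hard part will be step (3): making precise the claim that cofibrancy forces the comma categories $j \times_{/J} \int^\amalg S_\bullet$ to decompose compatibly under $f$ into pieces whose nerves compute (up to weak equivalence) the mapping-space comparison $\Hom(X, f)$, so that the simplicial-set hypothesis transfers. This is the combinatorial heart of the argument and is exactly the point where Cisinski's reasoning \cite{Cis04} must be adapted from simplicial sets to simplicial objects in $\mathcal{S}^{\amalg}$; one expects to need the explicit description of cofibrant objects from Theorem~\ref{SATZSOMODEL} and a careful bookkeeping of non-degenerate simplices, but no genuinely new idea beyond \cite{Cis04}. A secondary subtlety is verifying that $\alpha\colon I \to J$, or its replacement, satisfies the hypotheses of the fibration form of (L4 left) rather than only the comma form — but since the paper has proved these two forms equivalent (citing \cite[Proposition 3.2.12]{Hor15}), either may be used.
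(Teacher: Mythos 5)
There is a genuine gap, and it sits precisely where you defer the ``hard part''. Your plan hinges on applying (L4 left) to $w=\int^\amalg f$ for an arbitrary weak equivalence $f$ between cofibrant objects, and this fails for two independent reasons. First, $\int^\amalg f$ is \emph{not} of pure diagram type: a morphism in $\mathcal{S}^{\amalg,\Delta^{\op}}$ consists degreewise of a map of index sets together with morphisms $S_n(x)\to T_n(f(x))$ in $\mathcal{S}$, and these are not identities in general, so $\int^\amalg f$ has a nontrivial fixed-shape component about which (L4 left) says nothing. Second, and more decisively, even for $\mathcal{S}=\{\cdot\}$ (where every object is cofibrant and the first issue disappears) the hypothesis of (L4 left) is simply false for a general weak equivalence: for $f\colon X\to Y$ a map of simplicial sets and $j=(\Delta_n,y)$, the comma category $j\times_{/\int Y}\int X$ is isomorphic to $\int(\Delta_{n,\bullet}\times_Y X)$, the \emph{strict} pullback, which computes the homotopy fibre only for fibration-like maps. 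For the trivial cofibration $\Lambda^2_0\hookrightarrow\Delta_2$ the strict fibre over the edge $\{1,2\}$ is $\partial\Delta_1$, which is not contractible; no bookkeeping of non-degenerate simplices can repair this, since cofibrancy is automatic here. Your parenthetical escape route (``replace $\alpha$ by a fibration up to weak equivalence'') is where all the content would have to live, and it is not supplied: to know that the replacement map is itself sent to a weak equivalence by $\int^\amalg$ is essentially the theorem one is trying to prove.

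The paper's proof (which is the actual adaptation of \cite{Cis04}) is a cellular induction rather than a direct comma-category analysis: factor $f=p\circ\iota$ with $\iota$ a transfinite composition of pushouts of the generating trivial cofibrations $\Lambda_e(\Delta_n\times\Delta_1)\otimes S\to(\Delta_n\times\Delta_1)\otimes S$ and $p$ a trivial fibration between cofibrant objects. Axiom (L4 left) is invoked only for the very special morphisms of Proposition~\ref{PROPPROPERTIESLOCALIZERII} (the diagonal $\delta^*X_{\bullet,\bullet}\to X_{\bullet,\bullet}$ and $\Delta_{n,\bullet}\otimes X_\bullet\to X_\bullet$), whose comma categories are literally $\int\Delta_{n,\bullet}\otimes\Delta_{m,\bullet}$ and hence contractible. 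One then shows separately that the property of being sent to a weak equivalence is stable under pushout along cofibrations (Proposition~\ref{PROPPUSHOUT}) and transfinite composition (Proposition~\ref{PROPTRANSFINITE}), using that cofibrations are degreewise of the form $A\to A\amalg B$ (Lemma~\ref{LEMMAPUSHOUTTRANSFINITE}) --- this, not a latching-object analysis of comma categories, is where cofibrancy enters --- and that trivial fibrations between cofibrant objects are left homotopy equivalences (Corollary~\ref{KORLEFTHOMOTOPYEQUIVALENCE}). If you want to salvage your outline, you would have to route it through this factorization; as written, steps (2'), (3) and (4) rest on a criterion that a general weak equivalence does not satisfy.
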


For this theorem the choice of Grothendieck pretopology is irrelevant and you may want to choose the trivial one. 
We will later prove that (cf.\@ Propositions~\ref{KORINTWEGENERAL}--\ref{KORINTWEGENERALCECH}), in fact, the restriction to cofibrant objects is unnecessary and that also $\int$ preserves weak equivalences, provided
that coproduct covers are in the chosen Grothendieck topology, i.e.\@ the latter is finer than the extensive topology.

\begin{PROP}\label{PROPPROPERTIESLOCALIZERII}
\begin{enumerate}
\item For a bisimplicial object $X_{\bullet, \bullet}$ in $\mathcal{S}^{\amalg, \Delta^{\op} \times \Delta^{\op}}$ the obvious morphism
\[ \int^{\amalg} \delta^* X_{\bullet, \bullet} \rightarrow  \int^{\amalg}  X_{\bullet, \bullet}   \]
is a weak equivalence. Here $\delta: \Delta^{\op} \rightarrow  \Delta^{\op} \times \Delta^{\op}$ is the diagonal. 
\item If $X_\bullet \rightarrow Y_\bullet$ is a morphism in $\mathcal{S}^{\amalg, \Delta^{\op}}$ such that $\int^{\amalg} X_\bullet \rightarrow \int^{\amalg} Y_\bullet$
is a weak equivalence then $\int^{\amalg} K \otimes X_\bullet \rightarrow \int^{\amalg} K \otimes Y_\bullet$ is a weak equivalence for any simplicial set $K$. 
\item The map $\int^{\amalg} \Delta_{n,\bullet} \otimes X_\bullet \rightarrow \int^{\amalg} X_\bullet$  is a weak equivalence for any $X_\bullet \in \mathcal{S}^{\amalg, \Delta^{\op}}$.  
\end{enumerate}
\end{PROP}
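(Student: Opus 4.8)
The plan is to prove the three parts of Proposition~\ref{PROPPROPERTIESLOCALIZERII} in order, using only the axioms (L1)--(L4) of a localizer together with the already-established Proposition~\ref{PROPPROPERTIESLOCALIZER} and Corollary~\ref{KORGCONSTRUCTION}; part (3) will be the payoff (and is exactly what feeds into Theorem~\ref{THEOREMINTWECOF}), while part (1) is the engine. For part (1), I would observe that $\int^\amalg X_{\bullet,\bullet}$ is the Grothendieck construction over $\Delta^{\op}$ of the diagram $n \mapsto \int^\amalg X_{n,\bullet}$, whereas $\int^\amalg \delta^* X_{\bullet,\bullet}$ is the Grothendieck construction over $\Delta^{\op}$ of $n \mapsto (\Delta_n^{\mathrm{discrete}} \ldots)$ — more precisely, the comparison is the classical ``Bousfield--Kan / Thomason'' statement that the diagonal of a bisimplicial set computes the homotopy colimit, transported to $\Dia(\mathcal{S})$. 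Concretely I would factor the comparison morphism through $\int$ of the bisimplicial diagram viewed as a simplicial object in $\Dia(\mathcal{S})$, and reduce to the case $\mathcal{S} = \cdot$ (where $\int^\amalg$ is the ordinary Grothendieck construction $\mathcal{SET}^{\Delta^{\op}} \to \Dia$ from~\ref{PARPOINTWISE}) via the characterization of weak equivalences by $\Hom(X,-)$ in~\ref{PARINT} together with Lemma~\ref{LEMMAPOINTWISEINT}: applying $\Hom(X,-)$ turns the statement into the analogous assertion about bisimplicial sets, which is Cisinski's/Thomason's theorem that $N$ sends such diagonal comparisons to weak equivalences, i.e.\@ it holds in $\mathcal{W}_\infty$ on $\Dia$ and hence in any localizer. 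The honest work is checking that $\Hom(X,-)$ commutes appropriately with $\delta^*$ and $\int^\amalg$, which is Lemma~\ref{LEMMAPOINTWISEINT} applied levelwise.

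For part (2), I would write $K \otimes X_\bullet$ as the diagonal of the bisimplicial object $(\ell, n) \mapsto \coprod_{K_\ell} X_n$ (the tensoring in $\mathcal{S}^{\amalg, \Delta^{\op}}$ is computed by copowers, which exist since $\mathcal{S}^\amalg$ has all coproducts), so that by part (1) it suffices to show the bisimplicial map $\coprod_{K_\ell} X_n \to \coprod_{K_\ell} Y_n$ induces a weak equivalence after $\int^\amalg$. Using part (1) again in the other simplicial direction (or directly), this reduces to showing that for fixed $\ell$, $\int^\amalg \coprod_{K_\ell} X_\bullet \to \int^\amalg \coprod_{K_\ell} Y_\bullet$ is a weak equivalence; but $\int^\amalg$ commutes with coproducts and a coproduct of weak equivalences is a weak equivalence by Proposition~\ref{PROPPROPERTIESLOCALIZER}(1). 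One has to be slightly careful that the bisimplicial comparison of part (1) is natural enough to be applied to a map of bisimplicial objects, i.e.\@ that a levelwise-$\int^\amalg$-weak-equivalence of bisimplicial objects becomes a weak equivalence on diagonals — this is again part (1) applied to the mapping cylinder or, more simply, a $3\times 3$-type argument using part (1) on source and target and 2-out-of-3.

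Part (3) is then a special case of part (2) with $K = \Delta_n$: since $\Delta_n$ is (the nerve of a category with a terminal object, hence) contractible, $\Delta_{n,\bullet} \otimes X_\bullet \to \Delta_{0,\bullet}\otimes X_\bullet = X_\bullet$ — wait, more carefully: $\Delta_{n,\bullet} \otimes X_\bullet \to X_\bullet$ is induced by the unique map $\Delta_n \to \Delta_0$, and I must check $\int^\amalg \Delta_{n,\bullet} \otimes X_\bullet \to \int^\amalg \Delta_{0,\bullet}\otimes X_\bullet \cong \int^\amalg X_\bullet$ is a weak equivalence. By part (2) it is enough to treat $X_\bullet = \Delta_{0,\bullet}$ (the constant terminal simplicial object) where $\int^\amalg \Delta_{n,\bullet} = \int \Delta_n$; but $\int \Delta_n$ is the Grothendieck construction of the nerve of $[n]$, which has a terminal object, so $\int \Delta_n \to \cdot$ is in $\mathcal{W}$ by (L2 left), giving the result after invoking part (1) to move between $\Delta_{n,\bullet} \otimes X_\bullet$ and the relevant bisimplicial diagonal. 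Alternatively, and perhaps cleaner: use (L2 left) plus Proposition~\ref{PROPPROPERTIESLOCALIZER}(4) ($f \times E \in \mathcal{W}$) after identifying $\int^\amalg \Delta_{n,\bullet}\otimes X_\bullet$ with $\int\Delta_n \times_{/?} \int^\amalg X_\bullet$ fibered suitably — though the fibered structure is not a plain product, so the diagonal-bisimplicial route of part (1) is the safer one.

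The main obstacle I anticipate is part (1): making the identification of $\int^\amalg$ of a diagonal with a homotopy colimit precise in the setting of $\Dia(\mathcal{S})$ (as opposed to simplicial sets), and in particular producing the comparison morphism and verifying it lands in $\mathcal{W}$ using only (L1)--(L4). The cleanest strategy is to bootstrap from the known case $\mathcal{S}=\cdot$ — where this is Thomason's theorem and lies in the smallest localizer $\mathcal{W}_\infty$ on $\Dia$, hence in every localizer — and then globalize by detecting weak equivalences via $\Hom(X,-)$ for $\N$-small (equivalently connected) $X$, using Lemma~\ref{LEMMAPOINTWISEINT} to commute $\Hom(X,-)$ past $\int^\amalg$ and the evident fact that $\Hom(X,-)$ commutes with $\delta^*$. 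Once part (1) is in hand, parts (2) and (3) are short formal consequences as sketched.
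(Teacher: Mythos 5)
There is a genuine gap, and it sits exactly where you locate the ``engine'': part (1). Your plan is to detect membership in the localizer by applying $\Hom(X,-)$ for all $X$ and checking that the resulting morphisms in $\Dia$ lie in the smallest basic localizer $\mathcal{W}_\infty$ (Thomason/Cisinski). But the proposition is about an \emph{arbitrary} localizer $\mathcal{W}$ on $\Dia(\mathcal{S})$ --- ``weak equivalence'' here means ``element of the fixed localizer'', not ``weak equivalence of the model structure on $\mathcal{S}^{\amalg, \Delta^{\op}}$'' --- and none of the axioms (L1)--(L4) supplies a detection principle of the form ``if $\Hom(X,w)$ is in $\mathcal{W}_\infty(\Dia)$ for all $X$ then $w \in \mathcal{W}$''. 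The characterization via $\Hom(X,-)$ recalled in \ref{PARINT} concerns the model structure only. Worse, the assertion that every localizer contains all morphisms detected this way is essentially Corollary~\ref{KORINTWEGENERAL}, whose proof rests on Theorem~\ref{THEOREMINTWECOF}, which rests on the present proposition; so the reduction is circular. The tool actually available is (L4 left): the comparison $\int^{\amalg} \delta^* X_{\bullet,\bullet} \rightarrow \int^{\amalg} X_{\bullet,\bullet}$ is of pure diagram type, so (L4 left) reduces its membership in $\mathcal{W}$ to the contractibility of the comma categories $(\Delta_n, \Delta_m, x) \times_{/\int X_{set}} \int \delta^* X_{set}$, which are identified with $\int_{\Delta^{\op}} \Delta_{n,\bullet} \otimes \Delta_{m,\bullet}$; these are point-valued diagrams, so the basic-localizer contractibility result legitimately applies there. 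That is the step your argument is missing.

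Two smaller points. In part (2), the step you flag (``a levelwise-$\int^{\amalg}$-weak-equivalence of bisimplicial objects becomes a weak equivalence on the total Grothendieck constructions'') is not a $3\times 3$ argument: it is Proposition~\ref{PROPPROPERTIESLOCALIZER}, 3.\@ applied to the opfibrations over $(\Delta^{\op}, \cdot)$; with that substitution your outline of (2) is essentially the paper's. Part (3), however, is \emph{not} a special case of part (2): part (2) fixes $K$ and varies the morphism $X_\bullet \rightarrow Y_\bullet$, whereas (3) varies $K$ (from $\Delta_n$ to $\Delta_0$) with $X_\bullet$ fixed, and none of the work-arounds you sketch is carried through (the product reduction also runs into the restriction $E \in \Dia$ in Proposition~\ref{PROPPROPERTIESLOCALIZER}, 4.). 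The paper proves (3) by a second, independent application of (L4 left), again landing on the contractibility of $\int_{\Delta^{\op}} \Delta_{n,\bullet} \otimes \Delta_{m,\bullet}$.
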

\begin{proof}
1. The morphism is of pure diagram type and by (L4 left) the assertion boils down to the contractibility of 
\[ (\Delta_n, \Delta_m, x)  \times_{/ \int X_{set}  }  \int \delta^* X_{set}  \]
where $X_{set}$ is the underlying bisimplicial set and $x \in X_{set,n,m}$. This diagram is isomorphic to
\[ \int_{\Delta^{\op}} \Delta_{n, \bullet} \otimes \Delta_{m, \bullet}  \]
which is contractible by \cite[Corollary~3.3.4]{Hor15}.

2. For a {\em set} $K$, we have
\[  \int^{\amalg} K \otimes X_\bullet = K \times \int^{\amalg} X_\bullet \]
hence the assertion follows from Proposition~\ref{PROPPROPERTIESLOCALIZER}, 4.\@ For a simplicial set $K$ consider the morphism of bisimplicial objects
\[ K_n \otimes X_m \rightarrow K_n \otimes Y_m. \]
Its horizontal simplicial objects are thus mapped to weak equivalences under $\int^{\amalg}$. 
From  Proposition~\ref{PROPPROPERTIESLOCALIZER}, 3.\@ applied to the diagram
\[ \xymatrix{
\int^{\amalg} X_{\bullet, \bullet} \ar[rd] \ar[rr] & &  \int^{\amalg} Y_{\bullet, \bullet} \ar[ld] \\
& (\Delta^{\op}, \cdot) 
}\]
in which the projections are opfibrations,
follows that a morphism of bisimplicial objects is mapped to a weak equivalence under $\int^{\amalg}$ provided that the morphisms between its horizontal simplicial objects are mapped to weak equivalences under $\int^{\amalg}$ . The assertion follows therefore from 1.

3. Consider the pullback 
\[ \xymatrix{ \pi^*X_\bullet: \Delta_n \otimes_{/\Delta^{\op}} \Delta^{\op} \ar[r]^-\pi & \Delta^{\op} \ar[r]^{X_\bullet} &  \mathcal{S}^{\amalg}. } \]
We have $\int^{\amalg} \Delta_{n, \bullet} \otimes X_\bullet = \int^{\amalg} \pi^* X_\bullet$. The morphism $\int^{\amalg} \pi^*X_\bullet  \rightarrow \int^{\amalg}  X_\bullet $ is of pure diagram type. 
By (L4 left) we have to show that for $x \in X_{set, m}$ the diagram
\[ \{ (\Delta_m, x) \} \otimes_{/\int X_{set}} \int  \pi^* X_{set}  \]
is contractible\footnote{We call a diagram $I$ contractible if the morphism $(I, \cdot) \rightarrow (\cdot, \cdot)$ is in the localizer.} where $X_{set}$ is the image of $X_\bullet$ under the forgetful functor $\mathcal{S}^{\amalg, \Delta^{\op}} \rightarrow \mathcal{SET}^{\Delta^{\op}}$. This diagram is isomorphic to 
\[ \{ \Delta_m \} \otimes_{/\Delta^{\op}} \int \Delta_{n,\bullet}  = \int_{\Delta^{\op}} \Delta_{n, \bullet} \otimes \Delta_{m, \bullet}  \]
which is contractible by \cite[Corollary~3.3.4]{Hor15}.
\end{proof}

\begin{KOR}\label{KORLEFTHOMOTOPYEQUIVALENCE}
Every left homotopy equivalence in $\mathcal{S}^{\amalg, \Delta^{\op}}$ maps to a weak equivalence under $\int^\amalg$. 
In particular, trivial fibrations {\em between cofibrant objects} map to weak equivalences. 
\end{KOR}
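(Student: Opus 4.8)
The plan is to reduce everything to the results of Proposition~\ref{PROPPROPERTIESLOCALIZERII} together with the weak saturation axiom (L1). First recall that in $\mathcal{S}^{\amalg, \Delta^{\op}}$, equipped with the split-projective (simplicial) model structure of Theorem~\ref{SATZSOMODEL}, the cylinder object on a cofibrant $X_\bullet$ may be taken to be $\Delta_1 \otimes X_\bullet$ (using the simplicial tensoring), so a left homotopy between $f, g: X_\bullet \rightarrow Y_\bullet$ is encoded by a map $H: \Delta_1 \otimes X_\bullet \rightarrow Y_\bullet$ restricting to $f$ and $g$ along the two face inclusions $X_\bullet \cong \Delta_0 \otimes X_\bullet \rightrightarrows \Delta_1 \otimes X_\bullet$. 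I would first observe that the two inclusions $\iota_0, \iota_1: X_\bullet \hookrightarrow \Delta_1 \otimes X_\bullet$ are sent to weak equivalences under $\int^\amalg$: indeed each is a section of the projection $\Delta_1 \otimes X_\bullet \rightarrow X_\bullet = \Delta_0 \otimes X_\bullet$, which is sent to a weak equivalence by Proposition~\ref{PROPPROPERTIESLOCALIZERII}, part 3 (the case $K = \Delta_1$, or more directly the fact that $\int^\amalg \Delta_{1,\bullet}\otimes X_\bullet \to \int^\amalg X_\bullet$ is a weak equivalence); then by (WS3) together with (WS2) both $\int^\amalg \iota_0$ and $\int^\amalg \iota_1$ are weak equivalences.

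Now suppose $f$ is a left homotopy equivalence with homotopy inverse $g$, so $g f$ is left-homotopic to $\id_{X_\bullet}$ and $f g$ is left-homotopic to $\id_{Y_\bullet}$ (one should first arrange, as usual for model categories, that both $X_\bullet$ and $Y_\bullet$ are cofibrant — which is part of the hypothesis in the intended application, and otherwise the statement is about cofibrant objects). Given the homotopy $H: \Delta_1 \otimes X_\bullet \rightarrow Y_\bullet$ with $H \iota_0 = \id$ and $H \iota_1 = gf$, applying $\int^\amalg$ and using that $\int^\amalg \iota_0$ is a weak equivalence gives, by 2-out-of-3, that $\int^\amalg H$ is a weak equivalence, and then using that $\int^\amalg \iota_1$ is a weak equivalence gives that $\int^\amalg(gf) = \int^\amalg H \circ \int^\amalg \iota_1$ is a weak equivalence. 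The same argument on the $Y_\bullet$ side shows $\int^\amalg(fg)$ is a weak equivalence. A final application of (WS2) (2-out-of-3, together with functoriality $\int^\amalg(gf) = \int^\amalg g \circ \int^\amalg f$) then forces $\int^\amalg f$ to be a weak equivalence. For the last sentence: a trivial fibration between cofibrant objects admits a section by lifting $\id$ against the trivial fibration, and that section is a cofibration which is a weak equivalence between cofibrant objects; by Ken Brown's lemma / the standard argument, trivial fibrations between cofibrant objects are (two-sided) homotopy equivalences, hence in particular left homotopy equivalences, so the previous part applies.

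The main obstacle — really the only non-formal point — is verifying that $\Delta_1 \otimes X_\bullet$ genuinely serves as a cylinder object for cofibrant $X_\bullet$ in the split-projective structure, i.e. that $X_\bullet \amalg X_\bullet \rightarrow \Delta_1 \otimes X_\bullet$ is a cofibration and $\Delta_1 \otimes X_\bullet \rightarrow X_\bullet$ is a weak equivalence. This is exactly the content of the simplicial-model-category axioms recalled in Theorem~\ref{SATZSOMODEL} (the tensoring is left Quillen, and $\partial\Delta_1 \hookrightarrow \Delta_1$ is a cofibration of simplicial sets, so the pushout-product with the cofibration $\emptyset \to X_\bullet$ is a cofibration; the weak equivalence statement is the pushout-product with $\Delta_1 \to \Delta_0$), so strictly speaking it requires no new work. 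The genuinely new input is Proposition~\ref{PROPPROPERTIESLOCALIZERII}, part 3, whose proof via (L4 left) was already carried out above; everything else here is bookkeeping with weak saturation. One should take a little care that left homotopy for not-necessarily-fibrant targets is still an equivalence relation on maps out of a cofibrant object — this is standard and uses that $\Delta_1\otimes X_\bullet$ is a (good) cylinder — and that ``left homotopy equivalence'' as used in the statement means: admits an inverse up to left homotopy on both sides.
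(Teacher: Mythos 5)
Your overall strategy is the one the paper intends (the corollary is stated without proof, as an immediate consequence of Proposition~\ref{PROPPROPERTIESLOCALIZERII}, 3.\ and weak saturation): use $\Delta_1\otimes X_\bullet$ as cylinder, note that $\int^\amalg(\Delta_1\otimes X_\bullet)\to\int^\amalg X_\bullet$ is a weak equivalence by part 3 of that proposition, deduce by 2-out-of-3 that both face inclusions $\iota_0,\iota_1$ are sent to weak equivalences, and hence that $\int^\amalg(gf)$ and $\int^\amalg(fg)$ lie in $\mathcal{W}$. All of that is correct (and your invocation of (WS3) for the face inclusions is superfluous: since the projection is already known to be in $\mathcal{W}$, (WS2) alone handles its sections).

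There is, however, one step that does not go through as written: the final deduction from ``$\int^\amalg(gf)\in\mathcal{W}$ and $\int^\amalg(fg)\in\mathcal{W}$'' to ``$\int^\amalg f\in\mathcal{W}$'' is \emph{not} an instance of 2-out-of-3. Writing $F=\int^\amalg f$, $G=\int^\amalg g$, knowing $GF,FG\in\mathcal{W}$ and trying to apply (WS2) to $F\circ(GF)=(FG)\circ F$ is circular: you would first need to know that the triple composite is in $\mathcal{W}$. What is needed here is 2-out-of-6 (applied to the composable pair $f,g,f$), or equivalently saturation of $\mathcal{W}$. This is available in the paper --- Theorem~\ref{SATZSAT} and Lemma~\ref{LEMMA2OUTOF6} show that every localizer satisfies 2-out-of-6, and their proofs rest only on Proposition~\ref{PROPPROPERTIESLOCALIZER} and the Grothendieck construction, not on the present corollary, so there is no circularity --- but it must be cited; (WS2) alone is insufficient. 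Alternatively, for the ``in particular'' clause you can avoid 2-out-of-6 altogether: a trivial fibration $p$ between cofibrant objects has an honest section $s$ with $ps=\id$ and $sp$ left homotopic to $\id$, so once your cylinder argument gives $\int^\amalg(sp)\in\mathcal{W}$, axiom (WS3) applies directly to yield $\int^\amalg p\in\mathcal{W}$. With that one correction the proof is complete.
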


\begin{LEMMA}\label{LEMMAPUSHOUTTRANSFINITE}
\begin{enumerate}
\item
Let $S: \lefthalfcap \rightarrow \mathcal{S}^{\amalg}$ a diagram in which one of the morphisms is of the form $A \rightarrow A \amalg B$. Then
\[ \int^\amalg S \rightarrow \colim S\]
is a weak equivalence. 
\item
Let $O$ be an ordinal and let $S: O \rightarrow \mathcal{S}^{\amalg}$ be a diagram in which all morphisms are of the form $A \rightarrow A \amalg B$. Then
\[ \int^\amalg S \rightarrow \colim S\]
is a weak equivalence. 
\end{enumerate}
\end{LEMMA}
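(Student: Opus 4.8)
The plan is, in both parts, to replace $\int^{\amalg}S$ by a smaller diagram which is, connected component by connected component, a diagram with a final object (Part 1) resp.\ an initial object (Part 2) carrying the correct value in $\mathcal{S}$, so that its collapse morphism lands in $\mathcal{W}$ by axiom (L2 left) resp.\ by Proposition~\ref{PROPPROPERTIESLOCALIZER}(2), together with closure under coproducts (Proposition~\ref{PROPPROPERTIESLOCALIZER}(1)).

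For Part 1, I identify $\lefthalfcap$ with the category $(0\leftarrow 1\rightarrow 2)$; using its automorphism exchanging $0$ and $2$ I may assume the coproduct inclusion is $S(1\to 2)=(A\to A\amalg B)$, and write $g:=S(1\to 0)\colon A\to C$, so that $\colim S=C\amalg B$. Set $(E,F):=\int^{\amalg}S$, with $E=\int_{\lefthalfcap}|S|$ carrying its opfibration to $\lefthalfcap$ (so its objects are the pairs $(j,x)$, $x\in|S(j)|$), and introduce
\[ (P,F_P):=\int^{\amalg}\big(S|_{\{1\to 0\}}\big)\ \amalg\ B, \]
the second summand being $B\in\mathcal{S}^{\amalg}\subset\Dia(\mathcal{S})$. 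Each component of $\int^{\amalg}(S|_{\{1\to 0\}})$ is a ``broom'' $\{(0,c)\}\cup\{(1,a):g(a)=c\}$ with final object $(0,c)$ of value $C_c$, and each component of $B$ is a point, so the evident collapse $(P,F_P)\to C\amalg B=\colim S$ lies in $\mathcal{W}$ by (L2 left) and Proposition~\ref{PROPPROPERTIESLOCALIZER}(1). I would then exhibit a deformation retraction of $(E,F)$ onto $(P,F_P)$: let $\iota\colon(P,F_P)\to(E,F)$ send $(0,c)\mapsto(0,c)$, $(1,a)\mapsto(1,a)$ and the copy of $b\in|B|$ to $(2,b)$; let $r\colon(E,F)\to(P,F_P)$ fix the $(0,c)$ and $(1,a)$, send $(2,b)\mapsto b$, and send $(2,a)\mapsto(1,a)$ for $a\in|A|$. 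Both are morphisms of diagram type with $r\iota=\mathrm{id}$, and the natural transformation $\nu$ which equals $\big((1,a)\to(2,a)\big)$ at $(2,a)$ and the identity elsewhere is a $2$-morphism $\iota r\Rightarrow\mathrm{id}_{(E,F)}$ in $\Dia(\mathcal{S})$. Hence $\iota r$ is homotopic to the identity, so $\iota r\in\mathcal{W}$ by Proposition~\ref{PROPPROPERTIESLOCALIZER}(5); axiom (WS3) of Definition~\ref{DEFWS}, applied with $s=\iota$, $p=r$, gives $r\in\mathcal{W}$, hence $\iota\in\mathcal{W}$ by $2$-out-of-$3$. As the composite $(P,F_P)\xrightarrow{\iota}(E,F)\to\colim S$ is exactly the collapse morphism above, $2$-out-of-$3$ then yields $\int^{\amalg}S\to\colim S\in\mathcal{W}$.

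For Part 2, every transition $S(\alpha)\to S(\beta)$ ($\alpha\le\beta$) being a coproduct inclusion, its underlying map of index sets is an injection and its components in $\mathcal{S}$ are identities; therefore $\int_O|S|$ is a poset whose connected components are indexed by the elements $x$ of $\colim_O|S|$, the component of $x$ being the sub-poset $O_{\ge\alpha_0(x)}=\{\gamma\in O:x\in|S(\gamma)|\}$, which has least element $\alpha_0(x)$, and on which the value functor of $\int^{\amalg}S$ is constant equal to $V_x:=(S(\alpha_0(x)))_x$. Thus $\int^{\amalg}S\to\colim S$ is the coproduct over $x$ of the collapse maps $(O_{\ge\alpha_0(x)},\mathrm{const}_{V_x})\to(\cdot,V_x)$; since $\{\alpha_0(x)\}\hookrightarrow O_{\ge\alpha_0(x)}$ is left adjoint to the projection to the point, each is the morphism $\widetilde s$ of Proposition~\ref{PROPPROPERTIESLOCALIZER}(2) (with ``$J$'' the point and ``$S$'' equal to $V_x$), hence lies in $\mathcal{W}$, and Proposition~\ref{PROPPROPERTIESLOCALIZER}(1) concludes.

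I expect the main obstacle to be the bookkeeping in Part 1: verifying that $\iota$, $r$ and $\nu$ really are $1$- resp.\ $2$-morphisms of $\Dia(\mathcal{S})$ --- i.e.\ that all the squares of $\mathcal{S}$-morphisms commute --- which is precisely where one uses that the arrows contracted by the retraction, namely the $(1,a)\to(2,a)$, are the ones labelled by identities of $\mathcal{S}$ (this is the content of ``coproduct inclusion''); and, relatedly, matching the abstract comparison morphism $\int^{\amalg}S\to\colim S$ with the explicit ``collapse to final objects'' morphism.
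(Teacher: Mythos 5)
Your proof is correct, and Part 2 coincides with the paper's argument (decompose $\int^{\amalg}S$ into its connected components, each an up-set of $O$ with a least element carrying a constant value, and collapse). For Part 1, however, you take a genuinely different route. The paper stays entirely within the localizer axioms: it applies (L3 left) to the map $\int^{\amalg}S \rightarrow \colim S$ over the discrete diagram $\colim S = C \amalg B$, reducing to the fibers over each component; the fiber over a $B$-component is a point, and the fiber over a $C$-component is the span $A_i \leftarrow A_i \rightarrow C_i$, which is collapsed in two steps using (L4 left) (contracting the redundant copy of $A_i$) and then (L2 left). You instead build an explicit $2$-categorical deformation retraction of $\int^{\amalg}S$ onto $\int^{\amalg}(A\to C)\amalg B$ and invoke homotopy invariance (Proposition~\ref{PROPPROPERTIESLOCALIZER}(5)) together with (WS3) of Definition~\ref{DEFWS}, before collapsing final objects via (L2 left) and closure under coproducts. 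Both arguments are short and correct; yours trades the use of (L3 left)/(L4 left) for the retraction--homotopy mechanism, which makes the role of the identity components of the coproduct inclusion $A\to A\amalg B$ (needed for $r$ to be of diagram type and for $\nu^*F=\id$) very explicit, while the paper's version is the one that generalizes most directly to the pattern used throughout the rest of the section (reduction over a base via (L3 left)). Your verification that the composite $(P,F_P)\xrightarrow{\iota}\int^{\amalg}S\to\colim S$ is the final-object collapse is the only bookkeeping point, and it checks out.
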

\begin{proof}
1.\@ The diagram
\[
\xymatrix{
A \ar[d] \ar[r] & C \ar[d] \\
A \amalg B \ar[r] & C \amalg B = \colim(S) 
}
\]
in $\mathcal{S}^{\amalg}$ is Cartesian. 
Let $(i, S_i) \hookrightarrow \colim S$ be a connected component. It is either in $C$ or in $B$.
We have to show that $\int S \times_{/(i, S_i)} (i, S_i) \rightarrow (i, S_i)$ is a weak equivalence.
This is either the diagram $(\cdot, B_i)$ over $(\cdot, B_i)$ or the diagram 
\[
\xymatrix{
A_i \ar[d] \ar[r] & C_i \\
A_i &
}
\]
over $(\cdot, C_i)$. This might be factored as
\[
\left( \vcenter{ \xymatrix{
A_i \ar[d] \ar[r] & C_i \\
A_i &
} } \right)  \rightarrow \left(\xymatrix{
A_i \ar[r] & C_i \\
} \right) \rightarrow (C_i).
\]
The corresponding morphisms are weak equivalences by (L4 left) and (L2 left).

Let $(i, S_i) \rightarrow \colim S$ be a connected component. Then $\int^\amalg S \times_{/(i, S_i)} (i, S_i) \rightarrow (i, S_i)$
is the integration of the constant diagram $O_{o} \rightarrow \mathcal{S}$ with value $S_i$, where $O_o$ is the subset of elements $> o$ and $o \in O$ is such that $o \rightarrow o+1$
is a mapped to a pushout of the form  $A \rightarrow A \amalg B$ and $S_i$ occurs in $B$. 
Constant diagrams of ordinal shape are contractible (e.g.\@ because they have an initial object). 
\end{proof}

\begin{PROP}\label{PROPPUSHOUT}
If a cofibration $f$ maps to a weak equivalence under $\int^\amalg$ then also any pushout of $f$ maps to a weak equivalence. 
\end{PROP}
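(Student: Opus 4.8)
The plan is to apply $\int^\amalg$ to the pushout square, recognise that the hypothesis $\int^\amalg f\in\mathcal W$ enters only through Corollary~\ref{KORGCONSTRUCTION}, and reduce the whole statement to the fact that pushing out along a cosieve inclusion is a homotopy pushout.

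First I would record the shape of the maps involved. Write $f\colon X_\bullet\to Y_\bullet$ and let $g\colon Z_\bullet\to W_\bullet$ be its pushout along a morphism $X_\bullet\to Z_\bullet$. An induction along the skeletal (relative latching) filtration shows that a split-projective cofibration is, in every simplicial degree, a coproduct inclusion $X_n\hookrightarrow X_n\amalg B_n$ in $\mathcal S^\amalg$, and hence so is $g$, with the same relative cells $B_n$. Consequently $\int^\amalg f$ and $\int^\amalg g$ are of pure diagram type, their underlying functors are full cosieve inclusions whose complement is in both cases the same diagram ``indexed by the $B_n$'', and $\int^\amalg$ carries the pushout square to a pushout square in $\Dia(\mathcal S)$; equivalently, $\int^\amalg g$ is the cobase change of $\int^\amalg f$ along $\int^\amalg(X_\bullet\to Z_\bullet)$.

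Next I would fit $\int^\amalg g$ into a commutative square by applying the Grothendieck construction $\int_\lefthalfcap$ of Corollary~\ref{KORGCONSTRUCTION} to the morphism of pushout diagrams in $\Dia(\mathcal S)$
\[ \bigl(\, \int^\amalg X \xleftarrow{\;\id\;} \int^\amalg X \longrightarrow \int^\amalg Z \,\bigr) \;\longrightarrow\; \bigl(\, \int^\amalg Y \xleftarrow{\;\int^\amalg f\;} \int^\amalg X \longrightarrow \int^\amalg Z \,\bigr), \]
whose three components are $\int^\amalg f$ and two identities, all in $\mathcal W$; by Corollary~\ref{KORGCONSTRUCTION} the induced morphism $\int_\lefthalfcap(\,\cdot\,)\to\int_\lefthalfcap(\,\cdot\,)$ is in $\mathcal W$. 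The colimits of the two diagrams are $\int^\amalg Z$ and $\int^\amalg W$ (using the previous step for the second), and the induced map on colimits is exactly $\int^\amalg g$. So, by the 2-out-of-3 property, it suffices to show that the canonical morphism $\int_\lefthalfcap D\to\colim D$ lies in $\mathcal W$ for each of the two $\lefthalfcap$-diagrams $D$ occurring; in fact it is enough to know this for every pushout diagram $D$ in $\Dia(\mathcal S)$ whose leftmost leg is a cosieve inclusion of pure diagram type (that leg being $\id$, resp.\@ $\int^\amalg f$, in the two cases).

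This last assertion — that such pushouts are homotopy pushouts — is the heart of the matter and is proved by means of axiom (L4 left). For the diagram with the identity leg it is elementary: collapsing the two identical copies of $\int^\amalg X$ exhibits $\int_\lefthalfcap D$ as deformation retracting onto the mapping cylinder of $\int^\amalg(X_\bullet\to Z_\bullet)$ in $\Dia(\mathcal S)$, which in turn deformation retracts onto $\int^\amalg Z=\colim D$, and both retractions are adjoints of the evident (co)sieve inclusions, hence in $\mathcal W$ by Proposition~\ref{PROPPROPERTIESLOCALIZER}, 2. For the general case one notes that the comparison map $\int_\lefthalfcap D\to\colim D$ is again of pure diagram type, so by (L4 left) one needs only check that, for every object $w$ of $\colim D$, the corresponding comma category is $\mathcal W$-contractible: for $w$ coming from the right vertex it has an initial object, while for $w$ lying over a cell of the complement (a non-degenerate simplex of $Y_\bullet$ not in $X_\bullet$) one identifies it, via the coend description of the morphism sets of the pushout, with a Grothendieck construction of coslices over the category of simplices of a standard simplex, and concludes using Corollary~\ref{KORGCONSTRUCTION}, the fact that coslices have initial objects, and the fact that the ``categories of presentations of coend elements'' that arise are contractible because $\int^\amalg X$ and $\int^\amalg Z$ are categories of simplices — Reedy categories without non-trivial automorphisms. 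The step I expect to be the main obstacle is precisely this contractibility of the presentation comma categories, i.e.\@ the verification that $\int^\amalg$ sends pushouts along cofibrations to genuine homotopy pushouts; everything else is formal bookkeeping with the localizer axioms and the results already established in this section.
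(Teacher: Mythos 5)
Your overall skeleton is exactly the paper's: compare the given pushout square with the trivial one (identity on top), apply the Grothendieck construction over $\lefthalfcap$ and invoke Corollary~\ref{KORGCONSTRUCTION} for the induced map, and reduce by 2-out-of-3 to showing that the comparison from the Grothendieck construction of a pushout diagram (along a cofibration) to its colimit is a weak equivalence. The problem is that this last step --- which you yourself flag as ``the main obstacle'' --- is precisely the content of the proposition, and your sketch of it does not go through as stated. You propose to apply (L4 left) to the comparison map $\int_{\lefthalfcap} D \rightarrow \colim D$, asserting it is of pure diagram type; but the leg $\int^{\amalg}X_\bullet \rightarrow \int^{\amalg}Z_\bullet$ is an arbitrary morphism of simplicial objects and in general involves nontrivial morphisms in $\mathcal{S}$, so the collapse onto the colimit changes the $\mathcal{S}$-labels and is \emph{not} of pure diagram type. (L4 left) therefore does not apply directly, and the subsequent analysis of ``presentation comma categories'' is left entirely open.

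The paper circumvents this by never forming the $1$-categorical pushout in $\Dia(\mathcal{S})$ for the nontrivial square. Instead it compares $\int^{\amalg}_{\lefthalfcap\times\Delta^{\op}}X$ directly with $\int^{\amalg}_{\Delta^{\op}}H$, where $H$ is the pushout computed in $\mathcal{S}^{\amalg,\Delta^{\op}}$. Both sides are opfibered over $\Delta^{\op}$, so Proposition~\ref{PROPPROPERTIESLOCALIZER}, 3.\@ reduces the comparison to each simplicial degree separately, where it becomes Lemma~\ref{LEMMAPUSHOUTTRANSFINITE}, 1.\@ (a cofibration is degree-wise of the form $A\rightarrow A\amalg B$). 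Note how that lemma itself handles the issue you ran into: it first applies (L3 left) over the connected components of the colimit, and only \emph{then}, on each comma diagram, are the labels constant enough for (L4 left) and (L2 left) to finish. If you want to salvage your route, you should import exactly this two-stage use of (L3 left) followed by (L4 left); as written, the proposal has a genuine gap at its central step.
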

\begin{proof}
Consider a diagram 
\[
\xymatrix{ G \ar[r]^f  \ar[d] & F \ar[d] \\
L \ar[r] & H 
  } \]
in $\mathcal{S}^{\amalg, \Delta^{\op}}$ in which $H: \Delta^{\op} \rightarrow \mathcal{S}^{\amalg, \Delta^{\op}}$ is the push-out. The functors $G, F, L$ assemble to a functor  $X: \Delta^{\op} \times \lefthalfcap \rightarrow \mathcal{S}^{\amalg}$.
First, we claim that 
\[ \int_{\lefthalfcap \times \Delta^{\op}}^{\amalg} X \rightarrow \int^{\amalg}H  \]
is a weak equivalence. It suffices to see this point-wise in $\Delta^{\op}$, i.e.\@ we have to prove that
\[ \int_{\lefthalfcap} X_n \rightarrow H_n  \]
is a weak equivalence. This follows from Lemma~\ref{LEMMAPUSHOUTTRANSFINITE}, 1.\@ because a cofibration is in particular degree-wise of the form $A \mapsto A \amalg B$ (i.e.\@ in $\mathcal{L}_{\proj,\spl}$). Hence for two push-out diagrams $(i \in \{0, 1\})$ in $\mathcal{S}^{\amalg, \Delta^{\op}}$ 
\[
\xymatrix{ G_i \ar[r]^{f_i}  \ar[d] & F_i \ar[d] \\
L_i \ar[r] & H_i
  } \]
such that  $f_i$ is a cofibration and a morphism between them such that 
\begin{equation} \label{eq1}
\int^{\amalg } F_0 \rightarrow F_1,  \int^{\amalg } G_0 \rightarrow G_1, \text{ and } \int^{\amalg } L_0 \rightarrow L_1, \text{ are weak equivalences} 
\end{equation}
 then also
\[ \int^{\amalg } H_0 \rightarrow \int^{\amalg } H_1 \]
is a weak equivalence. Finally, consider the morphism of push-out diagrams 
\[
\left(\vcenter{\xymatrix{ F \ar[r]^\id  \ar[d] & F \ar[d] \\
L \ar[r] & L
  }}\right) \rightarrow
  \left(\vcenter{\xymatrix{ F \ar[r]^f  \ar[d] & G \ar[d] \\
L \ar[r] & H
  }}\right). \]
which satisfies (\ref{eq1}) because $f$ is a weak equivalence. 
Hence $L \rightarrow H$ is a weak equivalence.  
\end{proof}

\begin{PAR}Given $e \in \{0, 1\}$, $n \in \N$, and $S \in \mathcal{S}$, 
consider  the monomorphisms 
\[ \underbrace{(\Delta_n \times \{e\} \cup  \partial \Delta_n \times \Delta_1)}_{=:\Lambda_e( \Delta_n \times \Delta_1)} \otimes S \rightarrow (\Delta_n \times \Delta_1) \otimes S  \]
coming from the diagram: 
\[
\xymatrix{ (\partial \Delta_n \times \{e\}) \otimes S  \ar[rr]^{\id \times \delta_{1}^{1-e}} \ar[d] & & ( \partial \Delta_n \times \Delta_1)  \otimes S \ar[d] \\ 
( \Delta_n \times \{e\} )  \otimes S \ar[rr]^{\id \times \delta_{1}^{1-e}}  & & ( \Delta_n \times \Delta_1)  \otimes S
  } \]
\end{PAR}

\begin{PROP}
The morphisms $\Lambda_e( \Delta_n \times \Delta_1) \otimes S \rightarrow (\Delta_n \times \Delta_1) \otimes S$  map to weak equivalences under $\int^\amalg$. 
\end{PROP}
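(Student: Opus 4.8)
The plan is to reduce the statement to Corollary~\ref{KORLEFTHOMOTOPYEQUIVALENCE} by showing that the inclusion of simplicial sets $j\colon \Lambda_e(\Delta_n\times\Delta_1)\hookrightarrow\Delta_n\times\Delta_1$ is a simplicial homotopy equivalence whose homotopy inverse and homotopies are elementary (built from $(-)\times\Delta_1$), so that applying $-\otimes S$ turns it into a left homotopy equivalence in $\mathcal{S}^{\amalg,\Delta^{\op}}$. No pushout arguments are needed here; the only genuine content is a combinatorial deformation retraction.

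Concretely, I would take $e=0$ (the case $e=1$ being dual, with $\max$ in place of $\min$ below). Viewing $\Delta_n\times\Delta_1$ as the nerve of $[n]\times[1]$, the order preserving map $([n]\times[1])\times[1]\to[n]\times[1]$, $((a,b),c)\mapsto(a,\min(b,c))$, defines a simplicial homotopy $H\colon(\Delta_n\times\Delta_1)\times\Delta_1\to\Delta_n\times\Delta_1$ from $\bar\rho$ to the identity, where $\bar\rho$ is the composite $\Delta_n\times\Delta_1\xrightarrow{\rho}\Delta_n\times\{0\}\hookrightarrow\Delta_n\times\Delta_1$ and $\rho$ is the projection; moreover $H$ is constant on $\Delta_n\times\{0\}$, and since it is the identity in the $[n]$-variable it restricts along $\partial\Delta_n\hookrightarrow\Delta_n$, hence to a homotopy $\Lambda_0\times\Delta_1\to\Lambda_0$ where $\Lambda_0:=\Lambda_0(\Delta_n\times\Delta_1)$. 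Thus $\Delta_n\times\{0\}$ is a strong deformation retract of both $\Delta_n\times\Delta_1$ and $\Lambda_0$, and by 2-out-of-3 applied to $\Delta_n\times\{0\}\hookrightarrow\Lambda_0\xrightarrow{j}\Delta_n\times\Delta_1$ the inclusion $j$ is a simplicial homotopy equivalence; an explicit homotopy inverse is $\Delta_n\times\Delta_1\xrightarrow{\rho}\Delta_n\times\{0\}\hookrightarrow\Lambda_0$, with homotopies $H$ and $H|_{\Lambda_0\times\Delta_1}$.

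Finally, $-\otimes S$ carries this to a left homotopy equivalence in $\mathcal{S}^{\amalg,\Delta^{\op}}$: there is a natural isomorphism $(K\times\Delta_1)\otimes S\cong(K\otimes S)\otimes\Delta_1$ for every simplicial set $K$ (both have $m$-th term $\coprod_{K_m}S$, indexed over the product of simplices), the objects $\Lambda_e(\Delta_n\times\Delta_1)\otimes S$ and $(\Delta_n\times\Delta_1)\otimes S$ are cofibrant in $\mathcal{S}^{\amalg,\Delta^{\op}}$ by the description of cofibrant objects in Theorem~\ref{SATZSOMODEL}, and hence $(-)\otimes\Delta_1$ computes left homotopies there (the cylinder axiom being part of being a simplicial model category). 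Therefore $j\otimes S\colon\Lambda_e(\Delta_n\times\Delta_1)\otimes S\to(\Delta_n\times\Delta_1)\otimes S$ is a left homotopy equivalence in $\mathcal{S}^{\amalg,\Delta^{\op}}$, and Corollary~\ref{KORLEFTHOMOTOPYEQUIVALENCE} yields that $\int^\amalg(j\otimes S)$ is a weak equivalence. The step I expect to be the main (though minor) obstacle is the bookkeeping in the middle paragraph: checking that $H$ really restricts along $\partial\Delta_n\hookrightarrow\Delta_n$ and therefore to $\Lambda_e$; everything else is formal. An alternative avoiding the gluing step is to factor $j$ through $(\Delta_n\times\{e\})\otimes S\cong\Delta_n\otimes S$ and combine Proposition~\ref{PROPPROPERTIESLOCALIZERII}, 3.\@ with 2-out-of-3, but one still needs a deformation-retract argument for $\Lambda_e$ over $\Delta_n\times\{e\}$.
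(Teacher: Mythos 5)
Your proof is correct, but it takes a genuinely different route from the paper. The paper stays entirely within the machinery it has just built: by Proposition~\ref{PROPPROPERTIESLOCALIZERII}, 2.\@ and 3.\@ the two maps $(\partial\Delta_n\times\{e\})\otimes S\rightarrow(\partial\Delta_n\times\Delta_1)\otimes S$ and $(\Delta_n\times\{e\})\otimes S\rightarrow(\Delta_n\times\Delta_1)\otimes S$ are sent to weak equivalences, then Proposition~\ref{PROPPUSHOUT} transfers this to the pushout leg $(\Delta_n\times\{e\})\otimes S\rightarrow\Lambda_e(\Delta_n\times\Delta_1)\otimes S$, and 2-out-of-3 finishes. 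You instead bypass the pushout argument entirely by exhibiting an explicit order-theoretic deformation retraction $((a,b),c)\mapsto(a,\min(b,c))$ of $[n]\times[1]$ onto $[n]\times\{0\}$, checking (correctly --- the map fixes the first coordinate, so it preserves $\partial\Delta_n\times\Delta_1$, and it is constant on $\Delta_n\times\{0\}$) that it restricts to $\Lambda_e$, and then feeding the resulting left homotopy equivalence into Corollary~\ref{KORLEFTHOMOTOPYEQUIVALENCE}; the compatibility $(K\times\Delta_1)\otimes S\cong(K\otimes S)\otimes\Delta_1$ and the cofibrancy of $K\otimes S$ are indeed immediate from the formulas. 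Both arguments ultimately rest on Proposition~\ref{PROPPROPERTIESLOCALIZERII} (yours through the corollary on left homotopy equivalences, the paper's directly), so neither is logically lighter; what your version buys is independence from Proposition~\ref{PROPPUSHOUT} and a concrete homotopy inverse, at the cost of the combinatorial bookkeeping, while the paper's version is shorter given the surrounding lemmas and matches the pushout/transfinite-composition pattern used immediately afterwards in the proof of Theorem~\ref{THEOREMINTWECOF}. One small caveat: your parenthetical appeal to 2-out-of-3 for simplicial homotopy equivalences is dispensable (and slightly delicate for non-fibrant targets, since one-step homotopy need not be transitive); the explicit inverse and the two one-step homotopies $H$ and $H|_{\Lambda_0\times\Delta_1}$ that you write down are what actually carry the argument, and they suffice.
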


\begin{proof}
By Proposition~\ref{PROPPROPERTIESLOCALIZERII}, 2.\@\footnote{Note  that e.g.\@ $(\partial \Delta_n \times \{e\}) \otimes S \rightarrow ( \partial \Delta_n \times \Delta_1)  \otimes S$ is the same as
$\partial \Delta_n \otimes (\{e\} \otimes S) \rightarrow \partial \Delta_n \otimes (\Delta_1 \otimes S)$.} the morphisms $(\partial \Delta_n \times \{e\}) \otimes S \rightarrow ( \partial \Delta_n \times \Delta_1)  \otimes S$ and
$( \Delta_n \times \{e\} )  \otimes S \rightarrow ( \Delta_n \times \Delta_1)  \otimes S$ are  mapped to weak equivalences. Thus by Proposition~\ref{PROPPUSHOUT} also the push-out
\[   (\Delta_n \times \{ e \}) \otimes S \rightarrow (\Delta_n \times \{e\} \cup  \partial \Delta_n \times \Delta_1) \otimes S \]
is mapped to a weak equivalence and so the same holds for the given morphism by 2-out-of-3. 
\end{proof}

\begin{PROP}\label{PROPTRANSFINITE}
Let $O$ be an ordinal. 
If a sequence $O \rightarrow \mathcal{S}^{\amalg, \Delta^{\op}}$ of cofibrations maps to weak equivalences under $\int^\amalg$ then also its transfinite composition maps to a weak equivalence.
\end{PROP}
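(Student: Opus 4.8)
The plan is a transfinite induction whose only substantial step occurs at a limit ordinal; there the transfinite colimit in $\mathcal{S}^{\amalg,\Delta^{\op}}$ is disassembled into the already proven Lemma~\ref{LEMMAPUSHOUTTRANSFINITE}(2) by separating the simplicial direction from the direction of the sequence. By functoriality of $\int^\amalg$ together with $2$-out-of-$3$ (part of (L1)), the successor steps are immediate, so it suffices to treat the following inductive step: $O=\lambda$ is a limit ordinal, $X_\infty := \colim_{\alpha<\lambda} X_\alpha$ (a colimit computed degreewise in $\mathcal{S}^\amalg$, since cofibrations are degreewise in $\mathcal{L}_{\proj,\spl}$), and $\int^\amalg(X_0\to X_\alpha)\in\mathcal{W}$ for all $\alpha<\lambda$; we must show $\int^\amalg(X_0\to X_\infty)\in\mathcal{W}$.

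Regarding $X$ as a functor $\lambda\times\Delta^{\op}\to\mathcal{S}^\amalg$, I would form the object $P := \int_\lambda\bigl(\alpha\mapsto\int^\amalg X_\alpha\bigr)\in\Dia(\mathcal{S})$ — equivalently, by associativity of the Grothendieck construction, $\int^\amalg$ of $X$ viewed in $\mathcal{S}^{\amalg,\lambda\times\Delta^{\op}}$ (cf.\@ \ref{PARGCONSTR}, \ref{PARINTINT}) — which comes with compatible opfibrations to $\lambda$ and to $\Delta^{\op}$; its fibre over $\alpha\in\lambda$ is $\int^\amalg X_\alpha$ and its fibre over $n\in\Delta^{\op}$ is $\int^\amalg_\lambda(X_{-,n})$. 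Then $\int^\amalg(X_0\to X_\infty)$ factors as
\[ \int^\amalg X_0 \ \xrightarrow{\ \sigma_0\ }\ \bigl(\int^\amalg X_0\bigr)\times\lambda \ \xrightarrow{\ \int_\lambda\eta\ }\ P \ \xrightarrow{\ g\ }\ \int^\amalg X_\infty, \]
where $\eta$ is the point-wise morphism with $\alpha$-component $\int^\amalg(X_0\to X_\alpha)$ (so that $(\int^\amalg X_0)\times\lambda=\int_\lambda$ of the constant functor), $g$ is induced by the natural transformation $X\to\pr_{\Delta^{\op}}^*(X_\infty)$ of $\lambda\times\Delta^{\op}$-shaped objects, and $\sigma_0\colon x\mapsto(x,0)$ is the canonical section into the fibre over the initial object $0\in\lambda$. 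Checking that this composite really is $\int^\amalg$ of the structure map is an inspection of the Grothendieck construction: along the slice $\alpha=0$ the map $\int_\lambda\eta$ is the identity, and $g$ restricted there is $\int^\amalg$ of the colimit cocone $X_0\to X_\infty$.

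It then remains to place the three legs in $\mathcal{W}$. First, $g$ is a morphism of opfibrations over $(\Delta^{\op},\cdot)$, and its fibre over $n$ is the canonical map $\int^\amalg_\lambda(X_{-,n})\to\colim_{\alpha<\lambda}X_{\alpha,n}=X_{\infty,n}$; since $X$ is a sequence of cofibrations, every transition map $X_{\alpha,n}\to X_{\beta,n}$ is of the form $A\to A\amalg B$, so this fibre lies in $\mathcal{W}$ by Lemma~\ref{LEMMAPUSHOUTTRANSFINITE}(2), whence $g\in\mathcal{W}$ by Proposition~\ref{PROPPROPERTIESLOCALIZER}(3) (taking the trivial coverings). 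Second, $\eta$ is point-wise in $\mathcal{W}$ by the inductive hypothesis, so $\int_\lambda\eta\in\mathcal{W}$ by Corollary~\ref{KORGCONSTRUCTION}. Third, $\sigma_0$ is the morphism $\widetilde p$ attached to the adjunction in $\Dia$ in which the fibre inclusion $(\int^\amalg X_0)\times\{0\}\hookrightarrow(\int^\amalg X_0)\times\lambda$ is left adjoint to the projection $(\int^\amalg X_0)\times\lambda\to\int^\amalg X_0$ (here one uses that $0$ is initial in $\lambda$), hence $\sigma_0\in\mathcal{W}$ by Proposition~\ref{PROPPROPERTIESLOCALIZER}(2). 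Therefore the composite is in $\mathcal{W}$, which completes the induction.

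The crux, and the main obstacle, is the organisation of this limit step: one cannot invoke closure of $\mathcal{W}$ under transfinite composition (a localizer is only weakly saturated), and $\int^\amalg$ does not visibly commute with the colimit in a usable way. The device of factoring through $P$ is what makes it work, isolating the purely degreewise statement (dispatched by Lemma~\ref{LEMMAPUSHOUTTRANSFINITE}(2) plus the fibrewise criterion Proposition~\ref{PROPPROPERTIESLOCALIZER}(3)) from the comparison in the $\lambda$-direction (dispatched by comparing with the constant diagram via Corollary~\ref{KORGCONSTRUCTION} together with Proposition~\ref{PROPPROPERTIESLOCALIZER}(2) and the fact that $0$ is initial in $\lambda$). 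The remaining work — the fibre identifications for $P$ and the verification that the displayed composite equals $\int^\amalg(X_0\to X_\infty)$ — is routine bookkeeping with the Grothendieck construction.
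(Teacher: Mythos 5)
Your proof is correct and follows essentially the same route as the paper: both arguments compare $\int_{O\times\Delta^{\op}}X$ with the Grothendieck construction of the constant diagram at $X_0$ (via Corollary~\ref{KORGCONSTRUCTION} and the adjunction/initial-object argument) and with $\int^{\amalg}\colim X$ (via the degree-wise application of Lemma~\ref{LEMMAPUSHOUTTRANSFINITE}(2) through the fibrewise criterion of Proposition~\ref{PROPPROPERTIESLOCALIZER}(3)). The only differences are cosmetic: you package the comparison as a three-fold composite rather than a commutative square plus 2-out-of-3, and you make explicit the transfinite induction that the paper leaves implicit in its reading of the hypothesis.
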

\begin{proof}
If a functor $X: O \times \Delta^{\op} \rightarrow \mathcal{S}^{\amalg}$ is a cofibration for all morphisms in $O$ then the morphism
\[ \int_{O \times \Delta^{\op}} X \rightarrow \int_{\Delta^{\op}} \colim X   \]
is a weak equivalence by Lemma~\ref{LEMMAPUSHOUTTRANSFINITE}, 2.\@ because this can be checked degree-wise. It follows that, given two diagrams $X_0, X_1: O \times \Delta^{\op} \rightarrow \mathcal{S}^{\amalg}$ mapping all morphisms on $O$ to cofibrations, 
and a point-wise weak equivalence between them, also $\int^{\amalg} \colim X_0  \rightarrow \int^{\amalg} \colim X_1$ is a weak equivalence. 
Applying this to  $X_1 := X$ and $X_0 := X(0)$ (constant in $O$) the result follows. 
\end{proof}

\begin{proof}[Proof of Theorem~\ref{THEOREMINTWECOF}.]
Let $f$ be a weak equivalence between cofibrant objects.
It can be factored as
\[ f = p \circ \iota \]
where $\iota$ is a transfinite composition of pushouts of morphisms of the form\footnote{By \cite[Chapter IV]{GZ67} trivial cofibrations of simplicial sets are generated by the set $\Lambda_e( \Delta_n \times \Delta_1) \rightarrow \Delta_n \times \Delta_1$ and thus trivial cofibrations in $\mathcal{S}^{\amalg, \Delta^{\op}}$ are generated by the morphisms in question.}
\[ \Lambda_e( \Delta_n \times \Delta_1) \otimes S \rightarrow (\Delta_n \times \Delta_1) \otimes S  \] for $S \in \mathcal{S}$, and where $p$ is a trivial fibration between cofibrant objects.
The trivial fibration is mapped to a weak equivalence by Corollary~\ref{KORLEFTHOMOTOPYEQUIVALENCE}.
Therefore the statement follows from Propositions~\ref{PROPPUSHOUT}--\ref{PROPTRANSFINITE}.
\end{proof}

\section{Localizers and (co)homological descent}

\begin{PAR}
The notion of localizer is well-suited to study questions of (co)homological descent in derivators. Recall \cite{Hor15} the notion of fibered derivator as well as \cite[3.5]{Hor15} the notion of weak $\DD$-equivalence for a fibered derivator $\DD \rightarrow \SSS$ (cf.\@ also Definition~\ref{DEFWEAKDEQUIV}). This is a generalization of a notion of Cisinski~\cite{Cis08}. (The notion of strong $\DD$-equivalence will play a role later when extending fibered derivators to stacks in Section~\ref{SECTEX}.) 
In this section let $\mathcal{S}$ be a category with finite limits and
Grothendieck pretopology and denote by $\SSS$ the prederivator represented by $\mathcal{S}$. The definitions make sense more generally for a fibered derivator over a right derivator $\SSS$ --- we refer to \cite{Hor15} for details. 
\end{PAR}

\begin{PAR}
Let $\DD \rightarrow \SSS$ be a left fibered derivator satisfying (FDer0 right) as well. Recall \cite[2.6]{Hor15} that $\DD$ induces a 2-pseudo-functor $\Dia(\mathcal{S})^{\op} \rightarrow \mathcal{CAT}$ mapping $(I, T) \mapsto \DD(I)_T$ and a morphism $\mu: (I, T) \rightarrow (J, U)$ given by $\alpha: I \rightarrow J$ and $f: T \rightarrow \alpha^* U$ to $\mu^* := f^\bullet \alpha^*$ where $f^\bullet$ is a fixed pull-back functor along $f$.  From the axioms of a left fibered derivator together with (FDer0 right) it follows that these functors have left adjoints, namely  $\alpha_!^{(U)} f_\bullet$. 
Sometimes this pseudo-functor is taken to be the basic datum as for instance in Ayoub's notion of algebraic derivator. 
\end{PAR}

\begin{DEF}\label{DEFWEAKDEQUIV}Let $S$ be an object in $\mathcal{S}$.
A morphism 
\[ \xymatrix{ 
D_1 \ar[rr]^\alpha \ar[rd]_{\pi_1} & & D_2 \ar[ld]^{\pi_2} \\
& S
 } \]
in $\Dia(\mathcal{S})/S$ is  a {\bf $\DD$-equivalence} over $S$ if the morphism
\[ \pi_{1,!} \pi_{1}^* \rightarrow  \pi_{2,!} \pi_{2}^*  \]
induced by $\alpha$ is an isomorphism.
\end{DEF}

\begin{LEMMA}\label{LEMMADDEQUIVALENCE}
\begin{enumerate}
\item If $\alpha: D_1 \rightarrow D_2$ and $\beta: D_1 \rightarrow D_2$ are morphisms over $S$ and are connected by a 2-morphism $\mu: \alpha \Rightarrow \beta$
compatible with the morphisms to $S$  then $\alpha$ is a $\DD$-equivalence over $S$ if and
only if $\beta$ is a $\DD$-equivalence over $S$.
\item Every morphism in a formal adjunction in the 2-category $\Dia(\mathcal{S})/S$ is a $\DD$-equivalence over $S$. 
\item If $I$ contains a final object $i$ then $(I, S) \leftrightarrow (\cdot, S(i))$ are $\DD$-equivalences over $S(i)$. 
\end{enumerate}
\end{LEMMA}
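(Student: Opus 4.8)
The plan is to treat the three items largely as consequences of the basic calculus of adjunctions in derivators, mirroring how the analogous facts for localizers (Proposition~\ref{PROPPROPERTIESLOCALIZER}) were handled. For item~1, I would argue that the 2-morphism $\mu: \alpha \Rightarrow \beta$ induces, upon applying the pseudo-functor $(I,T) \mapsto \DD(I)_T$, a natural isomorphism between the two comparison morphisms $\pi_{1,!}\pi_1^* \to \pi_{2,!}\pi_2^*$ that are induced by $\alpha$ and $\beta$ respectively. Concretely: the compatibility of $\mu$ with the projections to $S$ means $\pi_1 = \pi_2 \circ \alpha = \pi_2 \circ \beta$ as morphisms in $\Dia(\mathcal{S})/S$ (up to the 2-cell), so $\mu^*$ furnishes an isomorphism of functors $\alpha^* \cong \beta^*$ compatible with the units/counits, whence the two induced transformations $\pi_{1,!}\pi_1^* \Rightarrow \pi_{2,!}\pi_2^*$ agree up to isomorphism. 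One being an isomorphism then forces the other to be one. The only care needed is to chase the 2-functoriality of $\DD$ on 2-morphisms correctly, which is routine.

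For item~2, suppose $\widetilde{s}: D_1 \to D_2$ and $\widetilde{p}: D_2 \to D_1$ form a formal adjunction over $S$ (say $\widetilde{s} \dashv \widetilde{p}$ or vice versa), with unit and counit 2-morphisms that are compatible with the structure maps $\pi_1, \pi_2$ to $S$. I would show directly that $\pi_{1,!}\pi_1^* \to \pi_{2,!}\pi_2^*$ is an isomorphism with explicit inverse coming from the adjoint morphism $\widetilde{p}$, using that $\pi_2 \circ \widetilde{s} = \pi_1$ and $\pi_1 \circ \widetilde{p} = \pi_2$ up to the given 2-cells, together with item~1 to absorb the unit/counit 2-morphisms. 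In other words, the composite $\pi_{1,!}\pi_1^* \to \pi_{2,!}\pi_2^* \to \pi_{1,!}\pi_1^*$ induced by $\widetilde{s}$ followed by $\widetilde{p}$ is induced by $\widetilde{p}\circ\widetilde{s}$, which is connected by a 2-morphism (the unit or counit) to the identity, hence is an isomorphism by item~1; symmetrically for the other composite. This is the algebraic shadow of the standard fact that adjoint functors induce mutually inverse maps on (co)limits.

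Item~3 I would deduce as a special case of item~2. If $I$ has a final object $i$, then the inclusion $\{i\} \hookrightarrow I$ and the projection $I \to \{i\}$ (well, the functor picking out $i$) are adjoint in $\Dia$, and this lifts to a formal adjunction $(I,S) \leftrightarrow (\cdot, S(i))$ in $\Dia(\mathcal{S})/S(i)$ exactly as in Proposition~\ref{PROPPROPERTIESLOCALIZER}, item~2, with the structure maps to $S(i)$ being the evident ones. Item~2 then applies verbatim. The main obstacle — really the only nontrivial point — is bookkeeping: making sure at each stage that the 2-morphisms witnessing the adjunctions are genuinely compatible with the slices over $S$ (i.e.\@ live in $\Dia(\mathcal{S})/S$, not merely in $\Dia(\mathcal{S})$), so that item~1 is legitimately applicable. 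Once that compatibility is pinned down, the rest is a formal diagram chase in the 2-pseudo-functor attached to $\DD$, and no input beyond the left-fibered-derivator axioms plus (FDer0 right) is needed.
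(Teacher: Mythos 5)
Your proposal is correct and follows essentially the same route as the paper: item~1 from the 2-functoriality of the pseudo-functor attached to $\DD$, item~2 by applying item~1 to the unit/counit 2-cells of the formal adjunction, and item~3 by exhibiting $(I,S) \leftrightarrow (\cdot, S(i))$ as a formal adjunction and invoking item~2. Your attention to the point that the adjunction 2-cells must live in the slice $\Dia(\mathcal{S})/S$ is a worthwhile detail the paper's one-line proof glosses over, but it introduces no new idea.
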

\begin{proof}
1.\@ follows directly from the 2-functoriality, and
2.\@ is an immediate consequence of 1. For
3.\@ note that $(I, S) \leftrightarrow (\cdot, S(i))$ is a formal adjunction in the 2-category $\Dia(\mathcal{S})$
\end{proof}

The following is a slight generalization of the Main Theorem of weak homological descent \cite[Theorem 3.5.5]{Hor15}. 
First recall: 
\begin{DEF}\label{DEFLOCAL}
Let $p: \DD \rightarrow \SSS$ be a left fibered derivator satisfying also (FDer0 right). 
A morphism $f: U \rightarrow S$ in $\mathcal{S}$ is {\bf $\DD$-local} if

\begin{itemize}
\item[(Dloc1 left)] 
The morphism $f$ satisfies {\bf base change}: for any diagram $Q \in \DD(\Box)$  with underlying diagram
\[ \xymatrix{A \ar[r]^-{\widetilde{F}} \ar[d]_-{\widetilde{G}} & B \ar[d]^{\widetilde{g}} \\ C \ar[r]_{\widetilde{f}} & D } \]
such that $p(Q)$ in $\mathcal{S}^\Box$ is a pull-back-diagram with $p(\widetilde{f})=f$ the following holds true: 
If $\widetilde{F}$ and ${\widetilde{f}}$ are Cartesian, and $\widetilde{g}$ is coCartesian then also $\widetilde{G}$ is 
coCartesian.\footnote{In other words, if 
\[ \xymatrix{ U \times_S V   \ar[r]^-F \ar[d]_-G & V \ar[d]^-g \\ U \ar[r]_-f & S } \]
is the underlying diagram of $p(Q)$ then the exchange morphism
\[ G_\bullet F^\bullet \rightarrow f^\bullet g_\bullet  \]
is an isomorphism.}

\item[(Dloc2 left)] The morphism of left derivators 
$f^\bullet: \DD_S \rightarrow \DD_U$
commutes with homotopy colimits.
\end{itemize}

The left fibered derivator $p: \DD \rightarrow \SSS$ is {\bf local} w.r.t.\@ the pretopology on $\mathcal{S}$, if
the following conditions hold: 
\begin{enumerate}
\item Every morphism $U_i \rightarrow S$ which is part of a cover is $\DD$-local.
\item For a cover $\{f_i: U_i \rightarrow S\}$ the family
$\{ (f_i)^\bullet: \DD(S) \rightarrow \DD(U_i)\} $
is jointly conservative.
\end{enumerate}
\end{DEF}

\begin{SATZ}\label{SATZWEAKHOMDESCENT}
Let $p: \DD \rightarrow \SSS$ be a left fibered derivator which is local w.r.t.\@ the Grothendieck pretopology on $\mathcal{S}$, and such that all push-forward functors $f_\bullet$ are conservative. Then
all $\mathcal{W}_S$ are localizers. 
\end{SATZ}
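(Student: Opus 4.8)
The plan is to verify, for each object $S\in\mathcal{S}$, the four localizer axioms for the class $\mathcal{W}_S$ of $\DD$-equivalences over $S$, regarded as a class of $1$-morphisms in $\Dia(\mathcal{S})/S \cong \Dia(\mathcal{S}/S)$ (the latter carrying the induced finite limits and pretopology); the argument follows the proof of \cite[Theorem~3.5.5]{Hor15}, with the notion of $\DD$-equivalence used over diagrams, not just objects, in the evident extension of Definition~\ref{DEFWEAKDEQUIV}. Write $\theta_\alpha\colon \pi_{1,!}\pi_1^* \Rightarrow \pi_{2,!}\pi_2^*$ for the natural transformation attached to a morphism $\alpha\colon D_1\to D_2$ over a base. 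The one structural fact used throughout is that $\DD$ is a pseudo-functor on $\Dia(\mathcal{S})^{\op}$ with left adjoints, so $(hq)^*=q^*h^*$ and $(hq)_!=h_!q_!$; hence $\theta$ is compatible with composition ($\theta_{\beta\alpha}=\theta_\beta\circ\theta_\alpha$) and, for any $h\colon D\to D'$ compatible with the structure maps, $\theta_\alpha$ over $D'$ is obtained by applying $h_!(-)h^*$ to $\theta_\alpha$ over $D$; in particular a $\DD$-equivalence over $D$ is a $\DD$-equivalence over $D'$ (the \emph{compatible-base principle}). Axiom (L1): identities are trivially in $\mathcal{W}_S$; $2$-out-of-$3$ is immediate from $\theta_{\beta\alpha}=\theta_\beta\circ\theta_\alpha$; and (WS3) follows from the elementary fact that if $ba=\id$ and $ab$ is invertible then $a$ and $b$ are invertible (applied with $a=\theta_s$, $b=\theta_p$, using $\theta_p\theta_s=\theta_{\id}$ and $\theta_s\theta_p=\theta_{sp}$). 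Axiom (L2 left): the projection $(I,F)\to(e,F(e))$ for $I$ with final object $e$ is a $\DD$-equivalence over $F(e)$ by the third part of Lemma~\ref{LEMMADDEQUIVALENCE}, hence over $S$ by the compatible-base principle applied to the structure map $F(e)\to S$.

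Axiom (L4 left) is the comparatively formal part. For $w\colon (I,\alpha^*T)\to(J,T)$ of pure diagram type one has $\theta_w=\pi_{J,!}(\varepsilon)\pi_J^*$ with $\varepsilon\colon\alpha_!^{(T)}\alpha^*\Rightarrow\id$ the counit, so it suffices that $\varepsilon$ be an isomorphism, which is checked after each $j^*$, $j\in J$ (jointly conservative, a derivator axiom). The comma-square base change of a left fibered derivator identifies $j^*\varepsilon$ with the comparison attached to the projection of $j\times_{/J}I$ to a point, and the fibre hypothesis of (L4 left) says exactly that this is an isomorphism. Conservativity of the push-forward functors is what lets one pass the needed isomorphisms through the fibre directions of the diagrams involved; the rest is the usual cofinality bookkeeping of \cite{Hor15}.

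The substance is (L3 left). Given the commutative triangle $(I,S')\xrightarrow{w}(J,T)$ over $(K,U)$ with the local fibre hypothesis, one first uses the compatible-base principle for the structure map $(K,U)\to S$ to reduce to showing that $w$ is a $\DD$-equivalence over $(K,U)$; since a natural transformation landing in $\DD(K)_U$ is an isomorphism iff it is so after every $k^*$, $k\in K$, and since $k^*$ intertwines the relevant left Kan extensions with those over the comma categories $(-)\times_{/(K,U)}(k,U(k))$, one reduces further to: for each $k$, the morphism $w\times_{/(K,U)}(k,U(k))$ is a $\DD$-equivalence over the single object $U(k)$. For this the hypothesis provides a covering $\{U_{i,k}\to U(k)\}$ with each $w\times_{/(K,U)}(k,U_{i,k})=\bigl(w\times_{/(K,U)}(k,U(k))\bigr)\times_{/U(k)}U_{i,k}$ a $\DD$-equivalence over $U_{i,k}$. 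The crux is then the homological-descent computation: for a map $f\colon V\to\Sigma_0$ in a covering, $f$ is $\DD$-local, so (Dloc1 left) applied object-wise along a diagram gives a base-change isomorphism $f^\bullet s_\bullet\cong s'_\bullet (f')^\bullet$ for the relevant pulled-back morphisms $s',f'$, and combining this with (Dloc2 left) — that $f^\bullet$ commutes with homotopy colimits — yields a natural isomorphism
\[ f^\bullet \circ \pi_!\,\pi^* \;\cong\; \pi'_!\, \pi'^* \circ f^\bullet \]
where $\pi'$ is the structure map of the pulled-back diagram $(-)\times_{/\Sigma_0}V$. This isomorphism carries $\theta_w$ to $\theta_{w\times_{/\Sigma_0}V}$, an isomorphism by hypothesis; running it over all members $f_i$ of the covering and using joint conservativity of $\{f_i^\bullet\}$ (condition~2 of locality, Definition~\ref{DEFLOCAL}) shows $\theta_w$ is an isomorphism, so $w\in\mathcal{W}_{\Sigma_0}$.

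The main obstacle is precisely the base-change isomorphism $f^\bullet\pi_!\pi^*\cong\pi'_!\pi'^*f^\bullet$: here $\pi_!\pi^*$ is not a single pullback–pushforward but a relative homotopy colimit along the whole diagram, so one must combine the object-wise base change (Dloc1 left) in the fibre directions with the commutation of $f^\bullet$ with homotopy colimits (Dloc2 left), and then check — using (FDer0) and the conservativity of the push-forwards — that these assemble into a single natural transformation, that it is invertible, and that it intertwines the $w$- and $(w\times_{/\Sigma_0}V)$-comparison maps. Once this (co)homological descent is set up the remaining axioms are routine, and everything follows the template of \cite{Hor15}, the only genuine additions being the systematic use of the compatible-base principle and of the conservativity hypothesis on push-forwards.
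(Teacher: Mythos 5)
Your overall template (verify (L1)--(L4) for $\mathcal{W}_S$, using locality for (L3) and a cofinality statement for (L4)) matches the paper, and (L1), (L2 left) and the descent mechanism in (L3) are in the right spirit. But there are two genuine gaps.

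The serious one is (L4 left). You reduce to showing that the counit $\varepsilon\colon \alpha_!\alpha^*\Rightarrow\id$ is an isomorphism and propose to check this after each $j^*$, identifying $j^*\varepsilon$ with the comparison map for the projection of $j\times_{/J}I$ to a point. This identification is false: by the comma-square formula $j^*\alpha_!\alpha^*X\cong p_!\pi^*\alpha^*X$, and the coefficient diagram $\pi^*\alpha^*X$ on the comma category is \emph{not constant}, so the contractibility hypothesis of (L4 left) (which is a statement about constant coefficients, $p_!p^*\to\id$) does not apply to it. In fact $\varepsilon$ is essentially never an isomorphism under the hypotheses of (L4 left): take $\alpha$ to be the inclusion of the final object of $J=[1]$, which satisfies the fibre hypothesis, yet $(\alpha_!\alpha^*X)(0)$ is initial. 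What is true, and what the conclusion of (L4 left) actually requires, is only that $\pi_{J,!}\varepsilon$ is invertible, i.e.\@ that $p_{I,!}\alpha^*\to p_{J,!}$ is an isomorphism. Since these functors land in $\DD(\cdot)$ there is no pointwise criterion available, and this is exactly the content of Theorem~\ref{FAIBLE} (``every d\'erivateur faible \`a gauche is a d\'erivateur \`a gauche''), whose proof is genuinely non-trivial: one checks the transformation on the generators $j_!\mathcal{E}$ via Lemma~\ref{LEMMAADJOINTDER2}, which in turn rests on the twisted-arrow-category decomposition showing every object of the fibre is a homotopy colimit of such generators. Your ``usual cofinality bookkeeping'' hides precisely this step, and the concrete argument you do give in its place would fail.

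The second gap is in (L3 left), where you write that the hypothesis provides coverings with each $w\times_{/(K,U)}(k,U_{i,k})$ a $\DD$-equivalence \emph{over $U_{i,k}$}. It does not: the hypothesis of (L3 left) for $\mathcal{W}_S$ is that these morphisms are $\DD$-equivalences \emph{over $S$}, i.e.\@ that $\pi_{U_{i,k},\bullet}\,\Pi_{1,!}\pi_1^*\to\pi_{U_{i,k},\bullet}\,\Pi_{2,!}\pi_2^*$ is invertible, and your compatible-base principle only goes in the opposite direction (equivalence over the cover member implies equivalence over $S$, not conversely). Stripping off $\pi_{U_{i,k},\bullet}$ is exactly where the extra hypothesis of the theorem --- conservativity of all push-forward functors $f_\bullet$ --- must be invoked; you mention this hypothesis only in connection with (L4), where the paper does not need it. Once that step is inserted, the rest of your (L3) argument (base change from (Dloc1 left) plus (Dloc2 left) to commute $f^\bullet$ past $\pi_!\pi^*$, then joint conservativity of the $f_i^\bullet$ from condition 2 of Definition~\ref{DEFLOCAL} together with (Der2) over $K$) agrees with the paper's.
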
 
There exists an obvious dual variant (cohomological descent) whose formulation we leave to the reader. Without the assumption regarding conservativity, the $\mathcal{W}_S$ only form a
{\em system of relative localizers}, a notion that we will not need in this article. For the statement of this Theorem to be true we had to slightly change property (L4 left) with respect to \cite{Hor15}. 
The Main Theorem of weak homological descent was proven in \cite{Hor15} under the assumption that $\DD$ is also a right fibered derivator. The assumption, however, is not needed at least for this formulation of the Theorem. 

\begin{proof}
(L1) is clear. (L2 left) is Lemma~\ref{LEMMADDEQUIVALENCE}, 3. 
For (L3 left) consider a diagram
\[ \xymatrix{ D_1 \ar[rddd]_{\pi_1} \ar[rd]\ar[rr]^f & & D_2 \ar[dl] \ar[dddl]^{\pi_2} \\
& D_3 = (E, F) \ar[dd]|{\pi_3} \\
\\
 & S
 }\]
  in $\Dia(\mathcal{S})/S$. For all $e \in E$ let $\{ U_{e,i} \rightarrow F(e) \}$ be a covering and 
 assume that
 \[ D_{1,i,e} := D_1 \times_{/D_3} U_{e,i} \rightarrow D_{2,i,e} := D_2 \times_{/D_3} U_{e,i} \]
 is in $\mathcal{W}_S$ for all $e \in E$ and for all $i$, i.e.\@ that 
 \[ \pi_{D_{1,i,e},!} \pi_{D_{1,i,e}}^* \rightarrow \pi_{D_{2,i,e},!} \pi_{D_{2,i,e}}^* \]
 is an isomorphism. 

 Consider the following diagrams for $k=0,1$:
 \[\xymatrix{ 
 D_{k,e,i} \ar[r]^{B_{k,i,e}} \ar[d]_{\Pi_i} &  D_{k} \ar[rdd]^{\pi_i} \ar[d]_{p_k} \\
 U_{i,e} \ar[rrd]_{\pi_{U_{i,e}}} \ar[r]^{\beta_{i,e}} & D_3 \ar[rd]|{\pi_3} \\
 & & S}\]
 By conservativity of $\pi_{U_{i,e},!} = \pi_{U_{i,e},\bullet}$ (the assumption) the morphism 
 \[ \Pi_{1,!} \pi_{D_{1,i,e}}^* \rightarrow \Pi_{2,!} \pi_{D_{2,i,e}}^* \]
is  an isomorphism.
By base change (DLoc1 left) we have that 
 $\Pi_{k,!} B_{k,i,e}^*   \cong  \beta_{i,e}^* p_{k,!}$ (cf.\@ \cite[Proposition 2.6.8.2]{Hor15}). 
Therefore 
 \[ \beta_{i,e}^*  p_{1,!} \pi_{1}^* \rightarrow \beta_{i,e}^*  p_{2,!} \pi_{2}^* \]
 is an isomorphism. Since $\DD$ is local, and by (Der2), the collection of $\beta_{i,e}^*$ for all $i$ and $e$ is conservative, hence
 \[   p_{1,!} \pi_{1}^* \rightarrow  p_{2,!} \pi_{2}^* \]
 is an isomorphism and therefore also
 \[   \pi_{1,!} \pi_{1}^* \rightarrow  \pi_{2,!} \pi_{2}^*. \]

For (L4 left) 
let 
\[ \xymatrix{ 
D_1:=(I, \alpha^* T) \ar[rr] \ar[rd]_{\pi_1} & &  D_2:=(J, T) \ar[ld]^{\pi_2} \\
& S
} \]
be a morphism in $\Dia(\mathcal{S})/S$ of pure diagram type 
such that  $\alpha: I \rightarrow J$ is a fibration.
 Let $f: T \rightarrow \pi_J^*S$ be the structural morphism.  
By assumption 
\[ \pi_{I_j,!}  \pi_{I_j}^* \rightarrow \id  \]
is an isomorphism in $\DD_{S}(\cdot)$ for all $j$. 
Therefore Theorem~\ref{FAIBLE} below applies and
\[ \pi_{J,!}  \alpha_!  \alpha^*  \rightarrow \pi_{J,!} \]
is an isomorphism on $\DD_{S}(\cdot)$. Therefore also
\[ \pi_{J,!}  \alpha_!  \alpha^* f_\bullet f^\bullet \pi_J^*  \rightarrow \pi_{J,!} f_\bullet f^\bullet \pi_J^* \]
is an isomorphism, which is the same as
\[ \pi_{I,!} (\alpha^*f)_\bullet (\alpha^*f)^\bullet   \pi_I^*  \rightarrow \pi_{J,!} f_\bullet f^\bullet \pi_J^*, \]
or equivalently
\[ \pi_{1,!} \pi_1^* \rightarrow  \pi_{2,!} \pi_2^*.   \]
\end{proof}

We have already made use of the following theorem:

\begin{SATZ}\label{FAIBLE}
Let $\DD$ be a left derivator. If $\alpha: I \rightarrow J$ is a fibration such that for each fibre $p_{I_j}: I_j \rightarrow j$ the counit $p_{I_j, !} p_{I_j}^* \rightarrow \id$ is an isomorphism then 
the canonical morphism
\[ p_{I,!} \alpha^* \rightarrow p_{J,!}  \]
is an isomorphism. 
\end{SATZ}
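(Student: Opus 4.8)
The plan is to recognise the claim as the derivator form of one direction of Quillen's Theorem~A, applied to the Grothendieck fibration $\alpha$, and first to put both hypothesis and conclusion into the right shape. Since $p_I = p_J \circ \alpha$ we have $p_{I,!} = p_{J,!}\alpha_!$, so the morphism in the statement is exactly $p_{J,!}$ applied to the counit $\varepsilon\colon \alpha_!\alpha^* \Rightarrow \id_{\DD(J)}$ of $\alpha_! \dashv \alpha^*$. Thus the assertion is that $\alpha$ is \emph{aspherical for $\DD$}, i.e.\ $p_{J,!}(\varepsilon_M)\colon p_{I,!}\alpha^* M \to p_{J,!} M$ is invertible for every $M \in \DD(J)$ --- a weaker demand than invertibility of $\varepsilon$ itself, which fails for fibrations in general.

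Next I would rewrite the fibrewise hypothesis in terms of comma categories. For $j \in J$ let $j \times_{/J} I$ have objects $(i,\psi\colon j \to \alpha(i))$; the inclusion $\iota_j\colon I_j \hookrightarrow j\times_{/J} I$, $i \mapsto (i, \id_j)$, has a right adjoint $r_j$ sending $(i,\psi)$ to a cartesian lift $\psi^{*}i$ of $\psi$ along $\alpha$, with $r_j\iota_j \cong \id$ --- it is precisely the assumption that $\alpha$ is a fibration that yields the counit $\iota_j r_j \Rightarrow \id$ from the cartesian morphisms. Hence $(\iota_j,r_j)$ is a formal adjunction in $\Dia$, so by Lemma~\ref{LEMMADDEQUIVALENCE}, 2 (applied over the one-point site, where $\DD$ is a left fibered derivator over $\cdot$), or equivalently by the standard fact that a functor with a left adjoint is aspherical in any left derivator, $\iota_j$ and $r_j$ are $\DD$-equivalences; therefore $(I_j,\cdot) \to (\cdot,\cdot)$ is a $\DD$-equivalence --- i.e.\ $p_{I_j,!}p_{I_j}^{*} \to \id$ is invertible --- if and only if $(j\times_{/J}I,\cdot) \to (\cdot,\cdot)$ is. So the hypothesis is equivalent to: $j\times_{/J}I$ is $\DD$-contractible for all $j$.

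It then remains to prove the derivator version of Theorem~A in the form: \emph{if every comma category $j\times_{/J}I$ is $\DD$-contractible, then $\alpha$ is aspherical for $\DD$.} This last input is the only non-formal one, and is where I expect the main obstacle --- it is exactly the criterion implicitly used to identify the two formulations of (L4~left). I would deduce it from the homotopy-exact-square machinery for left derivators: with the comma base-change isomorphism $j^{*}\alpha_{!} \cong p_{\alpha/j,!}\,q_j^{*}$ (formal from the derivator axioms) and the weak-saturation / $2$-out-of-$3$ calculus, the statement reduces to homotopy exactness of the square with corners $j\times_{/J}I$, $I$, $\cdot$, $J$, which holds once the $j\times_{/J}I$ are $\DD$-contractible; alternatively one may simply invoke it from the literature (Cisinski \cite{Cis08}, or \cite{Hor15}). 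Everything outside this step is formal.
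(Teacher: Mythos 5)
Your formal reductions in the first two paragraphs are correct: the morphism in question is indeed $p_{J,!}$ applied to the counit of $\alpha_!\dashv\alpha^*$, and the adjunction $\iota_j\dashv r_j$ between $I_j$ and $j\times_{/J}I$ obtained from cartesian lifts does show that the fibrewise hypothesis is equivalent to $\DD$-contractibility of the comma categories $j\times_{/J}I$. The gap is in the third paragraph, which you yourself flag as the only non-formal step but then do not carry out. The comma square that is homotopy exact by the axioms of a left derivator is the one built on $I\times_{/J}j$ (objects $(i,\alpha(i)\to j)$), which computes $j^*\alpha_!$ pointwise; your hypothesis concerns the \emph{other} comma category $j\times_{/J}I$ (objects $(i,j\to\alpha(i))$), the one relevant to right Kan extensions. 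In a general left derivator, contractibility of the $j\times_{/J}I$ gives no formal handle on the Beck--Chevalley map for $\alpha_!$, so the assertion that ``the square with corners $j\times_{/J}I$, $I$, $\cdot$, $J$ is homotopy exact once the $j\times_{/J}I$ are $\DD$-contractible'' is essentially a restatement of the theorem, not a reduction of it. Nor is the fallback to the literature available: the paper notes explicitly that this is precisely the fact stated in \cite{Cis08} \emph{without proof} (that every ``d\'erivateur faible \`a gauche'' is a ``d\'erivateur \`a gauche'').

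The missing ingredient is Lemma~\ref{LEMMAADJOINTDER2}: a 2-morphism between continuous functors out of the fiber $\DD_J$ is invertible as soon as it is invertible on objects of the form $j_!\mathcal{E}$ with $j\in J$ and $\mathcal{E}\in\DD(\cdot)$, because every object of the fiber is a homotopy colimit of such cells (proved via the category $\twc J$); this is the genuinely new input. Granting it, one evaluates $p_{I,!}\alpha^*\to p_{J,!}$ on $j_!\mathcal{E}$ and uses homotopy exactness of the \emph{fiber} square of $\alpha$ for the left Kan extension $j_!$ --- which does hold formally for a fibration, since the relevant comma categories $I_j\times_{/I}i$ split as coproducts over $\Hom_J(j,\alpha(i))$ of categories with terminal objects given by cartesian lifts --- to identify $p_{I,!}\alpha^*j_!\mathcal{E}$ with $p_{I_j,!}p_{I_j}^*\mathcal{E}$, where the fibrewise hypothesis applies. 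Your proposal contains no substitute for the cell-decomposition lemma, and without it the argument does not close.
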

In the language of \cite[3.11]{Cis08} this means that every ``d\'erivateur faible \`a gauche'' is a ''d\'erivateur \`a gauche'', a fact stated in \cite{Cis08} without proof.
We need a kind of adjoint of axiom (Der2):

\begin{LEMMA}\label{LEMMAADJOINTDER2}
Let $\DD, \EE$ be left derivators. Let $I$ be a diagram and let $\DD_{I}$ be the fibre \cite[Theorem 1.30]{Gro13} of $\DD$ above $I$. If
\[ F, G: \DD_{I} \rightarrow \EE, \]
 are two continuous (i.e.\@ commuting with homotopy colimits) morphisms of left derivators and
\[ \mu: F \Rightarrow G \] is a 2-morphism such that $\mu(\cdot)$ is an isomorphism 
on all objects of the form $i_!  \mathcal{E}$ for $i \in  I$  and $\mathcal{E} \in \DD(\cdot)$ then 
$\mu$ is an isomorphism. 
\end{LEMMA}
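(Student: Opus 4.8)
The plan is to reduce the statement to an application of axiom (Der2) for the target derivator $\EE$, which says that a 2-morphism of diagrams is an isomorphism if and only if it is so after evaluation at every object. First I would recall that the fibre $\DD_I$ is again a left derivator (by \cite[Theorem 1.30]{Gro13}), so $F$ and $G$ may be evaluated on any diagram $K$ to give functors $\DD_I(K) \to \EE(K)$, compatibly with homotopy colimits since $F,G$ are continuous. By (Der2) applied to $\EE$, it suffices to prove that $\mu(\cdot)\colon F(\cdot)\Rightarrow G(\cdot)$ is an isomorphism on every object $\mathcal{X}\in\DD_I(\cdot)=\DD(I)$. Thus the whole problem is moved to the base diagram category $K=\cdot$, and the task becomes: show $\mu(\cdot)$ is an isomorphism on all of $\DD(I)$, given that it is an isomorphism on the objects $i_!\mathcal{E}$ with $i\in I$, $\mathcal{E}\in\DD(\cdot)$.

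The key step is then a "generation" argument: every object of $\DD(I)$ is built from the objects $i_!\mathcal{E}$ by a homotopy colimit. Concretely, for $\mathcal{X}\in\DD(I)$ one has the canonical resolution
\[ \hocolim_{(i\to i')\in \mathrm{Tw}(I)} \; i'_! i^* \mathcal{X} \;\iso\; \mathcal{X}, \]
(the bar-type / co-end resolution expressing $\mathcal{X}$ as a weighted homotopy colimit of the corepresentable building blocks $i'_!(-)$), or alternatively the statement that the functors $i^*$ are jointly conservative together with a left Kan extension description. The point is that $\mathcal{X}$ lies in the smallest class of objects of $\DD(I)$ closed under homotopy colimits and containing the $i_!\mathcal{E}$. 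Since $F$ and $G$ both commute with homotopy colimits and $\mu$ is a 2-morphism between them, $\mu(\cdot)$ evaluated on such a homotopy colimit is the induced map on homotopy colimits of the maps $\mu(i'_!i^*\mathcal{X})$, each of which is an isomorphism by hypothesis; hence $\mu(\cdot)(\mathcal{X})$ is an isomorphism. This also uses that continuity of $F,G$ is compatible with the diagram $\mathrm{Tw}(I)$ (or whatever index category is used) so that applying $F$ to the resolution of $\mathcal{X}$ again yields a homotopy colimit diagram in $\EE$.

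The main obstacle I expect is making the "$\mathcal{X}$ is a homotopy colimit of the $i_!\mathcal{E}$" step precise at the level of derivators: unlike in an $\infty$-category or a model category, in a bare derivator one does not have an explicit bar construction, so one must either invoke the known fact that the $i_!i^*$ (summed/hocolim'd appropriately) recover the identity on $\DD(I)$ — this is essentially the statement that $\{i^*\}_{i\in I}$ is a conservative family together with the fact that the left Kan extension along $I\to\cdot$ computes via the fibres — or, more elementarily, proceed by induction on $I$ when $I$ is a finite directed category and then pass to general $I$ by a filtered-colimit / (Der1)-type argument. A clean alternative that sidesteps explicit resolutions: use that $p_{I,!}\colon \DD(I)\to\DD(\cdot)$ together with the $i^*$ detect isomorphisms after noting $\mu$ is natural, and reduce to checking on objects in the image of the various $i_!$ by a co-continuity/adjunction manipulation. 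I would write the argument via the twisted-arrow resolution since that is the most self-contained, citing \cite{Gro13} for the relevant derivator facts, and flag that the continuity hypothesis on $F,G$ is exactly what is needed to commute $F$ past that resolution.
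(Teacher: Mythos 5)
Your proposal follows essentially the same route as the paper: reduce to $K=\cdot$ via (Der2) for $\EE$, resolve an arbitrary $\mathcal{X}\in\DD(I)$ as a homotopy colimit over the twisted arrow category of a diagram that is point-wise of the form $j_!\mathcal{F}$, and conclude by continuity of $F$ and $G$. The one step you flag as the main obstacle --- establishing the resolution $p_{\tw I,!}\,\mathcal{E}'\iso\mathcal{X}$ in a bare derivator --- is precisely where the paper spends most of its effort (it works with the category of composable pairs of arrows in $I$, factors its projection through $\tw I\times I$ via an opfibration with discrete fibres, and verifies the counit is an isomorphism point-wise in $I$ using comma categories and adjunctions), so your plan is correct but that step would still need to be carried out in full.
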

Observe that the Lemma follows immediately from (Der2) in case that $F$ and $G$ have right adjoints. 
\begin{proof}Consider the diagram $\twc I$ (cf.\@ \cite[7.3]{Hor16}). Its objects are sequences $i_1 \rightarrow i_2 \rightarrow i_3$ in $I$ and its morphims are commutative diagrams
\[ \xymatrix{ i_1 \ar[r]\ar[d] &  i_2 \ar[r] &  i_3 \ar[d] \\
 i_1' \ar[r] &  i_2' \ar[r] \ar[u] &  i_3'. } \]
It comes equipped with two obvious functors
\[ \xymatrix{ I & \ar[l]_{\pi_3}\twc I \ar[r]^-{\pi_1} & I   } \]
and a natural transformation $\mu: \pi_1 \Rightarrow \pi_3$ which induces a natural transformation
\begin{equation}\label{eq7}
 \pi_{3,!}  \pi_1^* \Rightarrow \id.
\end{equation}
Let us assume for the moment that (\ref{eq7}) is an isomorphism.  
The functor $\pi_{3,!}$ is the composition:
\[ \pi_{3,!} = \pi_{3,!}' p_! \]
where $p$ and $\pi_3'$ are the opfibrations  
\[ \xymatrix{ \twc I \ar[r]^-p & \tw I \times  I \ar[r]^-{\pi_3'} \ar[r] & I. } \]
Since $p$ has discrete fibers, 
the functor $p_! \pi_1^*$ is point-wise at $(\nu: i \rightarrow j, k)$ given by
\[  \coprod_{\alpha \in \Hom(j, k)}  i^* \mathcal{E}  =  k^* j_!  i^* \mathcal{E} . \] 
Thus every object $\mathcal{E}$ in the fiber $\DD_I(\cdot)$ is of the form 
$p_{\tw I,!} \mathcal{E}'$
where $p_{\tw I,!}$ is the homotopy colimit functor {\em of the fiber},
with $\mathcal{E}' \in \DD_I(\tw I)$ which is point-wise (in $\tw I$) of the form $j_! \mathcal{F}$ for some $\mathcal{F} \in \DD(\cdot)$.
The statement follows. 

It remains to show that (\ref{eq7}) is an isomorphism. This can be checked point-wise in $I$, i.e.\@ we are left to show that 
 the morphism
 \[ i^* \pi_{3,!} \pi_1^* \Rightarrow  i^* \]
 is an isomorphism. Consider the following functors:
\[ \xymatrix{ \tw(I \times_{/I} i) \ar[r]^-{\pi_{13}'} \ar[d]^{\pi_3''} & I \times_{/I} i  \ar[r]^-{\pi_1'} & I   \\
 \{i\}    } \]
 Since $\pi_3$ is an opfibration we have
\begin{equation*}   i^* \pi_{3,!} \pi_1^* \cong  p_{\tw(I \times_{/I} i),!} \pi_1^* \cong
   p_{\tw(I \times_{/I} i),!} (\pi_{13}')^* (\pi_1')^*. 
\end{equation*}
Thus it suffices to see that the counit 
 \[ (\pi_{13}')_! (\pi_{13}')^* \Rightarrow \id \]
is an isomorphism. (Note that $p_{I \times_{/I} i, !}(\pi_1')^* \cong i^*$ is an isomorphism.) However $\pi_{13}'$ has an obvious left adjoint $\iota_{13}$ such that the unit $\id \Rightarrow \pi_{13}' \iota_{13}$ is an isomorphism.
Therefore, by 2-functoriality of the prederivator, $(\pi_{13}')^*$ has the right adjoint  $\pi_{13,*}' := \iota_{13}^*$ such that the unit $\id \Rightarrow\pi_{13,*}' (\pi_{13}')^*$ is an isomorphism. 
However, the above counit is just the adjoint of this isomorphism and thus also an isomorphism. 
\end{proof}

\begin{proof}[Proof of Theorem~\ref{FAIBLE}]
Domain and codomain of the natural transformation in question give rise to functors from the fiber above $J$ 
\[ \DD_J \rightarrow \DD \]
which are continuous, i.e.\@ commute with homotopy colimits. Hence, by Lemma~\ref{LEMMAADJOINTDER2}, it suffices to show the assertion on objects of the form $j_! \mathcal{E}$ for $\mathcal{E} \in \DD(\cdot)$ and $j \in J$, i.e.\@ we have to show that 
\[ p_{I,!} \alpha^* j_! \rightarrow p_{J,!} j_!  = \id \]
is an isomorphism for all $j \in J$. Since $\alpha$ is a fibration, the diagram
\[ \xymatrix{ I_j \ar[r]^{p_{I_j}} \ar[d]_{\iota} & \cdot \ar[d]^j \\
 I \ar[r]_{\alpha} & J  } \]
 is homotopy exact. Therefore we have to show that 
\[ p_{I_j,!} p_{I_j}^* = p_{I,!} \iota_! p_{I_j}^* \rightarrow \id \]
is an isomorphism which is true by assumption.
\end{proof}

\section{The fibered derivators of higher stacks}

In Section~\ref{SECTMC} we have seen several model categories presenting the homotopy theory of higher stacks.
In this section we prove that they all yield fibered derivators over the base category $\mathcal{S}$ with an explicit Bousfield-Kan formula for
relative homotopy Kan extensions. 

\begin{PAR}\label{PARMC}Let $\mathcal{S}$ be a category with finite limits. 
Recall from Section~\ref{SECTMC} that we have the following simplicial model categories:
\begin{enumerate}
\item  $\mathcal{S}^{\Delta^{\op}}$, if $\mathcal{S}$ is extensive (thus big in general), has finite limits, and every object is a coproduct of $\N$-small objects, with the
split-projective structure,
\item  $\mathcal{S}^{\amalg, \Delta^{\op}}$, if $\mathcal{S}$ has finite limits (and arbitrary size), with the
split-projective structure;
\item  $\mathcal{SET}^{\mathcal{S}^{\op} \times \Delta^{\op}}$, if $\mathcal{S}$ is small, with the
projective model structure.
\item  $\mathcal{SET}^{\mathcal{S}^{\op} \times \Delta^{\op}}_{loc}$, if $\mathcal{S}$ is small, with the left Bousfield localization at the \v{C}ech covers of the model structure in 3. 
\end{enumerate}
The goal of this section is to show that the above model categories give rise to fibered derivators in the sense of \cite[2.3.6]{Hor15} over $\SSS$, the prederivator represented by $\mathcal{S}$. 
\end{PAR}

\begin{PROP}
Let $I$ be a small category and let $\mathcal{M}$ be one of the model categories in \ref{PARMC}. Then the category
$\mathcal{M}^I$
of $I$-shaped diagrams can be equipped with the projective extension of the previous model category structure, i.e.\@ fibrations and weak equivalences are those which are point-wise of this form. 
\end{PROP}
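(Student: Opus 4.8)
The plan is a case analysis along the list in~\ref{PARMC}. In each case the point-wise (``projective'') structure on $\mathcal{M}^I$ is obtained either from the classical theory of diagram categories over a cofibrantly generated model category, or --- in the cases where $\mathcal{S}$ may be large --- as a \emph{transported} model structure in the sense of \cite[4.4]{Hor21}, exactly as the structures on $\mathcal{M}$ itself were obtained there.

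First I would record that $\mathcal{M}^I$ has the (co)limits a model category is required to have: since $I$ is small and $\mathcal{M}$ has finite limits and all coproducts, so does $\mathcal{M}^I$, computed point-wise in $I$. Writing $F_i$ for the left adjoint of $\mathrm{ev}_i\colon \mathcal{M}^I \to \mathcal{M}$, the generating projective cofibrations are the $F_i(g)$ with $g$ a generating cofibration of $\mathcal{M}$; since $\mathrm{ev}_j$ preserves colimits and $\mathrm{ev}_j F_i(g) = \coprod_{\Hom_I(i,j)} g$ is a coproduct of cofibrations of $\mathcal{M}$, every projective cofibration of $\mathcal{M}^I$ is point-wise a cofibration of $\mathcal{M}$. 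Hence pushouts along projective cofibrations, and transfinite compositions of projective cofibrations, may be formed point-wise and exist because they do in $\mathcal{M}$.

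Next, the two weak factorization systems. For $\mathcal{M} = \mathcal{SET}^{\mathcal{S}^{\op} \times \Delta^{\op}}$ with the projective structure ($\mathcal{S}$ small, Theorem~\ref{SATZSPMODEL}) the category $\mathcal{M}$ is cofibrantly generated with all limits and colimits, so the projective model structure on $\mathcal{M}^I$ is the classical one, see e.g.\@ \cite{Hir03, Lur09}. For $\mathcal{M} = \mathcal{SET}^{\mathcal{S}^{\op} \times \Delta^{\op}}_{loc}$ the desired structure on $\mathcal{M}^I$ is the left Bousfield localization of the previous (combinatorial, left proper) one at the set $\{F_i(U_\bullet \to X)\}$ ranging over all $i \in I$ and all \v{C}ech covers $U_\bullet \to X$; this localization exists, and its weak equivalences are the point-wise $\mathcal{W}_{loc}$-equivalences by the standard compatibility of projective diagram structures with left Bousfield localization, where exactness (Theorem~\ref{SATZBLEXACT}) again simplifies matters. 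For $\mathcal{M} = \mathcal{S}^{\amalg, \Delta^{\op}}$ (resp.\@ $\mathcal{S}^{\Delta^{\op}}$), recall that $\mathcal{M}$ is the transport of the weak factorization system $(\mathcal{L}_{\proj,\spl}, \mathcal{R}_{\proj,\spl})$ on $\mathcal{S}^\amalg$ (resp.\@ on $\mathcal{S}$). Since $\mathcal{M}^I \cong ((\mathcal{S}^\amalg)^I)^{\Delta^{\op}}$, it suffices to exhibit on $(\mathcal{S}^\amalg)^I$ (resp.\@ $\mathcal{S}^I$) the ``$I$-projective'' weak factorization system whose right class consists of the point-wise $\mathcal{R}_{\proj,\spl}$-maps, and then to apply the transport theorem; evaluating the defining lifting conditions against the objects $F_i(S)$, for which $\Hom(F_i(S), -) = \Hom(S, \mathrm{ev}_i(-))$, identifies the weak equivalences and fibrations of the transported structure with the point-wise ones.

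The main obstacle is this last point: producing the $I$-projective weak factorization system on $(\mathcal{S}^\amalg)^I$ when $\mathcal{S}$ --- hence $\mathcal{M}$ --- is not cocomplete, so that one must run a small object argument using only the colimits that are available. This works because the left class is generated by the maps $F_i(\emptyset \to S)$, $i \in I$, $S \in \mathcal{S}$; attaching such a cell is a point-wise coproduct inclusion, and coproduct inclusions, their pushouts, and their transfinite compositions all exist in $\mathcal{S}^\amalg$ and in $(\mathcal{S}^\amalg)^I$ --- a cell complex of such maps is again a coproduct inclusion --- precisely the mechanism that made the split-projective structures on $\mathcal{S}^{\amalg, \Delta^{\op}}$ and $\mathcal{S}^{\Delta^{\op}}$ work in \cite{Hor21}. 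Once the two factorization systems are in place the remaining model category axioms hold point-wise, and the verification that the resulting weak equivalences and fibrations are exactly the point-wise ones proceeds as in \cite{Hor21}.
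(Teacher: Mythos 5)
Your proof is correct and follows essentially the same route as the paper, whose own proof of this proposition is just a citation: the classical projective-diagram-structure theorem for the (cofibrantly generated, bicomplete) simplicial presheaf cases and their localization, and \cite[Theorem~6.1]{Hor21} (via the transported weak factorization systems of \cite[4.4]{Hor21}) for the split-projective cases. Your write-up simply supplies the details behind those citations, including the correct identification of the key point in the non-cocomplete cases, namely that the $I$-projective left class is built from point-wise coproduct inclusions so the small object argument only needs colimits that actually exist in $\mathcal{S}^{\amalg}$.
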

\begin{proof}
This is well-known for the projective model category structure on simplicial presheaves (and its localization) and proven in \cite[Theorem~6.1]{Hor21} for the split-projective structure. 
\end{proof}

We have in addition: 
\begin{SATZ}\label{SATZBIFIBMC}
Let $I$ be a small category and let $\mathcal{M}$ be one of the model categories in \ref{PARMC}. Then the functor\footnote{$\mathcal{M}^I / \mathcal{S}^I$ is the usual shorthand for the comma category $\mathcal{M}^I \times_{/\mathcal{S}^I} \mathcal{S}^I$ }
\[ \mathcal{M}^I / \mathcal{S}^I \rightarrow \mathcal{S}^I \]
equipped point-wise with the model category structure of over-category, is a bifibration of model categories. 
Furthermore the push-forward functors $f_\bullet$ and pull-back functors $f^\bullet$ preserve weak equivalence (i.e.\@ are equal to their derived functors). 
\end{SATZ}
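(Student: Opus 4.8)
The plan is to prove \textbf{Theorem~\ref{SATZBIFIBMC}} by reducing everything to known facts about model categories and the fibration structure on over-categories, combined with the key point-wise observation that base-change and cobase-change behave well because the objects of $\mathcal{S}$ (resp.\@ $\mathcal{S}^\amalg$, resp.\@ the representables in $\mathcal{SET}^{\mathcal{S}^{\op}}$) are nicely behaved under the given model structures. First I would recall that for each of the four model categories $\mathcal{M}$ in \ref{PARMC} and each object $S \in \mathcal{S}^I$, the over-category $\mathcal{M}^I/S$ carries the slice model structure (cofibrations, fibrations, weak equivalences created point-wise by the forgetful functor to $\mathcal{M}^I$), which exists since $\mathcal{M}^I$ has the requisite limits and the projective structure. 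The pull-back functor $f^\bullet$ for $f\colon S' \to S$ in $\mathcal{S}^I$ is just composition with $f$ on the base (a ``change of slice'' that does not alter the total object), and the push-forward $f_\bullet$ is pull-back along $f$ in $\mathcal{M}^I$, i.e.\@ $f_\bullet(X \to S) = (X \times_S S' \to S')$; this exists because each $\mathcal{M}$ has finite limits (and $\mathcal{S}^I \hookrightarrow \mathcal{M}^I$ via the constant-simplicial embedding, so the fiber product is computed in $\mathcal{M}^I$). That $(\mathcal{M}^I/\mathcal{S}^I \to \mathcal{S}^I)$ is a \emph{bi}fibration is then the standard statement that a slice construction over a category with pull-backs gives a bifibration; what requires the model-categorical input is that it is a bifibration \emph{of model categories} in the sense of \cite[2.3.6]{Hor15}, which amounts to: (i) $f^\bullet$ is right Quillen and $f_\bullet$ is left Quillen with respect to the slice structures, and (ii) the Beck--Chevalley/base-change condition holds on homotopy categories.

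For (i), the functor $f^\bullet$ (pre-composition with $f$, total object unchanged) manifestly preserves point-wise fibrations and point-wise (trivial) fibrations of total objects, and is left adjoint as well as right adjoint to the corresponding restriction--extension; one checks directly it preserves weak equivalences since a map in $\mathcal{M}^I/S'$ is a weak equivalence iff its underlying map in $\mathcal{M}^I$ is, and this is untouched by changing the structure map. For $f_\bullet = $ pull-back along $f$, the main work is to show it \emph{preserves all weak equivalences}, not just trivial fibrations. This is the heart of the theorem and the step I expect to be the main obstacle. The idea: it suffices to check this point-wise in $I$ and degree-wise in $\Delta^{\op}$, reducing to the claim that for a morphism $f\colon S' \to S$ in $\mathcal{S}$ (resp.\@ in $\mathcal{S}^\amalg$, resp.\@ a map of representable presheaves) and a point-wise weak equivalence $X \to Y$ of objects over $S$, the induced $X \times_S S' \to Y \times_S S'$ is a point-wise weak equivalence of objects over $S'$. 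In the representable/presheaf cases and the $\mathcal{S}^\amalg$ case one evaluates at a test object and uses that $\Hom(T, -)$ turns the fiber product into a pull-back of simplicial sets along a \emph{fibration} (the structure map to $S$, which is a point-wise fibration because every object over $S$ is fibrant in the relevant sense here, as $\mathcal{S}$ is discrete/the presheaf is a disjoint union of representables, hence $\Hom(T,-)$ of it is a Kan fibration over $\Hom(T,S)$), and pull-backs of simplicial sets along Kan fibrations preserve weak equivalences (right properness of $\mathcal{SET}^{\Delta^{\op}}$). For $\mathcal{S}^{\Delta^{\op}}$ one argues similarly restricting to $\N$-small test objects. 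So the mechanism is: reduce to simplicial sets, then invoke right properness.

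Having established (i), condition (ii) (the base-change isomorphism $f^\bullet g_\bullet \cong G_\bullet F^\bullet$ on homotopy categories for a pull-back square in $\mathcal{S}^I$) follows formally: since both $f_\bullet$ and $f^\bullet$ equal their derived functors by the preservation-of-weak-equivalences statement just proven, the strict base-change isomorphism in the $1$-categorical bifibration (which holds because fiber products compose) descends directly to homotopy categories without needing a homotopy-base-change argument — this is precisely why the ``furthermore'' clause about $f_\bullet, f^\bullet$ preserving weak equivalences is placed in the statement: it is both an output and the tool that makes the bifibration-of-model-categories axioms cheap. I would close by remarking that the projective-extension structure on $\mathcal{M}^I$ (Proposition above) and the slice structures are compatible with restriction along functors $I' \to I$, so the construction is functorial in $I$ as required for a fibered derivator, and that the stated properties of $\int^\amalg$ and the split-projective cofibrations from Section~\ref{SECTMC} are what guarantee the total categories have enough limits for all of this to make sense. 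The one genuinely delicate point, worth spelling out carefully in the proof, is the fibrancy claim used in the reduction to right properness: one must verify that in each of the four cases the structure map $X \to S$ of an arbitrary object of $\mathcal{M}^I/S$ becomes, after applying $\Hom(T,-)$ point-wise, a Kan fibration of simplicial sets — for the split-projective cases this uses that objects of $\mathcal{S}$ viewed in $\mathcal{S}^{\amalg,\Delta^{\op}}$ are constant hence fibrant, and that any map to a constant object of that form is automatically a point-wise fibration of simplicial sets over a discrete base, which is immediate.
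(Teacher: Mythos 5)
Your overall route is the paper's: the two structure functors are ``compose the structure map with $f$'' and ``fiber product along $f$''; the first preserves all three classes on the nose, the adjunction is therefore Quillen for formal reasons, and the only real content is that the fiber-product functor preserves \emph{all} weak equivalences, checked by evaluating at test objects. However, you have the two names swapped. In the paper's conventions (consistent with Definition~\ref{DEFLOCAL} and Proposition~\ref{HOCOLIMNERVE}) the push-forward $f_\bullet$ is the \emph{left} adjoint, given by post-composition with $f$, and the pull-back $f^\bullet$ is the \emph{right} adjoint, given by $X \mapsto X \times_T S$; you assign them the other way around. Since the rest of the paper repeatedly uses $f^\bullet$ as base change and $f_\bullet$ as composition, this is not a harmless relabelling and would propagate errors.

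There are also two gaps in the substantive step. First, your mechanism for ``fiber product preserves weak equivalences'' rests on the claim that any map of simplicial sets to a discrete (constant) simplicial set is a Kan fibration, so that right properness applies; this is false (e.g.\@ $\Lambda^1_2 \rightarrow \Delta^0$ is not a Kan fibration). The correct elementary argument is that a simplicial set over a discrete base decomposes as the coproduct of its fibers, a compatible weak equivalence of such coproducts is a weak equivalence fiber-wise, and the fiber product along $S \to T$ merely re-indexes these fibers; no properness is needed. Second, and more seriously, your reduction ``evaluate $\Hom(T,-)$ and work in simplicial sets'' only detects the \emph{global} (projective/split-projective) weak equivalences, so it covers cases 1--3 of \ref{PARMC} but says nothing about case 4, where the weak equivalences are the \v{C}ech-local ones. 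The paper treats that case separately by citing \cite[Proposition~3.17]{Hor21b}; your proposal needs an analogous separate argument (or citation) there, since preservation of local equivalences under base change along a representable is a genuinely different statement. Finally, note that the base-change condition (ii) you discuss is not part of the definition of a bifibration of model categories being invoked here --- it is the locality condition (Dloc1) treated later --- so that paragraph, while harmless, is not needed for this theorem.
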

\begin{proof}
The functor is just (a restriction of) the canonical over-category bifibration. The assertion is that for all morphisms $f: S \rightarrow T$ in $\mathcal{S}$ the push-forward and pull-back functors $f_\bullet$ and $f^\bullet$ are Quillen adjunctions. In this case the left adjoint $f_\bullet$  is the functor that composes the structural morphism with $f$. It preserves all three classes by definition of the model category structure on the over-category. Together with the right adjoint $f^\bullet(X) = X \times_T S$ it therefore forms a Quillen adjunction. 
This is true for any model category. In these cases, however, one easily checks that $f^\bullet$ preserves weak equivalences. For case 4.\@ see e.g.\@ \cite[Proposition~3.17]{Hor21b}.
\end{proof}

\begin{KOR}\label{KORBIFIB}
Let $\mathcal{M}$ be one of the model categories in \ref{PARMC}, and let $I$ be a diagram. Then the functor
\[ \DD_{\mathcal{M}}(I):= \mathcal{M}^I / \mathcal{S}^I [\mathcal{W}_I^{-1}] \rightarrow \mathcal{S}^I \]
where $\mathcal{W}_I$ is the union of the weak equivalences in the fibers,
is a bifibration whose pull-back and push-forward functors are computed point-wise (i.e.\@ commute with pull-backs along diagrams). 
\end{KOR}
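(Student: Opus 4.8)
The plan is to reduce the statement to Theorem~\ref{SATZBIFIBMC} by a fiberwise localization argument. By that theorem the functor $q\colon \mathcal{M}^I/\mathcal{S}^I \to \mathcal{S}^I$ is a bifibration whose cartesian transport functors $f^\bullet$ (fiber products, formed objectwise in $I$) and cocartesian transport functors $f_\bullet$ (post-composition with $f$, again objectwise) both preserve the weak equivalences of the fibers; here the class $\mathcal{W}_I$, being the union over $S\in\mathcal{S}^I$ of the weak equivalences of the slice model category $(\mathcal{M}^I/\mathcal{S}^I)_S$, consists of \emph{vertical} morphisms. First I would record that, since $\mathcal{W}_I$ is vertical and stable under $f^\bullet$ and $f_\bullet$, the canonical factorization of a morphism $X\to Y$ over $f$ as a cocartesian arrow $X\to f_\bullet X$ followed by a vertical arrow $f_\bullet X\to Y$ (equivalently: a vertical arrow $X\to f^\bullet Y$ followed by a cartesian arrow) survives localization, so that $\DD_{\mathcal{M}}(I)$ may be computed fiberwise: it has the same objects as $\mathcal{M}^I/\mathcal{S}^I$, and for $X$ over $S$ and $Y$ over $T$ one has
\[ \Hom_{\DD_{\mathcal{M}}(I)}(X,Y)\;=\;\coprod_{f\colon S\to T}\Hom_{(\mathcal{M}^I/\mathcal{S}^I)_T[\mathcal{W}^{-1}]}(f_\bullet X,\,Y)\;=\;\coprod_{f\colon S\to T}\Hom_{(\mathcal{M}^I/\mathcal{S}^I)_S[\mathcal{W}^{-1}]}(X,\,f^\bullet Y), \]
with composition inherited from $\mathcal{M}^I/\mathcal{S}^I$ (in each $\Hom$ the localization is at the weak equivalences of that particular fiber). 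The point at which the hypotheses of Theorem~\ref{SATZBIFIBMC} enter decisively is that $f^\bullet$ and $f_\bullet$ preserve \emph{all} weak equivalences, not merely those between (co)fibrant objects: this is exactly what guarantees that the functors they induce on localizations are the un-derived images of $f^\bullet$ and $f_\bullet$, so that no derived-functor correction is needed and the two displayed descriptions are literally adjoint.

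Granting this Hom-formula, the remaining assertions are formal. The image in $\DD_{\mathcal{M}}(I)$ of a cocartesian (resp.\ cartesian) arrow of $\mathcal{M}^I/\mathcal{S}^I$ is still cocartesian (resp.\ cartesian) — this is read off the formula — so $\DD_{\mathcal{M}}(I)\to\mathcal{S}^I$ is a bifibration with fiber over $S$ equal to $(\mathcal{M}^I/\mathcal{S}^I)_S[\mathcal{W}^{-1}]$, and with pull-back (resp.\ push-forward) along $f$ the functor induced on localizations by $f^\bullet$ (resp.\ $f_\bullet$). Finally, all of this is ``computed point-wise'' in the variable $I$: the over-category bifibration $\mathcal{M}^I/\mathcal{S}^I\to\mathcal{S}^I$ is obtained objectwise from $\mathcal{M}/\mathcal{S}\to\mathcal{S}$ (fiber products and post-compositions in $\mathcal{S}^I$, hence the cartesian and cocartesian cleavages, are formed objectwise), the projective weak equivalences on $\mathcal{M}^I$ are by definition the objectwise ones, and the fiberwise-localization recipe above is manifestly compatible with the restriction functors $\alpha^*$ along any functor $\alpha\colon I\to J$; therefore pull-back and push-forward in $\DD_{\mathcal{M}}(I)$ commute with pull-back along diagrams.

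The main obstacle is the fiberwise-localization lemma underlying the Hom-formula, i.e.\ showing that any finite zig-zag in $\mathcal{M}^I/\mathcal{S}^I$ whose backward arrows lie in $\mathcal{W}_I$ can be normalized, using the cocartesian-then-vertical factorization together with the stability of $\mathcal{W}_I$ under $f_\bullet$ and $f^\bullet$, to a single composite of the displayed shape, and that two such composites become equal in the localization precisely when they become equal after passing to the localized fibers. This is a purely category-theoretic statement about bifibrations equipped with a vertical class of weak equivalences stable under transport; once it is in place the corollary follows with no further input from the model structures beyond what Theorem~\ref{SATZBIFIBMC} provides.
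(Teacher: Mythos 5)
Your proposal is correct in substance and identifies the right inputs, but it takes a more self-contained route than the paper. The paper's proof is a two-line citation: by \cite[Proposition 5.1.9]{Hor15}, fiberwise localization of a bifibration of model categories is again a bifibration whose transport functors are the \emph{derived} functors of the original ones; since Theorem~\ref{SATZBIFIBMC} says $f^\bullet$ and $f_\bullet$ already preserve all weak equivalences, they coincide with their derived functors and remain computed point-wise. You instead sketch a direct, model-structure-free proof of the fiberwise-localization statement for a bifibration equipped with a vertical class of weak equivalences stable under both transport functors, with the explicit Hom-formula $\Hom(X,Y)=\coprod_f\Hom_{T}(f_\bullet X,Y)$. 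This is a legitimate and arguably more transparent route --- it makes visible exactly where the hypothesis ``$f^\bullet$, $f_\bullet$ preserve all weak equivalences'' is used, namely to avoid any derived-functor correction --- and it buys a statement that does not depend on the fibers carrying model structures. What it costs is that the entire content of the cited proposition now sits in your deferred ``normalization of zig-zags'' lemma. The surjectivity half (every zig-zag can be pushed into a single cocartesian arrow followed by a vertical zig-zag in the target fiber, using the cocartesian-then-vertical factorization and stability of $\mathcal{W}_I$ under $f_\bullet$) is as routine as you indicate; the injectivity/well-definedness half --- that two normalized representatives agree in $\mathcal{M}^I/\mathcal{S}^I[\mathcal{W}_I^{-1}]$ only if they agree in the localized fiber --- is the genuinely delicate part and is best handled not by chasing elementary moves but by constructing the candidate category with your Hom-sets directly (composition via the pseudo-functoriality of $f\mapsto f_\bullet$ descended to the localized fibers) and verifying its universal property. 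Since that is precisely what \cite[Proposition 5.1.9]{Hor15} supplies, your argument is a correct unpacking rather than a gap, but as written the main lemma is asserted rather than proved.
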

\begin{proof}By \cite[Proposition 5.1.9]{Hor15} the functor is a bifibration and the new pull-back and push-forward functors are the derived functors of the pull-back and push-forward functors in the original bifibration of model categories. We conclude because those are already derived (i.e.\@ preserve weak equivalences) and computed point-wise. 
\end{proof}

\begin{SATZ}\label{SATZHOCOLIM}
Let $\mathcal{M}$ be one of the model categories in \ref{PARMC}. Then the association
\[ I \mapsto \DD_{\mathcal{M}}(I) \]
defines a left fibered derivator over $\SSS$ (the represented prederivator associated with $\mathcal{S}$) with domain $\Dia$ (all small categories) and a right fibered derivator on homotopically finite diagrams. In case 3.\@ and 4.\@ it is a right fibered derivator on $\Dia$. 
On point-wise cofibrant objects  $X \in \mathcal{M}^I$  the homotopy colimit  is given explicitly by
\[ \hocolim X = \int^{I} N(- \times_{/I} I) \otimes X.  \]
This particular coend is computed by means of coproducts, more precisely, we have
\begin{equation}\label{eqhocolim} (\hocolim X)_n = \widetilde{X}_{n,n}, \end{equation}
where $\widetilde{X}$ is the bisimplical object
\[ \widetilde{X}_{n,m} = \coprod_ {i_0 \rightarrow \cdots \rightarrow i_n} X(i_0)_m.  \]

On point-wise fibrant\footnote{in case 4.\@ of \ref{PARMC} (Bousfield localization at the \v{C}ech covers) it suffices to have globally fibrant objects because of the exactness.} objects  $X \in \mathcal{M}^I$ the homotopy limit is given explicitly by
\begin{equation}\label{eqholim} \holim X = \int_{ I} \Hom( N(I \times_{/I} -),  X).  \end{equation}
\end{SATZ}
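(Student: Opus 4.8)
The plan is to build the left fibered derivator axiom by axiom, reducing everything to the explicit Bousfield--Kan coend formula. First I would verify (FDer0) — i.e.\ that each $\DD_{\mathcal{M}}(I) \to \mathcal{S}^I$ is a bifibration with point-wise pullback and push-forward — but this is exactly Corollary~\ref{KORBIFIB}, so nothing new is needed. Axioms (Der1) (compatibility with coproducts of diagrams) and (Der2) (conservativity of the point-evaluation functors $i^*$ for $i \in I$) are immediate from the point-wise definition of the model structures on $\mathcal{M}^I$ and the fact that weak equivalences, fibrations and cofibrant replacements are detected point-wise. The substantial axioms are (FDer3 left) and (FDer4 left) (existence and point-wise computation of relative homotopy left Kan extensions along $\alpha \colon I \to J$, in particular relative homotopy colimits for $J = \cdot$), together with their right-handed analogues on homotopically finite diagrams. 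The strategy for these is: (1) show that the coend $\int^{I} N(-\times_{/I} I)\otimes X$ on point-wise cofibrant $X$ computes the derived colimit of $X$ over $\mathcal{S}^{\colim I}$-structure correctly, and (2) identify this coend with the diagonal of the bisimplicial object $\widetilde{X}$, so that it lands back in $\mathcal{M}$ and is given by \eqref{eqhocolim}.

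For step (1) I would argue as follows. Since $\mathcal{M}$ is a simplicial model category and the projective model structure on $\mathcal{M}^I$ is cofibrantly generated by cells of the form $(\partial\Delta_n \hookrightarrow \Delta_n)\otimes (i_! A)$, it suffices — by the usual Quillen-adjunction/left-derived-functor argument — to check the Bousfield--Kan formula is a model for the derived colimit on these generating cofibrant objects, where it reduces to $N(i\times_{/I} I)\otimes A$, i.e.\ to the tensor of $A$ with the nerve of an over-category; and $N(i\times_{/I} I)$ is contractible because $i\times_{/I} I$ has an initial object. The coend $\int^{I} N(-\times_{/I} I)\otimes X$ is computed levelwise in the simplicial direction, and in each simplicial degree it is a colimit over the twisted-arrow-like indexing category of coproducts of the $X(i_0)_m$; this is where the formula $\widetilde{X}_{n,m} = \coprod_{i_0\to\cdots\to i_n} X(i_0)_m$ comes from, and passing to the diagonal $n=m$ gives \eqref{eqhocolim}. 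The key point is that this colimit in $\mathcal{S}^{\amalg}$ (or $\mathcal{SET}^{\mathcal{S}^{\op}}$) is a coproduct, hence exists and is computed in $\mathcal{S}^{\amalg,\Delta^{\op}}$, so the output is genuinely an object of $\mathcal{M}$ and not merely of some presheaf completion — this uses that $\mathcal{M}$ has all coproducts (an explicit hypothesis in the definition of model category here).

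For the relative statement (FDer3/FDer4 along a functor $\alpha\colon I\to J$, not just to a point) I would replace over-categories by the comma categories $j\times_{/J} I$ and run the same argument fiberwise over $J$, using that colimits in $\mathcal{M}^J/\mathcal{S}^J$ and the model structure are all point-wise in $J$; the Bousfield--Kan formula then reads $(\alpha_! X)(j) = \int^{I} N(- \times_{/J} j)\otimes X$ and the base-change/homotopy-exactness square for $\alpha$ together with the contractibility of $i\times_{/J} j$ when... (this is exactly the input that makes Theorem~\ref{FAIBLE} applicable) gives the point-wise formula. The right fibered derivator statements on homotopically finite diagrams are dual: one uses the end formula \eqref{eqholim}, point-wise fibrant (or, in case~4, globally fibrant by exactness of the \v{C}ech localization — Theorem~\ref{SATZBLEXACT}) replacements, and the fact that $N(I\times_{/I} -)$ is a finite simplicial set on homotopically finite $I$ so that the $\mathcal{HOM}$-functor of the simplicial model structure is available; in cases 3 and 4 all limits exist so the restriction to homotopically finite $I$ is unnecessary. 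The main obstacle I anticipate is step (1): verifying carefully that the concrete coend really does compute the derived colimit on generating cofibrations \emph{and} that, after taking the diagonal, one stays inside $\mathcal{M}$ with the correct point-wise behaviour — i.e.\ interchanging the coend with the simplicial realization and checking the two-variable left Quillen property of $\otimes$ is what makes the bookkeeping delicate, whereas the remaining derivator axioms are formal consequences of point-wise-ness.
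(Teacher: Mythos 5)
Your skeleton matches the paper's proof: (FDer0) is disposed of by Corollary~\ref{KORBIFIB}, (Der1) and (Der2) are formal from the point-wise nature of the structures, the relative Kan extensions (FDer3/FDer4) are obtained by applying the Bousfield--Kan coend fiberwise over the comma categories $I\times_{/J}j$, and the right-handed statements are dual with the homotopically-finite restriction coming exactly from where you locate it (the $\mathcal{HOM}$-functor only being available on finite simplicial sets, with cases 3 and 4 exempt because all limits exist). The one genuine divergence is that the paper does not prove the absolute formulas (\ref{eqhocolim}) and (\ref{eqholim}) at all --- it cites \cite[Theorem~7.2]{Hor21} for them and then verifies the adjunction $\alpha_!^{(S)}\dashv \alpha^*$ ``formally as in \cite[Appendix A]{Hor21}'' --- whereas you sketch a direct cell-induction proof reducing to generating cofibrations $(\partial\Delta_n\hookrightarrow\Delta_n)\otimes i_!A$ and the contractibility of $N(i\times_{/I}I)$. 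That is the right kind of argument and is presumably close to what the cited reference does; your version has the advantage of being self-contained, at the cost of the bookkeeping you yourself flag.

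One point in your step (1) needs rephrasing to avoid circularity. In cases 1 and 2 the categories $\mathcal{S}^{\Delta^{\op}}$ and $\mathcal{S}^{\amalg,\Delta^{\op}}$ have only finite limits and coproducts, so there is no underived functor $\colim_I\colon\mathcal{M}^I\to\mathcal{M}$ whose left derived functor you could compare the coend against; ``showing the Bousfield--Kan formula is a model for the derived colimit'' presupposes an object that does not exist a priori. The whole point of (\ref{eqhocolim}) is that it is built from coproducts alone and therefore serves as the \emph{definition} of the homotopy colimit; what must be verified is the adjunction property (that the formula is left adjoint to $p_I^*$ after localization, equivalently the derivator axioms), not agreement with a pre-existing derived functor. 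Your cell-by-cell analysis and the contractibility of the over-categories are precisely the inputs for that direct verification, so the fix is one of framing rather than substance; in cases 3 and 4 your comparison argument works as literally stated.
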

\begin{proof}
(Der1) and (Der2) are clear. 

(FDer0 left/right) is Corollary~\ref{KORBIFIB}. 

(FDer3 left/right) The formulas (\ref{eqhocolim}) and (\ref{eqholim}) are proven in \cite[Theorem~7.2]{Hor21}.
They are functorial in $I$. We can therefore define for a functor $\alpha: I \rightarrow J$, a diagram $S \in \mathcal{S}^J$, and an object $\mathcal{E} \in \DD(I)_{\alpha^*S}$ represented by a point-wise cofibrant object
\[ (\alpha_!^{(S)} \mathcal{E})(j)  := \hocolim_{I \times_{/J} j} f_{j,\bullet} \iota_j^* \mathcal{E} \]
where $\iota_j: I \times_{/J} j \rightarrow I$ is the canonical functor, $f_j: \iota_j^*\alpha^*S \rightarrow p_{I \times_{/J} j}^* S(j)$ is the canonical morphism, and by $\hocolim$ we understand the right hand side of (\ref{eqhocolim}). 
It is a formal verification as in \cite[Appendix A]{Hor21} that this functor is indeed left adjoint to $\alpha^*$ restricted to the fiber over $S$. We can reason analogously for the case of relative right homotopy Kan extensions.

(FDer4 left/right) is clear from the given explicit formula. 
\end{proof}
Note that in cases 3.\@ and 4.\@ of \ref{PARMC} all limits exist and thus the morphism of prederivators is a right fibered derivator on $\Dia$. 
In these we have a bifibration of model categories with all limits and colimits and one can thus alternatively cite \cite[Theorem~6.2]{Hor17b}. 

There are the following relations between the constructed fibered derivators:
\begin{SATZ}
Let $\mathcal{S}$ be a small category. 
\begin{enumerate}
\item If $\mathcal{S}$ has finite limits there is an equivalence of prederivators
\[ \DD_{\mathcal{S}^{\amalg, \Delta^{\op}}} \cong  \DD_{\mathcal{SET}^{\mathcal{S}^{\op} \times \Delta^{\op}}}. \]
In particular $\DD_{\mathcal{S}^{\amalg, \Delta^{\op}}}$ is a right fibered derivator with domain $\Dia$ even if not all limits exist in $\mathcal{S}$. 
\item There is an adjunction of derivators
\[ \xymatrix{( \DD_{\mathcal{SET}_{loc}^{\mathcal{S}^{\op} \times \Delta^{\op}}})_S \ar@<3pt>[rr]^R & &  \ar@<3pt>[ll]^L (\DD_{\mathcal{SET}^{\mathcal{S}^{\op} \times \Delta^{\op}}})_S  } \]
for every $S \in \mathcal{S}$ in which the right adjoint $R$ is fully-faithful and the left adjoint $L$ commutes with homotopically finite limits. 
\end{enumerate}
\end{SATZ}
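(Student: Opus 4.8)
\emph{Proof strategy.} Both assertions are instances of the following principle: a functor between bifibrations of model categories over $\mathcal{S}$ (resp. over $\mathcal{S}^I$ for every small $I$) which is compatible with the push-forward and pull-back functors and which is, fibrewise, a Quillen equivalence (resp. the left/right half of a Quillen adjunction), descends to an equivalence (resp. an adjunction) of the associated fibered derivators of Corollary~\ref{KORBIFIB}; here one invokes \cite[Proposition 5.1.9]{Hor15}, which computes the fibres of $\DD_{\mathcal{M}}$ by localizing the fibres of the bifibration of model categories, together with the functoriality of everything in $I$.

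\emph{Part 1.} Applied object-wise, the functor $R$ of \ref{PARR} yields for every small $I$ a functor
\[ \mathcal{S}^{\amalg, \Delta^{\op}, I}/\mathcal{S}^I \longrightarrow \mathcal{SET}^{\mathcal{S}^{\op} \times \Delta^{\op} \times I}/\mathcal{S}^I \]
over $\mathcal{S}^I$, the two bases being identified because the Yoneda embedding $\mathcal{S} \hookrightarrow \mathcal{SET}^{\mathcal{S}^{\op}}$ is the restriction of $R$ to constant discrete objects. It is a morphism of the bifibrations of model categories of Theorem~\ref{SATZBIFIBMC}: it commutes with the push-forward functors $f_\bullet$ (post-composition) trivially, and with the pull-back functors $f^\bullet$ because, $\mathcal{S}$ having finite limits, $R$ takes the relevant fibre products to fibre products --- degree-wise one has $R(\coprod_k(U_k \times_T S)) = \coprod_k h_{U_k \times_T S} = (\coprod_k h_{U_k}) \times_{h_T} h_S$, using that coproducts in presheaves are universal and that $R$ preserves coproducts. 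By Proposition~\ref{PROPSMALLS} it preserves cofibrations, fibrations and weak equivalences, and the underlying functor $\mathcal{S}^{\amalg, \Delta^{\op}} \to \mathcal{SET}^{\mathcal{S}^{\op} \times \Delta^{\op}}$ is a Quillen equivalence, whence so is its $I$-shaped projective variant. Since the objects of $\mathcal{S}^I$, viewed as constant discrete $I$-diagrams, are fibrant in both $\mathcal{S}^{\amalg, \Delta^{\op}, I}$ and $\mathcal{SET}^{\mathcal{S}^{\op} \times \Delta^{\op} \times I}$ (mapping spaces into a set are Kan complexes), the homotopy category of the slice over such an object is the corresponding slice of the homotopy category, so $R$ induces an equivalence on every fibre $\DD_{\mathcal{S}^{\amalg, \Delta^{\op}}}(I)_T \iso \DD_{\mathcal{SET}^{\mathcal{S}^{\op} \times \Delta^{\op}}}(I)_T$. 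These are compatible with pull-back and push-forward and functorial in $I$, hence assemble into the asserted equivalence of prederivators over $\SSS$. For the supplement: an equivalence of prederivators transports the property of being a left/right fibered derivator together with its domain, and $\DD_{\mathcal{SET}^{\mathcal{S}^{\op} \times \Delta^{\op}}}$ is a right fibered derivator on all of $\Dia$ by Theorem~\ref{SATZHOCOLIM} and the remark following it.

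\emph{Part 2.} The Quillen adjunction $L \dashv R$ of Theorem~\ref{SATZBLEXACT} has the identity as both underlying functors, hence is automatically compatible with the over-category structure and with $f_\bullet$ and $f^\bullet$; object-wise it gives, for every small $I$, a Quillen adjunction over $\mathcal{S}^I$ between $\mathcal{SET}^{\mathcal{S}^{\op} \times \Delta^{\op} \times I}/\mathcal{S}^I$ carrying the projective and the \v{C}ech-localized structures, and therefore (by the principle above) an adjunction of fibered derivators over $\SSS$ whose restriction to the fibre over any $S \in \mathcal{S}$ is the asserted adjunction of derivators; all functors in play are the derived functors of the identity and, by Corollary~\ref{KORBIFIB}, are computed point-wise. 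Full faithfulness of $R$ may thus be checked on the underlying categories $\DD(\cdot)$, where it becomes the standard fact that the right derived functor of the identity of a left Bousfield localization is fully faithful: a fibrant object of $\mathcal{SET}^{\mathcal{S}^{\op} \times \Delta^{\op}}_{loc}$ is fibrant in $\mathcal{SET}^{\mathcal{S}^{\op} \times \Delta^{\op}}$, so the derived counit $LR \Rightarrow \id$ is represented by a cofibrant replacement and is an isomorphism (cf.\@ \cite{Hir03}). Finally, $L$ commutes with homotopically finite homotopy limits: this is precisely the exactness statement of Theorem~\ref{SATZBLEXACT}, and it passes to the derivator level because homotopy limits in $\DD_{\mathcal{M}}$ are the derived limits of $\mathcal{M}$ and the derivator-level $L$ is the derived functor of the model-categorical one.

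\emph{Main obstacle.} The real work is in Part 1: turning $R$ into a genuine morphism of bifibrations of model categories over every $\mathcal{S}^I$ --- in particular verifying its compatibility with the pull-back functors $f^\bullet$ --- and then applying \cite[Proposition 5.1.9]{Hor15} with enough care to obtain a statement about prederivators (functorial in $I$) rather than only about individual fibres, together with the identification of each fibre homotopy category with a slice of a homotopy category over a (bi)fibrant object of $\mathcal{S}^I$. Part 2 carries essentially no new difficulty: once everything is known to be computed point-wise, full faithfulness of $R$ and preservation of homotopically finite homotopy limits reduce directly to Theorems~\ref{SATZBLEXACT}--\ref{SATZBIFIBMC} and to the classical properties of left Bousfield localizations.
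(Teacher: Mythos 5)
Your proof is correct and follows essentially the same route as the paper, which simply cites Proposition~\ref{PROPSMALLS} for Part 1 and Theorem~\ref{SATZBLEXACT} for Part 2; your write-up fills in the bookkeeping (compatibility of $R$ with $f_\bullet$, $f^\bullet$ and functoriality in $I$) that the paper leaves implicit. The only imprecision is the aside that the homotopy category of a slice over a fibrant object equals the slice of the homotopy category (false in general), but you do not actually need it: the fibers are by definition localizations of the slice categories, and the equivalence on them follows directly from Proposition~\ref{PROPSMALLS} since $R$ preserves all three classes.
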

\begin{proof}
Assertion 1.\@ is a consequence of Proposition~\ref{PROPSMALLS} while
2.\@ follows  from Theorem~\ref{SATZBLEXACT}.
\end{proof}

\begin{SATZ}
The fibered derivator $\DD_{\mathcal{SET}_{loc}^{\mathcal{S}^{\op} \times \Delta^{\op}}}$ is local w.r.t.\@ the chosen Grothendieck topology on $\mathcal{S}$ and, if $\mathcal{S}$ is extensive, 
$\DD_{\mathcal{S}^{\Delta^{\op}}}$ is local w.r.t.\@ the extensive topology on $\mathcal{S}$. 
\end{SATZ}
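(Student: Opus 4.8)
The plan is to verify the two conditions of Definition~\ref{DEFLOCAL} for a covering $\{f_i\colon U_i\to S\}$: that each $f_i$ is $\DD$-local --- i.e.\@ satisfies base change (Dloc1 left) and has $f_i^\bullet$ commuting with homotopy colimits (Dloc2 left) --- and that the family $\{f_i^\bullet\}_i$ is jointly conservative. I would first reduce everything to statements about the underlying model category $\mathcal{M}$ (for the first assertion $\mathcal{M}=\mathcal{SET}_{loc}^{\mathcal{S}^{\op}\times\Delta^{\op}}$, for the second $\mathcal{M}=\mathcal{S}^{\Delta^{\op}}$) by invoking Corollary~\ref{KORBIFIB} and Theorem~\ref{SATZHOCOLIM}: the fibre over $S$ of $\DD_{\mathcal{M}}(\cdot)$ is the localization of the over-category $\mathcal{M}/S$ at the point-wise weak equivalences; the pull-back functor $f^\bullet$ is the derived functor of the strict base change $X\mapsto X\times_S U$, which already preserves weak equivalences by Theorem~\ref{SATZBIFIBMC} (for the \v{C}ech-localized structure using exactness, cf.\@ \cite[Proposition~3.17]{Hor21b}); and homotopy colimits in the fibre are given by the Bousfield--Kan coend~(\ref{eqhocolim}) on point-wise cofibrant objects. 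It also helps to note, via Theorem~\ref{SATZEXTENSIVEMODELCAT}, that the second case essentially reduces to the first applied to the extensive topology, since $\DD_{\mathcal{S}^{\Delta^{\op}}}$ agrees with the fibered derivator of $\mathcal{S}^{\amalg,\Delta^{\op}}_{loc}$, the \v{C}ech localization at coproduct covers.

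For (Dloc1 left) I would observe that in the over-category bifibration the ``push-forward'' functors $g_\bullet$, $G_\bullet$ are post-composition while $f^\bullet$, $F^\bullet$ are strict pull-backs, so that in a pull-back square
\[ \xymatrix{ U\times_S V \ar[r]^-{F} \ar[d]_-{G} & V \ar[d]^-{g} \\ U \ar[r]_-{f} & S } \]
both $G_\bullet F^\bullet$ and $f^\bullet g_\bullet$ send an object $Y\to V$ to $Y\times_S U\to U$; hence the exchange morphism is an isomorphism of the underlying functors, and since all four preserve weak equivalences the same holds for the derived functors. For (Dloc2 left) I would exploit that the coend~(\ref{eqhocolim}) is assembled only from coproducts and the diagonal of a bisimplicial object: the strict pull-back $-\times_S U$ commutes with the diagonal and with coproducts --- in $\mathcal{SET}^{\mathcal{S}^{\op}\times\Delta^{\op}}/h_S$ because pull-back in a slice of a presheaf topos has a right adjoint and hence preserves all colimits, in $\mathcal{S}^{\Delta^{\op}}/S$ because coproducts in an extensive category are disjoint and universal --- and it preserves point-wise cofibrant objects, which via the explicit description of the split-projective/projective cofibrant objects comes down to the pull-back of a representable $h_V$ along $f\colon U\to S$ being again representable, namely $h_{U\times_S V}$ (using that $\mathcal{S}$ has finite limits). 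Since $f^\bullet$ thus commutes with the coend and preserves weak equivalences, it commutes with homotopy colimits.

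For joint conservativity the extensive case is immediate: a cover is a coproduct decomposition $S=\coprod_i U_i$, so by disjointness and universality any $X\to S$ equals $\coprod_i (X\times_S U_i\to U_i)$ and weak equivalences are tested point-wise, hence detected by the $f_i^\bullet$. For $\DD_{\mathcal{SET}_{loc}^{\mathcal{S}^{\op}\times\Delta^{\op}}}$ and an arbitrary cover I would use that the localization was performed precisely at the \v{C}ech covers, so that $h_S$ becomes the homotopy colimit of its \v{C}ech nerve $\check C_\bullet$, with $\check C_n=\coprod_{i_0,\dots,i_n}h_{U_{i_0}\times_S\cdots\times_S U_{i_n}}$; then given $\phi\colon X\to Y$ over $h_S$ with every $f_i^\bullet\phi$ a local weak equivalence, universality of homotopy colimits would give $X\simeq\hocolim_n(X\times_{h_S}\check C_n)$ and likewise for $Y$, with each $X\times_{h_S}\check C_n\to Y\times_{h_S}\check C_n$ a coproduct of finite iterated pull-backs of the $f_i^\bullet\phi$ and hence a local weak equivalence, so that $\phi$ is one too.

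The hard part will be this last step: making the identification $X\simeq\hocolim_n(X\times_{h_S}\check C_n)$ precise, i.e.\@ showing that the strict base change $-\times_{h_S}(-)$ commutes with the homotopy colimit realizing $|\check C_\bullet|\simeq h_S$. I expect to extract this from the exactness of the \v{C}ech localization (Theorem~\ref{SATZBLEXACT}) together with the point-wise computation of the bifibration of Corollary~\ref{KORBIFIB} --- or, more softly, from the fact that $\mathcal{SET}_{loc}^{\mathcal{S}^{\op}\times\Delta^{\op}}$ presents an $\infty$-topos, in which colimits are universal. A secondary obstacle, largely bookkeeping, is checking in (Dloc2 left) that the strict pull-back genuinely preserves point-wise cofibrancy and the degeneracy decomposition underlying~(\ref{eqhocolim}).
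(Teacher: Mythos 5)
Your overall decomposition is the right one and the first two thirds match the paper's argument: the paper proves (in its first lemma) that \emph{all} morphisms of $\mathcal{S}$ are $\DD_{\mathcal{M}}$-local, with (Dloc1 left) holding because push-forward is post-composition and pull-back is strict fibre product, so the exchange morphism is an isomorphism already of underived functors (all of which preserve weak equivalences), and (Dloc2 left) holding because the strict pull-back commutes with the coproducts and the diagonal assembling the Bousfield--Kan coend~(\ref{eqhocolim}). Your extensive-case conservativity is also essentially the paper's third lemma, except that the paper first replaces $g$ by a cofibration so that the coproduct decomposition consists of (trivial) cofibrations; you should do the same, or otherwise justify that a coproduct of weak equivalences in $\mathcal{S}^{\Delta^{\op}}$ is a weak equivalence.

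The genuine gap is exactly where you locate it: joint conservativity for an arbitrary cover in $\mathcal{SET}_{loc}^{\mathcal{S}^{\op}\times\Delta^{\op}}$. Your proposed identification $X\simeq\hocolim_n(X\times_{h_S}\check C_n)$ requires universality of homotopy colimits (descent) in the \v{C}ech localization; this is true because the localization is left exact and hence presents an $\infty$-topos, but it is not established anywhere in the paper, and the ``exactness'' of Theorem~\ref{SATZBLEXACT} (commutation of $L$ with homotopically finite homotopy limits) does not yield commutation of a base change past a geometric realization. The paper avoids universality altogether by a bisimplicial diagonal argument: set $\widetilde A_{i,j}=A_i\times_S U_j$ and $\widetilde B_{i,j}=B_i\times_S U_j$, where $U_\bullet\to S$ is the \v{C}ech cover of $U=\coprod_i U^{(i)}$. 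The rows $A_\bullet\times_S U_j\to B_\bullet\times_S U_j$ are weak equivalences by hypothesis (each component of $U_j$ factors through some $f_i$), so the diagonals satisfy $\operatorname{diag}\widetilde A\to\operatorname{diag}\widetilde B$ is a local weak equivalence by the realization lemma (no cofibrancy issues, via the injective structure). The columns $A_i\times_S U_\bullet\to A_i$ are \emph{level-wise pull-backs of the single generating local weak equivalence} $U_\bullet\to S$ along $A_i\to S$, hence local weak equivalences because pull-back preserves local weak equivalences (every object is sharp-fibrant, cf.\@ the reasoning in Theorem~\ref{SATZBIFIBMC}); so $\operatorname{diag}\widetilde A\to A_\bullet$ and $\operatorname{diag}\widetilde B\to B_\bullet$ are local weak equivalences, and 2-out-of-3 in the resulting square gives that $g$ is one. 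Replacing your descent step by this argument closes the gap using only tools already available in the paper.
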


\begin{proof}This assertion follows from the three following lemmas. 
\end{proof}

\begin{LEMMA}
Let $\mathcal{M}$ be one of the model categories in \ref{PARMC} and consider 
 the fibered derivator $\DD_{\mathcal{M}} \rightarrow \SSS$.
\begin{enumerate}
\item
For a Cartesian square
\[ \xymatrix{ 
X \times_Y Z \ar[r]^G \ar[d]^F & X \ar[d]^f  \\
Z \ar[r]^g & Y }  \]
 in $\mathcal{S}$  the natural exchange morphism
\[  F_\bullet  G^\bullet \rightarrow g^\bullet f_\bullet  \]
is an isomorphism. 
\item
For any morphism $f$ in $\mathcal{S}$ the pull-back functor $f^\bullet$ commutes with homotopy colimits as well. 
\end{enumerate}
In other words, {\em all} morphisms in $\mathcal{S}$ are $\DD_{\mathcal{M}}$-local (cf.\@ Definition~\ref{DEFLOCAL}).
\end{LEMMA}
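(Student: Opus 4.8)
The plan is to read everything off the explicit description of $\DD_{\mathcal{M}}$ provided by Corollary~\ref{KORBIFIB}: the pull-back and push-forward functors are computed point-wise in the diagram variable, so throughout we may reduce to the fibre categories over a single object of $\mathcal{S}$; on the underlying categories $f^\bullet$ (for $f\colon S \to T$ in $\mathcal{S}$) is the fibre product $X \mapsto X \times_T S$ and $f_\bullet$ is post-composition of the structural morphism with $f$; and by Theorem~\ref{SATZBIFIBMC} both already preserve weak equivalences, hence agree with their derived functors. Recall also that $\mathcal{S}$ has finite limits by the standing assumptions of \ref{PARMC}, so the fibre products exist.

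For part~1 I would evaluate the exchange transformation $F_\bullet G^\bullet \to g^\bullet f_\bullet$ on an object $W \to X$ of $\mathcal{M}/X$. Its source has underlying object $W \times_X (X \times_Y Z)$ with structural morphism the projection down to $Z$, and its target has underlying object $W \times_Y Z$ over $Z$. The defining Cartesian square of $W \times_X (X \times_Y Z)$ over $W \to X$ and the given Cartesian square with legs $X\times_Y Z \xrightarrow{G} X$ and $X\times_Y Z \xrightarrow{F} Z$ paste along $X\times_Y Z \to X$ into a Cartesian rectangle exhibiting a canonical isomorphism $W \times_X (X\times_Y Z) \cong W \times_Y Z$ over $Z$; one checks that the exchange transformation is precisely this isomorphism (it is the mate, under the adjunctions $F_\bullet \dashv F^\bullet$ and $f_\bullet \dashv f^\bullet$, of the pseudofunctoriality isomorphism $G^\bullet f^\bullet \cong F^\bullet g^\bullet$). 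So it is already an isomorphism on underlying objects in $\mathcal{M}$, a fortiori in $\DD_{\mathcal{M}}$, and since the statement is point-wise in the diagram variable this proves base change (Dloc1 left) for every morphism of $\mathcal{S}$.

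For part~2 I would use the Bousfield--Kan formula of Theorem~\ref{SATZHOCOLIM}. First, $f^\bullet$ preserves point-wise cofibrant objects: in the split-projective structures (cases~1 and 2) the fibre product with the constant objects $S,T$ respects the degeneracy splitting of a split object, and in the projective structure on simplicial presheaves (cases~3 and 4) latching objects commute with the fibre product because colimits are universal in the topos $\mathcal{SET}^{\mathcal{S}^{\op}}$, so cofibrations are preserved. Given a point-wise cofibrant $X \in \mathcal{M}^I$ over the constant diagram $p_I^* T$, its homotopy colimit over $I$ is computed degree-wise by $(\hocolim_I X)_n = \coprod_{i_0 \to \cdots \to i_n} X(i_0)_n$ (cf.\@ (\ref{eqhocolim})); applying $f^\bullet$ degree-wise — the fibre product being computed point-wise in $\Delta^{\op}$, with $T,S$ constant — and using that the fibre product distributes over coproducts (coproducts being disjoint and universal) in $\mathcal{S}^\amalg$, in $\mathcal{SET}^{\mathcal{S}^{\op}}$, and, for case~1, in the extensive category $\mathcal{S}$, one gets
\[ (f^\bullet \hocolim_I X)_n \;=\; \Bigl(\coprod_{i_0 \to \cdots \to i_n} X(i_0)_n\Bigr)\times_T S \;\cong\; \coprod_{i_0 \to \cdots \to i_n}\bigl(X(i_0)_n\times_T S\bigr) \;=\; (\hocolim_I f^\bullet X)_n, \]
naturally and compatibly with the structural morphisms to $S$, so the canonical comparison $\hocolim_I f^\bullet X \to f^\bullet \hocolim_I X$ is an isomorphism. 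Since $f^\bullet$ commutes with restriction along diagrams by Corollary~\ref{KORBIFIB}, the relative homotopy left Kan extensions $\alpha_!$ in the fibres are built from such $\hocolim_I$ and such restrictions, and $f^\bullet$ is already homotopical, it follows that $f^\bullet$ commutes with all $\alpha_!$, i.e.\@ is a morphism of left derivators commuting with homotopy colimits; this is (Dloc2 left). For case~4 one notes that the underlying category and cofibrations, hence the same formula, are unchanged and that $f^\bullet$ still preserves the larger class of weak equivalences by \cite[Proposition~3.17]{Hor21b} (alternatively one invokes exactness of the localization, Theorem~\ref{SATZBLEXACT}).

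I do not expect a deep obstacle. The two substantive points are (a) recognising the exchange map as the pasting isomorphism of two Cartesian squares, and (b) that the point-wise fibre product commutes with the coproducts occurring in the Bousfield--Kan formula, which rests on coproducts being disjoint and universal in $\mathcal{S}^\amalg$, $\mathcal{SET}^{\mathcal{S}^{\op}}$ and in an extensive $\mathcal{S}$. The only place needing some care is the Bousfield-localized case~4, where one must make sure the homotopy colimit is still given by the same formula and that $f^\bullet$ remains homotopical.
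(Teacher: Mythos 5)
Your proposal is correct and follows exactly the route of the paper's (very terse) proof: part~1 is the pasting of Cartesian squares on underlying objects together with the observation that $f^\bullet$ and $f_\bullet$ are already derived (Theorem~\ref{SATZBIFIBMC}), and part~2 is the Bousfield--Kan formula (\ref{eqhocolim}) combined with the distributivity of fibre products over coproducts (extensivity). You have merely filled in the details — including the cofibrancy check and the care needed in the localized case~4 — that the paper leaves implicit.
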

\begin{proof}
1.\@ The formula for the underived functors follows trivially from properties of fiber products. However all functors are equal to their derived functors because they map weak equivalences to weak equivalences. 
2.\@ follows from the explicit formula (\ref{eqhocolim}) of Theorem~\ref{SATZHOCOLIM} and the extensivity of $\mathcal{M}$.
\end{proof}

\begin{LEMMA}
Consider the fibered derivator $\DD_{\mathcal{SET}_{loc}^{\mathcal{S}^{\op} \times \Delta^{\op}}}$. 
If $f_i: U^{(i)} \rightarrow S$ form a cover in the Grothendieck pretopology then the functors $f_i^\bullet$ are jointly conservative.
\end{LEMMA}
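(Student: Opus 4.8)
The plan is to reduce the statement to the fact that the left Bousfield localization at the \v{C}ech covers inverts the \v{C}ech cover associated to $\{f_i: U^{(i)} \to S\}$, combined with the explicit description of the homotopy colimit formula from Theorem~\ref{SATZHOCOLIM}. Concretely, let $U := \coprod_i U^{(i)}$ in $\mathcal{S}^\amalg$ (equivalently, the coproduct presheaf in $\mathcal{SET}^{\mathcal{S}^{\op}}$) and let $U_\bullet \to S$ be the associated \v{C}ech cover as in the relevant paragraph of Section~\ref{SECTMC}. The key input is that, in $\mathcal{SET}_{loc}^{\mathcal{S}^{\op} \times \Delta^{\op}}$, the augmentation $U_\bullet \to h_S$ is a weak equivalence (this is the defining property of the localization, since $U_\bullet \to h_S$ is a \v{C}ech cover). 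First I would rephrase joint conservativity of $\{f_i^\bullet\}$ as joint conservativity of the single functor $(\pi_U)^\bullet: \DD_{\mathcal{SET}_{loc}}(\cdot)_S \to \DD_{\mathcal{SET}_{loc}}(\cdot)_U$ where $\pi_U: U \to S$; since $U$ decomposes as a coproduct over $i$ and $\DD_{\mathcal{M}}$ is computed point-wise on coproducts, $(\pi_U)^\bullet$ is jointly conservative iff the family $\{f_i^\bullet\}$ is.

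Next I would exploit the base-change/homotopy-colimit descent made available by the previous lemma (all morphisms in $\mathcal{S}$ are $\DD_{\mathcal{M}}$-local) and by the exactness of the \v{C}ech localization (Theorem~\ref{SATZBLEXACT}), to show that any object $\mathcal{E} \in \DD_{\mathcal{SET}_{loc}}(\cdot)_S$ is recovered as a homotopy colimit over $\Delta^{\op}$ of the pulled-back pieces along $U_\bullet \to S$. Precisely: since $U_\bullet \to h_S$ is a weak equivalence in $\mathcal{SET}_{loc}$ and the bifibration is computed point-wise (Corollary~\ref{KORBIFIB}), the canonical morphism
\[ \hocolim_{n \in \Delta^{\op}} (\pi_{U_n})_\bullet (\pi_{U_n})^\bullet \mathcal{E} \longrightarrow \mathcal{E} \]
is an isomorphism in $\DD_{\mathcal{SET}_{loc}}(\cdot)_S$ — this follows by writing $\mathcal{E}$ (or rather the object of $\DD$ over $S$) as the relative homotopy colimit along the map of diagrams $(\Delta^{\op}, U_\bullet) \to (\cdot, S)$, using the explicit Bousfield–Kan formula~(\ref{eqhocolim}) together with the fact that in $\mathcal{SET}_{loc}$ the \v{C}ech cover is already a weak equivalence, so the relative homotopy colimit of the constant-ish diagram is just $\mathcal{E}$. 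Now if $(\pi_U)^\bullet \mathcal{E} = 0$, then by base change (Dloc1, i.e.\@ the first part of the preceding lemma applied to the Cartesian squares $U_n = U \times_S \cdots \times_S U$) each $(\pi_{U_n})^\bullet \mathcal{E}$ is built by iterated pull-backs from $(\pi_U)^\bullet \mathcal{E}$ and hence vanishes; therefore the homotopy colimit above vanishes, forcing $\mathcal{E} = 0$.

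The main obstacle I expect is making the descent statement — that $\mathcal{E}$ is the homotopy colimit of the \v{C}ech resolution of its pullbacks — fully rigorous at the level of the fibered derivator $\DD_{\mathcal{SET}_{loc}}$, rather than just at the level of the model category $\mathcal{SET}_{loc}^{\mathcal{S}^{\op} \times \Delta^{\op}}$. The cleanest route is probably to work first in the model category: for a fibrant (hence \v{C}ech-local) object $\mathcal{X}$ over $h_S$, the map $\hocolim_{\Delta^{\op}}(U_\bullet \times_{h_S} \mathcal{X}) \to \mathcal{X}$ is a \v{C}ech-local weak equivalence because $U_\bullet \to h_S$ is one and $\mathcal{X} \to h_S$ is a fibration (so the pullback is a homotopy pullback, and homotopy colimits along $\Delta^{\op}$ commute with the relevant homotopy pullbacks after localization by left-properness and exactness, Theorem~\ref{SATZBLEXACT}); then transport this to $\DD_{\mathcal{SET}_{loc}}$ via Corollary~\ref{KORBIFIB} and Theorem~\ref{SATZHOCOLIM}. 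The second, more routine obstacle is bookkeeping the identification $(\pi_{U_n})^\bullet \mathcal{E} \cong (\text{iterated pullbacks of } (\pi_U)^\bullet \mathcal{E})$, which is exactly the content of part~1 of the preceding lemma (base change for Cartesian squares), applied inductively to the fiber products defining $U_n$; I would simply invoke that lemma. With these in hand the conservativity conclusion is immediate.
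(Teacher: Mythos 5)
Your overall strategy is the paper's: resolve an object over $S$ by the \v{C}ech cover $U_\bullet \to S$, use that the localization inverts this cover and that pull-back along maps to $S$ preserves local weak equivalences (sharpness/right properness), and conclude via the homotopy colimit over $\Delta^{\op}$ (the paper phrases this as the diagonal of the bisimplicial presheaf $A_i \times_S U_j$, invoking the levelwise criterion for bisimplicial objects rather than fibrant replacement). So the descent input is sound, and the reduction of the family $\{f_i^\bullet\}$ to the single functor $(\pi_U)^\bullet$ is also fine.

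The genuine gap is the last step: you deduce conservativity from ``$(\pi_U)^\bullet \mathcal{E} = 0$ implies $\mathcal{E} = 0$.'' That criterion is only equivalent to conservativity for exact functors between stable (triangulated/additive) categories. Here the fibers $\DD_{\mathcal{SET}_{loc}^{\mathcal{S}^{\op}\times\Delta^{\op}}}(\cdot)_S$ are homotopy categories of simplicial presheaves over $h_S$: they are unpointed, there is no zero object in the relevant sense, and conservativity is the statement that a \emph{morphism} $g: A_\bullet \to B_\bullet$ over $S$ with all $f_i^\bullet g$ invertible is itself invertible. The repair is immediate from what you have already set up: apply your descent equivalence $\hocolim_{n}(\pi_{U_n})_\bullet(\pi_{U_n})^\bullet(-) \xrightarrow{\ \sim\ } (-)$ \emph{naturally} to both $A_\bullet$ and $B_\bullet$. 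Base change identifies $(\pi_{U_n})^\bullet g$ with an iterated pull-back of $(\pi_U)^\bullet g$, hence each is a weak equivalence; the induced map on homotopy colimits over $\Delta^{\op}$ is then a weak equivalence (levelwise criterion), and naturality of the augmentation forces $g$ to be one. This is exactly the commuting square of bisimplicial objects in the paper's proof. With that correction the argument goes through.
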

\begin{proof}
Let $g: A_\bullet \rightarrow B_\bullet$ be a morphism in $\mathcal{SET}_{loc}^{\mathcal{S}^{\op} \times \Delta^{\op}}$ over $S$.
Form the coproduct $U:= \coprod_i U^{(i)}$ in $\mathcal{SET}^{\mathcal{S}^{\op}}$ and the \v{C}ech cover $f: U_\bullet \rightarrow S$.
Assume that $f_i^\bullet g$ is a weak equivalence for all $i$. Then $g \times_S U_k$ is a trivial cofibration for all $U_k$ (because the morphisms from the components of $U_k$ factor through one of the $f_i$). 

Form the morphism of bisimplicial objects
\[ \widetilde{A}_{i,j} = A_i \times_S U_j  \rightarrow  \widetilde{B}_{i,j} = B_i \times_S U_j  \] 
and consider $A$ and $B$ as bisimplicial presheaves constant in horizontal direction. We get a diagram
\[ \xymatrix{ 
\widetilde{A}_{\bullet,\bullet} \ar[r] \ar[d] & \widetilde{B}_{\bullet,\bullet} \ar[d]  \\
A_{\bullet,\bullet} \ar[r] & B_{\bullet,\bullet} }  \]
and the induced diagram
\[ \xymatrix{ 
\mathrm{hocolim} \widetilde{A}_{\bullet,\bullet} \ar[r] \ar[d] & \mathrm{hocolim} \widetilde{B}_{\bullet,\bullet} \ar[d]  \\
A_{\bullet} \ar[r]_g & B_{\bullet} }  \]
which is given by the diagonal simplicial presheaves\footnote{No cofibrancy assumption is needed because of the existence of the injective model category structure and its localization in which all objects are cofibrant (cf.\@ \cite[Lemma 3.16]{Hor21b}).}. 
The top horizontal morphism is a weak equivalences because the morphism $\widetilde{A}_{\bullet,\bullet} \rightarrow \widetilde{B}_{\bullet,\bullet}$ consists
of weak equivalences horizontally.
The vertical morphisms are weak equivalences because the columns themselves are pull-backs of the weak equivalence $U_\bullet \rightarrow S$ along $A_i \rightarrow S$, resp.\@ $B_i \rightarrow S$ and thus are weak equivalences in $\mathcal{SET}_{loc}^{\mathcal{S}^{\op} \times \Delta^{\op}}$ by the reasoning in the proof of Theorem~\ref{SATZBIFIBMC}. Therefore $g$ is a weak equivalence. 
\end{proof}

\begin{LEMMA}
Let $\mathcal{S}$ be an extensive category with finite limits and consider a family of coproduct injections $f_i: U^{(i)} \rightarrow \coprod_i U^{(i)}$. 
Then the functors $f_i^\bullet$ in the fibered derivator $\DD_{\mathcal{S}^{\Delta^{\op}}}$ are jointly conservative.
\end{LEMMA}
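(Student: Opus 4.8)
The plan is to use the extensivity of $\mathcal{S}$ to split everything along the decomposition $S:=\coprod_i U^{(i)}$, and then to reduce the claim to the elementary facts that coproducts, and finite products, of weak equivalences of simplicial sets are again weak equivalences.

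First I would represent a given morphism of $\DD_{\mathcal{S}^{\Delta^{\op}}}(\cdot)_S$ by an honest morphism $g\colon A_\bullet \rightarrow B_\bullet$ in $\mathcal{S}^{\Delta^{\op}}/S$. By Theorem~\ref{SATZBIFIBMC} (together with the description of the over-category model structure) $g$ becomes an isomorphism in $\DD_{\mathcal{S}^{\Delta^{\op}}}(\cdot)_S$, resp.\@ $\DD_{\mathcal{S}^{\Delta^{\op}}}(\cdot)_{U^{(i)}}$, precisely when it is a weak equivalence in $\mathcal{S}^{\Delta^{\op}}$, resp.\@ when $f_i^\bullet g = g\times_S U^{(i)}$ is one; so one must show that $g\times_S U^{(i)}$ being a weak equivalence for all $i$ forces $g$ to be a weak equivalence. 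Applying extensivity degreewise to the constant structure morphisms $A_\bullet \rightarrow S$, $B_\bullet \rightarrow S$ yields coproduct decompositions $A_\bullet = \coprod_i A_\bullet^{(i)}$, $B_\bullet = \coprod_i B_\bullet^{(i)}$ with $A_\bullet^{(i)} = A_\bullet\times_S U^{(i)}$ etc., compatible with faces and degeneracies because these lie over $S$; hence $g = \coprod_i g^{(i)}$ with $g^{(i)} = g\times_S U^{(i)}$ a weak equivalence by hypothesis.

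It then remains to see that $\Hom(T,g)$ is a weak equivalence of simplicial sets for every $\N$-small $T\in\mathcal{S}$. Since the composite $A_\bullet\rightarrow S$ is constant, a morphism $T\rightarrow A_n$ has an underlying morphism $\phi\colon T\rightarrow S$ independent of the degree $n$, and extensivity splits $T = \coprod_i T_i^\phi$ with $T_i^\phi:=T\times_{S,\phi}U^{(i)}$; each $T_i^\phi$ is a retract of $T$, hence again $\N$-small, and equals the initial object for all but finitely many $i$, since $\phi$ factors through a finite subcoproduct of $S=\coprod_i U^{(i)}$ by $\N$-smallness of $T$. Feeding this into $\Hom(T,-)$ exhibits $\Hom(T,A_\bullet)$ as a coproduct, indexed by the morphisms $\phi\colon T\rightarrow S$, of finite products over the relevant $i$ of fibres of the maps $\Hom(T_i^\phi,A_\bullet^{(i)})\rightarrow \Hom(T_i^\phi,U^{(i)})$, and likewise for $B_\bullet$, naturally in $g$. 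Each $\Hom(T_i^\phi,g^{(i)})$ is a weak equivalence (as $g^{(i)}$ is one and $T_i^\phi$ is $\N$-small) lying over the discrete simplicial set $\Hom(T_i^\phi,U^{(i)})$, hence is a fibrewise weak equivalence; so $\Hom(T,g)$ is a coproduct of finite products of weak equivalences, hence a weak equivalence. Therefore $g$ is a weak equivalence, i.e.\@ an isomorphism in $\DD_{\mathcal{S}^{\Delta^{\op}}}(\cdot)_S$, which is the asserted joint conservativity. The only delicate point is this last combinatorial bookkeeping — in particular the use of $\N$-smallness of $T$ to render the product over $i$ essentially finite (so that it is stable under weak equivalences); everything else is routine extensive-category manipulation.
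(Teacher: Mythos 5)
Your reduction to the statement ``$g=\coprod_i g^{(i)}$ with each $g^{(i)}$ a weak equivalence over $U^{(i)}$ implies $g$ is a weak equivalence'' matches the paper's, but the way you then verify that statement has two genuine gaps, both located exactly at the point you yourself flag as delicate. First, $T_i^\phi = T\times_S U^{(i)}$ is a coproduct summand of $T$, not a retract: a retraction $T = T_i^\phi \amalg (\coprod_{j\neq i}T_j^\phi) \rightarrow T_i^\phi$ requires a map from the complementary summand into $T_i^\phi$, which need not exist (already in $\mathcal{SET}$ with $T_i^\phi$ the initial object). So the $\N$-smallness of $T_i^\phi$, and hence the claim that $\Hom(T_i^\phi, g^{(i)})$ is a weak equivalence, is not established; smallness is not in general inherited by coproduct summands. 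Second, the assertion that $\phi\colon T\rightarrow \coprod_i U^{(i)}$ factors through a finite subcoproduct does not follow from $\N$-smallness: a coproduct over an arbitrary index set is the filtered colimit of its finite subcoproducts, but that colimit is not indexed by an $\N$-chain, so smallness with respect to countable sequences gives no factorization (and there is no connectedness hypothesis on $\N$-small objects to fall back on). Without essential finiteness of the product over $i$, your final step fails, since infinite products of weak equivalences between non-Kan simplicial sets need not be weak equivalences.

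The paper's proof sidesteps all of this. It first reduces, via the factorization into cofibration followed by trivial fibration (and the fact that $f_i^\bullet$ preserves trivial fibrations and weak equivalences), to the case where $g$ is a cofibration. Extensivity then decomposes $g$ as $\coprod_i g_i$ with each $g_i$ a cofibration over $U^{(i)}$; the hypothesis makes each $g_i$ a \emph{trivial} cofibration; and trivial cofibrations, being characterized by the left lifting property against fibrations, are closed under arbitrary coproducts. No $\Hom(T,-)$ bookkeeping, no smallness, no finiteness of the indexing set is needed. If you want to keep your more hands-on route, you would at minimum need to extract from the definition of $\N$-smallness in \cite{Hor21} that it is inherited by coproduct summands and that maps from $\N$-small objects into coproducts factor through finite (or at least suitably controlled) subcoproducts; absent that, I recommend the ``WLOG a cofibration'' reduction.
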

\begin{proof}
Let $g: A \rightarrow B$ be a morphism over $\coprod_i U^{(i)}$ in $\mathcal{S}^{\Delta^{\op}}$. We may assume w.l.o.g.\@ that $g$ is a cofibration.
Then by the extensivity it decomposes itself as a coproduct of 
morphisms $g_i: A^{(i)} \rightarrow B^{(i)}$ over $U^{(i)}$ in $\mathcal{S}^{\Delta^{\op}}$. The $g_i$ are itself also cofibrations. If
they become isomorphisms in $\DD_{\mathcal{S}^{\Delta^{\op}}}(\cdot)_{U^{(i)}}$ they are trivial cofibrations. Thus $g$, being a coproduct of trivial cofibrations, is 
a trivial cofibration itself and thus an isomorphism in $\DD_{\mathcal{S}^{\Delta^{\op}}}(\cdot)_{\coprod_i U^{(i)}}$.
\end{proof}

\begin{DEF}\label{DEFN}
For a diagram $(I, S)$ in $\Dia(\mathcal{S})$, consider the simplicial diagram $\Delta^{\op} \rightarrow \mathcal{S}^{\amalg}$ 
which, on the level of underlying simplicial sets, is given by the usual nerve $N(I)$ and whose entry in $\mathcal{S}$ at a
simplex $\alpha: [n] \rightarrow I$ is $S(\alpha(0))$. It is called the {\bf nerve} $N(I, S)$ of $(I,S)$. By abuse of notation we denote the same way its image in $\mathcal{S}^{\Delta^{\op}}$
under $\amalg$ and its image in $\mathcal{SET}^{\mathcal{S}^{\op} \times \Delta^{\op}}$ under $R$. 
\end{DEF}

Warning:  The Yoneda embedding $\mathcal{S}^{\Delta^{\op}} \rightarrow \mathcal{SET}^{\mathcal{S}^{\op} \times \Delta^{\op}}$ does not preserve $N(I, S)$!

\begin{PROP}\label{HOCOLIMNERVE}
Let $\mathcal{M}$ be one of the model categories in \ref{PARMC}.
Let $\DD_{\mathcal{M}} \rightarrow \SSS$ be the corresponding fibered derivator and let $S \in \mathcal{S}$.
Let $p: (I, F) \rightarrow (\cdot, S)$ be a diagram over $S$ and let $X \in \DD_{\mathcal{M}}(\cdot)_S$ be represented by a cofibrant object in $\mathcal{M}$. Then we have
\[ p_! p^* X = N(I, S) \times_S X.  \]
\end{PROP}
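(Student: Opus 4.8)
The plan is to reduce the statement to the explicit Bousfield–Kan-type formula for $p_!p^*$ established in Theorem~\ref{SATZHOCOLIM}, and then identify the resulting coend with $N(I,S) \times_S X$. Concretely, the relative homotopy left Kan extension $p_! p^* X$ for $p : (I,F) \to (\cdot, S)$ is computed by $\hocolim$ of the constant diagram with value $p^* X$ over $I$ (sitting over the diagram $F$), together with the structural pushforward into the fiber over $S$. So the first step is to write
\[ p_! p^* X = \hocolim_{I} \big( (f_i)_\bullet \iota^* p^* X \big) \]
using the formula in the proof of (FDer3) in Theorem~\ref{SATZHOCOLIM}, with $f : F \to p_I^* S$ the canonical morphism; since $X$ is represented by a cofibrant object of $\mathcal{M}$ and the pullback $p^* X$ is pointwise cofibrant, the $\hocolim$ is given by the explicit coend $\int^I N(- \times_{/I} I) \otimes X$ of Theorem~\ref{SATZHOCOLIM}, computed degreewise by the bisimplicial coproduct $\widetilde{X}_{n,m} = \coprod_{i_0 \to \cdots \to i_n} X(i_0)_m$ after taking the diagonal.

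Next I would unwind what this coend is once one remembers that $X$ lives over $S$ and the diagram $F$ records the "decoration" of $I$ by objects of $\mathcal{S}$. Each summand $X(i_0)$ in the bisimplicial object is the pullback of $X$ (over $S$) along the structural map $F(i_0) \to S$; hence taking the coproduct over all $n$-simplices $i_0 \to \cdots \to i_n$ of $I$ of these $X(i_0)$ is exactly the coproduct description of $N(I,S) \times_S X$: by Definition~\ref{DEFN}, $N(I,S)$ is the simplicial object in $\mathcal{S}^\amalg$ whose $n$-th term is $\coprod_{i_0 \to \cdots \to i_n} S(i_0)$ (where here $S = F$ in the notation of that definition), and fibering it over $S$ with $X$ produces precisely $\coprod_{i_0 \to \cdots \to i_n} (F(i_0) \times_S X)$ in degree $n$. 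Matching this against $\widetilde{X}_{n,n}$ via equation~(\ref{eqhocolim}) gives the claimed identification $p_! p^* X = N(I,S) \times_S X$, at least on the level of underlying objects; one then checks the identification is natural and respects the structural maps to $S$, which is routine since both sides are manifestly functorial constructions in $(I,F)$ and the coend formula of Theorem~\ref{SATZHOCOLIM} is functorial in $I$.

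A small subtlety to address in cases 3 and 4 (simplicial presheaves, localized or not) is that $N(I,S)$ there denotes the image under $R$ of the $\mathcal{S}^\amalg$-object, and by the Warning after Definition~\ref{DEFN} this is \emph{not} the Yoneda image of the $\mathcal{S}^{\Delta^{\op}}$-object $N(I,S)$; but since $R$ preserves coproducts and finite limits (it is the functor of \ref{PARR}, $R(\{U_i\})(T) = \prod_i \Hom(T,U_i)$, which commutes with the relevant limits and coproducts degreewise), the coproduct-of-pullbacks description transports correctly, and the homotopy colimit in $\mathcal{M}$ agrees with the one computed in $\mathcal{S}^{\amalg,\Delta^{\op}}$ and pushed forward — in case 4 using exactness of the localization and the fact (Theorem~\ref{SATZBIFIBMC}) that $f^\bullet$ preserves weak equivalences, so no fibrant replacement is needed. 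The main obstacle, modest as it is, lies precisely in carefully matching the two coproduct indexings and verifying that the coend formula's degreewise coproducts assemble to the fiber product $N(I,S)\times_S X$ compatibly with the simplicial structure maps — i.e. that the diagonal $\widetilde{X}_{n,n}$ of the bisimplicial object is genuinely the $n$-simplices of $N(I,S)\times_S X$ and not merely abstractly isomorphic to them; this is a bookkeeping check on face and degeneracy maps, made straightforward by the explicit formulas (\ref{eqhocolim}) and Definition~\ref{DEFN}.
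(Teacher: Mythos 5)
Your proposal is correct and follows essentially the same route as the paper's proof: unwind $p_!p^*X$ as $p_{I,!}f_\bullet f^\bullet p_I^*X$, identify the pointwise diagram as $i \mapsto F(i) \times_S X$ (still cofibrant, with $f_\bullet, f^\bullet$ underived by Theorem~\ref{SATZBIFIBMC}), and apply the Bousfield--Kan formula (\ref{eqhocolim}) to match the degreewise coproducts with the definition of $N(I,S)$. The paper's version is just terser, leaving the indexing bookkeeping and the case distinctions implicit.
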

\begin{proof} 
$p_! p^*X$ is defined as $p_{I,!}f_\bullet f^\bullet p_I^*X$ where $f: F \rightarrow p_I^* S$ is the structural morphism.
The object
\[ f_\bullet f^\bullet p_I^* X  \]
is the diagram $S': I \rightarrow \mathcal{S}^{\amalg, \Delta^{\op}}$ which maps $i \in I$ to the simplicial diagram $F(i) \times_S X$ which is still cofibrant. 
Note that $f_\bullet$ and $f^\bullet$ are equal to their underived variants (cf.\@ Theorem~\ref{SATZBIFIBMC}). 
By the Bousfield-Kan formula  (\ref{eqhocolim})  of Theorem~\ref{SATZHOCOLIM} therefore
\[ p_{I,!} S' = N(I, S) \times_S X. \]
\end{proof}

\begin{KOR}\label{KOREQUIVALENCE}
Let $\mathcal{M}$ be one of the model categories in \ref{PARMC} and let  $S \in \mathcal{S}$. Then
 a morphism $\alpha: D_1 \rightarrow D_2$ in $\Dia(\mathcal{S})$ is a $\DD_{\mathcal{M}}$-equivalence over $S$ if and only if  $N(\alpha)$ is a weak equivalence in $\mathcal{M}$.
 In particular, the classes of morphisms $\alpha$ such that $N(\alpha)$ is a weak equivalence form a localizer in each of the cases w.r.t.\@ the appropriate Grothendieck topology. 
\end{KOR}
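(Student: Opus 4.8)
The plan is to reduce everything to the explicit Bousfield--Kan description of $\pi_!\pi^*$ provided by Proposition~\ref{HOCOLIMNERVE}, and then to deduce the ``in particular'' from the descent Theorem~\ref{SATZWEAKHOMDESCENT}. First I would fix the structure morphisms $\pi_i: D_i=(I_i,F_i)\to(\cdot,S)$ and rewrite the defining condition of a $\DD_{\mathcal{M}}$-equivalence. By Proposition~\ref{HOCOLIMNERVE} one has, naturally in a cofibrant $X\in\DD_{\mathcal{M}}(\cdot)_S$, an identification $\pi_{i,!}\pi_i^*X\cong N(D_i)\times_S X$ under which the comparison map induced by $\alpha$ becomes $N(\alpha)\times_S X$; since every object of $\DD_{\mathcal{M}}(\cdot)_S$ has a cofibrant representative, $\alpha$ is a $\DD_{\mathcal{M}}$-equivalence over $S$ precisely when $N(\alpha)\times_S X$ is a weak equivalence for every such $X$. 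The implication ``$\Rightarrow$'' is then immediate: evaluating at $X=S$ (which is cofibrant in each of the model categories of~\ref{PARMC}; or, to be safe, at a cofibrant replacement $QS\to S$ taken to be a trivial fibration, arguing by $2$-out-of-$3$ with the trivial fibration $N(D_i)\times_S QS\to N(D_i)$) one has $N(D_i)\times_S S=N(D_i)$, so $N(\alpha)$ is an isomorphism in $\DD_{\mathcal{M}}(\cdot)_S$, i.e.\@ a weak equivalence in $\mathcal{M}$.

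For ``$\Leftarrow$'' I would proceed in two steps. When $X=U$ comes from $\mathcal{S}$, i.e.\@ is a morphism $u: U\to S$ of $\mathcal{S}$ viewed in $\mathcal{M}$, the object $N(D_i)\times_S U$ is exactly $u^\bullet N(D_i)$ for the pull-back functor $u^\bullet$ of the bifibration of model categories of Theorem~\ref{SATZBIFIBMC}; since that $u^\bullet$ preserves weak equivalences, $N(\alpha)\times_S U=u^\bullet N(\alpha)$ is a weak equivalence whenever $N(\alpha)$ is. To pass to a general $X$ over $S$ I would use that every object of $\DD_{\mathcal{M}}(\cdot)_S$ is a homotopy colimit of (coproducts of) objects coming from $\mathcal{S}$ --- the usual resolution of a simplicial presheaf, respectively a simplicial object in $\mathcal{S}^{\amalg}$, as the homotopy colimit over $\Delta^{\op}$ of its discrete/constant levels, each of which is a homotopy colimit of representables over its category of elements --- together with the fact that both endofunctors $\pi_{i,!}\pi_i^*=N(D_i)\times_S(-)$ preserve homotopy colimits: $\pi_{i,!}$ is a homotopy left Kan extension, and in the formula~(\ref{eqhocolim}) the functor $\pi_i^*=f_i^\bullet p_{I_i}^*$ only ever contributes pull-backs $F_i(i_0)^\bullet$ along morphisms of $\mathcal{S}$, which commute with homotopy colimits because all morphisms of $\mathcal{S}$ are $\DD_{\mathcal{M}}$-local ((Dloc2 left)). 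A natural transformation between homotopy-colimit-preserving functors that is invertible on a generating family is invertible, which finishes this direction. (Alternatively one can bypass the generation argument: by~(\ref{eqhocolim}), $N(D_i)\times_S X$ is the diagonal of the bisimplicial object $(n,m)\mapsto\coprod_{i_0\to\cdots\to i_n}F_i(i_0)\times_S X_m$; in each simplicial degree $m$ the map induced by $\alpha$ decomposes over the connected components of $X_m$ into a coproduct of the weak equivalences $u^\bullet N(\alpha)$ of the first step, so it is a degreewise weak equivalence of bisimplicial objects, and its diagonal --- tested after applying $\Hom(V,-)$ for $V$ ranging over $\N$-small objects of $\mathcal{S}$ --- remains a weak equivalence.)

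Finally, for the ``in particular'', I would use that $\mathcal{S}$ has a terminal object $e$ (it has finite limits), so every object of $\Dia(\mathcal{S})$ maps uniquely to $(\cdot,e)$ and the class $\{\alpha : N(\alpha)\ \text{a weak equivalence in}\ \mathcal{M}\}$ coincides, by the first part with $S=e$, with the class $\mathcal{W}_e$ of $\DD_{\mathcal{M}}$-equivalences over $e$. The fibered derivator $\DD_{\mathcal{M}}\to\SSS$ is a left fibered derivator (Theorem~\ref{SATZHOCOLIM}) which is local with respect to the appropriate Grothendieck topology --- the chosen one in case~4, the extensive one in case~1 by the preceding theorem, and the trivial topology in cases~2 and~3, where this is automatic since all morphisms of $\mathcal{S}$ are $\DD_{\mathcal{M}}$-local and joint conservativity for a trivial cover is trivial --- and all its push-forward functors $f_\bullet$ are conservative, since by Theorem~\ref{SATZBIFIBMC} such an $f_\bullet$ merely post-composes the structural morphism with $f$ and hence reflects weak equivalences. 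Then Theorem~\ref{SATZWEAKHOMDESCENT} applies and $\mathcal{W}_e$ is a localizer. I expect the main obstacle to be exactly the reduction in the ``$\Leftarrow$'' direction from arbitrary $X$ over $S$ to $X\in\mathcal{S}$: it is where the interplay of the explicit coproduct formula~(\ref{eqhocolim}), the homotopy-colimit compatibility coming from $\DD_{\mathcal{M}}$-locality (universality of colimits, resp.\@ exactness of the localization in case~4), and the density of representables must all be brought together.
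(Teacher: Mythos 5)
Your argument is correct and is exactly the (omitted) argument the paper intends: the corollary is meant to follow from Proposition~\ref{HOCOLIMNERVE} by identifying $\pi_{i,!}\pi_i^*X$ with $N(D_i)\times_S X$ on cofibrant $X$, testing at $X=S$ for one direction and reducing to the degreewise coproducts of representables (using Theorem~\ref{SATZBIFIBMC} and the diagonal of a levelwise weak equivalence of bisimplicial objects) for the other, with the ``in particular'' obtained from Theorem~\ref{SATZWEAKHOMDESCENT} precisely as the paper does again in Corollaries~\ref{KORINTWEGENERAL}--\ref{KORINTWEGENERALCECH} and Theorem~\ref{THEOREMINTWECOFCECH}. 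The only point worth making fully explicit is that in cases 3--4 the decomposition of $X_m$ into representable summands is exactly where the cofibrancy of $X$ (via the description of cofibrations in Theorem~\ref{SATZSPMODEL}) is used, which your setup already provides.
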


\section{The smallest localizer}

\begin{LEMMA}\label{LEMMANERVECOF}
Consider the categories $\mathcal{S}^{\amalg, \Delta^{\op}}$ and $\mathcal{S}^{\Delta^{\op}}$ with the split-projective structure. Then $N(I, S)$ is cofibrant in  $\mathcal{S}^{\amalg, \Delta^{\op}}$ and (its image under $\amalg$ is) cofibrant in $\mathcal{S}^{\Delta^{\op}}$
\end{LEMMA}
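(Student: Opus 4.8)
The plan is to verify directly the combinatorial criterion for cofibrancy in the split-projective structure recalled in Theorems~\ref{SATZSOMODEL} and \ref{SATZEXTENSIVEMODELCAT}: a simplicial object $X$ is cofibrant precisely when, in each degree $n$, the degeneracy maps assemble to an isomorphism
\[ X_{n,nd} \amalg \coprod_{\substack{\Delta_n \twoheadrightarrow \Delta_m \\ n \neq m}} X_{m,nd} \iso X_n. \]
By Definition~\ref{DEFN}, $N(I,S)$ is the simplicial object in $\mathcal{S}^\amalg$ with $N(I,S)_n = \coprod_{\alpha\colon [n] \to I} S(\alpha(0))$, the coproduct being indexed by the set $N(I)_n$ of $n$-simplices of the nerve of $I$ (chains $i_0 \to \cdots \to i_n$), the summand at $\alpha$ being $S(\alpha(0)) = S(i_0)$.

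The key observation is that every surjection $s\colon [n] \twoheadrightarrow [m]$ in $\Delta$ satisfies $s(0)=0$, since an order-preserving surjection must preserve endpoints; hence for $\alpha\colon [m] \to I$ one has $(\alpha s)(0) = \alpha(0)$. Therefore the degeneracy operator $s^*\colon N(I,S)_m \to N(I,S)_n$ is the coproduct injection along the injective map of index sets $N(I)_m \hookrightarrow N(I)_n$, $\alpha \mapsto \alpha s$, carrying the summand $S(\alpha(0))$ identically onto $S((\alpha s)(0)) = S(\alpha(0))$. In other words, the value objects are literally unchanged by degeneracies, so all the combinatorics is that of the underlying simplicial set $N(I)$.

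Now I invoke the Eilenberg--Zilber lemma for $N(I)$: every $\alpha \in N(I)_n$ factors uniquely as $\alpha = \beta s$ with $s\colon [n]\twoheadrightarrow[m]$ a surjection and $\beta \in N(I)_m$ nondegenerate (a chain with no identity arrows), giving a bijection $N(I)_n \cong \coprod_{s\colon [n]\twoheadrightarrow[m]} N(I)_m^{\mathrm{nd}}$. Combined with the previous paragraph this yields
\[ N(I,S)_n = \coprod_{\alpha \in N(I)_n} S(\alpha(0)) = \coprod_{s\colon [n]\twoheadrightarrow[m]} \ \coprod_{\beta \in N(I)_m^{\mathrm{nd}}} S(\beta(0)). \]
Setting $N(I,S)_{m,nd} := \coprod_{\beta \in N(I)_m^{\mathrm{nd}}} S(\beta(0))$, the summand for $s=\id_{[n]}$ is $N(I,S)_{n,nd}$ and, for $m<n$, the summand for $s\colon[n]\twoheadrightarrow[m]$ is $N(I,S)_{m,nd}$ included via $s^*$ — exactly the decomposition demanded of a cofibrant object, so $N(I,S)$ is cofibrant in $\mathcal{S}^{\amalg,\Delta^{\op}}$. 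For the assertion in $\mathcal{S}^{\Delta^{\op}}$ I can either rerun the same computation with honest coproducts in $\mathcal{S}$ (preserved by $\amalg$, being a left adjoint), or simply note that $\amalg\colon \mathcal{S}^{\amalg,\Delta^{\op}}_{loc} \to \mathcal{S}^{\Delta^{\op}}$ is left Quillen by Theorem~\ref{SATZEXTENSIVEMODELCAT} and that the cofibrations of $\mathcal{S}^{\amalg,\Delta^{\op}}_{loc}$ coincide with those of $\mathcal{S}^{\amalg,\Delta^{\op}}$, so cofibrancy is automatically preserved.

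The only point requiring care — and thus the ``main obstacle'', though a mild one — is identifying the abstract summand $X_{n,nd}$ of the split-projective structure with the concrete object $\coprod_{\beta\ \mathrm{nondeg.}} S(\beta(0))$; this is precisely what Eilenberg--Zilber provides once one knows that the degeneracies of $N(I,S)$ act as pure relabelings of the index set and fix the value objects. Everything else is bookkeeping with coproducts in $\mathcal{S}^\amalg$.
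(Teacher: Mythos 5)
Your proof is correct and follows the same route as the paper, which simply says to compare the explicit description of cofibrant objects (the degeneracy decomposition of Theorems~\ref{SATZSOMODEL} and \ref{SATZEXTENSIVEMODELCAT}) with the definition of $N(I,S)$, and to use that $\amalg$ is left Quillen for the second assertion. You have merely spelled out the comparison the paper leaves implicit — the observation that order-preserving surjections fix $0$, so degeneracies only relabel indices, plus Eilenberg--Zilber for $N(I)$ — and both of your suggested arguments for the $\mathcal{S}^{\Delta^{\op}}$ case are fine.
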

\begin{proof}
Compare the explicit description of cofibrant objects in Theorem~\ref{SATZEXTENSIVEMODELCAT} with the definition of $N(I,S)$. Furthermore $\amalg$ preserves cofibrant objects being left Quillen. 
\end{proof}

The following follows directly from the definition: 
\begin{LEMMA}\label{LEMMAPOINTWISENERVE}Recall from \ref{PARPOINTWISE} the definition of the $\Hom$-functor.
Let $X \in \mathcal{S}^{\amalg}$ and $(I, S) \in \Dia(\mathcal{S})$. Then
\[ \Hom_r(X, N(I, S)) = N(\Hom(X, (I, S))). \]
\end{LEMMA}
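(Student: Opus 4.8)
The plan is to unwind both sides degreewise and observe that they are literally the same simplicial set, with matching face and degeneracy maps; the statement is, as the lead-in says, a pure definition chase. First I would record what $N(I,S)\in\mathcal{S}^{\amalg,\Delta^{\op}}$ is term by term: by Definition~\ref{DEFN} its underlying simplicial set is $N(I)$, so $N(I,S)_n=\coprod_{\alpha\colon [n]\to I} S(\alpha(0))$, indexed by the $n$-simplices $\alpha$ of $N(I)$, and for a monotone map $\theta\colon [m]\to[n]$ the structure map $\theta^{*}\colon N(I,S)_n\to N(I,S)_m$ carries the $\alpha$-summand into the $(\alpha\theta)$-summand by $S$ applied to the canonical arrow $\alpha(0)\to\alpha(\theta(0))$ in $I$ (using $0\le\theta(0)$). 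Consequently, by the defining identification of $\Hom_r$ (the one making Lemma~\ref{LEMMAPOINTWISEINT} hold, i.e.\ $\int\Hom_r(X,-)\cong\Hom(X,\int^{\amalg}(-))$), the simplicial set $\Hom_r(X,N(I,S))$ has $n$-simplices $\coprod_{\alpha\colon[n]\to I}\Hom_{\mathcal{S}^{\amalg}}(X,S(\alpha(0)))$, that is, pairs $(\alpha\colon[n]\to I,\ \phi\colon X\to S(\alpha(0)))$, and a monotone $\theta\colon[m]\to[n]$ acts by post-composition with the transition map above, i.e.\ $(\alpha,\phi)\mapsto(\alpha\theta,\ S(\alpha(0\to\theta(0)))\circ\phi)$.

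On the other side, by \ref{PARPOINTWISE} we have $\Hom(X,(I,S))=\int_I\Hom_{\mathcal{S}^{\amalg}}(X,S(-))$, a category whose objects are pairs $(i,\phi\colon X\to S(i))$ and whose morphisms $(i,\phi)\to(i',\phi')$ are arrows $g\colon i\to i'$ in $I$ with $S(g)\phi=\phi'$. An $n$-simplex of $N(\Hom(X,(I,S)))$ is a functor $[n]\to\Hom(X,(I,S))$; composing with the opfibration to $I$ it amounts to a functor $\alpha\colon[n]\to I$ together with an element $\phi\in\Hom_{\mathcal{S}^{\amalg}}(X,S(\alpha(0)))$, the values at the other vertices being forced to $S(\alpha(0\to k))\phi$. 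Thus $N(\Hom(X,(I,S)))_n$ is the same set $\{(\alpha,\phi)\}$ as $\Hom_r(X,N(I,S))_n$, and a direct check shows the simplicial operators agree: $\theta\colon[m]\to[n]$ sends a string $\gamma$ to $\gamma\theta$, whose $0$-th vertex is $\gamma(\theta(0))$, carrying the component $S(\alpha(0\to\theta(0)))\phi$ — exactly the formula found above. Hence the two simplicial sets coincide. (If one prefers a packaged argument: apply $\int$ to both sides, rewrite the left side via Lemma~\ref{LEMMAPOINTWISEINT} as $\Hom(X,\int^{\amalg}N(I,S))$, unravel $\int^{\amalg}N(I,S)=(\mathrm{El}(N(I)),\ \alpha\mapsto S(\alpha(0)))$, and note that $\int_{\mathrm{El}(N(I))}\Hom_{\mathcal{S}^{\amalg}}(X,S(\alpha(0)))$ is precisely the category of simplices of $N(\Hom(X,(I,S)))$ over $\Delta^{\op}$; since a simplicial set is recovered from its category of simplices together with the canonical discrete opfibration to $\Delta^{\op}$, this concludes.)

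The only step that needs a moment of care is the bookkeeping of the \emph{first vertex}: the fibre $S(\alpha(0))$ depends on $\alpha(0)$, so one must pin down exactly which arrow $\alpha(0)\to\alpha(\theta(0))$ in $I$ induces the transition maps, both inside the simplicial object $N(I,S)$ and inside the Grothendieck construction defining $\Hom(X,(I,S))$. Once this is spelled out the two descriptions of an $n$-simplex and of the action of $\theta$ are visibly identical. No hypotheses on the topology, on cofibrancy, or on connectedness of $X\in\mathcal{S}^{\amalg}$ enter — the last is already absorbed into the definition of $\Hom_r$ via coproducts over simplices of $N(I)$ — so the equality is a formal consequence of the definitions.
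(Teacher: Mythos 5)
Your proof is correct and is exactly the definition chase the paper intends — the paper itself gives no argument beyond ``follows directly from the definition,'' and your degreewise identification of both sides as $\coprod_{\alpha:[n]\to I}\Hom_{\mathcal{S}^{\amalg}}(X,S(\alpha(0)))$ with matching transition maps along $\alpha(0)\to\alpha(\theta(0))$ is the right bookkeeping. Your closing remark that the coproduct convention built into $\Hom_r$ (forced by Lemma~\ref{LEMMAPOINTWISEINT}) makes the statement independent of connectedness of $X$ is also the correct reading.
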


\begin{PROP}\label{PROPFWD}
We have a natural transformation $\int^\amalg N \Rightarrow \id$ whose entries
\[ \int^{\amalg} N(I, S) \rightarrow (I, S) \]
are of pure diagram type and are weak equivalences (i.e.\@ are contained in any localizer) for all $(I, S) \in \Dia(\mathcal{S})$. 
\end{PROP}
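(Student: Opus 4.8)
The plan is to construct the natural transformation $\int^\amalg N \Rightarrow \id$ explicitly and then verify that each component is a weak equivalence by reducing to the case $\mathcal{S} = \{\cdot\}$ together with an application of (L4 left). For the construction, recall that an object of $\int^\amalg N(I,S)$ is a pair consisting of an $n$-simplex $\alpha\colon [n]\to I$ and an element of the indexing set at the $0$-vertex; concretely, by Definition~\ref{DEFN}, the underlying diagram of $\int^\amalg N(I,S)$ is the category of simplices of $N(I)$, i.e.\ $\int_{\Delta^{\op}} N(I)$, and the $\mathcal{S}$-valued functor sends $\alpha\colon[n]\to I$ to $S(\alpha(0))$. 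The functor to $(I,S)$ is the ``last-vertex'' (or here ``first-vertex'') functor $\int_{\Delta^{\op}} N(I)\to I$, $\alpha\mapsto\alpha(0)$, equipped with the identity natural transformation on the $\mathcal{S}$-side (since the value at $\alpha$ is literally $S(\alpha(0))$). This is visibly a morphism of pure diagram type, and it is natural in $(I,S)$. The fact that the last-vertex functor $\int_{\Delta^{\op}} N(I)\to I$ is well-known to be a weak equivalence of categories is exactly the classical input we want to leverage.

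Since the component $\int^\amalg N(I,S)\to(I,S)$ is of pure diagram type, axiom (L4 left) (in its second, fibration, form, or the first form via the comma category) reduces the claim to contractibility, in the smallest localizer on $\Dia$, of the fibers of the underlying functor $\rho\colon\int_{\Delta^{\op}} N(I)\to I$ over each $i\in I$ — more precisely, of the comma categories $i\times_{/I}\int_{\Delta^{\op}} N(I)$. Here one must first check that $\rho$ fits the hypotheses: either check directly that $\rho$ is a fibration (it is: over $i$ the fiber is the category of simplices $\alpha$ with $\alpha(0)=i$), or use the comma-category formulation of (L4 left) as stated in the excerpt. Either way, I would identify the relevant comma category. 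The fiber $\rho^{-1}(i)$ is the category of simplices $\alpha\colon[n]\to I$ with $\alpha(0)=i$, which is isomorphic to the category of simplices of the nerve of the under-category $i\backslash I$... but more useful is the comma category $i\times_{/I}\int_{\Delta^{\op}} N(I)$, whose objects are pairs $(\alpha\colon[n]\to I,\ i\to\alpha(0))$; this is precisely $\int_{\Delta^{\op}} N(i\backslash I)$ via $N(i\backslash I)_n \cong \{\alpha\colon[n+1]\to I \text{ with }\alpha(0)=i\}$ shifted appropriately — in any case it is the category of simplices of the nerve of a category with an initial object.

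Thus the main point becomes: the category of simplices $\int_{\Delta^{\op}} N(C)$ of the nerve of a category $C$ with an initial object is contractible in the smallest localizer on $\Dia$. This I would establish along the lines already used in the excerpt: the nerve $N(C)$ is simplicially homotopy equivalent (even deformation-retracts) onto its initial vertex $N(\cdot)=\Delta_0$, so $\int^\amalg N(C)\to\int^\amalg\Delta_0=\cdot$ is a weak equivalence by Proposition~\ref{PROPPROPERTIESLOCALIZERII}, parts 2 and 3 — indeed the mapping-cylinder/contraction argument expresses $\int N(C)\to\cdot$ using exactly the $\Delta_n\otimes(-)\to(-)$-type equivalences of Proposition~\ref{PROPPROPERTIESLOCALIZERII}(3), or one can cite the contractibility statement \cite[Corollary~3.3.4]{Hor15} already invoked twice above. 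Alternatively, and perhaps most cleanly, one invokes \cite[Corollary~3.3.4]{Hor15} directly after identifying the comma category with a diagram of the form $\int_{\Delta^{\op}} N(I\times_{/I}-)$-type expression.

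The step I expect to be the main obstacle is the careful bookkeeping in applying (L4 left): one has to verify precisely which of the two equivalent forms applies, check the fibration hypothesis on $\rho$ (or set up the comma-category version correctly), and then correctly identify the resulting comma categories $i\times_{/I}\int_{\Delta^{\op}} N(I)$ with categories of simplices of nerves of categories possessing an initial (or terminal) object — the indexing and variance conventions around $N(I,S)$ taking $\alpha\mapsto S(\alpha(0))$ rather than $S(\alpha(n))$ are easy to get backwards. Once that identification is pinned down, contractibility is exactly the already-cited \cite[Corollary~3.3.4]{Hor15}, so no genuinely new homotopical input is needed beyond what Section~3 provides. The naturality of $\int^\amalg N\Rightarrow\id$ is then a routine check on the last-vertex functors, functorial in $(I,S)$ by construction.
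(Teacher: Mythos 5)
Your proposal is correct and follows essentially the same route as the paper's proof: reduce via (L4 left) to contractibility of the comma categories $i\times_{/I}\int N(I)\cong \int N(i\times_{/I}I)$, which holds because $i\times_{/I}I$ has an initial object. You simply make explicit what the paper leaves terse (the first-vertex construction of the transformation and the justification of the final contractibility step via Proposition~\ref{PROPPROPERTIESLOCALIZERII}, i.e.\ Corollary~\ref{KORLEFTHOMOTOPYEQUIVALENCE}), so the skeleton is identical.
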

\begin{proof}
By (L4 left) is suffices to show that the diagram
\[ i \times_{/I} \int N(I)\]
is contractible. This is isomorphic to 
$\int N(i \times_{/I} I)$
which is contractible because $i \times_{/I} I$ has an initial object. 
\end{proof}

\begin{SATZ}\label{THEOREMBACK}
We have a natural transformation $N \int^\amalg \Rightarrow \id$ whose entries
\[  N\int^{\amalg} X_\bullet \rightarrow X_\bullet    \]
are weak equivalences for all $X_\bullet \in \mathcal{S}^{\amalg, \Delta^{\op}}$.
\end{SATZ}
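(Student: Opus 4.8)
The plan is to construct the natural transformation $N\int^\amalg X_\bullet \to X_\bullet$ explicitly, and then reduce the claim that its components are weak equivalences to material already established — principally Theorem~\ref{THEOREMINTWECOF} (that $\int^\amalg$ preserves weak equivalences between cofibrant objects), Proposition~\ref{PROPFWD} (that $\int^\amalg N(I,S)\to(I,S)$ is always a weak equivalence), and the bisimplicial/skeletal tools of Proposition~\ref{PROPPROPERTIESLOCALIZERII}. First I would describe the unit-type map: a simplex of $N\int^\amalg X_\bullet$ in degree $n$ is a chain of $n$ composable arrows in the category $\int X_{set}$ (the Grothendieck construction of the underlying simplicial set of $X_\bullet$), together with the value in $\mathcal{S}^\amalg$ sitting over the initial vertex, which is exactly $X_m$ where $m$ is the simplicial degree of that initial vertex; the map to $X_\bullet$ remembers this vertex (an $m$-simplex of $X_{set}$) and the $\mathcal{S}^\amalg$-datum, using functoriality of $X_\bullet$ to land in $X_n$. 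Concretely this is the composite $N\int^\amalg X_\bullet \cong \int^\amalg N(\textstyle\int^\amalg X_\bullet)$-flavoured map; more usefully, one observes $N\int^\amalg X_\bullet$ is the diagonal of the bisimplicial object $(n,m)\mapsto \coprod_{x\in X_{set,m},\, \sigma:[n]\to X_{set}\downarrow x} X_m$, and the augmentation to $X_n$ is induced by the functors $X_m\to X_n$ coming from the chain $\sigma$.

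Next, I would reduce to the cofibrant case. Using Theorem~\ref{SATZSOMODEL}, every $X_\bullet$ admits a cofibrant replacement $Q_\bullet \xrightarrow{\sim} X_\bullet$ by a split-projective trivial fibration; by Corollary~\ref{KORLEFTHOMOTOPYEQUIVALENCE} this is sent to a weak equivalence by $\int^\amalg$, hence (by Lemma~\ref{LEMMANERVECOF} and Theorem~\ref{THEOREMINTWECOF}) $N\int^\amalg Q_\bullet \to N\int^\amalg X_\bullet$ is a weak equivalence — here one needs that $N$ sends morphisms which become weak equivalences in $\mathcal{S}^{\amalg,\Delta^{\op}}$ to weak equivalences, which is Corollary~\ref{KOREQUIVALENCE} combined with the fact that $N$ is left adjoint-like and respects the relevant classes, so naturality of the transformation reduces the statement for $X_\bullet$ to the statement for $Q_\bullet$. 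Thus it suffices to treat $X_\bullet$ cofibrant.

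For $X_\bullet$ cofibrant, the strategy is to compare $N\int^\amalg X_\bullet \to X_\bullet$ with $\int^\amalg N X_\bullet$ (or rather to reuse the already-proven Proposition~\ref{PROPFWD} applied to the diagram $(\int X_{set}, \text{values})$). The key identity is that $N\int^\amalg X_\bullet$, as a bisimplicial object, can be rewritten via the diagonal lemma Proposition~\ref{PROPPROPERTIESLOCALIZERII}(1): applying $\int^\amalg$ to the diagonal versus to the full bisimplicial object differs only by a weak equivalence. Concretely, for each fixed $x\in X_{set,m}$ the contributing piece is $\int^\amalg$ of the constant-in-the-nerve-direction diagram, and by (L4 left) contractibility of the relevant comma categories $\{(\Delta_m,x)\}\otimes_{/\int X_{set}}(\cdots)$ — exactly the computation $\int_{\Delta^{\op}}\Delta_{n,\bullet}\otimes\Delta_{m,\bullet}$ that appears in the proof of Proposition~\ref{PROPPROPERTIESLOCALIZERII} and is contractible by \cite[Corollary~3.3.4]{Hor15} — the augmentation collapses each fibre to a point. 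Assembling these fibrewise contractions with Proposition~\ref{PROPPROPERTIESLOCALIZER}(3) applied to the opfibrations over $(\Delta^{\op},\cdot)$ yields that $N\int^\amalg X_\bullet\to X_\bullet$ is a weak equivalence.

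\textbf{Main obstacle.} The delicate point is not any single computation but bookkeeping the identification of $N\int^\amalg X_\bullet$ as a diagonal of an explicit bisimplicial object in $\mathcal{S}^\amalg$, and checking that the natural augmentation to $X_\bullet$ really is the one induced fibrewise by the structure maps of $X_\bullet$ (so that the fibrewise contractibility arguments apply verbatim). Once that identification is pinned down, the weak-equivalence claim is a formal consequence of (L4 left), the contractibility of $\int_{\Delta^{\op}}\Delta_{n,\bullet}\otimes\Delta_{m,\bullet}$, and the opfibration lemma Proposition~\ref{PROPPROPERTIESLOCALIZER}(3) — i.e.\ precisely the toolkit already deployed in Propositions~\ref{PROPPROPERTIESLOCALIZERII}--\ref{PROPFWD}. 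I would therefore expect the write-up to spend most of its length on the explicit description of the transformation and comparatively little on the homotopical input.
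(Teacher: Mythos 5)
Your construction of the transformation agrees with the paper's, but the homotopical part of your argument proves the wrong statement. Theorem~\ref{THEOREMBACK} asserts that $N\int^{\amalg}X_\bullet\to X_\bullet$ is a weak equivalence \emph{in the split-projective model structure on} $\mathcal{S}^{\amalg,\Delta^{\op}}$, i.e.\ that $\Hom(S,-)$ of it is a weak equivalence of simplicial sets for every $S\in\mathcal{S}$ (this is how it is used in Corollary~\ref{KORINTWEGENERAL}, via 2-out-of-3 among model-category weak equivalences). All the tools you deploy in your final step --- (L4 left), the contractibility of $\int_{\Delta^{\op}}\Delta_{n,\bullet}\otimes\Delta_{m,\bullet}$, Proposition~\ref{PROPPROPERTIESLOCALIZER}(3), Proposition~\ref{PROPPROPERTIESLOCALIZERII}(1) --- yield conclusions of the form ``such-and-such morphism of $\Dia(\mathcal{S})$ lies in every localizer''; at best they would show that $\int^{\amalg}$ applied to the map in question is in $\mathcal{W}_\infty$. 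Passing from that back to a weak equivalence in $\mathcal{S}^{\amalg,\Delta^{\op}}$ is exactly the content of Corollary~\ref{KORINTWEGENERAL}, whose proof \emph{uses} Theorem~\ref{THEOREMBACK} as input, so your route is circular. (You may have pattern-matched on Proposition~\ref{PROPFWD}, where the analogous map $\int^{\amalg}N(I,S)\to(I,S)$ \emph{is} a morphism of diagrams and ``weak equivalence'' does mean ``in the localizer''; for the other composite the two notions trade places.) The paper instead applies $\Hom(S,-)$ to the explicitly constructed map and identifies the result, via Lemmas~\ref{LEMMAPOINTWISEINT} and~\ref{LEMMAPOINTWISENERVE}, with the classical comparison map $N\int K\to K$ for the simplicial set $K=\Hom(S,X_\bullet)$, which is a weak equivalence by a theorem of Quillen (\cite[2.1.14--2.1.15]{Cis04}). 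That non-formal classical input is essential and is absent from your proposal.

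Two smaller points. Your reduction to cofibrant $X_\bullet$ applies Corollary~\ref{KORLEFTHOMOTOPYEQUIVALENCE} to the trivial fibration $Q_\bullet\to X_\bullet$, but that corollary covers left homotopy equivalences, in particular trivial fibrations \emph{between cofibrant objects}, and here $X_\bullet$ is arbitrary; even if repaired, the reduction would again only transport localizer-statements, not model-category weak equivalences. And no cofibrancy reduction is needed once one works $\Hom(S,-)$-pointwise, since Quillen's theorem holds for every simplicial set $K$.
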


\begin{proof}
The $k$-simplices $N(\int^{\amalg} X_\bullet)_k$ consist of the set of sequences
\[\xymatrix{  \Delta_{n_0} \ar[r]^{\alpha_1} & \Delta_{n_1} \ar[r]^{\alpha_2} &  \cdots  \ar[r]^{\alpha_k} &\Delta_{n_k}  }\]
together with an element $\xi$ in the underlying set of $X_{n_k}$ and the associated functor maps this to $X_{n_k}(\xi)$.
We define a morphism
\[ \Delta_k \rightarrow \Delta_{n_k} \]
mapping $i \in \Delta_k$ to $\alpha_{k}\cdots \alpha_{i+1}(n_i)$. Composing with the resulting $X_{n_k} \rightarrow X_k$ this defines a morphism 
\[ N\int^{\amalg} X_\bullet \rightarrow X_\bullet. \] Applying for any $S \in \mathcal{S}$
the functor $\Hom(S, -)$ we get, by Lemmas~\ref{LEMMAPOINTWISEINT} and \ref{LEMMAPOINTWISENERVE},
a morphism
\[ N \int \Hom(S, X_\bullet) \rightarrow \Hom(S, X_\bullet). \]
This is the same as the one considered in \cite[2.1.14]{Cis04} and it is thus a weak equivalence by a theorem of Quillen~\cite[2.1.15]{Cis04}.
\end{proof}

\begin{KOR}\label{KORINTWEGENERAL}Let $\mathcal{S}$ be a category with finite limits. 

The functor $\int^{\amalg}$ maps weak equivalences to weak equivalences in any localizer w.r.t.\@ the trivial topology in general (i.e.\@ not only between cofibrant objects).

Any localizer contains the class of morphisms $\alpha$ such that $N(\alpha)$ is a weak equivalence in $\mathcal{S}^{\amalg,\Delta^{\op}}$. This class is thus the smallest localizer w.r.t.\@ the trivial topology. 
The functors
\[ \xymatrix{ (\mathcal{S}^{\amalg, \Delta^{\op}}, \mathcal{W}) \ar@<3pt>[rr]^-{\int^{\amalg}}  &&  \ar@<3pt>[ll]^-{N} (\Dia(\mathcal{S}), \mathcal{W}_\infty) } \]
define an equivalence of categories with weak equivalences
where $\mathcal{W}_\infty$ is the smallest localizer w.r.t.\@ the trivial topology. 
\end{KOR}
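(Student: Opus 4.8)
The plan is to bootstrap from the two comparison natural transformations already established. By Theorem~\ref{THEOREMBACK} the natural transformation $N \int^{\amalg} \Rightarrow \id$ is an object-wise weak equivalence in $\mathcal{S}^{\amalg, \Delta^{\op}}$, and by Proposition~\ref{PROPFWD} the natural transformation $\int^{\amalg} N \Rightarrow \id$ is an object-wise member of every localizer in $\Dia(\mathcal{S})$. The first thing I would do is upgrade Theorem~\ref{THEOREMINTWECOF} to remove the cofibrancy restriction. Given an arbitrary weak equivalence $f: X_\bullet \rightarrow Y_\bullet$ in $\mathcal{S}^{\amalg, \Delta^{\op}}$, consider the commutative square obtained by applying $N \int^{\amalg}$ and the counit of Theorem~\ref{THEOREMBACK}: the two horizontal legs $N\int^{\amalg} X_\bullet \to X_\bullet$ and $N\int^{\amalg} Y_\bullet \to Y_\bullet$ are weak equivalences, and $f$ is a weak equivalence, hence $N \int^{\amalg} f$ is a weak equivalence in $\mathcal{S}^{\amalg,\Delta^{\op}}$ by 2-out-of-3. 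Now I claim this forces $\int^{\amalg} f$ to lie in $\mathcal{W}$: apply $\int^{\amalg}$ to $N \int^{\amalg} f$ — since $N \int^{\amalg} X_\bullet$ and $N \int^{\amalg} Y_\bullet$ are cofibrant (Lemma~\ref{LEMMANERVECOF}, as they are nerves), Theorem~\ref{THEOREMINTWECOF} applies and $\int^{\amalg} N \int^{\amalg} f \in \mathcal{W}$; then Proposition~\ref{PROPFWD} gives a commuting square with the two horizontal legs $\int^{\amalg} N (\int^{\amalg} X_\bullet) \to \int^{\amalg} X_\bullet$ in $\mathcal{W}$, and (L1)/2-out-of-3 yields $\int^{\amalg} f \in \mathcal{W}$. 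This proves the first assertion.

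Next I would deduce that $N$ maps nerve-weak-equivalences into $\mathcal{W}$. If $\alpha: D_1 \rightarrow D_2$ in $\Dia(\mathcal{S})$ has $N(\alpha)$ a weak equivalence in $\mathcal{S}^{\amalg, \Delta^{\op}}$, then by the previous paragraph $\int^{\amalg} N(\alpha) \in \mathcal{W}$; feeding this into the naturality square of Proposition~\ref{PROPFWD} (whose horizontal legs $\int^{\amalg} N(D_i) \to D_i$ are in $\mathcal{W}$) and using 2-out-of-3 gives $\alpha \in \mathcal{W}$. Since $\mathcal{W}$ was an arbitrary localizer, the class of such $\alpha$ is contained in every localizer; as it is itself a localizer (this is Corollary~\ref{KOREQUIVALENCE}, for the trivial topology, taking e.g.\ $\mathcal{M} = \mathcal{S}^{\amalg, \Delta^{\op}}$), it is the smallest one, $\mathcal{W}_\infty$.

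Finally, the equivalence of categories with weak equivalences. By construction $\int^{\amalg}: (\mathcal{S}^{\amalg,\Delta^{\op}}, \mathcal{W}) \to (\Dia(\mathcal{S}), \mathcal{W}_\infty)$ preserves weak equivalences (first assertion, with $\mathcal{W} = \mathcal{W}_\infty$), and $N$ preserves weak equivalences in the other direction (just shown, together with the fact that a weak equivalence in $\mathcal{S}^{\amalg,\Delta^{\op}}$ is by definition one detected by $\Hom(S,-)$ and Theorem~\ref{THEOREMBACK}'s argument shows $N$ sends $\mathcal{W}_\infty$ to $\mathcal{W}$ — more directly, one uses the definition $\mathcal{W}_\infty = \{\alpha : N(\alpha) \in \mathcal{W}\}$ so $N$ trivially preserves weak equivalences). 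The composite $N \int^{\amalg} \Rightarrow \id$ is a natural weak equivalence by Theorem~\ref{THEOREMBACK}, hence the identity in $\Fun(\mathcal{S}^{\amalg,\Delta^{\op}}, \mathcal{S}^{\amalg,\Delta^{\op}})[\mathcal{W}_{\mathcal{S}^{\amalg,\Delta^{\op}}}^{-1}]$; the composite $\int^{\amalg} N \Rightarrow \id$ is a natural $\mathcal{W}_\infty$-equivalence by Proposition~\ref{PROPFWD}, hence the identity in the corresponding localized functor category over $\Dia(\mathcal{S})$. This is exactly the data required by the Definition of an equivalence of categories with weak equivalences.

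The main obstacle is the first paragraph: the "diagonal" argument that passes from $f$ being a weak equivalence to $\int^{\amalg} f \in \mathcal{W}$ without any cofibrancy, by routing through $N \int^{\amalg} f$ between the (automatically cofibrant) nerves and invoking Theorem~\ref{THEOREMINTWECOF}. Everything afterwards is formal 2-out-of-3 bookkeeping against the two natural transformations of Proposition~\ref{PROPFWD} and Theorem~\ref{THEOREMBACK}.
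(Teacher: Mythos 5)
Your proposal is correct and follows essentially the same route as the paper: the same ``diagonal'' bootstrap through $N\int^{\amalg}f$ between the automatically cofibrant nerves (Lemma~\ref{LEMMANERVECOF} plus Theorem~\ref{THEOREMINTWECOF}), the same 2-out-of-3 bookkeeping against the natural transformations of Theorem~\ref{THEOREMBACK} and Proposition~\ref{PROPFWD}, and the same appeal to Corollary~\ref{KOREQUIVALENCE} (via Theorem~\ref{SATZWEAKHOMDESCENT}) to see that the nerve-detected class is itself a localizer and hence the smallest one.
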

\begin{proof}
We begin by showing that $\int^{\amalg}$ preserves weak equivalences (regardless of cofibrancy). Consider a weak equivalence $X_\bullet \rightarrow Y_\bullet$ and the diagram
\[ \xymatrix{ 
 N \int^{\amalg} X_\bullet \ar[r] \ar[d]&  X_\bullet \ar[d] \\
 N \int^{\amalg} Y_\bullet \ar[r] &  Y_\bullet
 }\]
 By Theorem~\ref{THEOREMBACK} the horizontal morphisms are weak equivalences. Hence by 2-out-of-3 the functor $N \circ \int^{\amalg}$ preserves weak equivalences. Consider the diagram
\[ \xymatrix{ 
\int^{\amalg} N \int^{\amalg} X_\bullet \ar[r] \ar[d]& \int^{\amalg} X_\bullet \ar[d] \\
\int^{\amalg} N \int^{\amalg} Y_\bullet \ar[r] & \int^{\amalg} Y_\bullet
 }\]
in which the horizontal morphisms are induced by the natural transformation $\int^{\amalg} N \rightarrow \id$, which has values in a weak equivalences by Proposition~\ref{PROPFWD}.
Since $N \int^{\amalg} X_\bullet \rightarrow N \int^{\amalg} Y_\bullet$ is a weak equivalence {\em between cofibrant objects} (Lemma~\ref{LEMMANERVECOF}), by Theorem~\ref{THEOREMINTWECOF}, the left vertical morphism is a weak equivalence. Hence by 2-out-of-3 also $\int^{\amalg} X_\bullet \rightarrow \int^{\amalg} Y_\bullet$ is a weak equivalence. 

We have to show that any localizer contains the class of $\DD_{\mathcal{S}^{\amalg,\Delta^{\op}}}$-equivalences which, by Corollary~\ref{KOREQUIVALENCE}, is the class of morphisms $\alpha$ such that $N(\alpha)$ is a weak equivalence in $\mathcal{S}^{\amalg,\Delta^{\op}}$. Since that class is itself a localizer w.r.t.\@ the trivial topology by Theorem~\ref{SATZWEAKHOMDESCENT}, it must be the smallest such localizer. 

Let $\alpha: D_1 \rightarrow D_2$ be a morphism such that $N(\alpha)$ is a weak equivalence and consider the diagram
\[ \xymatrix{ 
  \int^{\amalg} N D_1 \ar@{<-}[r] \ar[d]&  D_1 \ar[d] \\
 \int^{\amalg} N D_2  \ar@{<-}[r] &  D_2
 }\]
The left vertical morphism is a weak equivalence (i.e.\@ it is contained in the localizer) and the horizontal morphisms are weak equivalences by Proposition~\ref{PROPFWD}.
Thus also $D_1 \rightarrow D_2$ is in the localizer. 
\end{proof}

\begin{KOR}\label{KORINTWEGENERALCECH}Let $\mathcal{S}$ be an extensive category with finite limits and such that every object is a coproduct of $\N$-small objects. 

The functor $\int: \mathcal{S}^{\Delta^{\op}} \rightarrow \Dia(\mathcal{S})$ maps weak equivalences to weak equivalences in any localizer w.r.t.\@ the extensive topology.
Any localizer w.r.t.\@ the extensive topology contains the morphisms $\alpha$ such that $N(\alpha)$ is a weak equivalence in $\mathcal{S}^{\Delta^{\op}}$. This class is thus the smallest localizer w.r.t.\@ the extensive topology. 

The functors
\[ \xymatrix{ (\mathcal{S}^{\Delta^{\op}}, \mathcal{W}) \ar@<5pt>[rr]^-{\int}  &&  \ar@<5pt>[ll]^-{N} (\Dia(\mathcal{S}), \mathcal{W}_\infty) } \]
define an equivalence
of categories with weak equivalences
where $\mathcal{W}_\infty$ is the smallest localizer w.r.t.\@ the extensive topology. 
\end{KOR}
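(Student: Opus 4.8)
The plan is to deduce Corollary~\ref{KORINTWEGENERALCECH} from Corollary~\ref{KORINTWEGENERAL} (the analogous statement for $\mathcal{S}^{\amalg,\Delta^{\op}}$ and the trivial topology) by comparing the two localizers and using the Quillen equivalence of Theorem~\ref{SATZEXTENSIVEMODELCAT}. First I would observe that, writing $\int = \int^{\amalg} \circ R$ with $R\colon \mathcal{S}^{\Delta^{\op}} \hookrightarrow \mathcal{S}^{\amalg,\Delta^{\op}}$ (as noted in \ref{PARINT}), and given that $\int^{\amalg}$ preserves all weak equivalences for any localizer w.r.t.\@ the trivial topology (Corollary~\ref{KORINTWEGENERAL}), it suffices to understand how $R$ interacts with weak equivalences. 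The point is that a weak equivalence in $\mathcal{S}^{\Delta^{\op}}$ need not be a weak equivalence in $\mathcal{S}^{\amalg,\Delta^{\op}}$ — it is only a local one, i.e.\@ a weak equivalence in the Bousfield localization $\mathcal{S}^{\amalg,\Delta^{\op}}_{loc}$ at the (\v Cech) coproduct covers. So the real content is: for a localizer $\mathcal{W}$ w.r.t.\@ the extensive topology, $\int^{\amalg}$ maps the coproduct \v Cech covers $U_\bullet \to \coprod_i U^{(i)}$ to morphisms in $\mathcal{W}$; combined with Corollary~\ref{KORINTWEGENERAL} and 2-out-of-3 this will give that $\int^{\amalg}$, hence $\int$, sends every weak equivalence in $\mathcal{S}^{\Delta^{\op}}$ into $\mathcal{W}$.

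Concretely, the key steps I would carry out are: (1) Show that for a coproduct cover $\{U^{(i)} \to \coprod_i U^{(i)}\}$ (which is in the extensive topology), the \v Cech cover $U_\bullet \to \coprod_i U^{(i)}$ in $\mathcal{S}^{\amalg,\Delta^{\op}}$ is sent by $\int^{\amalg}$ to a morphism in any localizer w.r.t.\@ the extensive topology — this should follow from axiom (L3 left) applied with the cover $\{U^{(i)} \to \coprod_i U^{(i)}\}$ over $(K,U) = (\cdot, \coprod_i U^{(i)})$, reducing to the statement that each fiber $U_\bullet \times_{\coprod U^{(j)}} U^{(i)}$ maps to something contractible, which after unwinding is an integration of a simplicial object with a final/initial object in each relevant comma category (similar in spirit to Lemma~\ref{LEMMAPUSHOUTTRANSFINITE}). (2) Using Proposition~\ref{PROPPROPERTIESLOCALIZERII} and Proposition~\ref{PROPPUSHOUT} (now valid in the extensive-topology localizer, since its hypotheses only used (L1)--(L4)), deduce that $\int^{\amalg}$ sends the generating trivial cofibrations of $\mathcal{S}^{\amalg,\Delta^{\op}}_{loc}$ — which by the discussion in Theorem~\ref{SATZEXTENSIVEMODELCAT}/\ref{SATZBLEXACT} are generated by pushout-products of $\Lambda_e(\Delta_n\times\Delta_1)\hookrightarrow \Delta_n\times\Delta_1$ with cofibrant replacements of these \v Cech covers — to morphisms in $\mathcal{W}$, hence $\int^{\amalg}$ sends all local weak equivalences between cofibrant objects to $\mathcal{W}$. (3) Remove the cofibrancy restriction exactly as in the proof of Corollary~\ref{KORINTWEGENERAL}, using Theorem~\ref{THEOREMBACK} and Proposition~\ref{PROPFWD} (the adjunction unit/counit $\int^{\amalg}N \Rightarrow \id$ and $N\int^{\amalg}\Rightarrow\id$ and the fact that $N$ always lands in cofibrant objects, Lemma~\ref{LEMMANERVECOF}), plus the fact that $R$ preserves cofibrant objects (it is left Quillen, Theorem~\ref{SATZEXTENSIVEMODELCAT}).

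For the second and third assertions, I would note that the class $\{\alpha : N(\alpha) \text{ is a w.e.\@ in } \mathcal{S}^{\Delta^{\op}}\}$ is a localizer w.r.t.\@ the extensive topology by Corollary~\ref{KOREQUIVALENCE} (case 1 of \ref{PARMC}, with $\DD_{\mathcal{S}^{\Delta^{\op}}}$ shown local w.r.t.\@ the extensive topology), so once we know every localizer w.r.t.\@ the extensive topology contains it (which is what step (3) gives, by chasing the square with corners $\int N D_i$ and $D_i$ and $N$ applied — as in the last display of the proof of Corollary~\ref{KORINTWEGENERAL}, but using the $\mathcal{S}^{\Delta^{\op}}$-nerve and the Quillen equivalence to identify $N\int X$-type objects correctly), minimality is immediate. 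The equivalence of categories with weak equivalences then follows formally: $\int$ and $N$ are mutually inverse up to the natural transformations of Proposition~\ref{PROPFWD} and Theorem~\ref{THEOREMBACK}, which consist of weak equivalences, and the passage from $\mathcal{S}^{\amalg,\Delta^{\op}}_{loc}$ to $\mathcal{S}^{\Delta^{\op}}$ is a Quillen equivalence by Theorem~\ref{SATZEXTENSIVEMODELCAT}. The main obstacle I anticipate is step (1)–(2): carefully checking that the \emph{specific} cofibrant models of the coproduct \v Cech covers occurring as generators of $\mathcal{S}^{\amalg,\Delta^{\op}}_{loc}$ are sent into $\mathcal{W}$, and that the generation statement for trivial cofibrations in the localization (which is stated for the general \v Cech case in the paragraph after Theorem~\ref{SATZBLEXACT} via \cite[Prop.\@ A.3.7.3]{Lur09}) really does reduce everything to coproduct covers; extensivity of $\mathcal{S}$ is what should make these covers have the simple degreewise form $A \to A\amalg B$ needed to invoke Lemma~\ref{LEMMAPUSHOUTTRANSFINITE}.
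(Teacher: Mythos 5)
Your overall strategy for the second and third assertions (Corollary~\ref{KOREQUIVALENCE} plus Theorem~\ref{SATZWEAKHOMDESCENT} to see that $\{\alpha : N(\alpha)\in\mathcal{W}\}$ is itself a localizer, then a zig-zag through $\int^{\amalg}N(I,S)$ to get minimality) matches the paper. The problem is your route to the first assertion. You propose to redo the cell-attachment argument of Theorem~\ref{THEOREMINTWECOFCECH} inside $\mathcal{S}^{\amalg,\Delta^{\op}}_{loc}$: describe the generating trivial cofibrations of the Bousfield localization as pushout-products of $\Lambda_e(\Delta_n\times\Delta_1)\hookrightarrow\Delta_n\times\Delta_1$ with cofibrant models of coproduct \v{C}ech covers, and then factor an arbitrary local weak equivalence between cofibrant objects by the small object argument. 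That description is available in the paper only for $\mathcal{SET}^{\mathcal{S}^{\op}\times\Delta^{\op}}$ with $\mathcal{S}$ \emph{small} (the citation of \cite[Proposition A.3.7.3]{Lur09} lives in that combinatorial setting). Here $\mathcal{S}$ is extensive and hence big in general: the coproduct covers form a proper class, $\mathcal{S}^{\amalg,\Delta^{\op}}$ is not assumed to have all colimits and is not asserted to be cofibrantly generated, and Theorem~\ref{SATZEXTENSIVEMODELCAT} only asserts the \emph{existence} of the localization, not a cellular presentation of its trivial cofibrations. So step (2) of your plan — the step you yourself flag as the main obstacle — cannot be carried out from the results quoted in the paper, and it is not a routine verification.

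The paper avoids this entirely. Given a weak equivalence $X_\bullet\to Y_\bullet$ in $\mathcal{S}^{\Delta^{\op}}$, it factors $RX_\bullet\to RY_\bullet$ in the \emph{unlocalized} split-projective structure on $\mathcal{S}^{\amalg,\Delta^{\op}}$ as a trivial cofibration $RX_\bullet\to X'_\bullet$ followed by a fibration; applying the left Quillen functor $\amalg$ shows that $\amalg X'_\bullet\to Y_\bullet$ is a trivial fibration in $\mathcal{S}^{\Delta^{\op}}$, hence $X'_\bullet\to R\amalg RY_\bullet$ factors through a global trivial fibration $R\amalg X'_\bullet \to R\amalg RY_\bullet$. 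The two global weak equivalences ($RX_\bullet\to X'_\bullet$ and $R\amalg X'_\bullet\to R\amalg RY_\bullet$) are handled by Corollary~\ref{KORINTWEGENERAL}, which applies because any localizer for the extensive topology is in particular one for the trivial topology. The only genuinely new ingredient is the comparison $X'_\bullet\to R\amalg X'_\bullet$, which is degreewise in $\Delta^{\op}$ a weak equivalence after refinement by the coproduct cover; a single application of (L3 left) together with Proposition~\ref{PROPPROPERTIESLOCALIZER}, 3.\@ (the opfibration over $\Delta^{\op}$) disposes of it. If you want to salvage your approach, you would need to either restrict to a setting where the localized model structure is cofibrantly generated by a set, or replace step (2) by this one-morphism comparison — which is exactly the paper's shortcut.
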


\begin{proof}
Consider a weak equivalence $X_\bullet \rightarrow Y_\bullet$ in $\mathcal{S}^{\Delta^{\op}}$ and factor it in  $\mathcal{S}^{\amalg, \Delta^{\op}}$ into a
trivial cofibration followed by a fibration:
\[ RX_\bullet \rightarrow X'_\bullet \rightarrow RY_\bullet \]
The functor $\amalg$ preserves cofibrations, trivial cofibrations, and fibrations \cite[Lemmas 6.3--6.4]{Hor21}. Hence applying $\amalg$ we get a factorization
\[ X_\bullet \rightarrow \amalg X'_\bullet \rightarrow Y_\bullet \]
into a trivial cofibration and a fibration and thus the fibration has to be trivial. 
Consider the diagram
\[ \xymatrix{ RX_\bullet \ar[r]^1 \ar@{=}[d] &  X'_\bullet \ar[r] \ar[d]_2 &  RY_\bullet  \ar@{=}[d] \\
 R \amalg RX_\bullet \ar[r] & R \amalg X'_\bullet \ar[r]_3 & R \amalg RY_\bullet }
\]
Morphism 1 is mapped by $\int^{\amalg}$ to a weak equivalence because it is a trivial cofibration. Morphism 2 is mapped by $\int^{\amalg}$ to a weak equivalence by Proposition~\ref{PROPPROPERTIESLOCALIZER}, 3.\@ because it is
point-wise in $\Delta^{\op}$ a weak equivalence (by (L3 left) involving the corresponding coproduct cover). Morphism 3 is a trivial fibration and thus mapped to a weak equivalence. Therefore $\int X_\bullet \rightarrow \int Y_\bullet$ is a weak equivalence. 

Furthermore, if $X_\bullet \in \mathcal{S}^{\Delta^{\op}}$, the morphism 
\[  R X_\bullet \rightarrow N( \int^{\amalg} R X_\bullet ) \]
is a weak equivalence in $\mathcal{S}^{\amalg, \Delta^{\op}}$. 
Therefore also 
\[  X_\bullet \rightarrow \amalg N( \int^{\amalg} R X_\bullet ) \]
is a weak equivalence in $\mathcal{S}^{\Delta^{\op}}$.

We have to show that any localizer contains the class of $\DD_{\mathcal{S}^{\Delta^{\op}}}$-equivalences which by Corollary~\ref{KOREQUIVALENCE} are the morphisms $\alpha$ such that $N(\alpha)$ is a weak equivalence in $\mathcal{S}^{\Delta^{\op}}$. Since it is by Theorem~\ref{SATZWEAKHOMDESCENT}  itself a localizer w.r.t.\@ the extensive topology it is the smallest such localizer. 
For the proof note that we have  a diagram (functorial in $(I, S)$)
\[ \int \amalg  N(I, S)\leftarrow \int^{\amalg} N(I, S) \rightarrow (I, S)  \]
in which the right morphism is a weak equivalence by Proposition~\ref{PROPFWD} and the left morphism is a
weak equivalence by the reasoning above. 
\end{proof}

\begin{PROP}\label{PROPCECH}
Let $X \in \mathcal{S}$ and let $U_\bullet \rightarrow X$ be a \v{C}ech covering in $\mathcal{S}^{\amalg, \Delta^{\op}}$. Let $U_\bullet \rightarrow  \widetilde{X} \rightarrow X$
be the factorization into a cofibration and a trivial fibration.
Then 
\[ (\partial \Delta_n \rightarrow  \Delta_n) \boxplus (U_\bullet \rightarrow \widetilde{X}) \]
is mapped  to a weak equivalence under $\int^\amalg$.
\end{PROP}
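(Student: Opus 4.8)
The plan is to deduce this from the stability properties of $\int^{\amalg}$ already established, together with one observation about \v{C}ech covers. Write $g\colon U_\bullet\to\widetilde{X}$ for the cofibration in question, so that the source of the pushout--product is
\[ P := (\Delta_n\otimes U_\bullet)\amalg_{\partial\Delta_n\otimes U_\bullet}(\partial\Delta_n\otimes\widetilde{X}), \]
and the morphism to be controlled is $P\to\Delta_n\otimes\widetilde{X}$. The crux is that, although $g$ is \emph{not} a weak equivalence in $\mathcal{S}^{\amalg,\Delta^{\op}}$, the morphism $\int^{\amalg}U_\bullet\to\int^{\amalg}\widetilde{X}$ nevertheless lies in every localizer with respect to the chosen pretopology; granting this, the rest is assembly. (Note that a localizer with respect to the chosen pretopology is a fortiori a localizer with respect to the trivial topology, since identities are coverings, so results such as Corollary~\ref{KORINTWEGENERAL} are available for the ambient $\mathcal{W}$.)

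To prove that claim I would first show $\int^{\amalg}U_\bullet\to(\cdot,X)$ is in $\mathcal{W}$ by applying axiom (L3~left) to the covering $\{U^{(i)}\to X\}$ from which the \v{C}ech cover was built. One identifies the base change $(\int^{\amalg}U_\bullet)\times_{/(\cdot,X)}(\cdot,U^{(i)})$ with $\int^{\amalg}(U_\bullet\times_X U^{(i)})$ --- the fibre product $U_n\times_X U^{(i)}$ exists in $\mathcal{S}$ and has the same connected components as $U_n$ --- and observes that $U_\bullet\times_X U^{(i)}$ is the \v{C}ech cover of $U\times_X U^{(i)}\to U^{(i)}$, which admits a section, namely the diagonal $U^{(i)}\hookrightarrow U^{(i)}\times_X U^{(i)}$ followed by the coproduct inclusion. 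Applying $\Hom(S,-)$ for every $S\in\mathcal{S}$ turns this into the \v{C}ech nerve of a split surjection of sets, hence a weak equivalence onto the constant simplicial set; so $U_\bullet\times_X U^{(i)}\to N(\cdot,U^{(i)})$ is a weak equivalence in $\mathcal{S}^{\amalg,\Delta^{\op}}$, and by Corollary~\ref{KORINTWEGENERAL} (which says $\int^{\amalg}$ preserves all weak equivalences, not only between cofibrant objects) together with Proposition~\ref{PROPFWD} (the natural transformation $\int^{\amalg}N\Rightarrow\id$ is pointwise in $\mathcal{W}$) its image $\int^{\amalg}(U_\bullet\times_X U^{(i)})\to(\cdot,U^{(i)})$ lies in $\mathcal{W}$. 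Then (L3~left) gives $\int^{\amalg}U_\bullet\to(\cdot,X)\in\mathcal{W}$. Since $\widetilde{X}\to X$ is a trivial fibration and $X=N(\cdot,X)$, the same two results give $\int^{\amalg}\widetilde{X}\to(\cdot,X)\in\mathcal{W}$, and two-out-of-three yields $\int^{\amalg}U_\bullet\to\int^{\amalg}\widetilde{X}\in\mathcal{W}$.

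Now the assembly. By Proposition~\ref{PROPPROPERTIESLOCALIZERII}(2), copowering preserves this, so $\int^{\amalg}(K\otimes U_\bullet)\to\int^{\amalg}(K\otimes\widetilde{X})$ is in $\mathcal{W}$ for every simplicial set $K$, in particular for $K=\partial\Delta_n$ and $K=\Delta_n$. The map $\partial\Delta_n\otimes U_\bullet\to\partial\Delta_n\otimes\widetilde{X}$ is a cofibration sent by $\int^{\amalg}$ into $\mathcal{W}$, so by Proposition~\ref{PROPPUSHOUT} its pushout $\int^{\amalg}(\Delta_n\otimes U_\bullet)\to\int^{\amalg}P$ lies in $\mathcal{W}$. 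Finally the composite $\Delta_n\otimes U_\bullet\to P\to\Delta_n\otimes\widetilde{X}$ is the map induced by $g$, whose image under $\int^{\amalg}$ is in $\mathcal{W}$ by the previous sentence; two-out-of-three then shows the pushout--product map $\int^{\amalg}P\to\int^{\amalg}(\Delta_n\otimes\widetilde{X})$ is in $\mathcal{W}$, which is the assertion.

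I expect the main obstacle to be the bookkeeping in the (L3~left) step: verifying that $\int^{\amalg}$ commutes on the nose with the base change along $U^{(i)}\to X$, that the base-changed \v{C}ech cover genuinely splits, and the (classical but indexing-convention-sensitive) fact that the \v{C}ech nerve of a split surjection of sets is weakly contractible over each fibre. Everything downstream of that point is a direct application of Corollary~\ref{KORINTWEGENERAL} and Propositions~\ref{PROPPROPERTIESLOCALIZERII}--\ref{PROPPUSHOUT}.
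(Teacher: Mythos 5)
Your proposal is correct and follows essentially the same route as the paper: first show $\int^{\amalg}U_\bullet\to X$ is in $\mathcal{W}$ via (L3 left), using that the \v{C}ech cover pulled back along the covering splits and hence becomes contractible, then tensor with $K$ via Proposition~\ref{PROPPROPERTIESLOCALIZERII}(2), compare with $\widetilde{X}$ by Corollary~\ref{KORINTWEGENERAL} and 2-out-of-3, and assemble the pushout--product with Proposition~\ref{PROPPUSHOUT}. The only (harmless) cosmetic difference is that the paper handles the split pulled-back \v{C}ech cover as a left homotopy equivalence via Corollary~\ref{KORLEFTHOMOTOPYEQUIVALENCE}, whereas you treat it as a weak equivalence in the model structure and invoke Corollary~\ref{KORINTWEGENERAL} together with Proposition~\ref{PROPFWD}.
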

\begin{proof}
The morphism $U_\bullet \rightarrow X$ is mapped to a weak equivalence because after pull-back along $U_0 \rightarrow X$ it becomes a left homotopy equivalence and is thus a 
weak equivalence by Corollary~\ref{KORLEFTHOMOTOPYEQUIVALENCE} and (L3 left). 
By Proposition~\ref{PROPPROPERTIESLOCALIZERII}, 2.\@ also $K \otimes U_\bullet \rightarrow K \otimes X$ for any simplicial set $K$ is mapped to a weak equivalence. The morphism $K \otimes \widetilde{X} \rightarrow K \otimes X$ is a global weak equivalence and thus
it is mapped to a weak equivalence by Corollary~\ref{KORINTWEGENERAL}. Therefore also the cofibration $K \otimes U \rightarrow K \otimes \widetilde{X}$ is mapped to a weak equivalence.
 Finally the diagram  
\[ \xymatrix{
\partial \Delta_n \otimes U \ar[r]  \ar[d]_{\in \Cof} &  \Delta_n \otimes U  \ar[d]_{\in \Cof} \ar[rdd] \\
 \partial \Delta_n \otimes \widetilde{X} \ar[rrd] \ar[r] & \lefthalfcap  \ar[rd] \\
 & &  \Delta_n \otimes \widetilde{X}
} \]
shows that also $(\partial \Delta_n \rightarrow  \Delta_n) \boxplus (U \rightarrow \widetilde{X})$ is mapped to a weak equivalence. Note that the vertical morphisms are
cofibrations and thus the push-out exists and it is mapped to a weak equivalence by Proposition~\ref{PROPPUSHOUT}. 
\end{proof}

\begin{PAR}\label{PARINTCECH}
Let $\mathcal{S}$ be a small idempotent complete\footnote{For example $\mathcal{S}$ is idempotent complete if it has finite limits or finite colimits.  } category with Grothendieck pretopology and consider the left Bousfield localization 
$\mathcal{SET}_{loc}^{\mathcal{S}^{\op} \times \Delta^{\op}}$. Its cofibrant objects are the same as the cofibrant objects in $\mathcal{SET}^{\mathcal{S}^{\op} \times \Delta^{\op}}$ and thus these are in the essential image of $\mathcal{S}^{\amalg, \Delta^{\op}}$ because $\mathcal{S}$ is idempotent complete. Hence, on cofibrant objects, we have a functor
\[ \int^{\amalg}: \mathcal{SET}_{loc}^{\mathcal{S}^{\op} \times \Delta^{\op}, \Cof} \rightarrow \Dia(\mathcal{S}).  \]
\end{PAR}

\begin{SATZ}\label{THEOREMINTWECOFCECH}Let $\mathcal{S}$ be a small category with finite limits and Grothendieck pretopology.

The functor $\int^\amalg$  of (\ref{PARINTCECH}) maps (\v{C}ech) weak equivalences to weak equivalences in any localizer w.r.t.\@ the chosen Grothendieck topology. 
Any localizer w.r.t.\@ the given Grothendieck topology contains the morphisms $\alpha$ such that $N(\alpha)$ is a weak equivalence in $\mathcal{SET}_{loc}^{\mathcal{S}^{\op} \times \Delta^{\op}}$. This class is the smallest localizer w.r.t.\@ the given Grothendieck topology. 

The functors
\[ \xymatrix{ (\mathcal{SET}_{loc}^{\mathcal{S}^{\op} \times \Delta^{\op}}, \mathcal{W}) \ar@<5pt>[rr]^-{\int^{\amalg} Q}  &&  \ar@<5pt>[ll]^-{N} (\Dia(\mathcal{S}), \mathcal{W}_\infty) } \]
define an equivalence of categories with weak equivalence, 
where $Q$ denotes the cofibrant replacement functor,  $\int^{\amalg}$ is the functor described in \ref{PARINTCECH}, and $\mathcal{W}_\infty$ is the smallest localizer w.r.t.\@ the chosen Grothendieck topology. 
\end{SATZ}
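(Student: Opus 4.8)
The plan is to reduce the first statement to Theorem~\ref{THEOREMINTWECOF} together with the new input Proposition~\ref{PROPCECH}, and then to deduce the smallest-localizer statement and the equivalence formally from the two natural transformations $\int^{\amalg} N \Rightarrow \id$ of Proposition~\ref{PROPFWD} and $N \int^{\amalg} \Rightarrow \id$ of Theorem~\ref{THEOREMBACK}, exactly as in Corollaries~\ref{KORINTWEGENERAL}--\ref{KORINTWEGENERALCECH}.

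So first fix a localizer $\mathcal{W}$ with respect to the chosen Grothendieck topology and let $f \colon X \to Y$ be a \v{C}ech weak equivalence between cofibrant objects of $\mathcal{SET}^{\mathcal{S}^{\op} \times \Delta^{\op}}$. I would factor $f = p \circ \iota$ by the small object argument in $\mathcal{SET}^{\mathcal{S}^{\op} \times \Delta^{\op}}_{loc}$, with $\iota \colon X \to Z$ a transfinite composition of pushouts of (single) generating trivial cofibrations of $\mathcal{SET}^{\mathcal{S}^{\op} \times \Delta^{\op}}_{loc}$ and $p$ a fibration; since $f$ and $\iota$ are weak equivalences $p$ is a trivial fibration, and $Z$ is cofibrant because $X$ is. As the cofibrations are not changed by the Bousfield localization, $p$ is already a trivial fibration in $\mathcal{SET}^{\mathcal{S}^{\op} \times \Delta^{\op}}$, hence a weak equivalence between cofibrant objects, so $\int^{\amalg} p \in \mathcal{W}$ by Theorem~\ref{THEOREMINTWECOF}. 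For $\iota$ I would use the explicit generating trivial cofibrations recalled after Theorem~\ref{SATZBLEXACT}: the generating trivial cofibrations of $\mathcal{SET}^{\mathcal{S}^{\op} \times \Delta^{\op}}$ together with the maps $(\partial \Delta_n \hookrightarrow \Delta_n) \boxplus (U_\bullet \to Y')$; by Lemma~\ref{LEMMAFACTORCECH} the factorization $U_\bullet \to Y' \to Y$ of the \v{C}ech cover may be taken inside $\mathcal{S}^{\amalg, \Delta^{\op}}$. Then every object occurring in the cell presentation of $\iota$ is cofibrant, hence lies in the essential image of $\mathcal{S}^{\amalg, \Delta^{\op}}$ (using that $\mathcal{S}$, having finite limits, is idempotent complete), so that $\int^{\amalg}$ is defined on all of them and the relevant pushouts and transfinite compositions may be computed in $\mathcal{S}^{\amalg, \Delta^{\op}}$. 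The generators of the first kind are weak equivalences between cofibrant objects and go into $\mathcal{W}$ by Theorem~\ref{THEOREMINTWECOF}, those of the second kind go into $\mathcal{W}$ by Proposition~\ref{PROPCECH}; by Propositions~\ref{PROPPUSHOUT} and \ref{PROPTRANSFINITE} it follows that $\int^{\amalg} \iota \in \mathcal{W}$, whence $\int^{\amalg} f \in \mathcal{W}$.

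For the second statement, Corollary~\ref{KOREQUIVALENCE} shows that $\{ \alpha \mid N(\alpha) \in \mathcal{W}_{loc} \}$ is a localizer for the chosen topology, so it is enough to prove it is contained in every such localizer $\mathcal{W}$. If $N(\alpha) \colon N(D_1) \to N(D_2)$ is a \v{C}ech weak equivalence, then $N(D_1), N(D_2)$ are cofibrant by Lemma~\ref{LEMMANERVECOF}, so $\int^{\amalg} N(\alpha) \in \mathcal{W}$ by the first step; in the naturality square whose horizontal edges are the maps $\int^{\amalg} N(D_i) \to D_i$ (which lie in $\mathcal{W}$ by Proposition~\ref{PROPFWD}) and whose vertical edges are $\int^{\amalg} N(\alpha)$ and $\alpha$, the two-out-of-three property forces $\alpha \in \mathcal{W}$. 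Hence this class is $\mathcal{W}_\infty$.

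For the third statement, $N$ sends $\mathcal{W}_\infty$ into $\mathcal{W}_{loc}$ by the description of $\mathcal{W}_\infty$ just obtained, and $\int^{\amalg} Q$ sends $\mathcal{W}_{loc}$ into $\mathcal{W}_\infty$ since the cofibrant replacement functor $Q$ preserves weak equivalences and $\int^{\amalg}$ sends weak equivalences between cofibrant objects into $\mathcal{W}_\infty$ by the first step. The natural zig-zags of weak equivalences $N \int^{\amalg} Q X \to QX \to X$ (Theorem~\ref{THEOREMBACK} and cofibrant replacement) and $\int^{\amalg} Q N(I,S) \to \int^{\amalg} N(I,S) \to (I,S)$ — the first map being $\int^{\amalg}$ of a weak equivalence between cofibrant objects (Lemma~\ref{LEMMANERVECOF} and the first step), the second being Proposition~\ref{PROPFWD} — then exhibit $N$ and $\int^{\amalg} Q$ as mutually quasi-inverse, i.e.\@ as an equivalence of categories with weak equivalences. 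I expect the first step to be the main obstacle: it requires the explicit cellular structure of the trivial cofibrations of the \v{C}ech localization, the reduction of the \v{C}ech-type generators into $\mathcal{S}^{\amalg, \Delta^{\op}}$ via Lemma~\ref{LEMMAFACTORCECH} so that Proposition~\ref{PROPCECH} applies verbatim, and the bookkeeping that keeps every object in the essential image of $\mathcal{S}^{\amalg, \Delta^{\op}}$, where $\int^{\amalg}$ is defined; once this is in place the remaining two steps are purely formal.
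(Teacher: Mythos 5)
Your proposal is correct and follows essentially the same route as the paper: the same factorization of a \v{C}ech weak equivalence into a relative cell complex built from the two kinds of generating trivial cofibrations (handled by Theorem~\ref{THEOREMINTWECOF} resp.\@ Proposition~\ref{PROPCECH}, with Lemma~\ref{LEMMAFACTORCECH} keeping everything in the essential image of $\mathcal{S}^{\amalg,\Delta^{\op}}$) followed by a trivial fibration, and the same formal deduction of the smallest-localizer claim and the equivalence from Corollary~\ref{KOREQUIVALENCE}, Proposition~\ref{PROPFWD} and Theorem~\ref{THEOREMBACK}. The only differences are cosmetic (e.g.\@ citing Theorem~\ref{THEOREMINTWECOF} where the paper cites Corollary~\ref{KORLEFTHOMOTOPYEQUIVALENCE} for the trivial fibration).
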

\begin{proof}
(cf.\@ the proof of Theorem~\ref{THEOREMINTWECOF})
Let $f$ be a weak equivalence between cofibrant objects.
It can be factored as
\[ f = p \circ \iota \]
where $p$ is a trivial fibration between cofibrant objects and where $\iota$ is a transfinite composition of pushouts of morphisms of the form
\[ \Lambda_e( \Delta_n \times \Delta_1) \otimes S \rightarrow (\Delta_n \times \Delta_1) \otimes S  \] 
for $S \in \mathcal{S}$ and also 
\[ (\partial \Delta_{n} \hookrightarrow \Delta_n) \boxplus (U_\bullet \rightarrow X')  \] 
for a cofibration representing a \v{C}ech cover.
Everything thus takes place in the full subcategory $\mathcal{S}^{\amalg, \Delta^{\op}}$ (cf.\@ also Lemma~\ref{LEMMAFACTORCECH}) and push-outs along, and transfinite compositions of
cofibrations commute with the embedding $R$. 
The trivial fibration is mapped to a weak equivalence by Corollary~\ref{KORLEFTHOMOTOPYEQUIVALENCE}.
Therefore the first statement follows from Propositions~\ref{PROPPUSHOUT}--\ref{PROPTRANSFINITE} and \ref{PROPCECH}.

We have to show that any localizer contains the class of $\DD_{\mathcal{SET}_{loc}^{\mathcal{S}^{\op} \times \Delta^{\op}}}$-equivalences. By Corollary~\ref{KOREQUIVALENCE} those are the morphisms $\alpha$ such that $N(\alpha)$ is a weak equivalence in $\mathcal{SET}^{\mathcal{S}^{\op} \times \Delta^{\op}}_{loc}$. Since this class is  itself a localizer w.r.t.\@ the given Grothendieck topology by Theorem~\ref{SATZWEAKHOMDESCENT}, it is the smallest such localizer.
For the proof observe that we have the weak equivalence
\[ (I, S) \leftarrow  \int^{\amalg} N(I, S) \]
for any $(I, S) \in \Dia(\mathcal{S})$.
Hence, if $N(\alpha)$ is a \v{C}ech weak equivalence, $\alpha$ must be in the localizer. 

To prove that the functors $\int^{\amalg} Q$ and $N$ are mutually inverse equivalences of categories with weak equivalences observe that both functors preserve weak equivalences. 
Furthermore, we have natural transformations
\[ \id \leftarrow Q \rightarrow N \int^{\amalg} Q    \]
which are point-wise weak equivalences (already in $\mathcal{SET}^{\mathcal{S}^{\op} \times \Delta^{\op}}$, i.e.\@ globally).
Furthermore, we have the composition of natural transformations
\[ \id \leftarrow  \int^{\amalg} N  \leftarrow \int^{\amalg} Q N    \]
Note that $QN(I, S) \rightarrow N(I, S)$ is a global weak equivalence in the essential image of $\mathcal{S}^{\amalg, \Delta^{\op}}$ and thus mapped by $\int^{\amalg}$ to a weak equivalence. 
\end{proof}

\begin{BEM}
Unfortunately, if $\mathcal{S}$ is a big category with finite limits and {\em  non-trivial} Grothendieck topology, we are not able to describe the smallest localizer on $\Dia(\mathcal{S})$. 
In the best case there would be an explicit description of \v{C}ech weak equivalences in $\mathcal{S}^{\amalg, \Delta^{\op}}$ (not necessarily part of a model category structure), which does not depend on
the embedding into simplicial presheaves w.r.t.\@ a larger universe, and the smallest localizer on $\Dia(\mathcal{S})$ would be the class of morphisms $\alpha$ such that $N(\alpha)$ is
such a \v{C}ech weak equivalence. 
\end{BEM}

\section{The universal property revisited}

Whereas the use of $(\Dia(\mathcal{S}), \mathcal{W}_\infty)$, where $\mathcal{W}_\infty$ is the smallest localizer for the trivial topology, is very useful for studying fibered derivators, it follows from Theorem~\ref{THEOREMINTWECOFCECH} that  for small $\mathcal{S}$ it is equivalent to
to $(\Dia^{\mathcal{S}^{\op}}, \mathcal{W}_{\infty,\mathcal{S}^{\op}})$, where $\mathcal{W}_{\infty, \mathcal{S}^{\op}}$ is the class of morphisms that are point-wise in $\mathcal{S}^{\op}$ in the smallest localizer on $\Dia$. Cisinski showed that the associated derivator $\HH(\mathcal{S})$ of the latter satisfies the universal property
\[ \Hom_!(\HH(\mathcal{S}), \DD) \cong \DD_{\mathcal{S}} = \Hom(\SSS, \DD) \]
where $\Hom_!$ is the left derivator of continuous morphisms of left derivators\footnote{Cisinski uses a stronger definition (d\'erivateur a gauche) of left derivator which is however equivalent to the usual definition (d\'erivateur faible a gauche) by Theorem~\ref{FAIBLE}.}.

In this section we explain that this result follows also directly from the discussion in this article, {\em regardless of the size of $\mathcal{S}$}. We refer to Proposition~\ref{PROPDERLOCALIZER} in the appendix for the proof that the pair
$(\Dia(\mathcal{S}), \mathcal{W})$ yields a left derivator for arbitrary $\mathcal{S}$ and an arbitrary localizer $\mathcal{W}$, in which moreover the homotopy colimit is given by the Grothendieck construction~\ref{PARGCONSTR}.  For locally small $\mathcal{S}$ with finite limits and the smallest localizer $\mathcal{W}_\infty$ (for the trivial topology) this follows also from Theorem~\ref{KORINTWEGENERAL} and Theorem~\ref{SATZHOCOLIM}, giving the additional information that all values of the derivator are locally small.

\begin{SATZ}\label{SATZUNIV}
For any category $\mathcal{S}$ and for any left derivator $\DD$ we have an equivalence 
\[ \mathbb{HOM}_!(\HH(\mathcal{S}), \DD) \cong \mathbb{HOM}(\SSS, \DD) \]
where $\HH(\mathcal{S})$ is the left derivator associated with the pair $(\Dia(\mathcal{S}), \mathcal{W}_\infty)$ and $\SSS$ is the prederivator associated with $\mathcal{S}$.
The morphism from left to right is the restriction along the inclusion $\SSS \rightarrow \HH(\mathcal{S})$ and
the morphism from right to left (at the level of underlying categories) maps a morphism $\Xi: \SSS \rightarrow \DD$ of prederivators to 
\begin{equation}\label{eqmap}   (F, S) \mapsto p_{I,!} \Xi(S)  \end{equation}
where as usual an object in $\Dia(\mathcal{S})^I$ is given by a pair $(F, S)$ consisting of $F \in \Dia^I$ and $S \in \mathcal{S}^{\int F}$, and
where $p_I: \int F \rightarrow I$ denotes the defining opfibration. 
\end{SATZ}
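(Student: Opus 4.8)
The plan is to exhibit the two pseudo-inverse functors explicitly and to check the three standard properties — well-definedness, continuity, mutual inverseness — following Cisinski's strategy but feeding in the results of the preceding sections on $\Dia(\mathcal{S})$ (Propositions~\ref{PROPFWD}, \ref{PROPPROPERTIESLOCALIZER}, Corollary~\ref{KORINTWEGENERAL}, Theorems~\ref{FAIBLE}, \ref{SATZHOCOLIM}) in place of the classical theory of the minimal localizer on $\Dia$. First I would fix the morphism of prederivators $\iota\colon\SSS\to\HH(\mathcal{S})$ sending $S\in\mathcal{S}^J=\SSS(J)$ to the diagram $j\mapsto(\cdot,S(j))$; the left-to-right functor $\rho$ is restriction along $\iota$. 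Conversely, for a morphism of prederivators $\Xi\colon\SSS\to\DD$ I set $\widetilde\Xi_I(F,S):=p_{I,!}\Xi_{\int F}(S)$ for $(F,S)\in\Dia(\mathcal{S})^I$, $p_I\colon\int F\to I$ the defining opfibration; since $\int(j\mapsto\cdot)=J$ with $p_J=\id$, restricting $\widetilde\Xi$ along $\iota$ returns $\Xi$ on the nose, so $\rho\circ\sigma=\id$ where $\sigma(\Xi):=\widetilde\Xi$.

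The heart is to show $\widetilde\Xi_I$ descends to $\HH(\mathcal{S})(I)=\Dia(\mathcal{S})^I[\mathcal{W}_{\infty,I}^{-1}]$. Writing $\Psi(I,S):=p_{I,!}\Xi_I(S)$, membership in $\mathcal{W}_{\infty,I}$ can be tested pointwise over $I$ by (Der2) for $\DD$ together with base change for the opfibration $p_I$ (giving $i^*p_{I,!}\cong p_{F(i),!}\circ(\mathrm{incl})^*$), so the problem reduces to showing that the class $\mathcal{W}_\Xi:=\{\alpha:\Psi(\alpha)\text{ invertible}\}$ contains the smallest localizer $\mathcal{W}_\infty$ for the trivial topology. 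As in the Caution following Theorem~\ref{THEOREMINTWECOFCECH}, it suffices that $\mathcal{W}_\Xi$ satisfy (L1), (L2 left), (L3 left) and the weakened (L4 left): for $\alpha\colon I\to J$ with each comma category $j\times_{/J}I$ contractible in $\mathcal{W}_\infty(\Dia)$, morphisms $(I,\alpha^*T)\to(J,T)$ of pure diagram type lie in $\mathcal{W}_\Xi$. Here (L1) is trivial; (L2 left) holds because $p_{I,!}\cong e^*$ for $I$ with terminal object $e$ (both are left adjoint to $p_I^*$, the latter from $p_I\dashv j_e$), so $\Psi(I,F)\cong\Xi_\cdot(F(e))$ compatibly with the projection; (L3 left) for the trivial topology follows by factoring $\Psi(w)$ through $p_{K,!}$ of a morphism $\phi_!\Xi(S)\to\psi_!\Xi(T)$ in $\DD(K)$ and checking it pointwise at $k\in K$, where homotopy exactness of the comma square identifies $k^*\phi_!\Xi(S)$ with $\Psi(I\times_{/K}k,\mathrm{pr}^*S)$ (using that $\Xi$ commutes with restriction and that, for the trivial topology, $w\times_{/(K,U)}(k,U(k))$ has precisely this decorated diagram as its source), hence with an isomorphism by hypothesis; and the weakened (L4 left) reduces to the statement that $p_{I,!}\alpha^*\to p_{J,!}$ is invertible, which follows by factoring $\alpha$ through the (co)fibration whose fibres are the comma categories $j\times_{/J}I$ and combining Proposition~\ref{PROPPROPERTIESLOCALIZER}(2) (whose proof uses only (L1)–(L2 left)) with Theorem~\ref{FAIBLE}.

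It remains to check that $\widetilde\Xi$ is continuous and a morphism of prederivators, and that $\sigma\circ\rho\cong\id$. Compatibility of $\widetilde\Xi$ with the restriction functors $u^*$ is again base change for the opfibration $\int F'\to I'$ along $u$; continuity reduces by the usual comma-category argument to commutation with $p_{I,!}$, which holds because $p_{\int F}=p_I\circ q$ for the opfibration $q\colon\int F\to I$, and base change gives $q_!\Xi(S)\cong(i\mapsto p_{F(i),!}\Xi(S_i))=\widetilde\Xi(F,S)$, matching the description of $\hocolim$ in $\HH(\mathcal{S})$ by the Grothendieck construction (Proposition~\ref{PROPDERLOCALIZER}). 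Conversely, for continuous $\Phi\colon\HH(\mathcal{S})\to\DD$ with $\Xi:=\Phi\circ\iota$, one uses the weak equivalences $\int^{\amalg}N(I,S)\xrightarrow{\sim}(I,S)$ of Proposition~\ref{PROPFWD} to exhibit every object of $\HH(\mathcal{S})$ as a canonical homotopy colimit, over a Grothendieck construction involving $\Delta^{\op}$, of objects in the image of $\iota$; continuity of $\Phi$ yields $\Phi(F,S)\cong\hocolim\Xi(\cdots)$, and the Bousfield–Kan (simplicial replacement) formula for $p_{I,!}$ valid in any left derivator (Theorem~\ref{SATZHOCOLIM}) identifies the right-hand side with $p_{I,!}\Xi(S)=\widetilde\Xi(F,S)$, naturally in $(F,S)$. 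Carrying out the same construction over each shape $A$ upgrades this to the asserted equivalence of (pre)derivators.

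The main obstacle is the well-definedness step — the homotopy invariance of $p_{I,!}\Xi(S)$, i.e.\ that $\mathcal{W}_\Xi$ is a localizer for the trivial topology. This is precisely where the weakened form of axiom (L4 left) adopted in Definition~\ref{DEFFUNLOC} is needed, and it is handled by the combination of Theorem~\ref{FAIBLE}, the adjunction property valid in any localizer, and homotopy exactness of comma squares in the left derivator $\DD$; everything else is a lengthy but formal manipulation of Grothendieck constructions and Kan extensions.
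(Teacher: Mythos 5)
Your proposal is correct and follows essentially the same route as the paper: the crux in both is showing that the class $\mathcal{W}_\Xi$ of morphisms inverted by $(F,S)\mapsto p_{I,!}\Xi(S)$ satisfies (L1), (L2 left), (L3 left) pointwise via (Der2) and homotopy exactness, and the weakened (L4 left) via Theorem~\ref{FAIBLE}, while continuity comes from the Grothendieck-construction description of homotopy colimits in Proposition~\ref{PROPDERLOCALIZER}. Your detour through $\int^{\amalg}N$ for the inverse-equivalence check is more elaborate than needed (and Theorem~\ref{SATZHOCOLIM} concerns the specific model-category derivators, not arbitrary left derivators) --- the tautological decomposition $(I,S)=\int_I\bigl(i\mapsto(\cdot,S(i))\bigr)$ already exhibits every object as a homotopy colimit of objects in the image of $\SSS$ --- but this does not affect correctness.
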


\begin{proof}
It is clear that the two morphisms are inverse to each other up to 2-isomorphism. Hence
we only have to see that (\ref{eqmap}) is 1.\@ well-defined, i.e.\@ it maps weak equivalences in $\Dia(\mathcal{S})^I$ to isomorphisms, and 2.\@ it is continuous. 
We leave it to the reader to check that (\ref{eqmap}) is a morphism of prederivators. This holds because $p_{I,!}$ for an opfibration $p_I$ is ``computed point-wise''. 
The extension to diagrams of the equivalence is given by replacing $\DD$ with its fibers $\DD_J$.

1.\@ Since $p_I$ is an opfibration this immediately boils down to the statement for $I = \cdot$. Let $\mathcal{W}_\Xi$ be the class of morphisms $\alpha: (I, S) \rightarrow (J, T)$ in $\Dia(\mathcal{S})$ with the property that the induced morphism
\[ p_{I,!}\Xi(S) \rightarrow p_{J,!}\Xi(T)   \]
is an isomorphism. It suffices to show the following properties:
\begin{enumerate}
\item[(1)] The class $\mathcal{W}_\Xi$ is weakly saturated. 
\item[(2)] If $I \in \Dia$ has a final object $i$ and $S \in \mathcal{S}^I$ then the morphism $(I, S) \rightarrow (i, S(i))$ is in $\mathcal{W}_\Xi$.
\item[(3)] If $\alpha: D_1 \rightarrow D_2$ is a morphism over $D_3=(K, U)$ and if $\alpha \times_{/D_3} (k, U(k))$ is in $\mathcal{W}_\Xi$ for all $k \in K$ then $\alpha \in \mathcal{W}_\Xi$.
\item[(4)] If $\alpha: I \rightarrow J$ is a fibration with contractible fibers (in the sense that all $I_j \rightarrow \cdot$ are in the smallest localizer on $\Dia$) then $(I, \alpha^* T) \rightarrow (J, T)$ is in $\mathcal{W}_\Xi$ for all $T \in \mathcal{S}^J$. 
\end{enumerate} 
Note that axiom (4) is a bit different from the axiom (L4 left) of a localizer. Nevertheless it follows from the proofs that the smallest class satisfying (1--4) above coincides with the smallest localizer w.r.t.\@ the trivial topology. 

(1) and (2) are clear. 

(3) Let $\alpha: (I, S) \rightarrow (J, T)$ be the morphism, then $\alpha \times_{/D_3} (k, U(k))$ is the morphism
\[ (I \times_{/K} k, \iota_I^*S) \rightarrow (J \times_{/K} k, \iota_J^*S) \]
where $\iota_I: I \times_{/K} k \rightarrow I$ is the projection and  similarly for $\iota_J$.
It suffices to show that 
\[ p_{1,I} \Xi(S) \rightarrow  p_{2,I} \Xi(T) \]
is an isomorphism. This can be shown point-wise by (Der2) but 
\[ k^* p_{1,I} \Xi(S) \rightarrow  k^* p_{2,I} \Xi(T) \]
is the same as
\[ p_{I \times_{/K} k,!} \Xi(\iota_I^*S) \rightarrow   p_{J \times_{/K} k,!} \Xi(\iota_J^*T) \]
which is an isomorphism by assumption. 

(4) follows immediately from Theorem~\ref{FAIBLE}.

2. By Theorem~\ref{PROPDERLOCALIZER} the homotopy colimit in $\Dia(\mathcal{S})$ w.r.t.\@ the smallest localizer (in fact any localizer) is given by the Grothendieck construction. By definition, the image of $F: I \rightarrow \Dia(\mathcal{S})$ in $\DD(I)$  is given by
\[ p_{I,!} \Xi(S), \]
where $p_I: \int F \rightarrow I$ is the opfibration and where $S \in \mathcal{S^{\int F}}$. 
Its homotopy colimit is obviously the same as the image of $\int F$ in $\DD(\cdot)$.
\end{proof}

\begin{PAR}
To compare the morphism (\ref{eqmap}) with Cisinski's construction {\em for small $\mathcal{S}$} observe that we have an equivalence
\[ \Hom: (\Dia(\mathcal{S}), \mathcal{W}_\infty) \rightarrow (\Dia^{\mathcal{S}^{\op}},\mathcal{W}_{\infty, \mathcal{S}^{\op}})    \]
in which $\mathcal{W}_\infty$ on the left is the smallest localizer on $\Dia(\mathcal{S})$ and $\mathcal{W}_{\infty, \mathcal{S}^{\op}}$ is the class of morphisms that are point-wise in $\mathcal{S}^{\op}$ in
the smallest localizer on $\Dia$. The functor is the functor $\Hom: \mathcal{S}^{\op} \times \Dia(\mathcal{S}) \rightarrow \Dia$ defined in \ref{PARPOINTWISE}.
Cisinski defines the morphism (on the underlying categories) by 
\begin{eqnarray*} \mathrm{Cis}: \Dia^{\mathcal{S}^{\op}} \times \DD(\mathcal{S}) & \rightarrow & \DD(\cdot) \\
 F, \Xi & \mapsto & p_{\nabla F,!} p_{\mathcal{S}}^*  \Xi 
\end{eqnarray*}
where $p_{\mathcal{S}}: \nabla F \rightarrow \mathcal{S}$ is the canonical fibration. 

Let us check that the diagram
\begin{equation} \vcenter{ \xymatrix{
\Dia(\mathcal{S}) \times \DD(\mathcal{S}) \ar[d]_-{\Hom \times \id} \ar[rrd]^-{\qquad ((I, S), \Xi) \mapsto p_{I,!}S^*\Xi} & \\
\Dia^{\mathcal{S}^{\op}} \times \DD(\mathcal{S})  \ar[rr]_-{\mathrm{Cis}} && \DD(\cdot)
} } \end{equation} 
commutes up to a canonical isomorphism. For $F=(I,S) \in \Dia(\mathcal{S})$ the fibration
\[ \nabla_{\mathcal{S}} \Hom(-, F) \rightarrow \mathcal{S}  \]
is the same as the fibration defined by the comma category:
\[ \pr_1: \mathcal{S} \times_{/\mathcal{S}} I \rightarrow \mathcal{S} \]
and we have
\[ \pr_{\mathcal{S} \times_{/\mathcal{S}} I,!} \pr_1^* = p_{I,!} \pr_{2,!} \pr_1^* \cong p_{I,!} S^*  \]
because the diagram
\[ \xymatrix{\mathcal{S} \times_{/\mathcal{S}} I \ar[d]_{\pr_2} \ar[r]^-{\pr_1} & \mathcal{S} \ar@{=}[d] \\
I \ar[r]_S & \mathcal{S}
 } \]
 is homotopy exact. Therefore the diagram commutes up to canonical isomorphism. 
\end{PAR}

\section{Application: Extension of fibered derivators}\label{SECTEX}

Let $\mathcal{S}$ be a small category with finite limits and Grothendieck pretopology. Let $\SSS$ be the prederivator represented by $\mathcal{S}$. 
The goal of this section is to prove the following theorem:

\begin{SATZ}\label{SATZEXDER}
Let $\DD \rightarrow \SSS$ be an infinite fibered derivator which is local w.r.t.\@ the Grothendieck
pretopology on $\mathcal{S}$ with stable, well-generated fibers. Then there is a natural extension 
\[ \DD'' \rightarrow \mathbb{H}^2(\mathcal{SET}^{\Delta^{\op} \times \mathcal{S}^{\op}}_{loc}) \]
such that the pullback of $\DD''$ along the natural morphism 
\[ \SSS \rightarrow \mathbb{H}^2(\mathcal{SET}^{\Delta^{\op} \times \mathcal{S}^{\op}}_{loc})  \]
is equivalent to $\DD$ and such that for a pair
$(I, S) \in \Dia(\mathcal{S})$ with $I \in \Dia$ and $S \in \mathcal{S}^{I}$
we have an equivalence of stable derivators
\[ \DD''_{N(I, S)} \cong  \DD_{(I, S)}^{\cart}. \]
\end{SATZ}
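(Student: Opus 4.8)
The plan is to obtain $\DD''$ by transporting the functor of (co)Cartesian sections $\DD \mapsto \DD^{\cart}$ along the equivalence of Theorem~\ref{THEOREMINTWECOFCECH}. Explicitly, for an object $X$ of $\mathcal{SET}^{\Delta^{\op}\times\mathcal{S}^{\op}}_{loc}$ I would pick a cofibrant replacement $QX$; since $\mathcal{S}$ is small and idempotent complete (it has finite limits), $QX$ lies in the essential image of $\mathcal{S}^{\amalg,\Delta^{\op}}$, so $\int^\amalg QX \in \Dia(\mathcal{S})$ is defined (cf.\@ \ref{PARINTCECH}), and I set $\DD''_X := \DD^{\cart}_{\int^\amalg QX}$. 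Applying the same recipe pointwise --- using functoriality of $Q$, $\int^\amalg$, $N$ and of the equivalence of Theorem~\ref{THEOREMINTWECOFCECH} in the diagram shape, together with the functoriality of $\DD^{\cart}$ on all of $\Dia(\mathcal{S})$ --- produces the pre-2-derivator $\DD'' \to \mathbb{H}^2(\mathcal{SET}^{\Delta^{\op}\times\mathcal{S}^{\op}}_{loc})$. Because $N(I,S)$ is cofibrant and $\int^\amalg N(I,S) \to (I,S)$ is a weak equivalence of pure diagram type (Proposition~\ref{PROPFWD}), the required identity $\DD''_{N(I,S)} \cong \DD^{\cart}_{(I,S)}$ holds by construction, once well-definedness is established.

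Well-definedness is the heart of the matter: one must show that a morphism $\alpha\colon (I,S) \to (J,T)$ in $\Dia(\mathcal{S})$ with $N(\alpha)$ a \v{C}ech weak equivalence induces an equivalence of (stable) derivators $\alpha^*\colon \DD^{\cart}_{(J,T)} \to \DD^{\cart}_{(I,S)}$. For this I would invoke the theory of \emph{strong} (co)homological descent from \cite{Hor15}: since $\DD$ is local with stable, well-generated fibres (hence all push-forwards $f_\bullet$ are conservative), the classes of strong $\DD$-equivalences over the objects $S \in \mathcal{S}$ form a localizer on $\Dia(\mathcal{S})$ with respect to the given pretopology --- the strong analogue of Theorem~\ref{SATZWEAKHOMDESCENT} --- and, by the defining property of the strong notion, such morphisms are exactly those inducing equivalences on the derivators $\DD^{\cart}$ (the statement being stable under passing to the fibres $\DD_A$). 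By the minimality assertion of Theorem~\ref{THEOREMINTWECOFCECH}, the smallest localizer $\mathcal{W}_\infty$ for that topology --- which is precisely the class of $\alpha$ with $N(\alpha)$ a \v{C}ech weak equivalence --- is contained in the localizer of strong $\DD$-equivalences, which gives the claim. Naturality of the recipe above then follows formally, the comparison zig-zags of Proposition~\ref{PROPFWD}, Theorem~\ref{THEOREMBACK} and Theorem~\ref{THEOREMINTWECOFCECH} being natural.

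Finally, for the pullback along $\SSS \to \mathbb{H}^2(\mathcal{SET}^{\Delta^{\op}\times\mathcal{S}^{\op}}_{loc})$: this morphism sends $S \in \mathcal{S}$ to $N(\cdot,S)$, which is already cofibrant, so $\DD''_{N(\cdot,S)} = \DD^{\cart}_{(\cdot,S)} = \DD_S$; compatibility with pull-back along morphisms of $\mathcal{S}$ and with relative homotopy Kan extensions follows because on the $\Dia(\mathcal{S})$-side the relative left Kan extension is computed by the Grothendieck construction $p_{I,!}$ (Proposition~\ref{PROPDERLOCALIZER}, Corollary~\ref{KORGCONSTRUCTION}), which matches the fibered structure of $\DD$ through $\DD^{\cart}$ as in \cite{Hor15}. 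Hence the pullback of $\DD''$ is equivalent to $\DD$, and $\DD''$ is the desired extension. I expect two steps to be the real obstacles: first, establishing that strong $\DD$-equivalences form a localizer --- this is where stability and well-generatedness of the fibres are genuinely used and where one must run the totalization/descent machinery of \cite{Hor15}; second, the coherence bookkeeping needed to upgrade the equivalence of categories-with-weak-equivalences of Theorem~\ref{THEOREMINTWECOFCECH} to an equivalence of pre-2-derivators compatible with $\SSS$, given that $Q$, $\int^\amalg$ and $N$ are only homotopy-functorial.
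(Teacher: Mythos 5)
Your overall strategy coincides with the paper's: the proof there also transports $\DD^{\cart}$ along $\int^{\amalg}Q$, uses the Main Theorem of strong homological descent from \cite{Hor15} (recalled as Theorem~\ref{HAUPTSATZHOMDESC}) to see that morphisms in any localizer --- in particular those $\alpha$ with $N(\alpha)$ a \v{C}ech weak equivalence, by Theorem~\ref{THEOREMINTWECOFCECH} --- induce equivalences on Cartesian objects, and uses the zig-zag $\int^{\amalg}QN(D)\rightarrow D$ both for the comparison $\DD''_{N(I,S)}\cong\DD^{\cart}_{(I,S)}$ and for the pullback to $\SSS$. The two ``obstacles'' you flag at the end are each one citation away: the strong descent theorem is \cite[Main Theorem 3.5.5]{Hor15}, and the coherence problem is exactly what the formalism of $\mathcal{M}^{cor,fib}$, $\mathbb{H}^2(\mathcal{M})$ and Proposition~\ref{PROPEXH2} (resting on \cite[Main Theorem 7.2]{Hor16}) is designed to absorb --- one works with a category of fibrant objects and correspondences whose left legs are trivial fibrations, instead of trying to make $Q$, $N$ and $\int^{\amalg}$ strictly functorial.

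The genuine gap is that you never establish that your $\DD''$ is a \emph{fibered derivator} over the new base, which is what ``extension'' means in the statement. Setting $\DD''_X:=\DD^{\cart}_{\int^{\amalg}QX}$ only produces the fibers and the pullback functors; one must still verify (FDer0)--(FDer4), i.e.\@ the existence of relative left and right Kan extensions between the categories of $\pi_I$-Cartesian objects and their base-change properties. This is the content of the intermediate step $\DD'\rightarrow\mathbb{CAT}(\mathcal{S})$ of Proposition~\ref{PROPEXDIA}, and it is not formal: the candidate left adjoints $(\int\mu)^{(T)}_!f_\bullet$ do not preserve Cartesianity, so one needs the left Cartesian projectors $\Box_!$, whose existence is a Brown-representability statement requiring the stable, well-generated fibers (Proposition~\ref{PROPKERNELPROJECTION}); and to feed the result into Proposition~\ref{PROPEXH2} one must also know that the fibers of $\DD'$ remain well-generated, which is Lemma~\ref{LEMMARIGHTPROJECTOR1}. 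A minor further point: your parenthetical deriving conservativity of all $f_\bullet$ from stability and well-generation is unjustified --- and unnecessary, since Theorem~\ref{HAUPTSATZHOMDESC}, unlike Theorem~\ref{SATZWEAKHOMDESCENT}, does not assume it. So: right route, but the derivator-axiom verification, which is the largest single component of the paper's argument, is missing.
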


Recall (compare also with Definition~\ref{DEFWEAKDEQUIV}):

\begin{DEF}\label{DEFSTRONGDEQUIV}
Let $\DD \rightarrow \SSS$ be a left (resp.\@ right) fibered derivator satisfying also (FDer0 right) (resp.\@ (FDer0 left)). A morphism 
\[ \xymatrix{ 
D_1 \ar[rr]^\alpha & & D_2 
 } \]
in $\Dia(\mathcal{S})$ (resp.\@ in $\Dia^{\op}(\mathcal{S})$) is called a {\bf strong $\DD$-equivalence} over $S$ if $\alpha^*$ induces an equivalence
\[ \xymatrix{ \DD(D_2)^{\cart} \ar[r]^{\alpha^*} & \DD(D_1)^{\cart} } \]
resp.\@
\[ \xymatrix{ \DD(D_2)^{\cocart} \ar[r]^{\alpha^*} &  \DD(D_1)^{\cocart} } \]
\end{DEF}
A strong $\DD$-equivalence is, in both cases, also a weak $\DD$-equivalence in the sense of Definition~\ref{DEFWEAKDEQUIV} over any $S$ but the converse does not necessarily hold.
 
We recall the Main Theorem of homological descent \cite[Main Theorem 3.5.5]{Hor15}:
\begin{SATZ}\label{HAUPTSATZHOMDESC}
Let $\DD \rightarrow \SSS$ be an infinite fibered derivator which is local w.r.t.\@ the Grothendieck
pretopology on $\mathcal{S}$ with stable, well-generated fibers. Then
the strong $\DD$-equivalences form a localizer in the sense of Definition~\ref{DEFFUNLOC}.
\end{SATZ}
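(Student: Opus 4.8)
The plan is to verify directly the four axioms (L1)--(L4 left) of Definition~\ref{DEFFUNLOC} for the class $\mathcal{W}$ of strong $\DD$-equivalences, following the same line as the proof of Theorem~\ref{SATZWEAKHOMDESCENT} but carrying the cartesian subcategories $\DD(D)^{\cart}$ along at every step. One uses throughout that $D\mapsto\DD(D)^{\cart}$ is a sub-pseudofunctor of $D\mapsto\DD(D)$ on $\Dia(\mathcal{S})^{\op}$, so that pseudofunctoriality of $\alpha\mapsto\alpha^*$, the $2$-morphism calculus, and the cartesian refinements of Lemma~\ref{LEMMADDEQUIVALENCE} are at hand. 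The one genuinely new ingredient beyond the weak case is \emph{effectivity} of homological descent --- a compatible family of cartesian objects over a covering sieve glues to an honest cartesian object upstairs --- and establishing this is where the hypotheses that $\DD$ is infinite and has stable, well-generated fibres are spent; this is the expected main obstacle.

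\emph{(L1) and (L2 left).} Equivalences of categories form a weakly saturated class (for (WS3): if $p\circ s=\id$ then $s^*p^*\cong\id$, so $p^*$ is faithful and split on morphism sets, and if $(s\circ p)^*=p^*s^*$ is an equivalence then $p^*$ is essentially surjective as well, hence an equivalence), and $\alpha\mapsto\alpha^*$ transports this to $\mathcal{W}$. For (L2 left), if $I$ has a final object $e$ then the projection $w\colon(I,F)\to(\cdot,F(e))$ is part of a formal adjunction in $\Dia(\mathcal{S})$, so by the argument behind Lemma~\ref{LEMMADDEQUIVALENCE}.2 the functor $w^*\colon\DD(\cdot)_{F(e)}\to\DD(I)_F$ is fully faithful; one then checks that its essential image is exactly $\DD(I,F)^{\cart}$ (the counit $w^*e^*\mathcal{E}\to\mathcal{E}$ at $i\in I$ is the inverse of the structure morphism of $\mathcal{E}$ along $i\to e$, an isomorphism precisely when $\mathcal{E}$ is cartesian, and $w^*X$ is always cartesian). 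Hence $w\in\mathcal{W}$.

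\emph{(L3 left).} This is the crux. Given $w\colon D_1\to D_2$ over $D_3=(K,U)$ and coverings $\{U_{k,i}\to U(k)\}$ with every base change $w\times_{/D_3}(k,U_{k,i})$ in $\mathcal{W}$, one must show $w^*\colon\DD(D_2)^{\cart}\to\DD(D_1)^{\cart}$ is an equivalence. The idea: by base change (Dloc1 left) the assignment $D\mapsto\DD(D)^{\cart}$ satisfies descent along the structure maps over $(K,U)$, so for $m=1,2$ the category $\DD(D_m)^{\cart}$ is the homotopy limit, over the \v{C}ech nerve of $\coprod_{k,i}(k,U_{k,i})\to(K,U)$, of the cartesian categories of the pulled-back diagrams. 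Faithfulness and conservativity of this comparison come from joint conservativity of the associated pull-back functors --- the same mechanism that yields the conservative family $\{\beta_{i,e}^*\}$ in the proof of Theorem~\ref{SATZWEAKHOMDESCENT}, together with (Dloc1 left) --- while essential surjectivity is effectivity of descent, for which one presents each fibre as a Bousfield localization of a compactly generated stable derivator (using stability and well-generatedness) and glues the descent datum there. Since $w^*$ commutes with all these base changes and is, term by term in the \v{C}ech nerve, an equivalence by hypothesis, it is an equivalence on the homotopy limits, i.e.\@ $w\in\mathcal{W}$. Organizing the $2$-categorical bookkeeping of this descent argument, and above all establishing the effectivity statement, is the hard part.

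\emph{(L4 left).} Use the second (fibration) form: $\alpha\colon I\to J$ a fibration and $w\colon(I,\alpha^*T)\to(J,T)$ of pure diagram type with each $(I_j,\cdot)\to(\cdot,\cdot)$ in $\mathcal{W}$; as in the discussion following Definition~\ref{DEFFUNLOC} it suffices (and is all that is used downstream, cf.\@ step (4) in the proof of Theorem~\ref{SATZUNIV}) to treat the case where moreover $N(I_j)$ is contractible in the smallest localizer on $\Dia$, hence aspherical for \emph{every} left derivator (Cisinski; cf.\@ Theorem~\ref{FAIBLE}). Since $\alpha$ is a fibration, $\alpha^*$ on the ambient fibered derivator has a relative left adjoint $\alpha_!^{(T)}$, and one shows its unit and counit are isomorphisms on cartesian objects. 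For cartesian $\mathcal{E}\in\DD(J,T)^{\cart}$ the restriction $\alpha^*\mathcal{E}|_{I_j}$ is constant at $\mathcal{E}(j)\in\DD(\cdot)_{T(j)}$, so the Bousfield--Kan formula for relative Kan extension along a fibration (as in the proof of Theorem~\ref{FAIBLE}) identifies the counit at $j$ with $p_{I_j,!}p_{I_j}^*\mathcal{E}(j)\to\mathcal{E}(j)$ in the stable fibre $\DD(\cdot)_{T(j)}$, an isomorphism by asphericity of $I_j$; the unit is dealt with symmetrically. Since $\alpha^*$ visibly preserves and reflects cartesianness here (pure diagram type), it is an equivalence $\DD(J,T)^{\cart}\xrightarrow{\sim}\DD(I,\alpha^*T)^{\cart}$, i.e.\@ $w\in\mathcal{W}$.
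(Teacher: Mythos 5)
First, a remark on what you are comparing against: the paper does not prove this theorem at all --- it is recalled verbatim from \cite[Main Theorem 3.5.5]{Hor15}, and the only commentary given is that well-generatedness enters through a Brown representability argument. So your proposal has to stand on its own, and judged that way it is an outline of the right overall strategy (verify (L1)--(L4 left) directly, with the cartesian subcategories carried along), but it contains a genuine gap exactly where the theorem has its content. In (L3 left) you assert that $\DD(D_m)^{\cart}$ is the homotopy limit, over the \v{C}ech nerve of the covering, of the cartesian categories of the pulled-back diagrams, and that essential surjectivity of the comparison (``effectivity of descent'') follows by ``presenting each fibre as a Bousfield localization of a compactly generated stable derivator and gluing the descent datum there.'' This is not an argument; it is a restatement of the theorem. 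The actual mechanism in \cite{Hor15} is the construction of left Cartesian projectors: one shows, via Brown representability for well-generated triangulated categories (cf.\@ Proposition~\ref{PROPKERNELPROJECTION} and Lemma~\ref{LEMMARIGHTPROJECTOR1} in this paper, which are the relevant tools), that the inclusion $\DD(\int F)^{\cart}_S \hookrightarrow \DD(\int F)_S$ admits a left adjoint $\Box_!$, and then one runs the counit/unit computation for $w^*$ against $\Box_!$ using (Dloc1 left), (Dloc2 left) and joint conservativity. Your sketch names the hypotheses that must be ``spent'' but never constructs the adjoint or performs the gluing, and you acknowledge as much; as written, the crux of the proof is missing.

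A secondary but real problem is your treatment of (L4 left). The axiom's hypothesis is that the morphisms $(I_j,\cdot)\rightarrow(\cdot,\cdot)$ lie in $\mathcal{W}$ itself, i.e.\@ are strong $\DD$-equivalences; you replace this by the hypothesis that $N(I_j)$ is contractible for the smallest basic localizer on $\Dia$, invoking the ``Caution'' discussion. That substitution is legitimate when one only wants the conclusion that $\mathcal{W}$ contains the smallest localizer (as in the proof of Theorem~\ref{SATZUNIV}), but the theorem as stated asserts that the strong $\DD$-equivalences form a localizer in the sense of Definition~\ref{DEFFUNLOC}, so you must derive the conclusion from the actual hypothesis --- e.g.\@ by showing that $(I_j,\cdot)\rightarrow(\cdot,\cdot)$ being a strong $\DD$-equivalence forces $p_{I_j,!}p_{I_j}^*\rightarrow\id$ to be invertible on each fibre $\DD(\cdot)_{T(j)}$ before feeding it into Theorem~\ref{FAIBLE}. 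As it stands you have proved a weaker axiom than the one in the definition.
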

Well-generated fibers are needed because in the proof a Brown representability theorem is used. 
There is the following dual variant \cite[Main Theorem 3.5.4]{Hor15} (Main Theorem of cohomological descent):
\begin{SATZ}
Let $\DD \rightarrow \SSS^{\op}$ be an infinite fibered derivator which is colocal w.r.t.\@ the Grothendieck
pretopology on $\mathcal{S}$ with stable, compactly generated fibers. Then
the strong $\DD$-equivalences in $\Dia^{\op}(\mathcal{S}^{\op})$ form a localizer in the sense of Definition~\ref{DEFFUNLOC} (transported under the isomorphism ``opposite'' $\Dia^{\op}(\mathcal{S}^{\op}) \rightarrow \Dia(\mathcal{S})^{2-\op}$).
\end{SATZ}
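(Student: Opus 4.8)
The plan is to deduce the statement from its homological counterpart, Theorem~\ref{HAUPTSATZHOMDESC} (i.e.\@ \cite[Main Theorem 3.5.5]{Hor15}), by passing to opposite derivators, rather than redoing that argument from scratch.

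First I would make the reduction precise. Given a right fibered derivator $\DD \rightarrow \SSS^{\op}$ satisfying also (FDer0 left), I form the \emph{opposite fibered derivator} $\DD^{\op}$, defined on a diagram $I$ and an object $S \in \mathcal{S}^{I}$ by
\[ \DD^{\op}(I)_{S} := \left( \DD(I^{\op})_{S^{\op}} \right)^{\op}, \]
where $S^{\op}$ is the induced object of $\mathcal{S}^{\op}$ over $I^{\op}$, with pull-back, push-forward and relative Kan extension functors obtained by dualizing those of $\DD$ in every slot. One checks in a routine way that $\DD^{\op} \rightarrow \SSS$ is a left fibered derivator satisfying also (FDer0 right), that it is again infinite, and that the colocality axioms for $\DD$ become the locality axioms of Definition~\ref{DEFLOCAL} for $\DD^{\op}$: the dual base-change condition (Dloc1 right) for a morphism $f$ in $\mathcal{S}^{\op}$ is exactly (Dloc1 left) for $\DD^{\op}$, ``$f^{\bullet}$ commutes with homotopy limits'' becomes ``$f^{\bullet}$ commutes with homotopy colimits'', and joint conservativity of a family $\{f_{i}^{\bullet}\}$ is self-dual, so $\DD$ colocal w.r.t.\@ the given pretopology (on $\mathcal{S}^{\op}$) is equivalent to $\DD^{\op}$ being local in the sense of Definition~\ref{DEFLOCAL}; the remaining hypotheses dualize in the evident way.

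Next comes the point where the two cases genuinely differ. Theorem~\ref{HAUPTSATZHOMDESC} requires $\DD^{\op}$ to have stable, well-generated fibers, and, as noted right after that theorem, its proof uses well-generatedness only in order to invoke a Brown representability theorem. The fibers of $\DD^{\op}$ are the opposites of the fibers of $\DD$, hence opposites of stable, compactly generated derivators; these are stable and, by Neeman's Brown representability theorem for the dual of a compactly generated triangulated category, satisfy Brown representability — precisely the input the proof needs. This is exactly why the hypothesis here reads ``compactly generated'' and not merely ``well-generated'': the opposite of a well-generated triangulated category need not satisfy Brown representability. I would therefore re-examine the proof of \cite[Main Theorem 3.5.5]{Hor15} and record that it applies verbatim to $\DD^{\op}$ with ``well-generated'' replaced by ``satisfies Brown representability''. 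Consequently the strong $\DD^{\op}$-equivalences form a localizer on $\Dia(\mathcal{S})$.

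Finally I would transport this back along the isomorphism ``opposite'' $\Dia^{\op}(\mathcal{S}^{\op}) \rightarrow \Dia(\mathcal{S})^{2-\op}$. It sends a morphism $\alpha\colon D_{1} \rightarrow D_{2}$ to $\alpha^{\op}\colon D_{1}^{\op} \rightarrow D_{2}^{\op}$, and under it there is a natural identification $\DD(D)^{\cocart} \cong \left( \DD^{\op}(D^{\op})^{\cart} \right)^{\op}$, so that $\alpha$ is a strong $\DD$-equivalence if and only if $\alpha^{\op}$ is a strong $\DD^{\op}$-equivalence. Since passing to $\Dia(\mathcal{S})^{2-\op}$ leaves the class of $1$-morphisms unchanged, the previous paragraph shows that the transported class of strong $\DD$-equivalences is a localizer on $\Dia(\mathcal{S})$, which is the assertion. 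The only real obstacle is the middle step: certifying that the Brown-representability argument in \cite{Hor15} uses exactly that property of the fibers and nothing stronger, so that opposites of stable compactly generated derivators are admissible; setting up $\DD^{\op}$, dualizing the locality axioms, and performing the final transport are purely formal.
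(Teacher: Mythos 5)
Your proposal is essentially consistent with what the paper does, but note that the paper gives no proof of this statement at all: it is quoted verbatim as \cite[Main Theorem 3.5.4]{Hor15}, with the only added commentary being the remark that ``compactly generated fibers are needed because in the proof a Brown representability theorem for the dual is used.'' Your reduction --- form $\DD^{\op} \rightarrow \SSS$, check that colocality dualizes to locality, transport strong equivalences along $\Dia^{\op}(\mathcal{S}^{\op}) \rightarrow \Dia(\mathcal{S})^{2\text{-}\op}$ via $\DD(D)^{\cocart} \cong (\DD^{\op}(D^{\op})^{\cart})^{\op}$, and then rerun the homological argument --- is exactly the dualization the paper's remark is gesturing at, and your identification of the one non-formal asymmetry (the opposite of a compactly generated category satisfies Brown representability by Neeman, while the opposite of a well-generated one need not) is the correct explanation for the change of hypothesis. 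The honest caveat is the one you already flag, and it is slightly larger than ``the proof uses well-generation only for one invocation of Brown representability'': in the companion constructions (cf.\@ Proposition~\ref{PROPKERNELPROJECTION} and Lemma~\ref{LEMMARIGHTPROJECTOR1}) well-generation is also used to guarantee that the kernels/subcategories produced along the way are \emph{again} of the same type, so that Brown representability can be invoked repeatedly; this is precisely why the dual hypothesis is ``compactly generated'' (a class stable under these constructions, by the parenthetical in Proposition~\ref{PROPKERNELPROJECTION}) rather than the a priori weaker ``fibers whose duals satisfy Brown representability.'' So your middle step cannot be discharged within this paper --- it requires inspecting the proof of \cite[Main Theorem 3.5.5]{Hor15} --- which is presumably why the author cites \cite[Main Theorem 3.5.4]{Hor15} directly instead of deriving the cohomological statement from the homological one.
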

Compactly generated fibers are needed because in the proof a Brown representability theorem for the dual is used. 

Recall from \cite[Definition 3.1]{Hor15b} the definition of the 2-category $\mathcal{M}^{cor}$ for a category $\mathcal{M}$ with fiber products.
We need the following variant: 

\begin{DEF}\label{DEFSCORFIB}Let $(\mathcal{M}, \Fib, \mathcal{W})$ be a {\em category of fibrant objects}. 
We define the following 2-category $\mathcal{M}^{cor, fib}$: 
\begin{enumerate}
\item The objects are the objects of $\mathcal{M}$.
\item The 1-morphisms in $\Hom(S, T)$ are the correspondences
\[ \xymatrix{ &U  \ar[dl]_{\alpha}  \ar[dr]^{\beta} \\ 
S && T.
} \]
in which $\beta$ is arbitrary and $\alpha$ is a trivial fibration. 

\item The 2-morphisms $(U, \alpha, \beta) \Rightarrow (U', \alpha', \beta')$ are the trivial fibrations $\gamma: U \rightarrow U'$ such that in
\begin{equation}\label{scor2morph} \vcenter{ \xymatrix{ 
&U  \ar[dl]_{\alpha} \ar[dd]^\gamma \ar[dr]^{\beta} \\ 
S  && T \\
&U' \ar[ul]^{\alpha'}  \ar[ur]_{\beta'}
} } \end{equation}
both triangles commute. 
\item The composition of 1-morphisms is given by the fiber product. 
\end{enumerate}
\end{DEF}

Literally this defines only a bicategory which we assume has been strictified. See \cite[5.3]{Hor17} for a similar strictification more generally for the
2-multicategory $\mathcal{M}^{cor}$. We refer to \cite[Section 2]{Hor16} for the discussion of pre-2-derivators and fibered derivators over them. 

\begin{DEF}\label{DEFH2}
Let $\mathcal{M}$ be a category of fibrant objects. We define the pre-2-derivator:
\[ I \mapsto \mathbb{H}^2(\mathcal{M})(I) \]
where $\mathbb{H}^2(\mathcal{M})(I)$ is the 2-category of pseudo-functors $I \rightarrow \mathcal{M}^{cor, fib}$, natural transfomations, and modifications, in which the
morphism categories $\Hom(F, G)$ for two pseudo-functors $F: I \rightarrow \mathcal{M}^{cor, fib}$ and $G: I \rightarrow \mathcal{M}^{cor, fib}$ are turned
into groupoids inverting formally all modifications.  
\end{DEF}

\begin{PAR}\label{PARSHARP}
Recall that a morphism $f$ in a model category is a {\bf sharp} morphism if any pull-back of $f$ has the property of preserving weak equivalences under pull-back. 
An object $X$ is called sharp-fibrant if the morphism $f: X \rightarrow \cdot$ to the final object is sharp.
If in $\mathcal{M}$ finite products are homotopy products (i.e.\@ if the product of two weak equivalences is a weak equivalence) then
all objects are sharp-fibrant. For example $\mathcal{SET}_{loc}^{\mathcal{S}^{\op} \times \Delta^{\op}}$ is right proper and every object is sharp-fibrant.
\end{PAR}
\begin{PROP}
In a model category $\mathcal{M}$ the full subcategory of fibrant objects forms a category with fibrant objects. 
In a right proper model category $\mathcal{M}$ the full subcategory of sharp-fibrant objects forms a category with fibrant objects. 
\end{PROP}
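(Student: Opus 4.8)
The plan is to verify in each case the axioms of a category of fibrant objects in the sense of Brown. In both cases the weak equivalences will be taken to be the weak equivalences of $\mathcal{M}$ with source and target in the subcategory, so that 2-out-of-3 and the containment of isomorphisms are automatic; the only genuine choice is the class of fibrations in the second case, and the only nonformal point is the stability of trivial fibrations under base change there.

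For the subcategory $\mathcal{M}_f$ of fibrant objects I would take the fibrations to be the fibrations of $\mathcal{M}$ between fibrant objects. These are closed under composition and contain the isomorphisms; the terminal object $\cdot$ is fibrant and every fibrant $X$ carries the structural fibration $X\to\cdot$. Given a fibration $p\colon X\to Z$ and any $g\colon Y\to Z$ with $X,Y,Z$ fibrant, the pullback $X\times_Z Y$ exists (a model category has finite limits), $X\times_Z Y\to Y$ is again a fibration, and the composite $X\times_Z Y\to Y\to\cdot$ exhibits $X\times_Z Y$ as fibrant; the same applies with ``fibration'' replaced by ``trivial fibration'', since trivial fibrations, being the right class of a weak factorization system, are stable under pullback in any model category. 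For path objects I would factor the diagonal $X\to X\times X$ as $X\xrightarrow{\sim}X^I\to X\times X$ into a weak equivalence followed by a fibration: $X\times X$ is fibrant since it admits the fibration $X\times X\to X$ to a fibrant object, hence so is $X^I$, so the whole factorization lies in $\mathcal{M}_f$.

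For the subcategory of sharp-fibrant objects of a right proper model category, the subtlety is that a sharp-fibrant object need not be fibrant, so model fibrations between sharp-fibrant objects are not a viable class of fibrations; instead I would take the fibrations to be the \emph{sharp} maps between sharp-fibrant objects (the notion being recalled in \ref{PARSHARP}). Invoking the standard properties of sharp maps in a right proper model category --- every fibration is sharp, isomorphisms are sharp, and sharp maps are stable under composition and under base change --- the verification is formally parallel to the first case: sharp maps are closed under composition and contain the isomorphisms; the terminal object is sharp-fibrant because its identity is an isomorphism, and every sharp-fibrant $X$ carries its defining sharp map $X\to\cdot$; a pullback of a sharp map between sharp-fibrant objects exists and, via its composite to $\cdot$ through sharp maps, is again sharp-fibrant; and a path object is obtained by a factorization $X\xrightarrow{\sim}X^I\xrightarrow{q}X\times X$ in $\mathcal{M}$ with $q$ a model fibration, noting that $q$ is sharp and that $X\times X$, and hence $X^I$, map by sharp maps to sharp-fibrant objects and are therefore sharp-fibrant.

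The step that genuinely uses right properness, and which I expect to be the only nontrivial point, is showing that a trivial fibration in the second structure --- that is, a sharp weak equivalence --- remains a weak equivalence after pulling back along an arbitrary morphism of sharp-fibrant objects. For this I would factor the sharp weak equivalence $w$ as $w=p\circ i$ with $i$ a trivial cofibration and $p$ a fibration, so that $p$ is a trivial fibration by 2-out-of-3; the pullback of $p$ is then a trivial fibration by the general model-category fact, while the pullback of $i$ is a weak equivalence because it is a base change along that trivial fibration, which --- being a fibration in a right proper model category, hence sharp --- preserves weak equivalences; composing the two gives the claim. The remaining verifications are routine bookkeeping with the closure properties above.
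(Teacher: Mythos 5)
The paper states this Proposition without proof, so there is nothing to compare your route against; I can only judge the argument on its own terms. The first half (fibrant objects with the model fibrations between them) is correct and standard, and the architecture of the second half is also right: taking the fibrations of the sharp-fibrant structure to be the sharp maps is the correct choice, and the closure properties of sharp maps (isomorphisms and, by right properness, fibrations are sharp; sharp maps are closed under composition and base change) do reduce everything to the one point you single out, namely that a sharp weak equivalence $w\colon X\to Z$ remains a weak equivalence after base change along an arbitrary $g\colon Y\to Z$ of sharp-fibrant objects.

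Your proof of that point has a genuine gap. Writing $w=p\circ i$ with $i\colon X\to X'$ a trivial cofibration and $p\colon X'\to Z$ a (necessarily trivial) fibration, the pulled-back factorization is
\[ X\times_Z Y\longrightarrow X'\times_Z Y\longrightarrow Y, \]
and the first map is the base change of $i$ along the projection $\pi\colon X'\times_Z Y\to X'$. But $\pi$ is a pullback of the arbitrary map $g$ (along $p$), not a pullback of $p$: the base change of the trivial fibration $p$ is the \emph{other} leg $X'\times_Z Y\to Y$. So $\pi$ is in general neither a fibration nor sharp, and neither right properness nor the sharpness of fibrations lets you conclude that the base change of the trivial cofibration $i$ along $\pi$ is a weak equivalence; base changes of trivial cofibrations along arbitrary maps are not weak equivalences in general. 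The statement is nevertheless true, but the factorization has to be applied to $g$ rather than to $w$: write $g=q\circ j$ with $j\colon Y\to Y'$ a weak equivalence and $q\colon Y'\to Z$ a fibration. Then $X\times_Z Y'\to Y'$ is a weak equivalence by right properness (base change of the weak equivalence $w$ along the fibration $q$), and the map $X\times_Z Y=(X\times_Z Y')\times_{Y'}Y\to X\times_Z Y'$ is a weak equivalence because it is the base change of the weak equivalence $j$ along $X\times_Z Y'\to Y'$, which is itself a base change of $w$ and hence, by the very definition of sharpness of $w$, preserves weak equivalences under pullback. Two-out-of-three in the resulting commutative square then gives that $X\times_Z Y\to Y$ is a weak equivalence. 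With this replacement the rest of your verification goes through.
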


\begin{PAR} \label{PAREXT1}
Let $\DD \rightarrow \SSS$ be a fibered derivator. 
We first define an extension 
$\DD' \rightarrow \mathbb{CAT}(\mathcal{S})$ where $\mathbb{CAT}(\mathcal{S})$ is the usual pre-1-derivator associated with $\Dia(\mathcal{S})$ considered as 1-category, 
as follows: For a diagram $I$ we define 
\[ \DD'(I) :=  \{(F, X) \ | \ F: I \rightarrow \Dia, X \in \DD(\int F)^{\pi_I-\cart} \}  \]
where $\pi_I: \int F \rightarrow I$ is the canonical opfibration. The structural functor to $\Dia(\mathcal{S})^I$ is given via the identification
\begin{equation}\label{eqiddiasi} \Dia(\mathcal{S})^I = \{(F, X) \ | \ F: I \rightarrow \Dia, X \in \mathcal{S}^{\int F} \}.  \end{equation}
A functor $\alpha: I \rightarrow J$ induces a corresponding functor $\alpha': \int \alpha \circ F \rightarrow \int F$ and a {\em Cartesian} diagram
\[  \xymatrix{ \int \alpha \circ F \ar[d]_{\alpha'} \ar[r]^-{\pi_I} & I \ar[d]^{\alpha} \\
\int F \ar[r]_-{\pi_J} & J } \]
We define 
\[ \alpha^*(F, X) := (F \circ \alpha, (\alpha')^*X). \] 
The functor $(\alpha')^*$ clearly maps $\pi_J$-Cartesian objects to $\pi_I$-Cartesian objects. 
For a natural transformation $\mu: \alpha \Rightarrow \beta$, 
we get morphisms $F \circ \alpha \rightarrow F \circ \beta$ and $\DD(\mu')(X): (\alpha')^*X \rightarrow (\beta')^*X$ which provides the 2-functoriality of $\DD'$. 
\end{PAR}

\begin{PROP}\label{PROPEXDIA}
Let $\DD \rightarrow \SSS$ be a fibered derivator such that
left Cartesian projectors exist\footnote{This means that the inclusion functors $\DD(\int F)^{\pi_I-\cart}_{S} \rightarrow \DD(\int F)_{S}$ have left adjoints $\Box_!$ for every $F: I \rightarrow \Dia$ and $S \in \mathcal{S}^{\int F}$. } (for instance, if $\DD \rightarrow \SSS$ satisfies the assumptions of Theorem~\ref{HAUPTSATZHOMDESC}). The morphism 
of prederivators defined in \ref{PAREXT1}
\[ \DD' \rightarrow \mathbb{CAT}(\mathcal{S}) \]
is again a fibered derivator.  Furthermore, for each $I \in \Dia$ and morphism $f$ in $\Dia(\mathcal{S})^I$ which is point-wise a strong $\DD$-equivalences the pull-back
$f^\bullet$ is an equivalence.  
\end{PROP}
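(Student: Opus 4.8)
The plan is to verify the axioms of a fibered derivator in the sense of \cite[2.3.6]{Hor15} for $\DD'\to\mathbb{CAT}(\mathcal{S})$ one at a time, transporting the structure from $\DD$ and correcting it by the Cartesianification functor $\Box_!$, and dually by $\Box_*$ in the situation of Theorem~\ref{HAUPTSATZHOMDESC} (where $\Box_*$ exists by stability and well-generatedness of the fibres), so that $\DD'$ inherits whatever one- or two-sided structure $\DD$ has. The guiding observation is that, via the Cartesian squares of \ref{PAREXT1}, everything is governed by the bifibrations $\DD(\int F)\to\mathcal{S}^{\int F}$ and their base change properties, and that the $\pi_I$-Cartesianness condition is tested on morphisms of $\int F$ lying over identities of $I$ --- hence is ``local over $I$''. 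That locality is what makes the corrections compatible with restriction. Axioms (Der1) and (Der2) for $\DD'$ are inherited from $\DD$.

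\emph{(FDer0) and (FDer3).} Every morphism of $\Dia(\mathcal{S})^I$ factors into one of fixed shape, i.e.\ a morphism $f$ in $\mathcal{S}^{\int G}$, followed by one of diagram type, induced by a functor $\bar\alpha\colon\int F\to\int G$ over $I$. The corresponding pull-back in $\DD$, namely $f^\bullet$ resp.\ $\bar\alpha^*$, preserves $\pi_I$-Cartesian objects: for $f^\bullet$ trivially, and for $\bar\alpha^*$ because $\bar\alpha$, being a functor over $I$, sends morphisms over identities of $I$ to morphisms over identities of $I$. Hence $\DD'(I)\to\Dia(\mathcal{S})^I$ is a fibration, and the push-forward left adjoint to $\mu^\bullet$ is obtained by post-composing the corresponding push-forward, resp.\ relative left homotopy Kan extension, computed in $\DD$ (which exists by (FDer0) and (FDer3 left) for $\DD$) with $\Box_!$; the adjunction is immediate from $\Box_!\dashv\iota$, the adjunction in $\DD$, and the fact that the $\DD$-pull-back preserves Cartesian objects. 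Applying the same recipe along a functor $\alpha\colon I\to J$ of $\Dia$, with $\alpha'\colon\int(G\circ\alpha)\to\int G$ of \ref{PAREXT1} in place of $\bar\alpha$, yields the relative left homotopy Kan extensions $\alpha_!^{\DD'}:=\Box_!\,(\alpha')^{(T)}_!$ and establishes (FDer3 left); compatibility with the restriction functors $\alpha^*$ follows from functoriality of $\Box_!$. The ``right'' variants follow dually with $\Box_*$ in the stable case.

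\emph{(FDer4).} Here is the only genuine difficulty. One must show that for $\alpha\colon I\to J$ in $\Dia$ and $j\in J$ the base change morphism $j^*\alpha_!^{\DD'}\to p_!\,(\text{restriction along }j\times_{/J}I\to I)$ is an isomorphism. Unravelling the definitions, $j^*$ on $\DD'$ is restriction of a $\pi_J$-Cartesian object along the inclusion $\iota_j\colon G(j)\hookrightarrow\int G$ of the fibre over $j$, and the Cartesian square of \ref{PAREXT1} restricts over $j$ to a Cartesian square identifying $G(j)\times_{/\int G}\int(G\circ\alpha)$ with the relevant comma category; granting (FDer4) for $\DD$ (which supplies $\iota_j^*(\alpha')^{(T)}_!\cong p_!(\cdots)$), the claim reduces to the compatibility $\iota_j^*\,\Box_!\cong\Box_!\,\iota_j^*$ of the Cartesianification with restriction to a fibre over $J$. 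This is exactly where one uses that $\pi_J$-Cartesianness is a fibrewise condition over $J$, so that $\Box_!$ can be computed fibre by fibre over $J$, as in the corresponding argument of \cite{Hor15}. I expect this step --- the bookkeeping of $\Box_!$ against the iterated comma-category restrictions appearing in (FDer4), rather than any conceptual obstruction --- to be the main obstacle of the proof.

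\emph{Point-wise strong $\DD$-equivalences.} By the first part $\DD'$ is a fibered derivator, so for a morphism $f\colon(F,S)\to(G,T)$ of $\Dia(\mathcal{S})^I$ the pull-back $f^\bullet\colon\DD(\int G)^{\pi_I-\cart}_T\to\DD(\int F)^{\pi_I-\cart}_S$ is defined; writing $\bar f\colon\int F\to\int G$ for the induced functor over $I$, restriction along the fibre inclusion $\iota_i$ gives $\iota_i^*f^\bullet=f_i^\bullet\iota_i^*$, where $f_i\colon(F(i),S_i)\to(G(i),T_i)$ is the component of $f$ at $i$. Restriction to the fibres over $i$ sends a $\pi_I$-Cartesian object to an object of $\DD(G(i),T_i)^{\cart}$, and by the fibre formalism of derivators (cf.\ \cite[Theorem~1.30]{Gro13}, \cite[7.3]{Hor16}) the categories $\DD(\int G)^{\pi_I-\cart}_T$ and $\DD(\int F)^{\pi_I-\cart}_S$ are, up to equivalence, assembled from the fibrewise categories $\DD(G(i),T_i)^{\cart}$, resp.\ $\DD(F(i),S_i)^{\cart}$, over $i\in I$, compatibly with equivalences at each $i$; under this identification $f^\bullet$ is induced by the functors $f_i^\bullet$. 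Each $f_i^\bullet$ is an equivalence since $f_i$ is a strong $\DD$-equivalence (Definition~\ref{DEFSTRONGDEQUIV}), hence so is $f^\bullet$.
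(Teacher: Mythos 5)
Your overall strategy---transport the structure from $\DD$ along the Grothendieck construction and correct by $\Box_!$, with everything hinging on the commutation of $\Box_!$ with restriction to fibres---is the same as the paper's, and your identification of $\iota_j^*\Box_!\cong\Box_!\iota_j^*$ as the key lemma is exactly right (it is \cite[Lemma~3.5.9]{Hor15} in the paper's proof). But there are three genuine gaps.

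First, the right-hand axioms. You propose to handle (FDer0 right)/(FDer3 right) ``dually with $\Box_*$ in the stable case''. The hypothesis of the proposition is only that \emph{left} Cartesian projectors exist; no $\Box_*$ is available in general, and the statement is not restricted to the situation of Theorem~\ref{HAUPTSATZHOMDESC}. The paper avoids this entirely: it shows that the relative \emph{right} Kan extensions $(\alpha')_*$ automatically preserve $\pi_I$-Cartesianity, because the exchange morphism $\Box_!(\alpha')^*\to(\alpha')^*\Box_!$ can be checked to be an isomorphism fibrewise using the commutation of $\Box_!$ with $(i')^*$ and (Der2). So no dual projector is needed, and your route as written either assumes more than is given or proves a weaker statement. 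Second, (FDer4): you correctly isolate the difficulty but do not resolve it; the paper's argument reduces to a Grothendieck fibration $\alpha\colon I\to J$, observes that the square relating $I_j\times G'(j)$ to $\int(G'\circ\alpha)\to\int G'$ is Cartesian with $\alpha'$ again a fibration, and then invokes the corresponding axiom for $\DD\to\SSS$; this step is missing from your proposal.

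Third, and most substantively, the final assertion. Your claim that $\DD(\int G)^{\pi_I-\cart}_T$ is ``up to equivalence assembled from the fibrewise categories $\DD(G(i),T_i)^{\cart}$, compatibly with equivalences at each $i$'' is precisely the nontrivial content, not a consequence of the fibre formalism of \cite[Theorem~1.30]{Gro13} (which describes fibres of a derivator over an object, not a decomposition of the category of $\pi_I$-Cartesian objects). What (Der2)-type reasoning gives you for free is conservativity of the family $\{(i')^*\}$, hence that unit and counit of the adjunction $\Box_!(\int f)_!\dashv(\int f)^*$ on Cartesian objects are isomorphisms \emph{once you know} that this adjoint restricts correctly to each fibre---which again requires the commutation of $\Box_!$ and of $(\int f)_!$ with $(i')^*$. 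Essential surjectivity of $(\int f)^*$ is not a pointwise statement and cannot be waved through by ``assembled from''. The paper settles this by citing the proof of the Main Theorem of homological descent \cite[p.~1321]{Hor15}, where exactly this equivalence $(\int f)^*\colon\DD(\int G)^{\pi_I-\cart}\to\DD(\int F)^{\pi_I-\cart}$ is established for pointwise strong $\DD$-equivalences. You should either reproduce that argument (construct the adjoint, check unit/counit fibrewise) or cite it.
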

\begin{proof}We check the axioms of a fibered derivator. 

(Der1) and (Der2) for $\DD'$ follow immediately from the corresponding axiom for $\DD$. 

(FDer0 left) The fiber over a functor $F: I \rightarrow \Dia(\mathcal{S})$ is given by 
\[ \DD\left(\int F'\right)_{S}^{\pi_I-\cart} \]
identifying $F$ with a pair $(F', S)$ as in (\ref{eqiddiasi}).

Let $\xi: F \rightarrow G$ be a morphism in $\Dia(\mathcal{S})^I$. By (\ref{eqiddiasi}) 
we can identify $F$ with a pair $(F', S)$ with $F' \in \Dia^I$ and $S: \int F' \rightarrow \mathcal{S}$ and $G$ with a pair
$(G', T)$ with $G' \in \Dia^I$ and $T: \int G' \rightarrow \mathcal{S}$.
The morphism $\xi$ is then given by a pair $(\mu, f)$ where $\mu: F' \rightarrow G'$ is a (strict) natural transformation
 and $f: S \rightarrow (\int \mu)^*T$ is a morphism. By construction the functor 
\[ \DD'(I) \rightarrow \Dia(\mathcal{S})^I \]
is a fibration with pull-back functors $\xi^\bullet$ given by the functoriality of the Grothendieck construction, namely by the composition: 
\[ (\int \xi)^*: \xymatrix{  \DD(\int G')_{T}^{\pi_I-\cart}  \ar[r]^-{(\int \mu)^*}  &  \ar[r]^-{f^\bullet}  \DD(\int F')_{(\int \mu)^*T}^{\pi_I-\cart} &  \DD(\int F')_{S}^{\pi_I-\cart}.   } \]
This functor obviously commutes with $(\alpha')^*$ and thus (FDer0 left) holds. 

(FDer0 right)
Neglecting the Cartesianity condition the functor $\xi^\bullet=(\int \xi)^*$ has the left adjoint $(\int \mu)_!^{(T)} f_\bullet$ which does not preserve the Cartesianity condition in general. 
Because of the existence of left Cartesian projectors $\Box_!$ we have adjoints on the subcategories of Cartesian objects given by
\[ \Box_! (\int \mu)_!^{(T)} f_\bullet \]
We have to show that these commute with $\alpha^*$. By (Der2) it suffices to see this for $i^*$ where $i: \cdot \rightarrow I$ is an object. 
Note that $i^*$ has been defined as the restriction of
\[ \xymatrix{  \DD(\int F')_{(\int F', S)}  \ar[r]^-{(i')^*} &  \DD(F(i))_{(F'(i), S|_{F'(i)})} }  \]
to Cartesian objects, where $i': F(i) \rightarrow \int F'$ is the inclusion of the fiber of $\int F' \rightarrow I$ above $i$. 
However, $(\int \mu)_!^{(T)}$ commutes with $(i')^*$ because $\int \mu$ is an opfibration, and $f_\bullet$ commutes with $(i')^*$ because of (FDer0 left) for $\DD \rightarrow \SSS$. It therefore suffices to see that functor $(i')^*$ commutes with the left Cartesian projector $\Box_!$. This is \cite[Lemma~3.5.9]{Hor15}.
Therefore also (FDer0 right) holds. 

(FDer3 left) follows as before using the left Cartesian projectors. 

(FDer3 right) Actually, $(\alpha')_*^{(G',T)}$ preserves Cartesianity. For note that this is equivalent to the commutation of the left Cartesian projector with $(\alpha')^*$, i.e.\@ the fact that the natural exchange morphism 
\begin{equation} \label{mor1}
\Box_!  (\alpha')^* \rightarrow  (\alpha')^*\Box_!  
\end{equation}
is an isomorphism. For each object $i \in I$ apply $(i')^*$:
\[ (i')^* \Box_!  (\alpha')^* \rightarrow (i')^* (\alpha')^*\Box_!.  \]
Since $(i')^*$ commutes with $\Box_!$ by \cite[Lemma~3.5.9]{Hor15} ($i'$ is the injection of a fiber into a Grothendieck opfibration) this is obviously an isomorphism. 
By (Der2) the morphism (\ref{mor1}) is thus an isomorphism. (Actually the Key Lemma is proven by establishing that $(i')_*$ preserves Cartesianity and the proof can probably extended to arbitrary $\alpha_*$ without reference to (Der2)).

Furthermore to establish (FDer4 left) and (FDer4 right) it suffices to show that for a Grothendieck fibration $\alpha: I \rightarrow J$ and $(G', T) \in \Dia(\mathcal{S})^J$ the canonical exchange morphism
\[     i^* \alpha_*^{(G',T)} \rightarrow  p_{I_j,*}^{(G'(i),T|_{G'(i)})} \iota^* \]
is an isomorphism. 
By definition this means that for the original fibered derivator $\DD \rightarrow \SSS$, the morphism 
\[  (i')^* (\alpha')_*^{(T)}  \rightarrow \pr_{2,*}^{(T|_{G'(i)})} (\iota')^*   \]
where $\pr_2: I_j \times G'(j) \rightarrow G'(j)$ is the second projection, has to be an isomorphism. 
However  the diagram
\[\xymatrix{  I_j \times G'(i) \ar[r] \ar[d]_{\pr_2} &  \int G' \circ \alpha \ar[d]^{\alpha'} \\
G'(i) \ar[r] & \int G'  } \]
is Cartesian and $\alpha'$ is again a fibration because the whole diagram is the pull-back of  
\[\xymatrix{  I_j \ar[r] \ar[d]&  I \ar[d]^{\alpha} \\
j \ar[r] & J  } \]
along $\pi_J$.  Therefore the requested property follows from (FDer3 right) for $\DD \rightarrow \SSS$. 

For the last assertion observe that if $f: F \rightarrow G$ is a morphism in $\Dia(\mathcal{S})^I$ which is a $\DD$-equivalence point-wise in $I$ then also $\int f: \int F \rightarrow \int G$ induces
an equivalence
\[ (\int f)^*: \DD(\int F)^{\pi_I-\cart} \rightarrow \DD(\int G)^{\pi_I-\cart}   \] 
by the proof of the Main Theorem of homological descent \cite[p.1321]{Hor15}. However, $f^\bullet$ in $\DD'$ is by the reasoning in the above proof of (FDer0 left) given by $(\int f)^*$. 
\end{proof}

\begin{PROP}\label{PROPKERNELPROJECTION}
Let
\[  \xymatrix{  \mathcal{D} \ar@/^5pt/[r]^F &  \mathcal{E} \ar@/^5pt/[l]^G } \]
 be an adjunction between well-generated triangulated categories with small coproducts in which
$F$ and $G$ are exact functors, and with $F$ right adjoint (resp.\@ left adjoint). 
Then there is exists a left adjoint (resp.\@ right adjoint) to the inclusion 
\[ \ker(F) \hookrightarrow \mathcal{D}. \]
Furthermore the category $\ker(F)$ is triangulated, and well-generated (or compactly generated, if $\mathcal{E}$ and $\mathcal{D}$ are). 
\end{PROP}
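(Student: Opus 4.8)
The plan is to realize $\ker(F)$ as one side of a Bousfield (co)localization of $\mathcal{D}$ and then invoke the existence theorems for such localizations in well-generated triangulated categories (Neeman; see also Krause's survey on localization theory). In both cases $\ker(F)$ is immediately a \emph{thick} triangulated subcategory of $\mathcal{D}$: since $F$ is exact it carries distinguished triangles to distinguished triangles, so $\ker(F)$ is closed under shifts and cones, and since $F$ is additive it is closed under direct summands. So the substantive points are the existence of the adjoint to the inclusion and the (well- or compact) generation of $\ker(F)$.

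I would treat first the case that $F$ is a right adjoint, say $G\dashv F$; then $G$ is exact and (being a left adjoint) preserves coproducts. Fix a set $\mathcal{G}_0$ of generators of the well-generated category $\mathcal{E}$, i.e.\ a set of objects such that $Y\in\mathcal{E}$ vanishes as soon as $\mathrm{Hom}(G_0[n],Y)=0$ for all $G_0\in\mathcal{G}_0$, $n\in\mathbb{Z}$. Using the adjunction and exactness of $G$ one gets, for $X\in\mathcal{D}$,
\[ \mathrm{Hom}_{\mathcal{D}}\big((GG_0)[n],X\big)\cong\mathrm{Hom}_{\mathcal{E}}\big(G_0[n],FX\big), \]
whence $FX=0$ if and only if $X$ lies in the right orthogonal of the \emph{set} $\{GG_0\mid G_0\in\mathcal{G}_0\}$; equivalently $\ker(F)=\mathcal{C}^{\perp}$, where $\mathcal{C}:=\mathrm{Loc}_{\mathcal{D}}\big(\{GG_0\mid G_0\in\mathcal{G}_0\}\big)$ is a localizing subcategory of $\mathcal{D}$ generated by a set. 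Since $\mathcal{D}$ is well-generated, Neeman's theorem on Bousfield localization then gives a functorial triangle $\Gamma X\to X\to LX\to\Gamma X[1]$ with $\Gamma X\in\mathcal{C}$ and $LX\in\mathcal{C}^{\perp}$, identifies $\mathcal{C}^{\perp}$ with the Verdier quotient $\mathcal{D}/\mathcal{C}$, and shows $\mathcal{D}/\mathcal{C}$ is well-generated. From the triangle, applying $\mathrm{Hom}_{\mathcal{D}}(-,Y)$ for $Y\in\mathcal{C}^{\perp}=\ker(F)$ makes $\mathrm{Hom}_{\mathcal{D}}(LX,Y)\to\mathrm{Hom}_{\mathcal{D}}(X,Y)$ an isomorphism (the outer terms $\mathrm{Hom}(\Gamma X[j],Y)$ vanish), so the corestriction $L\colon\mathcal{D}\to\ker(F)$ is the desired left adjoint to the inclusion, and $\ker(F)\simeq\mathcal{D}/\mathcal{C}$ is well-generated.

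For the case that $F$ is a left adjoint, $F\dashv G$, the functor $F$ preserves coproducts, so $\ker(F)$ is a localizing subcategory of $\mathcal{D}$; here I would invoke Neeman's theorem that the kernel of a coproduct-preserving exact functor out of a well-generated triangulated category is a localizing subcategory \emph{generated by a set} of objects. Combined once more with Neeman's Bousfield localization theorem, this produces a right adjoint to $\ker(F)\hookrightarrow\mathcal{D}$ and shows both $\ker(F)$ and $\mathcal{D}/\ker(F)$ are well-generated. (One could try to mirror the orthogonality computation above, but left orthogonality of $\ker(F)$ against $\mathrm{im}(G)$ does not cut down to a set without the ``generated by a set'' input, so this step is unavoidable.) For the compactly generated refinement I would rerun both arguments with the relevant generating set — $\mathcal{G}_0$, respectively the generating set of $\ker(F)$ — chosen to consist of compact objects, so that the auxiliary localizing subcategory is generated by objects compact in $\mathcal{D}$ and the quotient is again compactly generated by the Neeman--Thomason localization theorem.

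The hard part, and the place that needs real care, is precisely the generation statement in the second case (and, for the compact variant, in both cases): one has to know that $\ker(F)$ — equivalently, in the first case, the auxiliary subcategory $\mathcal{C}$ — is generated by a set of objects of a controlled size. This is exactly the content of Neeman's structural results on well-generated triangulated categories and on kernels of coproduct-preserving functors; granting these, everything else is a formal manipulation of adjunctions and distinguished triangles.
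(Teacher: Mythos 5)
Your argument is exactly the one the paper delegates to its references: for $F$ a right adjoint you identify $\ker(F)$ with $\langle G(\mathcal{E}_0)\rangle^{\perp}$ and apply the Bousfield localization theorem for localizing subcategories generated by a set in a well-generated category (the paper cites \cite[Propositions 7.2.1 and 4.9.1]{Kra09}), and for $F$ a left adjoint you invoke set-generation of kernels of coproduct-preserving exact functors (the paper's \cite[Proposition 7.4.1]{Kra09}), with the compact case obtained by running everything at $\alpha=\aleph_0$. This matches the paper's proof in substance, so nothing further is needed.
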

\begin{proof}
Without the additional statement this is \cite[Proposition 4.3.1]{Hor15}. If $F$ is the right adjoint then, by \cite[Proposition 7.2.1]{Kra09} not only the category $\langle G(\mathcal{E}_0) \rangle$ constructed in the proof of the proposition is well-generated but also the quotient  $\mathcal{D} / \langle G(\mathcal{E}_0) \rangle$. By \cite[Proposition 4.9.1]{Kra09} this is equivalent to $\ker F \cong \langle G(\mathcal{E}_0) \rangle^\perp$. 
The category $\ker F$ is thus well-generated. 
If $F$ is the left adjoint, then this follows from  \cite[Proposition 7.4.1]{Kra09} as stated  already in the proof of \cite[Proposition 4.3.1]{Hor15}. The statement involving compactly generated follow by setting the cardial $\alpha$ involved in the statements about $\alpha$-well-generated to $\aleph_0$. 
\end{proof}

\begin{LEMMA}\label{LEMMARIGHTPROJECTOR1}
Let $I \in \Dia$, let $\DD \rightarrow \SSS$ be a fibered derivator with domain $\Dia$ with stable, well-generated fibers, $S \in \SSS(I)$, and $M_{\cart} \subset \Mor(I)$ and $M_{\cocart} \subset \Mor(I)$ two subsets of morphisms. 
For each morphism $m: x \rightarrow y$ in $M_{\cart}$ consider the morphism $f_m: m^*S \rightarrow p_{\to}^*y^*S$ and
for each morphism $m: x \rightarrow y$ in $M_{\cocart}$ consider the morphism $g_m: p_{\to}^*x^*S \rightarrow m^*S$, both in $\SSS(\rightarrow)$. Assume that all 
$f_m^\bullet$ have right adjoints (resp.\@ that all $g_{m,\bullet}$ have left adjoints). Then 
 the subcategory 
\[ \DD(I)_S^{M_{\cart}, M_{\cocart}} = \{ \mathcal{F} \in \DD(I)_S \ |\ x^*\mathcal{F} \rightarrow y^*\mathcal{F} \text{ is } \substack{ \text{ Cartesian for all $m \in M_{\cart}$ } \\ \text{ coCartesian for all $m \in M_{\cocart}$ }  }    \} \]
is triangulated, well-generated (or compactly generated, if $\DD \rightarrow \SSS$ has compactly generated fibers), and the inclusion
\[ \DD^{M_{\cart}, M_{\cocart}}(I)_S \hookrightarrow \DD(I)_S \]
 has a right adjoint, resp.\@ a left adjoint.  
\end{LEMMA}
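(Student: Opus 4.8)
The plan is to reduce the statement to Proposition~\ref{PROPKERNELPROJECTION} by exhibiting $\DD(I)_S^{M_{\cart}, M_{\cocart}}$ as the kernel of an exact functor with an exact adjoint between well-generated (resp.\@ compactly generated) triangulated categories. First I would observe that, since $\DD \to \SSS$ has stable fibers, each $\DD(I)_S$ is triangulated, and by well-generatedness of the fibers together with (Der1)/(Der2) and the axioms of a fibered derivator it is well-generated (one can cite the corresponding discussion in \cite{Hor15}; the small coproducts come from the stable + well-generated hypothesis). For each $m: x \to y$ in $M_{\cart}$ form the functor
\[ \Phi_m := \cofib\bigl( p_{\to,!}\, x^* \to y^* \bigr) : \DD(I)_S \rightarrow \DD(\rightarrow)_{y^*S}, \]
or more precisely the exact functor measuring the failure of the square $x^*\mathcal{F} \to y^*\mathcal{F}$ to be Cartesian: concretely $\mathcal{F}$ lies in the Cartesianity-kernel for $m$ iff the canonical morphism $x^*\mathcal{F} \to f_m^\bullet y^*\mathcal{F}$ (using that $f_m^\bullet$ has a right adjoint, so this is the appropriate comparison) is an isomorphism, which is iff the cofiber of that morphism vanishes. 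Dually, using that $g_{m,\bullet}$ has a left adjoint, coCartesianity for $m \in M_{\cocart}$ is detected by the vanishing of a cofiber built from $g_m$. Packaging these over all $m$, set $\mathcal{E} := \prod_{m \in M_{\cart}} \DD(\rightarrow)_{y_m^*S} \times \prod_{m \in M_{\cocart}} \DD(\rightarrow)_{y_m^*S}$ and let $F: \DD(I)_S \to \mathcal{E}$ be the product of the $\Phi_m$. This $F$ is exact, and $\DD(I)_S^{M_{\cart}, M_{\cocart}} = \ker F$.

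The key point is then that $F$ has an exact adjoint (on the correct side). For the ``right adjoint'' case: each $\Phi_m$ is built from $p_{\to,!}$, $x^*$, $y^*$ and a cofiber, all of which admit right adjoints in a stable derivator precisely because of the stated hypothesis that the $f_m^\bullet$ (resp.\@ $g_{m,\bullet}$) have the required adjoints — this is where those hypotheses get used — so $\Phi_m$ has a right adjoint $\Psi_m$, and $F$ has right adjoint $G := $ (the functor assembled from the $\Psi_m$, using that $\mathcal{E}$ is a product so its universal property gives $G$ from the $\Psi_m$). Both $F$ and $G$ are exact (cofiber functors and adjoints of exact functors between stable categories are exact). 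Also $\mathcal{E}$ is well-generated (resp.\@ compactly generated) as a finite/small product of such categories — here one should be slightly careful if $M_{\cart} \cup M_{\cocart}$ is infinite, but a product of well-generated triangulated categories indexed by a set is again well-generated, and likewise for compactly generated, so this is fine. Now Proposition~\ref{PROPKERNELPROJECTION} applies verbatim and yields that $\ker F = \DD(I)_S^{M_{\cart}, M_{\cocart}}$ is triangulated, well-generated (resp.\@ compactly generated), with a left adjoint (resp.\@ right adjoint) to the inclusion.

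The main obstacle I anticipate is the bookkeeping in step two: correctly identifying the exact functor $\Phi_m$ whose kernel is exactly the Cartesianity (resp.\@ coCartesianity) locus at $m$, and verifying that it has an adjoint on the side dictated by the hypothesis on $f_m^\bullet$ (resp.\@ $g_{m,\bullet}$). The subtlety is that ``$x^*\mathcal{F} \to y^*\mathcal{F}$ is Cartesian'' is a statement about a morphism in $\DD(\rightarrow)$, and one must phrase it as the vanishing of an honest functor to a fixed triangulated category; the natural candidate uses the exchange morphism associated to $f_m$, and one must check that the comparison morphism whose cofiber we take is the correct one (so that vanishing of the cofiber is genuinely equivalent to Cartesianity, not merely implied by it). Once this identification is pinned down — which is a routine but careful use of the derivator axioms, cf.\@ the analogous arguments around Cartesian projectors in \cite[Section 3.5]{Hor15} — the rest is a direct appeal to Proposition~\ref{PROPKERNELPROJECTION} applied once for each $m$ (or once to the assembled $F$), together with the stability of well-generatedness and compact generatedness under small products.
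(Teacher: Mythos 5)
Your proposal is correct and follows essentially the same route as the paper: the paper's functor $F$ is exactly the composite $\prod C \circ (\prod f_m^\bullet \times \prod g_{m,\bullet}) \circ \prod_m m^*$ (restriction to each arrow, base change along $f_m$ resp.\ $g_m$, then cone), i.e.\ the cofiber-of-comparison-morphism construction you describe, and the conclusion is likewise drawn from Proposition~\ref{PROPKERNELPROJECTION} using that every factor of $F$ is a left (resp.\ right) adjoint by the hypothesis on $f_m^\bullet$ (resp.\ $g_{m,\bullet}$). (Your first displayed formula for $\Phi_m$ is a false start, but the corrected description via the cofiber of $x^*\mathcal{F}\to S(m)^\bullet y^*\mathcal{F}$ is what the paper uses, and your explicit remark that a set-indexed product of well-generated categories is again well-generated fills a point the paper leaves implicit.)
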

\begin{proof}
Cf.\@ \cite[Theorem~4.3.4]{Hor15}. We construct a functor $F$, composition of
\[\xymatrix{ \DD(I)_S \ar[d]^-{\prod_{m} m^*} \\ \prod_{m \in M_{\cart}}  \DD(\rightarrow)_{m^*S} \times \prod_{m \in M_{\cocart}}  \DD(\rightarrow)_{m^*S}   \ar[d]^-{\prod f_m^\bullet \times \prod g_{m,\bullet}} \\   \prod_{m \in M_{\cart}}  \DD(\rightarrow)_{p_{\to}^*x^*S} \times \prod_{m \in M_{\cocart}}  \DD(\rightarrow)_{p_{\to}^*y^*S} \ar[d]^-{\prod C} \\  \prod_{m \in M_{\cart}}  \DD(\cdot)_{x^*S} \times \prod_{m \in M_{\cocart}}  \DD(\cdot)_{y^*S}  }   \]
where the $C: \DD(\rightarrow)_{\cdots} \rightarrow \DD(\cdot)_{\cdots}$ is the  cone in the respective fiber. 
Note that $\ker(C)$ is the subcategory of $\DD(\rightarrow)_{\cdots}$ of objects whose underlying diagram consists of an isomorphism. 
Thus by construction 
\[ \DD^{M_{\cart}, M_{\cocart}}(I)  = \ker(F). \]
The statement follows from Proposition~\ref{PROPKERNELPROJECTION} because, by assumption, all functors that $F$ is composed of are left adjoints (resp.\@ right adjoints). 
\end{proof}

\begin{PROP}\label{PROPEXH2}
Let $(\mathcal{M}, \Fib,  \mathcal{W})$ be a category with fibrant objects and let $\mathbb{M}$ the prederivator represented by $\mathcal{M}$. 
Let $\DD' \rightarrow \mathbb{M}$ be a left and right fibered derivator such that for all morphisms $f: X \rightarrow Y$ in $\mathcal{M}^I$ point-wise in $\mathcal{W}$ the pull-back
$f^\bullet$ is an equivalence $\DD'(I)_Y \cong \DD'(I)_X$. Then there is an extension (well-defined up to equivalence) of $\DD'$ to a  left fibered derivator (satisfying also FDer0 right)
\[ \DD'' \rightarrow \mathbb{H}^2(\mathcal{M}) \]
such that its pull-back under the embedding $\mathbb{M} \rightarrow \mathbb{H}^2(\mathcal{M})$ is equivalent to $\DD'$. If $\DD'$ is infinite, and has stable and perfectly generated fibers then $\DD''$ is right fibered as well. 
\end{PROP}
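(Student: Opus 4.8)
The plan is to build $\DD''$ fibrewise by a correspondence-flavoured Grothendieck construction, to reduce the fibered-derivator axioms for $\DD'' \to \mathbb{H}^2(\mathcal{M})$ to those of $\DD' \to \mathbb{M}$ along the lines of the proof of Proposition~\ref{PROPEXDIA}, and to dispose of the genuinely $2$-categorical features of $\mathbb{H}^2(\mathcal{M})$ by means of the hypothesis that point-wise-$\mathcal{W}$ morphisms have invertible pull-back. Concretely, to a diagram $I$ and a pseudo-functor $F\colon I \to \mathcal{M}^{cor,fib}$ I would attach the total category $\nabla F$ of the span-valued pseudo-functor $F$ (its objects over $i$ being the objects of $\mathcal{M}$ over $F(i)$ together with the ``middle objects'' of the iterated composite correspondences, with $q_F \colon \nabla F \to \mathcal{M}$ stringing together the two legs of each correspondence), equipped with a reference functor $\pi_F \colon \nabla F \to I$ and with the distinguished class $\mathcal{T}_F$ of morphisms given by the (iterated) pull-backs of the left legs, which $q_F$ sends to trivial fibrations. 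I would then set
\[ \DD''(I)_F := \DD'(\nabla F)^{\cart}_{q_F} \ \subseteq \ \DD'(\nabla F)_{q_F}, \]
the full subcategory of objects whose underlying diagram is Cartesian along every morphism in $\mathcal{T}_F$. Checking the definition on small pieces: for $I$ the arrow category and $F$ a correspondence $S \xleftarrow{\alpha} U \xrightarrow{\beta} T$ one gets, using $\alpha^\bullet \dashv \alpha_\bullet$, that $\DD''(I)_F \cong \{(\mathcal{F}_S, \mathcal{F}_T, \mathcal{F}_S \to \alpha_\bullet\beta^\bullet\mathcal{F}_T)\}$, i.e.\@ the correspondence acts on fibres by $\alpha_\bullet\beta^\bullet$, which is well-defined since $\alpha^\bullet$ is an equivalence; and along $\mathbb{M} \hookrightarrow \mathbb{H}^2(\mathcal{M})$ (all left legs identities, $\mathcal{T}_F$ empty) one recovers $\DD'$.

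Functoriality comes next: a functor $\alpha\colon I' \to I$, and more generally a morphism or $2$-morphism in $\mathbb{H}^2(\mathcal{M})(I)$, induces a functor between the corresponding $\nabla$'s respecting $q$ and the distinguished classes, hence a pull-back functor, resp.\@ a $2$-isomorphism, between the $\DD''(I)_F$'s; since a modification is assembled from the $2$-cells of $\mathcal{M}^{cor,fib}$, which are trivial fibrations, it is sent by $\DD'$ to an isomorphism, so the groupoidification of the hom-categories in Definition~\ref{DEFH2} does no harm and $\DD''$ is an honest morphism of pre-$2$-derivators. Independence of the auxiliary choices up to canonical equivalence --- hence the ``well-defined up to equivalence'' clause and the asserted compatibility with the embedding of $\mathbb{M}$ --- follows because two choices are linked by a point-wise-$\mathcal{W}$ comparison, on which $\DD'$ is an equivalence by hypothesis.

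I would then verify the axioms of a (left) fibered derivator over the pre-$2$-derivator $\mathbb{H}^2(\mathcal{M})$ (in the sense of \cite[Section~2]{Hor16}), step by step as in Proposition~\ref{PROPEXDIA}. (Der1), (Der2) and the $2$-morphism axioms are inherited from $\DD'$ together with the invertibility of the action of trivial fibrations. (FDer0 left/right) follows from the fibre computations above and the functoriality of $F \mapsto (\nabla F, q_F, \mathcal{T}_F)$; the fibres over objects carry no Cartesianity condition, so the pull-back $\alpha_\bullet\beta^\bullet$ along a correspondence has adjoints $\beta_!\alpha^\bullet$ and $\beta_\bullet\alpha^\bullet$, which exist because $\DD'$ is left and right fibered. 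For (FDer3 left) I would compose the relative left Kan extension of $\DD'$ with the \emph{left Cartesian projector} $\Box_!$, i.e.\@ the left adjoint to $\DD'(\nabla F)^{\cart}_{q_F}\hookrightarrow\DD'(\nabla F)_{q_F}$; since the Cartesianity is imposed along trivial fibrations (equivalences after $\DD'$), $\Box_!$ has an explicit description (on a single such edge it is ``take the target component''), and its commutation with the fibre injections $(i')^*$ --- needed to push the point-wise verifications through (Der2), exactly as in Proposition~\ref{PROPEXDIA} --- is immediate. (FDer4 left), the point-wise formula for the relative Kan extensions, is reduced via the opfibration $\pi_F$ to a base-change statement for $\DD'$.

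For the last assertion --- that $\DD''$ is right fibered when $\DD'$ is infinite with stable, perfectly generated fibres --- the new input is the existence of the \emph{right} Cartesian projectors, i.e.\@ right adjoints to $\DD'(\nabla F)^{\cart}_{q_F}\hookrightarrow\DD'(\nabla F)_{q_F}$. When $I$, hence $\nabla F$, is infinite this amounts to a homotopy limit over the (possibly infinite) sub-diagram of distinguished morphisms, and its existence is exactly a Brown-representability statement: by Lemma~\ref{LEMMARIGHTPROJECTOR1} together with Proposition~\ref{PROPKERNELPROJECTION} the Cartesian subcategory is again triangulated and perfectly generated and the inclusion admits a right adjoint --- which is precisely where stability and perfect generation of the fibres are used. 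With the right projectors available, (FDer3 right), (FDer4 right) and (FDer5 right) for $\DD''$ follow by the same manipulations as their left counterparts; in particular one shows that $\alpha_*$ preserves Cartesianity by checking that the exchange $\Box_!(\alpha')^* \to (\alpha')^*\Box_!$ becomes an isomorphism after each fibre injection $(i')^*$ and invoking (Der2), exactly as in the (FDer3 right) step of Proposition~\ref{PROPEXDIA}. I expect the main obstacle to be the first step: making the correspondence Grothendieck construction $F \mapsto (\nabla F, q_F, \mathcal{T}_F)$ \emph{strictly} $2$-functorial --- so that $\DD''$ is a strict, not merely pseudo, morphism of pre-$2$-derivators --- and verifying that all coherences, in particular the invertibility of the action of modifications, survive; this is the analogue of the strictification behind Definition~\ref{DEFSCORFIB} and should be handled by the techniques of \cite[5.3]{Hor17}.
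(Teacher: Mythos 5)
The paper's own proof of this proposition is a one-line citation: everything is delegated to \cite[Main Theorem 7.2]{Hor16}, and the only new content is the observation that the pseudo-functor $\mathcal{M}^{cor,fib}(I) \rightarrow \mathcal{CAT}$ constructed there sends the 2-morphisms (trivial fibrations $\gamma$) to natural \emph{isomorphisms} --- because the relevant units of the adjunctions $(\gamma_\bullet, \gamma^\bullet)$ are invertible by the hypothesis that point-wise-$\mathcal{W}$ pull-backs are equivalences --- so that the pseudo-functor descends to the groupoidified hom-categories of $\mathbb{H}^2(\mathcal{M})$. Your proposal does contain exactly this key observation (your remark that modifications are assembled from trivial fibrations and hence go to isomorphisms, ``so the groupoidification \ldots does no harm''), which is the genuinely new step of the paper's argument. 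The rest of your write-up is a from-scratch reconstruction of the cited theorem itself: the span-valued Grothendieck construction $F \mapsto (\nabla F, q_F, \mathcal{T}_F)$, the identification of the action of a correspondence as $\alpha_\bullet\beta^\bullet$, and the axiom checks. That is a legitimate and more self-contained route, but be aware that the points you leave as sketches are precisely where the work in \cite{Hor16} lies: (i) the strictification and coherence of the correspondence Grothendieck construction (which you correctly flag as the main obstacle); and (ii) the existence of the left Cartesian projector $\Box_!$ for (FDer3 left) \emph{without} any stability or generation hypothesis --- your claim that it has an ``explicit description'' because Cartesianity is only imposed along equivalence-inducing edges is believable for a single edge but needs a genuine coherence argument over an arbitrary diagram shape, and cannot be waved through via Lemma~\ref{LEMMARIGHTPROJECTOR1} (which requires well-generated fibers and is only invoked in the paper for the \emph{right} fibered structure, consistent with the extra hypotheses in the last sentence of the statement). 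So: same essential mechanism, but your version re-proves the external input rather than citing it, and the two thin spots above would have to be filled before it stands on its own.
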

\begin{proof}
This follows from \cite[Main Theorem 7.2]{Hor16}. Note that the pseudo-functor  
\[ \mathcal{M}^{cor,fib}(I) \rightarrow \mathcal{CAT} \]
constructed in the proof of \cite[Main Theorem 7.2]{Hor16} maps the 2-morphisms to natural isomorphisms because the corresponding units are isomorphisms by assumption, hence it induces a pseudo-functor
\[ \mathbb{H}^2(\mathcal{M})(I) \rightarrow \mathcal{CAT}. \]
\end{proof}

\begin{proof}[Proof of Theorem~\ref{SATZEXDER}.]
Consider the model category $\mathcal{SET}_{loc}^{\mathcal{S}^{\op}\times \Delta^{\op}}$. It can be turned into a category with fibrant objects
using all objects and the sharp morphisms as fibrations (cf.\@ \ref{PARSHARP}). One can also restrict to fibrant objects in $\mathcal{SET}^{\mathcal{S}^{\op}\times \Delta^{\op}}$
and consider fibrations in $\mathcal{SET}^{\mathcal{S}^{\op}\times \Delta^{\op}}$ or restrict to fibrant objects in $\mathcal{SET}_{loc}^{\mathcal{S}^{\op}\times \Delta^{\op}}$ and consider fibrations in $\mathcal{SET}_{loc}^{\mathcal{S}^{\op}\times \Delta^{\op}}$.
Denote by $\mathcal{M}$ any of these categories with fibrant objects.

{\bf Step 1:} Extend $\DD \rightarrow \SSS$ to a fibered derivator 
\[ \DD' \rightarrow \mathbb{CAT}(\mathcal{S}) \]
using Theorem~\ref{PROPEXDIA}. 
The fibered derivator $\DD'$ has again well-generated fibers as follows from Lemma~\ref{LEMMARIGHTPROJECTOR1}.

{\bf Step 2:} We pull-back $\DD'$ to $\mathbb{M}$ along the functor $\int^{\amalg} Q: \mathbb{M} \rightarrow \mathbb{CAT}(\mathcal{S})$ of \ref{PARINTCECH} (where $Q$ denotes the cofibrant replacement) and
extend it using Proposition~\ref{PROPEXH2} to $\mathbb{H}^{2}(\mathcal{M})$:
\[  \DD'' \rightarrow \mathbb{H}^2(\mathcal{M}).  \]
This is again a left and right fibered derivator because $\DD'$ is infinite with stable, well-generated (and hence perfectly generated) fibers. 

There are morphisms of pre-2-derivators
\begin{equation}\label{eqfunct7}  \mathbb{CAT}(\mathcal{S}) \rightarrow \mathbb{M} \rightarrow \mathbb{H}^{2}(\mathcal{M}) \end{equation}
where the first morphism is the nerve $N$ (possibly followed by a fibrant replacement, depending on the variant used) and the second morphism is the natural inclusion. 
By Theorem~\ref{THEOREMINTWECOFCECH} and its proof, for $D \in \Dia(\mathcal{S})^I$, there is a morphism $f_D: \int^{\amalg} Q N(D) \rightarrow D$ (the functor $\int^{\amalg} Q N$ is applied point-wise in $I$) which is (point-wise in $I$) in any localizer. Therefore also $\int f_D$ is in the localizer of strong $\DD$-equivalences and induces an equivalence:
\[ f_D^*: \DD'(I)_D = \DD(\int D)^{\pi_I-\cart} \cong \DD(\int \int^{\amalg} Q N(D))^{\pi_I-\cart} = \DD''(I)_{N(D)}.  \]
The pull-back of $\DD''$ along (\ref{eqfunct7}) is thus equivalent to the extension $\DD'$ of $\DD$. 
In particular, the pullback to $\SSS \subset \mathbb{CAT}(\mathcal{S})$ is equivalent to $\DD$.
\end{proof}

\appendix

\section{Saturatedness of localizers and the associated left derivator}

We give an elementary proof of the following theorem, which has been proven for $\mathcal{S}= \{\cdot\}$ (i.e.\@ in the case of basic localizers of Grothendieck) by Cisinski \cite[Proposition~4.2.4]{Cis06}.

\begin{SATZ}\label{SATZSAT}
A localizer in the sense of Definition~\ref{DEFFUNLOC} is saturated and thus satisfies 2-out-of-6 and is closed under retracts. 
\end{SATZ}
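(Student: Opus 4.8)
The plan is to reduce everything to the single statement that $\mathcal{W}$ is \emph{saturated} in the strict sense that $\mathcal{W}=\gamma^{-1}(\mathrm{Iso})$, where $\gamma\colon\Dia(\mathcal{S})\to\Dia(\mathcal{S})[\mathcal{W}^{-1}]$ is the localization functor. The inclusion $\mathcal{W}\subseteq\gamma^{-1}(\mathrm{Iso})$ is automatic, so the whole content is the reverse inclusion. Once that is known, the two remaining assertions are purely formal. If $f$ is a retract of some $g\in\mathcal{W}$, then $\gamma(f)$ is a retract of the isomorphism $\gamma(g)$, hence an isomorphism, hence $f\in\mathcal{W}$; so $\mathcal{W}$ is closed under retracts. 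If $gf$ and $hg$ lie in $\mathcal{W}$, then $\gamma(g)$ is simultaneously split epi (with right inverse $\gamma(f)\gamma(gf)^{-1}$) and split mono (with left inverse $\gamma(hg)^{-1}\gamma(h)$), hence invertible, whence $\gamma(f)=\gamma(g)^{-1}\gamma(gf)$, $\gamma(h)=\gamma(hg)\gamma(g)^{-1}$ and $\gamma(hgf)$ are invertible and $f,g,h,hgf\in\mathcal{W}$ by saturation; so $\mathcal{W}$ satisfies 2-out-of-6.

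Next I would equip $\Dia(\mathcal{S})$ with a cylinder. For $D=(I,S)$ set $\mathrm{Cyl}(D):=(I\times\Delta_1,\,\mathrm{pr}_I^*S)$ with the projection $p_D\colon\mathrm{Cyl}(D)\to D$ of pure diagram type; the underlying functor $I\times\Delta_1\to I$ is a Grothendieck fibration with fibre $\Delta_1$, and $(\Delta_1,\cdot)\to(\cdot,\cdot)$ lies in $\mathcal{W}$ by (L2 left), so $p_D\in\mathcal{W}$ by (L4 left) and the two sections $\iota_0,\iota_1\colon D\to\mathrm{Cyl}(D)$ lie in $\mathcal{W}$ by 2-out-of-3. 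In particular morphisms connected by a $2$-morphism become equal after $\gamma$, recovering part of Proposition~\ref{PROPPROPERTIESLOCALIZER}. For a morphism $w\colon D_1\to D_2$ one forms the mapping cylinder $\mathrm{Cyl}(w):=D_2\amalg_{D_1}\mathrm{Cyl}(D_1)$ (pushout along $\iota_0$; this colimit exists in $\Dia(\mathcal{S})$) together with the resulting factorization $w=r_w\circ j_w$. The map $r_w\colon\mathrm{Cyl}(w)\to D_2$ admits a section $s_w$ with $r_ws_w=\id$, and $s_wr_w$ is connected to $\id_{\mathrm{Cyl}(w)}$ by a $2$-morphism; hence $s_wr_w\in\mathcal{W}$ by Proposition~\ref{PROPPROPERTIESLOCALIZER}, and then (WS3) forces $r_w\in\mathcal{W}$ \emph{unconditionally}. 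Therefore $w\in\mathcal{W}\iff j_w\in\mathcal{W}$ and $\gamma(w)$ invertible $\iff\gamma(j_w)$ invertible, so the saturation statement is reduced to: a cofibration $j$ with $\gamma(j)$ invertible lies in $\mathcal{W}$, where ``cofibration'' refers to the distinguished class containing the $j_w$.

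Finally one declares the cofibrations (an appropriate class of monomorphisms, stable under pushout and composition, with all pushouts along them existing — e.g.\ the cosieve-type inclusions produced above), and the decisive input — following Cisinski \cite{Cis06} — is the gluing lemma for $\mathcal{W}$: $\mathcal{W}$ is stable under pushout along cofibrations, and a morphism of pushout diagrams $(\text{cofibration},\text{arbitrary})$ which is a weak equivalence on each of the three vertices induces a weak equivalence on pushouts. This must be extracted directly from (L1)--(L4) by a Quillen--Thomason-style analysis of the comma categories of the pushout; it cannot be imported from Proposition~\ref{PROPPUSHOUT}, which concerns only the image of $\int^{\amalg}$ and hence the smallest localizer. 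Granting the gluing lemma, $(\Dia(\mathcal{S}),\mathrm{Cof},\mathcal{W})$ is a cofibration category in which every object is cofibrant, and the standard argument applies: a cofibration $j$ with $\gamma(j)$ invertible acquires, after a suitable replacement, an honest homotopy inverse, and feeding this through the cylinder and the gluing lemma forces $j\in\mathcal{W}$ (this re-proves in the present elementary setting that weak equivalences of a cofibration category are saturated). The hard part is precisely the gluing lemma; everything else is bookkeeping with the cylinder. (Alternatively one may sidestep the gluing lemma by identifying $\mathcal{W}$ with the class of $\DD_{\mathcal{W}}$-equivalences, cf.\ Definition~\ref{DEFWEAKDEQUIV}, of the left fibered derivator $\DD_{\mathcal{W}}\to\SSS$ associated with $(\Dia(\mathcal{S}),\mathcal{W})$ as in Proposition~\ref{PROPDERLOCALIZER}: that class is the preimage of the isomorphisms under $D\mapsto\pi_{D,!}\pi_D^{*}$ into an endofunctor category, hence automatically saturated; but the inclusion of the $\DD_{\mathcal{W}}$-equivalences in $\mathcal{W}$ requires essentially the same homotopical input.)
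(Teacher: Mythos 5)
There is a genuine gap. Your argument reduces everything to a gluing lemma for a putative cofibration category structure on $\Dia(\mathcal{S})$ (stability of $\mathcal{W}$ under pushout along ``cofibrations'' and invariance of pushouts under vertex-wise weak equivalences), and you explicitly defer its proof (``this must be extracted directly from (L1)--(L4)\dots''). That deferred step is the entire content of the theorem: nothing in (L1)--(L4) refers to strict colimits in $\Dia(\mathcal{S})$, so there is no direct route from the axioms to a statement about honest pushouts. Indeed it is not even clear that the pushout $D_2\amalg_{D_1}\mathrm{Cyl}(D_1)$ you use exists as a $1$-categorical colimit in $\Dia(\mathcal{S})$ with its $\mathcal{S}$-labelling, let alone that $\mathcal{W}$ interacts with it; the axioms (L3), (L4) speak only about comma categories and coverings. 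Your fallback via $\DD_{\mathcal{W}}$-equivalences is likewise acknowledged to need ``essentially the same homotopical input,'' so neither branch closes the argument.

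The paper's proof sidesteps strict pushouts entirely. It replaces the pushout of a span $X:\lefthalfcap\rightarrow\Dia(\mathcal{S})$ by the Grothendieck construction $\int X$ (a lax colimit, which always exists), and Lemma~\ref{LEMMAPUSHOUTW} shows that the inclusion of the third vertex into $\int X$ is in $\mathcal{W}$ whenever one leg is --- this follows cheaply from Corollary~\ref{KORGCONSTRUCTION} (itself a consequence of (L3 left)) together with the formal adjunctions of Proposition~\ref{PROPPROPERTIESLOCALIZER}. From this single lemma one gets both a left calculus of fractions on $\tau_1\Dia(\mathcal{S})$ (Proposition~\ref{PROPCALCFRACTIONS}) and, via the Blumberg--Mandell mapping-cylinder trick, the 2-out-of-6 property directly (Lemma~\ref{LEMMA2OUTOF6}); saturation is then the standard Lemma~\ref{LEMMASAT} (calculus of fractions $+$ 2-out-of-6 $\Rightarrow$ saturated). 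If you want to salvage your outline, the fix is to replace every strict pushout by the Grothendieck construction of the corresponding span and prove the analogue of Lemma~\ref{LEMMAPUSHOUTW} for it; your cylinder $(I\times\Delta_1,\mathrm{pr}_I^*S)$ and the formal reductions in your first paragraph are fine and can be kept.
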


We also show in the end of the section that there is always an associated left derivator. 
\begin{proof}
By Proposition~\ref{PROPPROPERTIESLOCALIZER}, 5.\@ it suffices to see that the image of $\mathcal{W}$ in $\tau_1 \Dia(\mathcal{S})$ is saturated. 
This follows from Lemma~\ref{LEMMASAT} below, in view of Proposition~\ref{PROPCALCFRACTIONS} and Lemma~\ref{LEMMA2OUTOF6}.
\end{proof}

\begin{LEMMA} \label{LEMMAPUSHOUTW}
Let $\mathcal{W}$ be a localizer in $\Dia(\mathcal{S})$ and consider a diagram $X: \lefthalfcap \rightarrow \Dia(\mathcal{S})$
\[ \xymatrix{ Y \ar[r]^g \ar[d]_f & W  \\
 Z   }\]
If $f$ is in $\mathcal{W}$ then also the natural inclusion $\iota_3: W \rightarrow \int X$ is in $\mathcal{W}$.
\end{LEMMA}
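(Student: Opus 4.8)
The plan is to route the inclusion $\iota_3$ through the Grothendieck construction of the ``$g$-leg'' alone. Write $\lefthalfcap=\{1\xleftarrow{a}2\xrightarrow{b}3\}$, with initial object $2$, so that $X$ is the span $Z=X(1)\xleftarrow{f}X(2)=Y\xrightarrow{g}X(3)=W$ and $\int X\to\lefthalfcap$ is the associated opfibration with fibre $X(i)$ over $i$. Set $X'=X|_{\{2,3\}}$ (the morphism $g\colon Y\to W$, as a diagram over $\{2\to 3\}$) and $X''=X|_{\{1,2\}}$ (the morphism $f\colon Y\to Z$). Both $\int X'$ and $\int X''$ are the full subdiagrams of $\int X$ lying over $\{2,3\}$ and over $\{1,2\}$, and $\iota_3$ factors as $W\hookrightarrow\int X'\hookrightarrow\int X$.

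First I would record two facts about fibre inclusions. For the Grothendieck construction of any morphism $h\colon A\to B$ in $\Dia(\mathcal{S})$ over $\{0\to 1\}$, the inclusion of the fibre $B$ over the terminal object $1$ admits a left adjoint in $\Dia(\mathcal{S})$ (opCartesian transport: $(0,a)\mapsto h(a)$, $(1,b)\mapsto b$, with the decoration transformation induced by the structural morphism of $h$), the composite left-adjoint-then-inclusion being the identity on $B$ and the unit being the opCartesian $2$-morphism. By part 5 of Proposition~\ref{PROPPROPERTIESLOCALIZER} the composite $\iota L$ is then homotopic to the identity, hence in $\mathcal{W}$, and (WS3) together with (WS2) force both $\iota$ and $L$ into $\mathcal{W}$; in particular $W\hookrightarrow\int X'$ and $Z\hookrightarrow\int X''$ lie in $\mathcal{W}$. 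Secondly, the \emph{other} fibre inclusion $\iota_2\colon Y\hookrightarrow\int X''$ (over the initial object) is of pure diagram type and is homotopic to the composite $\bigl(Z\hookrightarrow\int X''\bigr)\circ f$ via the opCartesian $2$-morphism over $a$; one checks this $2$-morphism is a genuine $2$-morphism of $\Dia(\mathcal{S})$ because the transition map of $\int X''$ along $a$ is exactly the structural morphism of $f$. Assuming now $f\in\mathcal{W}$, the composite $\bigl(Z\hookrightarrow\int X''\bigr)\circ f$ is in $\mathcal{W}$ by (WS2), hence $\iota_2\colon Y\to\int X''\in\mathcal{W}$ again by part 5 of Proposition~\ref{PROPPROPERTIESLOCALIZER}.

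Next I would apply axiom (L3 left) to the triangle with apex $(\lefthalfcap,\underline e)$, where $\underline e$ is the constant diagram at a terminal object $e$ of $\mathcal{S}$ (which exists since $\mathcal{S}$ has finite limits), the map $\int X\to(\lefthalfcap,\underline e)$ being the opfibration equipped with the unique decoration maps to $e$, the map $\int X'\to(\lefthalfcap,\underline e)$ factoring through $\{2,3\}\hookrightarrow\lefthalfcap$, and $j\colon\int X'\to\int X$ the inclusion. Choosing for each $U(k)=e$ the trivial covering $\{\operatorname{id}_e\}$ (an isomorphism, hence a covering in any pretopology), the hypothesis of (L3 left) reduces to showing the three morphisms $j\times_{/(\lefthalfcap,\underline e)}(k,e)$ lie in $\mathcal{W}$. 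Unravelling the comma diagrams, using that $\{m:\Hom_{\lefthalfcap}(m,k)\neq\emptyset\}$ equals $\{2\}$, $\{1,2\}$, $\{2,3\}$ for $k=2,1,3$, identifies them as
\[ j\times_{/(\lefthalfcap,\underline e)}(k,e)\;\cong\;\begin{cases} \operatorname{id}_Y & k=2,\\ \text{(an isomorphism $\int X'\to\int X'$)} & k=3,\\ \bigl(\iota_2\colon Y\to\int X''\bigr) & k=1. \end{cases} \]
All three are in $\mathcal{W}$ (the last by the previous paragraph), so (L3 left) gives $j\in\mathcal{W}$, and finally $\iota_3=j\circ\bigl(W\hookrightarrow\int X'\bigr)$ is in $\mathcal{W}$ by (WS2).

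I expect the main obstacle to be conceptual rather than computational: one \emph{cannot} apply (L3 left) directly to $\iota_3\colon W\to\int X$ over $(\lefthalfcap,\underline e)$, since the $k=2$ fibre of that triangle would be the morphism $\emptyset\to Y$, which is not in $\mathcal{W}$. Passing first through $\int X'$ is precisely what cures this, because $W$ is cofinal in $\int X'$. The remaining fiddly point is the decoration bookkeeping in the two fibre-inclusion facts — verifying that the opCartesian $2$-morphisms witnessing $\iota_2\simeq\bigl(Z\hookrightarrow\int X''\bigr)\circ f$ and $\operatorname{id}\simeq(\text{incl})\circ(\text{left adjoint})$ are compatible with the natural transformations on the $\mathcal{S}$-valued decorations; once this is set up, the comma-diagram identifications above are a routine unravelling of the Grothendieck construction.
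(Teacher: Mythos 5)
Your proof is correct, but it takes a genuinely different route from the paper's. The paper replaces the span $Z\xleftarrow{f}Y\xrightarrow{g}W$ by the point-wise weakly equivalent span $Y\xleftarrow{=}Y\xrightarrow{g}W$, applies Corollary~\ref{KORGCONSTRUCTION} (both totals are opfibrations over $\lefthalfcap$ and the comparison is a fibre-wise weak equivalence), and then contracts $\int(Y=Y\to W)$ onto $W$ by two adjunctions of the type in Proposition~\ref{PROPPROPERTIESLOCALIZER}, 2. You instead factor $\iota_3$ through $\int X'=\int(Y\to W)$ and handle the inclusion $\int X'\hookrightarrow\int X$ with the raw axiom (L3 left) over $(\lefthalfcap,\underline e)$ and trivial coverings --- which is the right tool there, since $\int X'\to\lefthalfcap$ is not an opfibration and Proposition~\ref{PROPPROPERTIESLOCALIZER}, 3 does not apply directly. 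Your comma-category identifications are correct, the only nontrivial fibre ($k=1$) is exactly where $f\in\mathcal{W}$ enters via the opCartesian homotopy $\iota_2\simeq\iota_Z\circ f$, and your diagnosis that a direct application of (L3 left) to $\iota_3$ fails at $k=2$ is the correct explanation of why the intermediate factorization is needed. One economy: the two fibre-inclusion facts you re-derive by hand (inclusion of the fibre over a terminal object is in $\mathcal{W}$) are literally instances of Proposition~\ref{PROPPROPERTIESLOCALIZER}, 2, with $s$ the fibre inclusion (a right adjoint, with opCartesian transport as left adjoint) and $s^*S$ the given decoration, so you may cite that directly and skip the (WS3) bookkeeping. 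The paper's argument is shorter because it stays within already-packaged consequences of the axioms; yours makes the actual use of (L3 left) explicit.
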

\begin{proof}
The morphism
\[ \int \left( \vcenter{ \xymatrix{ Y \ar@{=}[d] \ar[r]^g & W \\
 Y &   } } \right) \rightarrow \int \left( \vcenter{
 \xymatrix{ Y \ar[r]^g \ar[d]_f & W \\
 Z & } } \right) \]
 is a weak equivalence by Corollary~\ref{KORGCONSTRUCTION}. 
 Furthermore there are adjunctions in the 2-category $\Dia(\mathcal{S})$
 \[ \int \left( \vcenter{ \xymatrix{ Y \ar@{=}[d] \ar[r]^g & W \\
 Y &   } } \right) \leftrightarrow \int \left( \xymatrix{ Y \ar[r]^g & W }  \right)   \leftrightarrow W.   \]
 of the form considered in Proposition~\ref{PROPPROPERTIESLOCALIZER}, 2. The statement follows. 
\end{proof}

\begin{PROP} \label{PROPCALCFRACTIONS}
Let $\mathcal{W}$ be a localizer in $\Dia(\mathcal{S})$. Then the image of $\mathcal{W}$ in $\tau_1\Dia(\mathcal{S})$ has a left calculus of fractions.
\end{PROP}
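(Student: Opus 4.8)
The plan is to verify the two standard axioms of a (left) calculus of fractions for the class $\overline{\mathcal{W}}$, the image of $\mathcal{W}$ in $\tau_1 \Dia(\mathcal{S})$: first, that every cospan $X \xleftarrow{w} Z \xrightarrow{f} Y$ with $w \in \mathcal{W}$ can be completed to a commutative square with the opposite leg again in $\mathcal{W}$ (the Ore condition); and second, the cancellation condition: if $fw = gw$ (or $wf = wg$) in $\tau_1 \Dia(\mathcal{S})$ for some $w \in \mathcal{W}$, then $f$ and $g$ already agree after composing with some other morphism in $\mathcal{W}$. Together with the fact that $\overline{\mathcal{W}}$ contains all identities and is closed under composition (immediate from (L1)), these give a left calculus of fractions.

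For the Ore condition I would argue as follows. Given $w: Z \to X$ in $\mathcal{W}$ and $f: Z \to Y$, form the diagram $X' : \lefthalfcap \to \Dia(\mathcal{S})$ with $X \xleftarrow{w} Z \xrightarrow{f} Y$ and take its Grothendieck construction $\int X'$. By Lemma~\ref{LEMMAPUSHOUTW} (applied with the roles of $f$ and $g$ as in that lemma, i.e.\@ using that $w \in \mathcal{W}$), the natural inclusion $\iota_Y: Y \to \int X'$ is in $\mathcal{W}$, and the natural inclusion $\iota_X: X \to \int X'$ is always in $\mathcal{W}$ by Lemma~\ref{LEMMAPUSHOUTW} again (the leg $\mathrm{id}: Z \to Z$ being trivially in $\mathcal{W}$, so $\iota_X$ sits in an adjunction as in Proposition~\ref{PROPPROPERTIESLOCALIZER}, 2; alternatively one of the two inclusions is always a weak equivalence coming from a formal adjunction). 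The two composites $\iota_X \circ w$ and $\iota_Y \circ f$ are connected by a canonical $2$-morphism in $\Dia(\mathcal{S})$ (the structural transformation of the Grothendieck construction over $\lefthalfcap$), hence become equal in $\tau_1 \Dia(\mathcal{S})$. This produces, in $\tau_1 \Dia(\mathcal{S})$, the required commutative square
\[ \xymatrix{ Z \ar[r]^f \ar[d]_w & Y \ar[d]^{\iota_Y} \\ X \ar[r]_{\iota_X} & \int X' } \]
with $\iota_Y \in \overline{\mathcal{W}}$, as needed.

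For the cancellation condition, suppose $f, g: X \to Y$ and $w: W \to X$ in $\mathcal{W}$ satisfy $fw = gw$ in $\tau_1\Dia(\mathcal{S})$. The natural move is to realize the coequalizer-type data by a Grothendieck construction again: build the diagram over $\lefthalfcap$ with $X \xleftarrow{f} \cdot \xrightarrow{g} $ suitably set up, or, more cleanly, use the mapping-path/double-mapping-cylinder over the shape $\cdot \rightrightarrows \cdot$ so that a morphism coequalizing $f$ and $g$ factors through it; then invoke Corollary~\ref{KORGCONSTRUCTION} and Proposition~\ref{PROPPROPERTIESLOCALIZER}, 2 to identify the relevant inclusion as a weak equivalence. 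The morphism out of $Y$ to this coequalizing object is the desired element of $\mathcal{W}$ that equalizes $f$ and $g$.

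The main obstacle I anticipate is the cancellation condition: unlike the Ore condition, which is a fairly direct application of Lemma~\ref{LEMMAPUSHOUTW}, here one must carefully choose the right $2$-categorical colimit shape in $\Dia(\mathcal{S})$ so that (a) a $2$-morphism witnessing $fw = gw$ actually descends to give a morphism out of the construction, and (b) the relevant structural inclusion lands in $\mathcal{W}$ via the available tools (Corollary~\ref{KORGCONSTRUCTION}, Proposition~\ref{PROPPROPERTIESLOCALIZER}, 2, and Lemma~\ref{LEMMAPUSHOUTW}). Since $\tau_1\Dia(\mathcal{S})$ is the $1$-truncation of a $2$-category, keeping track of which equalities hold strictly versus only up to homotopy (and using Proposition~\ref{PROPPROPERTIESLOCALIZER}, 5 to absorb the homotopies into $\mathcal{W}$) will be the delicate bookkeeping; but no genuinely new input beyond the already-established closure properties of $\mathcal{W}$ should be required.
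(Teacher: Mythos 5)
Your verification of the Ore condition is essentially the paper's argument: form the Grothendieck construction over $\lefthalfcap$, apply Lemma~\ref{LEMMAPUSHOUTW} to see that the inclusion of the corner opposite the given weak equivalence is in $\mathcal{W}$, and use the structural $2$-morphisms (a zigzag through the inclusion of the middle vertex) to obtain commutativity in $\tau_1\Dia(\mathcal{S})$. (Your side remark that the \emph{other} inclusion $\iota_X$ is ``always'' in $\mathcal{W}$ is false in general --- Lemma~\ref{LEMMAPUSHOUTW} only gives the inclusion opposite to a leg that lies in $\mathcal{W}$ --- but that claim is not needed for the Ore condition.)

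The cancellation condition, which you correctly flag as the main obstacle, is where the proposal has a genuine gap. Two points. First, the hypothesis $fw=gw$ in $\tau_1\Dia(\mathcal{S})$ is witnessed not by a single $2$-morphism but by a finite zigzag $fw=\alpha_0\Rightarrow\alpha_1\Leftarrow\alpha_2\Rightarrow\cdots=gw$ of arbitrary length, since homotopy is the equivalence relation \emph{generated} by $2$-morphisms; a construction over a fixed shape such as $\cdot\rightrightarrows\cdot$ therefore cannot encode the datum. Second, and more importantly, the double mapping cylinder of $f$ and $g$ does not coequalize them: it glues two copies of the target along the source and only yields $\iota_1 f=\iota_3 g$ for \emph{two different} inclusions, not a single $\rho$ in $\mathcal{W}$ with $\rho f=\rho g$. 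The paper's construction is genuinely different: writing $f,g\colon Z\to X$ and $h\colon Y\to Z$, it packages the zigzag as a morphism $\alpha\colon\int F\to X$, where $F$ is a diagram over the zigzag shape $\Xi_n$ with endpoints $Z$ and intermediate vertices $Y$; it observes that the projection $p\colon\int F\to Z$ is a weak equivalence (Corollary~\ref{KORGCONSTRUCTION} together with contractibility of $\Xi_n$); and it then forms the $\lefthalfcap$-shaped Grothendieck construction $\int G$ on $Z\xleftarrow{p}\int F\xrightarrow{\alpha}X$, so that Lemma~\ref{LEMMAPUSHOUTW} makes $\rho\colon X\to\int G$ a weak equivalence, and the two endpoint inclusions of $Z$ supply the chain of $2$-morphisms $\rho f\Leftarrow\iota_1\Rightarrow\iota_3\Leftarrow\iota_2\Rightarrow\rho g$. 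None of this appears in your sketch, so the second axiom remains unproven.
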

\begin{proof}
We have to show the following two properties:
\begin{enumerate}
\item Let $f: Y \rightarrow Z$ and $g: Y \rightarrow W$ be morphisms in $\Dia(\mathcal{S})$ with $f \in \mathcal{W}$. Then there exists a diagram
\[ \xymatrix{ Y \ar[r]^g \ar[d]_f & W \ar[d]^{F} \\
 Z  \ar[r]_{G} & X   }\]
which commutes in $\tau_1(\Dia(\mathcal{S}))$ and in which $F \in \mathcal{W}$;
\item Let $f, g: Z \rightarrow X$ be two morphisms in $\Dia(\mathcal{S})$ and let $h: Y \rightarrow Z$ be a morphism in $\mathcal{W}$ such that $fh = gh$ in $\tau_1\Dia(\mathcal{S})$. Then there exists $\rho: X \rightarrow X' \in \mathcal{W}$ such that $\rho f  =\rho g$ in    $\tau_1(\Dia(\mathcal{S}))$.
\end{enumerate}

1.\@ First observe that by Proposition~\ref{PROPPROPERTIESLOCALIZER}, 5.\@ the image of $\mathcal{W}$ in $\tau_1\Dia(\mathcal{S})$ satisfies again 2-out-of-3. 
Note that $f$ and $g$ assemble to a diagram $X: \lefthalfcap \rightarrow \Dia(\mathcal{S})$. The diagram
\[ \xymatrix{ Y \ar[r]^g \ar[d]_f & W \ar[d]^{\iota_3} \\
 Z  \ar[r]_{\iota_1} & \int X   }\] 
 is  commutative in $\tau_1\Dia(\mathcal{S})$ because of the chain of 2-morphisms
 \[ \iota_1 f \Leftarrow \iota_2 \Rightarrow \iota_3 g \]
 where $\iota_2: Y \rightarrow \int X$ is the natural inclusion.  Moreover, by Lemma~\ref{LEMMAPUSHOUTW}, $\iota_3$ is again in $\mathcal{W}$. 
 
2.\@
 Since $fh = gh$ in $\tau_1\Dia(\mathcal{S})$ there is a chain of natural transformations
 \begin{equation}\label{eqchainadj} fh =: \alpha_0 \Rightarrow \alpha_1 \Leftarrow \alpha_2 \Rightarrow \cdots \Leftrightarrow \alpha_n := gh  \end{equation}
which can be seen as a morphism
\[ \alpha: \int  F \rightarrow X \]
where  $F: \Xi_n \rightarrow \Dia(\mathcal{S})$ is the diagram
 \[  \xymatrix{ Z & \ar[l]_h Y \ar[r]^{\id}  & Y  & \ar[l]_{\id} Y \ar[r]^{\id} & & \cdots & \ar[r]^h & Z  }  \]
 and $\Xi_n$ denotes its underlying shape (a chain of $\Delta_1$'s). 
 Consider the composition
 \[ p: \int_{\Xi_n} F \rightarrow \int_{\Xi_n} Z \rightarrow Z  \]
 where in the middle, by abuse of notation, $Z$ denotes the constant functor with value $Z$.  The first morphism is a weak equivalence by Corollary~\ref{KORGCONSTRUCTION}. The second morphism is also a weak equivalence because $\Xi_n$ is contractible\footnote{There is an obvious sequence of adjunctions $\Xi_n =: \Xi_n' \leftrightarrow \Xi_{n-1}' \leftrightarrow \cdots \leftrightarrow \Xi_{-1}'=\{\cdot\}$ where $\Xi_{i}'$ is obtained from $\Xi_{i+1}'$ by deleting an extremal object. }.
We have a  diagram (commutative in $\tau_1 \Dia(\mathcal{S})$):
\[ \xymatrix{ \int F \ar[r]^{\alpha} \ar[d]_p & X \ar[d]^{\rho} \\
 Z \ar[r]_{\iota_3} & \int G   }\] 
 where $G: \lefthalfcap \rightarrow \Dia(\mathcal{S})$ is the diagram formed by $\alpha$ and $p$.
 Since $p$ is a weak equivalence, the morphism $\rho$ is also a weak equivalence by Lemma~\ref{LEMMAPUSHOUTW}. 
 Moreover, there are three inclusions $\iota_{1,2}: Z \rightarrow \int F \rightarrow \int G$ and $\iota_{3}: Z \rightarrow \int G$ and 2-morphisms
 \[  \rho \circ f \Leftarrow \iota_1 \Rightarrow \iota_3 \Leftarrow \iota_2 \Rightarrow  \rho \circ g. \]
 In particular
 $\rho f = \rho g$
 in $\tau_1 \Dia(\mathcal{S})$.
\end{proof}

\begin{LEMMA}\label{LEMMA2OUTOF6}
A localizer $\mathcal{W}$ satisfies 2-out-of-6.  
\end{LEMMA}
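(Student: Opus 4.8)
The plan is to reduce $2$-out-of-$6$ to the single implication: for a composable triple $A \xrightarrow{f} B \xrightarrow{g} C \xrightarrow{h} D$ with $gf \in \mathcal{W}$ and $hg \in \mathcal{W}$, one has $g \in \mathcal{W}$. Once $g \in \mathcal{W}$ is known, $f$ and $h$ lie in $\mathcal{W}$ by the $2$-out-of-$3$ part of (WS2), and then $hgf \in \mathcal{W}$ because $\mathcal{W}$ is closed under composition (again a formal consequence of (WS2)). So all the work is in exhibiting $g$, up to composition with morphisms of $\mathcal{W}$, out of the given data.

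To do this I would form the Grothendieck construction $\int X$ of the diagram $X \colon \lefthalfcap \to \Dia(\mathcal{S})$ with apex $A$ and legs $f \colon A \to B$ and $gf \colon A \to C$. Applying Lemma~\ref{LEMMAPUSHOUTW} to the leg $gf \in \mathcal{W}$ shows that the inclusion $\iota_B \colon B \to \int X$ (the one denoted $\iota_3$ there) lies in $\mathcal{W}$. On the other hand, the data $\mathrm{id}_C$ on $C$, $gf$ on $A$, $g$ on $B$, together with the identity $2$-cells expressing $g \circ f = gf$ and $\mathrm{id}_C \circ gf = gf$, assemble into a morphism $\mu \colon \int X \to C$ with $\mu \circ \iota_B = g$ and $\mu \circ \iota_C = \mathrm{id}_C$, where $\iota_C \colon C \to \int X$ is the inclusion of $C$. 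Thus $g = \mu \circ \iota_B$, and since $\iota_B \in \mathcal{W}$ it suffices by (WS2) to prove $\mu \in \mathcal{W}$; and since $\mu \circ \iota_C = \mathrm{id}_C$, axiom (WS3) reduces this to showing $\iota_C \circ \mu \in \mathcal{W}$.

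For the last point I would pass to $\tau_1 \Dia(\mathcal{S})$ equipped with the left calculus of fractions of Proposition~\ref{PROPCALCFRACTIONS} on the image $\overline{\mathcal{W}}$ of $\mathcal{W}$. The classes of $gf$ and $hg$ are invertible in the localization, hence --- a morphism possessing both a left and a right inverse being invertible --- so is the class of $g$; combined with invertibility of the class of $\iota_B$ and the relation $\mu \circ \iota_B = g$ this forces the classes of $\mu$ and $\iota_C$ to be mutually inverse, so $\iota_C \circ \mu$ becomes equal to $\mathrm{id}_{\int X}$ in the localization. The calculus of left fractions then yields a morphism $\tau \in \mathcal{W}$ with $\tau \circ \iota_C \circ \mu$ homotopic to $\tau$; by Proposition~\ref{PROPPROPERTIESLOCALIZER}, 5 the composite $\tau \circ \iota_C \circ \mu$ lies in $\mathcal{W}$, and $2$-out-of-$3$ applied to $\iota_C \circ \mu$ and $\tau$ gives $\iota_C \circ \mu \in \mathcal{W}$, as required. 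Running the reductions backwards completes the proof. I expect the only delicate point to be the careful construction of $\mu$ as a $1$-morphism of the $2$-category $\Dia(\mathcal{S})$ --- i.e.\ checking that the cocone $(\mathrm{id}_C, gf, g)$ is compatible with the $\mathcal{S}$-valued data, not merely with underlying categories --- and matching the inclusions $\iota_B$, $\iota_C$ precisely to the hypotheses of Lemma~\ref{LEMMAPUSHOUTW}; everything else is formal manipulation with $2$-out-of-$3$, (WS3), and the calculus of fractions.
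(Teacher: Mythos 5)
Your proof is correct, and its skeleton coincides with the paper's: both form the Grothendieck construction $\int X$ of the span $(f, gf)$, use Lemma~\ref{LEMMAPUSHOUTW} (with the leg $gf\in\mathcal{W}$) to see that the inclusion $\iota_B\colon B\to\int X$ is a weak equivalence, introduce the retraction $\mu\colon\int X\to C$ with section $\iota_C$ satisfying $\mu\iota_C=\id$ on the nose, and finish with (WS3) once $\iota_C\circ\mu\in\mathcal{W}$ is known. You diverge exactly at that crux. The paper stays elementary and uses $h$ directly: from $h\mu\iota_B=hg\in\mathcal{W}$ and $\iota_B\in\mathcal{W}$ it gets $h\mu\in\mathcal{W}$ by 2-out-of-3, then rewrites $h\mu=(h\mu)\circ(\iota_C\mu)$ to extract $\iota_C\mu\in\mathcal{W}$ by a second application of 2-out-of-3. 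You instead pass to the localization of $\tau_1\Dia(\mathcal{S})$ at the image of $\mathcal{W}$, note that $[g]$ has a one-sided inverse on each side and is therefore invertible, deduce $[\iota_C\mu]=\id$, and invoke the Gabriel--Zisman description of when two parallel morphisms become equal under a left calculus of fractions to produce $\tau\in\mathcal{W}$ with $\tau\iota_C\mu$ homotopic to $\tau$, whence $\iota_C\mu\in\mathcal{W}$ by Proposition~\ref{PROPPROPERTIESLOCALIZER}, 5 and 2-out-of-3. This is legitimate and not circular, since Proposition~\ref{PROPCALCFRACTIONS} is proved without appeal to 2-out-of-6; but it does import one standard fact the paper never states (that with a left calculus of fractions, parallel morphisms are identified in the localization iff coequalized by a morphism of $\mathcal{W}$), so the paper's route is more self-contained and buys the result with nothing beyond (WS2) and (WS3). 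Your endgame is marginally cleaner --- $g=\mu\iota_B$ is a composite of weak equivalences, then $f,h$ follow by 2-out-of-3 --- whereas the paper first extracts $f$ from the commuting square $\iota_B f=\iota_C(gf)$. The point you flag as delicate, that the cocone $(\id_C, g, gf)$ defines a 1-morphism $\int X\to C$ in $\Dia(\mathcal{S})$, is real but harmless: the triangles commute strictly and the paper uses the same $p$ without comment.
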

\begin{proof}The following proof is inspired by \cite{BM11}. 
It suffices to see this for the image of $\mathcal{W}$ in $\tau_1(\Dia(\mathcal{S}))$.
Let
\[ \xymatrix{A \ar[r]^f & B \ar[r]^g & C  \ar[r]^h & D  } \]
be morphisms such that $gf$ and $hg$ are in $\mathcal{W}$. Consider the diagram (commutative in $\tau_1\Dia(\mathcal{S})$)
\[ \xymatrix{A \ar[r]^f \ar[d]_{gf} & B \ar[d]^{\mu}  \\ 
C \ar[r]_-\kappa & \int F \ar[r]_-p & C
 }\]
 where $F: \lefthalfcap \rightarrow \Dia$ is the  subdiagram formed by $f$ and $gf$. By Lemma~\ref{LEMMAPUSHOUTW} the morphism $\mu$ is a weak equivalence. By assumption $h p \mu = hg$ is a weak equivalence. Thus
 $hp= h p \kappa p$ is a weak equivalence. Thus $\kappa p$ is a weak equivalence by 2-out-of-3, and since $p \kappa = \id$ (on the nose, not only in $\tau_1 \Dia(\mathcal{S})$), and $\mathcal{W}$ is weakly saturated, the morphisms $p$ and $\kappa$ are in $\mathcal{W}$.
 Thus $f$, and hence also $g$ and $h$ are weak equivalences. 
\end{proof}

We have the following well-known lemma, whose proof we include for the sake of completeness. 
\begin{LEMMA}\label{LEMMASAT}
Let $(\mathcal{C}, \mathcal{W})$ be a category with weak equivalences. If $\mathcal{W}$ satisfies 2-out-of-6 and has a calculus of left fractions then it is saturated.
\end{LEMMA}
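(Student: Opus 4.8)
The plan is to use the standard description of the localization $\mathcal{C}[\mathcal{W}^{-1}]$ provided by a calculus of left fractions (see \cite[Chapter~I]{GZ67}) and then feed it into the 2-out-of-6 property. Write $Q\colon\mathcal{C}\to\mathcal{C}[\mathcal{W}^{-1}]$ for the localization functor. The two facts I would recall are: (i) every morphism $A\to B$ of $\mathcal{C}[\mathcal{W}^{-1}]$ is represented by a left roof $A\xrightarrow{a}Z\xleftarrow{s}B$ with $s\in\mathcal{W}$, standing for $Q(s)^{-1}Q(a)$; and (ii) for parallel morphisms $u,v\colon A\to Z$ of $\mathcal{C}$ one has $Q(u)=Q(v)$ if and only if there is $w\colon Z\to Z'$ in $\mathcal{W}$ with $wu=wv$. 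The implication ``$f\in\mathcal{W}\Rightarrow Q(f)$ invertible'' is immediate from the universal property of $Q$, so the whole content of saturation is the converse.

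First I would prove the key observation: if $\phi\colon A\to B$ is a morphism of $\mathcal{C}$ with $Q(\phi)$ invertible, then there is $\psi\colon B\to C$ in $\mathcal{C}$ with $\psi\phi\in\mathcal{W}$. To see this, represent $Q(\phi)^{-1}$ by a left roof $B\xrightarrow{a}Z\xleftarrow{s}A$ with $s\in\mathcal{W}$; then $Q(\phi)^{-1}=Q(s)^{-1}Q(a)$, so $Q(\phi)^{-1}Q(\phi)=\id_A$ gives $Q(a\phi)=Q(s)$ as morphisms $A\to Z$. By fact (ii) there is $w\colon Z\to Z'$ in $\mathcal{W}$ with $w(a\phi)=ws$, and then $\psi:=wa$ works, since $\psi\phi=ws$ is a composite of two morphisms in $\mathcal{W}$ and hence lies in $\mathcal{W}$ (recall $\mathcal{W}$, satisfying 2-out-of-3, is closed under composition).

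Then I would iterate and conclude with 2-out-of-6. Let $f\colon X\to Y$ be such that $Q(f)$ is invertible. The key observation gives $h_1\colon Y\to Z_1$ with $h_1f\in\mathcal{W}$; since $Q(h_1)=Q(h_1f)Q(f)^{-1}$ is then invertible, the key observation applied to $h_1$ gives $h_2\colon Z_1\to Z_2$ with $h_2h_1\in\mathcal{W}$. Applying 2-out-of-6 to the composable chain $X\xrightarrow{f}Y\xrightarrow{h_1}Z_1\xrightarrow{h_2}Z_2$, whose consecutive pairwise composites $h_1f$ and $h_2h_1$ lie in $\mathcal{W}$, yields $f\in\mathcal{W}$, as desired. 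The one point needing genuine care is the key observation, where the bookkeeping of the Gabriel--Zisman calculus (choosing the roof for $Q(\phi)^{-1}$ and invoking fact (ii) for the right pair of parallel arrows) must be carried out precisely; everything else is formal.
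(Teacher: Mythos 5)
Your proof is correct and follows essentially the same strategy as the paper's: both use the left-fraction (roof) description of $\mathcal{C}[\mathcal{W}^{-1}]$ to manufacture a composable chain $X\xrightarrow{f}Y\to Z_1\to Z_2$ whose two consecutive composites lie in $\mathcal{W}$, and then conclude by 2-out-of-6. The only difference is packaging: the paper extracts the two composites directly by composing $f$ with its inverse roof on either side (using the Ore square once), whereas you isolate a reusable ``key observation'' via the parallel-morphism criterion and apply it twice — a minor variation, not a different route.
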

\begin{proof}
Using the calculus of left fractions, morphisms in $\mathcal{C}[\mathcal{W}^{-1}]$ are cospans
\[ \xymatrix{X \ar[r]^{f} & Z & \ar[l]_w Y} \]
in which $w \in \mathcal{W}$, modulo the equivalence relation generated by the relation that the two extremal spans in a commutative diagram of the following shape are equivalent:
\[ \xymatrix{ & Z \ar[dd] & \\
X \ar[ru]^{f} \ar[rd]_{f'}  &  &  \ar[lu]_{w}  \ar[ld]^{w'} Y \\
& Z' } \]
From this and 2-out-of-3 it follows that any morphism which is equivalent to an identity is in $\mathcal{W}$. 
If $f: X \rightarrow Y$ is a morphism in $\mathcal{C}$ with inverse 
\[ \xymatrix{Y \ar[r]^{g} & Z & \ar[l]_w X} \]
in $\mathcal{C}[\mathcal{W}^{-1}]$ then  the following composition is equivalent to the identity:
\[ \xymatrix{  & & Z & & \\
& Y \ar[ru]^{g}  & & Z  \ar@{=}[lu]_{}  & \\
X \ar[ru]^{f}   &  &  \ar@{=}[lu]_{}  Y \ar[ru]^{g}   &  &  \ar[lu]_{w}  X } \]
Therefore $gf \in \mathcal{W}$. Also the following composition is equivalent to the identity:
\[ \xymatrix{  & & Z' & & \\
& Z \ar[ru]^{f'}  & & Y  \ar[lu]_{w'}  & \\
Y \ar[ru]^{g}   &  &  \ar[lu]_{w}  X \ar[ru]^{f}   &  &  \ar@{=}[lu]  Y } \]
and hence $f'g \in \mathcal{W}$.  By 2-out-of-6 the morphism $f$ is thus a weak equivalence. 
\end{proof}

\begin{PROP}\label{PROPDERLOCALIZER}
Let $\mathcal{W}$ be a localizer on $\Dia(\mathcal{S})$. Then the association
\[ I \mapsto \DD(I):=\Dia(\mathcal{S})^I[\mathcal{W}_I^{-1}]  \]
is a left derivator with domain $\Dia$ (in which the categories $\DD(I)$ are not necessarily locally small). Furthermore, for each diagram $I$ and functor $F: I \rightarrow \Dia(\mathcal{S})$ we have the following formula for its homotopy colimit:
\[ p_{I,!} F = \int F. \]
\end{PROP}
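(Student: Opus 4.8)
The plan is to verify the axioms (Der1)--(Der2), (Der3 left) and (Der4 left) for the prederivator $I \mapsto \Dia(\mathcal{S})^I[\mathcal{W}_I^{-1}]$, and to identify the left homotopy Kan extension functors explicitly via the Grothendieck construction~\ref{PARGCONSTR}. The key point throughout is that $\Dia(\mathcal{S})$ admits Grothendieck constructions along functors $F: I \to \Dia(\mathcal{S})$ (for $I \in \Dia$) and that, by Corollary~\ref{KORGCONSTRUCTION}, $\int$ carries point-wise weak equivalences to weak equivalences. Hence $\int$ descends to a functor $\Dia(\mathcal{S})^I[\mathcal{W}_I^{-1}] \to \Dia(\mathcal{S})[\mathcal{W}^{-1}] = \DD(\cdot)$, and the claim is that this is left adjoint to the constant-diagram functor $p_I^*$. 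More generally, for $\alpha: I \to J$ one forms the relative Grothendieck construction fibrewise over $J$ (as in \ref{PARINTINT}): for $F: I \to \Dia(\mathcal{S})$ set $(\alpha_! F)(j) := \int (F|_{I \times_{/J} j})$, using that the comma-category fibres assemble to a pseudo-functor on $J$; one checks this lands in $\Dia(\mathcal{S})^J[\mathcal{W}_J^{-1}]$ and is functorial. (Der1) is the statement that $\Dia(\mathcal{S})^{I \amalg J} = \Dia(\mathcal{S})^I \times \Dia(\mathcal{S})^J$ and that localization commutes with finite products of categories with weak equivalences, which is routine. (Der4 left) — the Kan-extension/colimit formula on comma categories — holds essentially by construction, since $\alpha_!$ was defined fibrewise over $J$ precisely by such a formula; one only needs to observe that the restriction of $\int(F|_{I\times_{/J} j})$ is computed correctly, which follows from the compatibility of Grothendieck constructions with base change along $j \to J$.

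First I would establish the adjunction $\int \dashv p_I^*$ at the level of homotopy categories. The unit $G \Rightarrow p_I^* \int G$ for $G \in \Dia(\mathcal{S})^I$ is given by the canonical functors $G(i) \to \int G$ (inclusion of the fibre), and the counit $\int p_I^* D \to D$ for $D \in \Dia(\mathcal{S})$ is the projection $I \times D \to D$, which by Proposition~\ref{PROPPROPERTIESLOCALIZER}, 1.~composed with (L2 left)-type arguments — more directly, by Proposition~\ref{PROPPROPERTIESLOCALIZER}, 4.~and the contractibility of a point — is a weak equivalence whenever... actually the cleanest route: for $D \in \Dia(\mathcal{S})$, $\int p_I^* D \cong I \times D$ and the projection to $D$ is the base change along $I \to \cdot$ of $I \to \cdot$; one verifies directly from the explicit fibre description that $\Hom_{\DD(\cdot)}(\int G, D) \cong \Hom_{\DD(I)}(G, p_I^* D)$ by sending a morphism $\int G \to D$ to its restrictions along the fibre inclusions, using that morphisms in $\DD(I)$ out of $G$ into a constant diagram are the same as morphisms $\int G \to D$ (this is a direct consequence of the universal property of the Grothendieck construction together with the fact that point-wise weak equivalences are inverted on both sides). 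The 2-functoriality needed to promote this into the natural transformations between hom-categories is formal.

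Next I would check (Der2): a morphism $f$ in $\Dia(\mathcal{S})^I[\mathcal{W}_I^{-1}]$ which is point-wise an isomorphism is an isomorphism. Using the calculus of left fractions established in Proposition~\ref{PROPCALCFRACTIONS} (applied fibrewise, since $\mathcal{W}_I$ is the union of fibrewise weak equivalences and one can run the argument of that proposition in $\Dia(\mathcal{S})^I$ with the Grothendieck construction replaced by its relative-over-$I$ variant), together with the saturatedness from Theorem~\ref{SATZSAT}, one reduces to showing: if $f: G \to G'$ in $\Dia(\mathcal{S})^I$ becomes an isomorphism in each $\DD(\cdot) = \Dia(\mathcal{S})[\mathcal{W}^{-1}]$ after evaluation at each $i \in I$, then $f$ is already in $\mathcal{W}_I$ up to the calculus of fractions; but this is immediate because $\mathcal{W}_I$ is, by definition, detected point-wise and the saturatedness of each fibre localization lets one conclude that a point-wise weak equivalence is a weak equivalence in the fibre, hence $f \in \mathcal{W}_I$.

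I expect the main obstacle to be (Der3 left): the existence of $\alpha_!$ as an honest left adjoint to $\alpha^*$, not merely the colimit over a point. The difficulty is that $\alpha^*: \Dia(\mathcal{S})^J[\mathcal{W}_J^{-1}] \to \Dia(\mathcal{S})^I[\mathcal{W}_I^{-1}]$ need not be the localization of an exact functor in any convenient sense, so one cannot simply quote a general nonsense adjoint-functor statement. The strategy I would use is to write $\alpha_! F$ via the fibrewise Grothendieck construction over $J$ as above and to verify the adjunction $\Hom_{\DD(J)}(\alpha_! F, H) \cong \Hom_{\DD(I)}(F, \alpha^* H)$ by hand, reducing via (Der2) and the point-wise nature of everything to the case $J = \cdot$ already handled, using that $I \times_{/J} j \to I$ and the fibre inclusions interact correctly with Grothendieck constructions (this is where Corollary~\ref{KORGCONSTRUCTION} and Proposition~\ref{PROPPROPERTIESLOCALIZER}, 3.~do the real work, guaranteeing that the fibrewise-over-$J$ construction is homotopy invariant and that base change $j^* \alpha_! \cong p_{I \times_{/J} j, !} \iota_j^*$ holds on the nose). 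Granting these compatibilities, the homotopy-exactness of the comma squares and the 2-out-of-3 property finish (Der3 left), and then (Der4 left) is the unwinding of the very definition of $\alpha_!$. The final formula $p_{I,!} F = \int F$ is then just the case $J = \cdot$ of the constructed $\alpha_!$.
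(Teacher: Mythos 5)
Your overall strategy --- verifying (Der1)--(Der4 left) directly, with $\alpha_!$ built fibrewise over $J$ from Grothendieck constructions of the restrictions to the comma categories $I\times_{/J}j$ --- is genuinely different from the paper's proof, which simply checks the hypotheses of a packaged statement, \cite[Theorem~A.7]{Hor21}: saturatedness of $\mathcal{W}$ (Theorem~\ref{SATZSAT}) plus five axioms (HC1)--(HC5) for a ``transitive functorial calculus of homotopy colimits'', of which (HC1) is Corollary~\ref{KORGCONSTRUCTION} and (HC2) follows from Proposition~\ref{PROPPROPERTIESLOCALIZER}, 5.\@ together with 2-out-of-6. Your route is essentially the unpackaged version of that theorem and could in principle be carried out.

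There is, however, a genuine gap at the step you declare ``a direct consequence of the universal property of the Grothendieck construction''. The Grothendieck construction is a \emph{lax} colimit, not a strict one: a 1-morphism $\int G \rightarrow D$ in $\Dia(\mathcal{S})$, restricted to the fibres, gives functors $G(i)\rightarrow D$ together with nontrivial 2-morphisms comparing $G(i)\rightarrow \int G\rightarrow D$ with $G(i)\rightarrow G(i')\rightarrow \int G\rightarrow D$ for each $i\rightarrow i'$; this is \emph{not} the same datum as a morphism $G\rightarrow p_I^*D$ in $\Dia(\mathcal{S})^I$. For the same reason your proposed unit, given by the fibre inclusions $G(i)\rightarrow \int G$, is not even a morphism of $\Dia(\mathcal{S})^I$: the required naturality squares only commute up to the canonical 2-morphisms of $\Dia(\mathcal{S})$. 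The hom-set bijection you want therefore only holds after localization, and proving it there is exactly the nontrivial content of the proposition; it requires passing systematically through $\tau_1\Dia(\mathcal{S})$ (where 2-morphisms are collapsed), the homotopy invariance of $\mathcal{W}$ (Proposition~\ref{PROPPROPERTIESLOCALIZER}, 5.), and a calculus of fractions as in Proposition~\ref{PROPCALCFRACTIONS} --- which the paper establishes only for $I=\cdot$ and which you would need to redo in $\Dia(\mathcal{S})^I$ with the relative Grothendieck construction, as your (Der2) argument also tacitly assumes. These are precisely the ingredients that the axioms (HC1)--(HC5) of the cited theorem organize, so the adjunction cannot be dismissed as formal.
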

\begin{proof}
This is an application of \cite[Theorem A.7]{Hor21}. By Theorem~\ref{SATZSAT} the class $\mathcal{W}$ is saturated. 
We have to show that
\[ I, F \mapsto \hocolim_I F := \int F \] 
 is a transitive functorial calculus of homotopy colimits in the sense of \cite[Definitions A.1 and A.6]{Hor21}:
The functoriality morphisms of \cite[Definition A.1]{Hor21} for $\alpha: I \rightarrow J$ are the opfibrations
\[ \alpha': \int_I \alpha^* F \rightarrow \int_J F. \]
The morphism $\mathrm{can}$ of  \cite[Definition A.1]{Hor21} is the identity, and the morphisms $\Xi_F$ of  \cite[Definition A.6]{Hor21} are given by the transitivity of the Grothendieck construction. 
We have to verify the axioms of a transitive functorial calculus of homotopy colimits:

(HC1) $\int$ maps object-wise weak equivalences to weak equivalences by Corollary~\ref{KORGCONSTRUCTION}. 

(HC2) $1'$ has a formal left adjoint and is thus a weak equivalence by Proposition~\ref{PROPPROPERTIESLOCALIZER}, 5.\@ and 2-out-of-6. 

(HC3) and (HC4) are elementary properties of the Grothendieck construction.

(HC5) is vacuous because $\mathrm{can}$ is the identity. 
\end{proof}

\newpage
\bibliographystyle{abbrvnat}
\bibliography{6fu}

\end{document}